\pdfoutput=1

\documentclass{amsart}

\usepackage{amsthm,amssymb,amsmath,braket,fullpage,mathtools,setspace}
\usepackage[utf8]{inputenc}
\usepackage{tikz}
\usetikzlibrary{arrows,decorations.markings}

\title{Special Functions for Hyperoctahedral Groups Using \\ Bosonic, Trigonometric Six-Vertex Models}
\author{Ben Brubaker}
\author{Will Grodzicki}
\author{Andrew Schultz}

\newcommand{\comment}[1]{}

\newtheorem*{theorem*}{Theorem}
\newtheorem*{maintheorem}{Main Theorem}
\newtheorem*{lemma*}{Lemma}
\newtheorem{proposition}{Proposition}[section]
\newtheorem{theorem}[proposition]{Theorem}
\newtheorem{corollary}[proposition]{Corollary}
\newtheorem{lemma}[proposition]{Lemma}

\theoremstyle{definition}
\newtheorem{definition}{Definition}[section]
\newtheorem*{definition*}{Definition}
\newtheorem*{remark*}{Remark}
\newtheorem*{question*}{Motivating Question}

\numberwithin{equation}{section}

\DeclareRobustCommand{\stasisthree}{\ \ 
\begin{tikzpicture}[scale=.5,baseline={([yshift=-\the\dimexpr\fontdimen22\textfont2\relax]current bounding box.center)}]
\filldraw[white] (0,2.35) circle (3pt);
\filldraw[red] (0,2) circle (3pt);
\filldraw[red] (0,1.5) circle (3pt);
\filldraw[red] (0,1) circle (3pt);
\draw[fill=white] (0,.5) circle (3pt);
\draw[fill=white] (0,0) circle (3pt);
\draw[fill=white] (0,-.5) circle (3pt);
\filldraw[white] (0,-.85) circle (3pt);
\end{tikzpicture}\ \ 
}

\DeclareRobustCommand{\stasistwo}{\ \ 
\begin{tikzpicture}[scale=.5,baseline={([yshift=-\the\dimexpr\fontdimen22\textfont2\relax]current bounding box.center)}]
\filldraw[red] (0,1.5) circle (3pt);
\filldraw[red] (0,1) circle (3pt);
\draw[fill=white] (0,.5) circle (3pt);
\draw[fill=white] (0,0) circle (3pt);
\end{tikzpicture}\ \ 
}

\DeclareRobustCommand{\stasisone}{\ \ 
\begin{tikzpicture}[scale=.5,baseline={([yshift=-\the\dimexpr\fontdimen22\textfont2\relax]current bounding box.center)}]
\filldraw[red] (0,1) circle (3pt);
\draw[fill=white] (0,.5) circle (3pt);
\end{tikzpicture}\ \ 
}
\begin{document}

\begin{abstract}
Recent works have sought to realize certain families of orthogonal, symmetric polynomials as partition functions of well-chosen classes of solvable lattice models. Many of these use Boltzmann weights arising from the trigonometric six-vertex model $R$-matrix (or generalizations or specializations of these weights).  In this paper, we seek new variants of bosonic models on lattices designed for type B/C root systems, whose partition functions match the zonal spherical function in type C. Under general assumptions, we find that this is possible for all highest weights in rank $2$ and $3$, but not for higher rank.
\end{abstract}

\maketitle

\section{Introduction}\label{sec:intro}

A number of recent papers have studied symmetric function theory and related special functions using solvable lattice models, including \cite{BorodinWheelerColored, Borodin-Wheeler-nsmac, hkice, BrubakerSchultz, Buciumas-Scrimshaw, Gorbounov-Korff, Wheeler-Zinn-Justin}. The word ``solvable'' here indicates that the models satisfy certain local relations, principally the ``Yang-Baxter equations'' or ``RTT relations'' (to be described in more detail in the subsequent sections). Such relations are known to arise naturally from modules for quantum groups and it is an interesting problem to associate families of symmetric functions to quantum group modules via the partition functions of solvable lattice models in this way. In the present paper, we explore this connection when the resulting $R$-matrix is, up to scaling, the trigonometric $R$-matrix for the six-vertex model coming from the standard module for the quantum group $U_q(\hat{sl}_2)$ and the lattices are constructed to reflect symmetries of the Weyl group in types B/C -- the so-called hyperoctahedral groups.

The two-dimensional lattice models we consider will consist primarily of finite rectangular arrays of vertices on a square lattice, with four edges (two vertical and two horizontal) adjacent to each vertex. Models for classical Cartan types outside of type $A$ may have additional boundary vertices which roughly reflect the embeddings of classical groups into the general linear group and are inspired by Kuperberg's models for symmetry classes of alternating sign matrices \cite{KuperbergOneRoof}. These will be defined precisely in Section~\ref{sec:type.B}.

Admissible states of these models for given boundary data will consist of paths of particles beginning from the bottom boundary, moving upward and leftward, and exiting out the left boundary. There's an interesting dichotomy here in whether we allow superposition of particles along {\it vertical edges} - such models are called ``bosonic'' models when superposition is allowed, and ``fermionic'' when not allowed. Superposition of particles on horizontal edges will always be forbidden in this context, in keeping with our requirement that the $R$-matrix in the RTT-relation involving a pair of adjacent rows is a six-vertex model. 

We may keep track of initial particle locations along the bottom boundary using an integer partition $\lambda$ whose parts $\lambda_i$ are associated to column indices where particles appear. In particular the parts of $\lambda$ are weakly decreasing in the bosonic case and strictly decreasing in the fermionic case. All other boundaries will be regular, so $\lambda$ suffices to determine the boundary conditions. An example of an admissible state in a type $A$ bosonic model is pictured in Figure~\ref{fig:type.A.example} below. Note in particular the superposition of two particles on a lone edge in the column labeled `2' at the bottom, as depicted by the particle number in the box. Let $\mathcal{S}_\lambda$ denote the set of all admissible states corresponding to the bottom boundary in the model. Full details on these models will be provided in Sections~\ref{sec:type.A} and~\ref{sec:type.B}.

\begin{figure}[ht]
\begin{tikzpicture}[scale=.75, baseline=.5ex]
\node [label=right:$x_3$] at (4,1) {};
\node [label=right:$x_2$] at (4,0) {};
\node [label=right:$x_1$] at (4,-1) {};

\node [label=below:$0$] at (0.5,-1.5) {};
\node [label=below:$1$] at (1.5,-1.5) {};
\node [label=below:$2$] at (2.5,-1.5) {};
\node [label=below:$3$] at (3.5,-1.5) {};

\draw [-] (0,1) -- (4,1);
\draw [-] (0,0) -- (4,0);
\draw [-] (0,-1) -- (4,-1);

\draw [-] (0.5,1.5) -- (0.5,-1.5);
\draw [-] (1.5,1.5) -- (1.5,-1.5);
\draw [-] (2.5,1.5) -- (2.5,-1.5);
\draw [-] (3.5,1.5) -- (3.5,-1.5);

\draw [-,line width=1mm,red,opacity=.5] (3.5,-1.5) -- (3.5,-1) -- (2.5,-1) -- (2.5,1) -- (0,1);

\draw [-,line width=1mm,red,opacity=.5] (0,0)  -- (2.5,0) -- (2.5,-1.5);

\draw [-,line width=1mm,red,opacity=.5] (0.5,-1.5) -- (0.5,-1) -- (0,-1);
\node [rectangle,draw,red,fill=white] at (2.5,-.5) {\tiny{$2$}};

\end{tikzpicture}
\caption{A sample state for the type $A$ model when $\lambda = (3,2,0) $}\label{fig:type.A.example}
\end{figure}

Broadly speaking, the goal is to represent families of symmetric functions indexed by partitions $\lambda$ as generating functions on the admissible states $\mathcal{S}_\lambda$. These generating functions come from choices of local weights, the ``Boltzmann weights'' at each vertex depending on occupancy in adjacent edges. Taking the product of weights at all vertices in the admissible state, and summing these products over all admissible states in $\mathcal{S}_\lambda$, results in a generating function over states called the ``partition function,'' denoted here as $\mathcal{Z}(\mathcal{S}_\lambda)$. As indicated in the example above, each row of the model is associated to an indeterminate $x_i$ (the so-called ``spectral parameters") and the Boltzmann weights of vertices in the row will be allowed to depend on these $x_i$. In this way, we will see that $\mathcal{Z}(\mathcal{S}_\lambda)$ will be a polynomial in the parameters $x_i$ according to the choice of Boltzmann weights.  

The symmetric functions we seek to represent as partition functions of lattice models are a particular family of orthogonal polynomials in several variables $\boldsymbol{x}:=(x_1, \ldots, x_r)$ indexed by $\lambda \in \Lambda_r$ associated to affine root systems. More precisely, they are one parameter specializations of the two parameter Macdonald polynomials $P_\lambda^{(\epsilon)}(\boldsymbol{x})$ defined in Section 5.7 of \cite{Macdonald}. If the affine Weyl group associated to the root system is $W = W_0 \ltimes \Lambda_r$ with $W_0$ the finite Weyl group and $\Lambda_r$ the weight lattice of rank $r$, then the family of polynomials in $r$ variables $\boldsymbol{x}$ and a linear character $\epsilon$ of $W_0$. In our specialization, the $P_\lambda^{(\epsilon)}$ are constructed from certain Hecke algebra symmetrizers (cf.~(5.5.6) in \cite{Macdonald}) built from $\epsilon$ as sums over $W_0$, and acting on monomials $\boldsymbol{x}^\lambda = x_1^{\lambda_1} \cdots x_r^{\lambda_r}$. Our primary example will be the case where $\epsilon$ is the trivial character, where $P_\lambda^{(\epsilon)}$ is the usual two parameter Macdonald polynomial and in our specialization to one parameter $q$ is Macdonald's zonal spherical function.\footnote{Macdonald polynomials are commonly written in terms of parameters $q$ and $t$, though conventions differ about their roles. Our conventions match those of $p$-adic representation theory. In particular, the usual role of $t$ in the literature is played by $q$ for us, owing to its connection with the cardinality of the residue field of the $p$-adic field which is typically notated with the same letter. No confusion will arise since our polynomials have a single deformation parameter and its role will always appear through explicit formulas like~(\ref{eq:hlpoly}).} In type $A$, this is also known as the Hall-Littlewood polynomial. For any root system and associated finite Weyl group $W_0$, it takes the form:
\begin{equation} P_\lambda(x_1, \ldots, x_r; q) = \frac{1}{Q_\lambda(q)} \sum_{\sigma \in W_0} \sigma \left( \boldsymbol{x}^\lambda \prod_{\alpha \in \Phi^+} \frac{1 - q \boldsymbol{x}^{-\alpha}}{1 - \boldsymbol{x}^{-\alpha}} \right) \label{eq:hlpoly}, \end{equation}
for some polynomial $Q_\lambda$. For other choices of linear character $\epsilon$, we achieve similar averaging formulas over $W_0$; see for example Section 8 of~\cite{BBFMatrix} for a discussion of this. Attempting to use lattice models derived from quantum group modules to represent Macdonald polynomials made from Hecke algebras is perhaps natural in light of Jimbo's generalization of Schur-Weyl duality \cite{Jimbo}. Making this connection precise in the language of partition functions, viewed as matrix coefficients of quantum group modules, is an interesting open question. At present, we don't even know whether solvable lattice models exist for all such polynomials in one deformation parameter (nevermind two). This paper is an attempt to understand this question better for classical groups, and we begin by reviewing what is known.

In type $A$, the connections between such orthogonal polynomials and solvable lattice models are well developed. The Weyl groups $W_0$ of simply laced types have just two linear characters - the sign and trivial characters - which produce spherical Whittaker functions and Hall-Littlewood polynomials, respectively. Families of solvable lattice models whose partition functions are spherical Whittaker functions and the aforemnetioned Hall-Littlewood polynomials appeared previously in the literature in \cite{hkice} and \cite{Wheeler-Zinn-Justin}, respectively. The Whittaker functions use a fermionic lattice model while those for Hall-Littlewood polynomials are bosonic. In fact, this connection between symmetric functions and lattice models extends to the nonsymmetric analogues. Indeed if we distinguish each path in an admissible state of the model by recording its color, then the resulting lattice models with a fixed set of colors along the boundary realize the non-symmetric analogues above. These are the non-symmetric Hall-Littlewood polynomials and Iwahori-Whittaker functions, which were studied in \cite{BorodinWheelerColored} and \cite{BBBGIwahori}, respectively. These latter Whittaker functions arise in representation theory of $p$-adic algebraic groups; further specializing the one parameter in the Boltzmann weights, they degenerate to so-called ``Demazure atoms'' as in \cite{MasonAtoms}, originally called ``standard bases'' by Lascoux and Sch\"{u}tzenberger in \cite{LascouxSchutzenbergerKeys}, and their connection to lattice models is explored in \cite{demice}.

In types $B$ and $C$, less is known. There are results for spherical Whittaker functions -- that is, $P_\lambda^{(\epsilon)}$ with $\epsilon$ the sign character -- in~\cite{Ivanov} using fermionic lattice models with bends along one side and bivalent vertices at each bend (Figure~\ref{fig:rank.2.example} gives one such example). In \cite{Wheeler-Zinn-Justin}, bosonic models with the same underlying lattice shape are shown to give certain two-parameter generalizations of polynomials $P_\lambda^{(\epsilon)}$ where $\epsilon$ is the character of $W_0$ sending long roots to 1 and short roots to $-1$. The two parameters, called $\gamma$ and $\delta$, arise from a pair of ``bonus'' columns introduced into the lattice models next to the bends. The Boltzmann weights for vertices in each of these two extra columns depend on one of these parameters, but setting $\gamma=\delta=0$ is equivalent to omitting these two bonus columns entirely from the model. In this case, according to (64) in~\cite{Wheeler-Zinn-Justin}, one obtains $P_\lambda^{(\epsilon)}$ for this character $\epsilon$ (see Theorem \ref{thm:WZJ}). If instead one chooses $\gamma$ and $\delta$ with $\gamma + \delta = 0$ and $\gamma \delta = q$, this results in the zonal spherical function in type $C$. So it is natural to ask which special functions are achievable from a solvable lattice model with bends in which there are no ``bonus'' columns, and for which the rectangular (tetravalent) vertices satisfy a Yang-Baxter equation with the  trigonometric six-vertex model. This is the main question we try to answer in the present paper. In particular, we address whether one can obtain the zonal spherical function in these non-simply-laced types via such solvable lattice models.

As we alluded to above, the lattice models in types $B$ and $C$ consist almost entirely of tetravalent vertices, together with a set of bivalent vertices appearing as bends connecting pairs of adjacent rows along one side of the model. The example in Figure~\ref{fig:rank.2.example} shows one such admissible state in rank two. Thus our question becomes one of exploring choices of these Boltzmann weights at the bend vertices (or ``bend weights,'' for short) which simultaneously preserve the solvability of the model and achieve our desired special functions as partition functions of the model. In doing so, we are forced to consider bend weights which are {\bf not uniformly defined} in terms of spectral parameters. For example in rank two models, the function defining the Boltzmann weight for the top bend may differ from the one for the bottom bend. Details are given in Section~\ref{sec:type.B}.

In later sections of the paper, we provide conditions on the bend weights to guarantee the solvability of the model. This leads to the following result.

\begin{maintheorem} Let $\mathcal{B}_\lambda$ be a solvable lattice model of type $B/C$ with boundary conditions corresponding to a dominant weight $\lambda$, and having Boltzmann weights from the bosonic, trigonometric six-vertex model at all tetravalent vertices. Then the existence of bend weights such that $\mathcal{Z}(\mathcal{B}_\lambda) = P_\lambda(\boldsymbol{x}; q)$, the zonal spherical function in type $C$, for all dominant weights $\lambda$ depends on the rank as follows:
\begin{itemize}
    \item For rank $r \leq 3$, there exists a choice of Boltzmann weights for the bend vertices that realizes the zonal spherical function at dominant weights $\lambda$. These bend weights depend on the row, and are not uniformly dependent on the (spectral) parameters $x_i$.
    \item In rank $r \geq 4$, no such choice of Boltzmann weights for bend vertices exists.
\end{itemize}
\end{maintheorem}

We briefly outline the contents of each section and their role in leading up to this main result. In Section~\ref{sec:type.A}, we fix notation, recall the definition of triangular weights and review the concept of solvability (the existence of a Yang-Baxter equation for tetravalent vertices) for bosonic models in type $A$. In Section~\ref{sec:type.B}, we extend this notion of solvability to type $B/C$ models. See Definition~\ref{def:solvable} for a precise definition of solvability for lattice models of types $B/C$. The definition ensures that the row-to-row transfer matrices (taken in pairs of rows connected by a bend vertex) commute, as shown in Lemmas~\ref{le:inversion} and~\ref{le:symmetric}. Solvability in these types requires certain additional relations, as has been long understood in lattice models of this geometry \cite{BrubakerSchultz, Ivanov, KuperbergOneRoof, Tsuchiya, Wheeler-Zinn-Justin}. Following Kuperberg, these relations tend to be named after animals -- fish, caduceus, etc. We prove necessary conditions on bend weights to satisfy the required relations in a series of lemmas in Section~\ref{sec:type.B}. In particular, we find new solutions to the caduceus relation (e.g., Lemma~\ref{le:caduceus}) using non-uniform choices of bend weights. Section~\ref{sec:genmethods} provides methods for explicit evaluations of solvable lattice models in type $B/C$, in the style of \cite{Wheeler-Zinn-Justin}. The later sections of the paper then apply these methods to lattice models of various ranks, as the analysis is rather different in each case, and results in the main theorem above. In the process, we evaluate all solvable lattice models in rank at most $3$ with trigonometric six-vertex model weights at tetravalent vertices.

Our main theorem handles the most general set of bend weights that is consistent with the paradigm of partition functions as matrix coefficients for quantum group modules -- in particular, we require the caduceus equation for bend weights to be consistent with Cherednik and Sklyanin's reflection equation and we require monomial dependence in the spectral parameters $x_i$, consistent with formal group laws for parametrized Yang-Baxter equations from families of affine quantum group representations. Thus any solution for bend weights outside of our assumptions would represent a break from this point of view. It is possible that our analysis could be used as a starting point for developing combinatorial solutions outside of this paradigm, and one might hope the resulting algebraic structures would be of fundamental interest.
 This work was partially supported by NSF grant DMS-2101392 (Brubaker).

\section{Bosonic Models for Cartan Type $A$}\label{sec:type.A}

We begin by describing the general features of six-vertex lattice models in type $A$, all of which will be needed in subsequent sections on other classical Cartan types. Each lattice model is indexed by an integer partition $\lambda = (\lambda_1 \geqslant \lambda_2 \geqslant \cdots \geqslant \lambda_r)$ (with $\lambda_r \geq 0$) which determines its size, shape, and boundary conditions. Given such a $\lambda$, each lattice model will be built from two-dimensional grids of tetravalent vertices with $n$ rows, numbered $1$ to $n$ from bottom to top, and $\lambda_1+1$ columns, numbered $0$ to $\lambda_1$ from left to right. An admissible state of such a model is a configuration of $r$ paths along the lattice edges, with the $i$-th path starting from the $\lambda_i$-th column along the bottom boundary edges and ending on an edge at the far left side of the diagram. At each vertex, paths are allowed to either move up or left. Paths are allowed to overlap without restriction along vertical edges, but any given horizontal edge can allow at most one path. Alternatively, we could view columns between each row as recording the position of a family of particles (or a finite window of interest among a semi-infinite collection of particles) and the admissible states record a discrete time evolution of these particles with each new row. The fact that columns may contain multiple paths then translates as the superposition of these particles, so we refer to the associated models as ``bosonic'' (as opposed to ``fermionic'' where no superposition is permitted). We will denote the set of all such (admissible) states as $\mathcal{A}_\lambda$. 

Ultimately we will attach a polynomial to each particular model.  To do this, each vertex $v$ is assigned a Boltzmann weight $B(v)$; that weight will be a function depending on the edge configurations adjacent to that vertex $v$ and on a transcendental parameter $x_i$ corresponding to the row $i$ in which the vertex occurs.  For an admissible state $S$ of the system, its weight $\textrm{wt}(S)$ is then defined as the product of weights over all its vertices: $\textrm{wt}(S)= \prod_{v \in S} B(v)$.  The partition function $\mathcal{Z}(\mathcal{A}_\lambda)$ for the model is then recovered by summing these weights over all states of the model: $$\mathcal{Z}(\mathcal{A}_\lambda) = \sum_{S \in \mathcal{A}_\lambda} \textrm{wt}(S).$$
This is a slight abuse of notation, as we've continued to use $\mathcal{A}_\lambda$ rather than introduce a new notation for the set of admissible states {\it together} with an associated collection of Boltzmann weights. We repeat this often in what follows when the underlying Boltzmann weights are clear from context.

Thus to specify the partition function $\mathcal{Z}(\mathcal{A}_\lambda)$ it remains to define the Boltzmann weights. The Boltzmann weights at each vertex are given in the table in Figure~\ref{fig:six.vertex.model}. They agree with the weights in Equation (35) of \cite{Wheeler-Zinn-Justin} and arise naturally from evolution operators on bosonic Fock space made by creation and annihilation of particles (see Section 2 of \cite{Wheeler-Zinn-Justin} for details). For us, the key point is that these weights satisfy certain relations known as ``Yang-Baxter equations'' or ``$RTT$ relations,'' which we address shortly below.

\begin{figure}[!ht]
\begin{tabular}
{|c|c|c|c|c|c|c|c|}
\hline
$\begin{tikzpicture}
\coordinate (left) at (0.25,1);
\coordinate (right) at (1.75,1);
\coordinate (top) at (1,1.75);
\coordinate (bottom) at (1,0.25);
\node [label=left:$x_j$] (L) at (left) {};
\node [label=right:$ $] (R) at (right) {};
\draw [-] (L.east) -- (R.west);
\draw [-] (top) -- (bottom);
\draw [-,line width=1mm,red,opacity=.5] (top) -- (bottom);
\node [rectangle,draw,red,fill=white] (T) at (1,1.35) {\tiny{$m$}};
\node [rectangle,draw,red,fill=white] (T) at (1,.65) {\tiny{$m$}};
\end{tikzpicture}$
&
$\begin{tikzpicture}
\coordinate (left) at (0.25,1);
\coordinate (right) at (1.75,1);
\coordinate (top) at (1,1.75);
\coordinate (bottom) at (1,0.25);
\node [label=left:$x_j$] (L) at (left) {};
\node [label=right:$ $] (R) at (right) {};
\draw [-] (L.east) -- (R.west);
\draw [-] (top) -- (bottom);
\draw [-,line width=1mm,red,opacity=.5] (top) -- (bottom);
\draw [-,line width=1mm,red,opacity=.5] (L) -- (R);
\node [rectangle,draw,red,fill=white] (T) at (1,1.35) {\tiny{$m$}};
\node [rectangle,draw,red,fill=white] (B) at (1,.65) {\tiny{$m$}};
\end{tikzpicture}$
&
$\begin{tikzpicture}
\coordinate (left) at (0.25,1);
\coordinate (right) at (1.75,1);
\coordinate (top) at (1,1.75);
\coordinate (bottom) at (1,0.25);
\node [label=left:$x_j$] (L) at (left) {};
\node [label=right:$ $] (R) at (right) {};
\node [] at (bottom) {};
\draw [-] (L.east) -- (R.west);
\draw [-] (top) -- (bottom);
\draw [-,line width=1mm,red,opacity=.5] (top) -- (bottom);
\draw [-,line width=1mm,red,opacity=.5] (1,1) -- (R);
\node [rectangle,draw,red,fill=white] (T) at (1,1.35) {\tiny{$m$}};
\node [rectangle,draw,red,fill=white] (B) at (1,.65) {\tiny{$m-1$}};
\end{tikzpicture}$
&
$\begin{tikzpicture}
\coordinate (left) at (0.25,1);
\coordinate (right) at (1.75,1);
\coordinate (top) at (1,1.75);
\coordinate (bottom) at (1,0.25);
\node [label=left:$x_j$] (L) at (left) {};
\node [label=right:$ $] (R) at (right) {};
\node []  at (bottom) {};
\node []  at (top) {};
\draw [-] (L.east) -- (R.west);
\draw [-] (top) -- (bottom);
\draw [-,line width=1mm,red,opacity=.5] (top) -- (bottom);
\draw [-,line width=1mm,red,opacity=.5] (L) -- (1,1);
\node [rectangle,draw,red,fill=white] (T) at (1,1.35) {\tiny{$m$}};
\node [rectangle,draw,red,fill=white] (B) at (1,.65) {\tiny{$m+1$}};
\end{tikzpicture}$
\\
\hline
$1$&$x_j$&$1-q^m$&$x_j$\\ \hline
\end{tabular}
\caption{Rectangular lattice Boltzmann weights - the boxed integer $m$ denotes the multiplicity of paths along the associated vertical edge.}\label{fig:six.vertex.model}
\end{figure}
This choice of Boltzmann weights gives the following explicit evaluations of the partition function:  

\begin{theorem}[Wheeler-Zinn-Justin \cite{Wheeler-Zinn-Justin}, Thm. 2] For any partition $\lambda$ and weights as in Figure~\ref{fig:six.vertex.model}, then  $$ \mathcal{Z}(\mathcal{A}_\lambda) = \left( x_1 \cdots x_r \right) P_\lambda(\boldsymbol{x};q) $$ where $P_\lambda$ denotes the Hall-Littlewood polynomial as in (\ref{eq:hlpoly}).
\end{theorem}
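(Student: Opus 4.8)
The plan is to realize $\mathcal{Z}(\mathcal{A}_\lambda)$ through its row-to-row transfer matrices, compute a single row explicitly, and then match the telescoped product against the branching rule for Hall-Littlewood polynomials. Before doing so I would record the elementary structural features forced by the weights of Figure~\ref{fig:six.vertex.model}: each admissible state contributes a single monomial in $\boldsymbol{x}$, and the power of $x_i$ equals the number of occupied leftward (horizontal) edges in row $i$. Since the left boundary is regular with exactly one outgoing path in each row, every state contains precisely one boundary-exit edge per row, each of weight $x_i$; these common factors collect into the prefactor $x_1\cdots x_r$. Thus it suffices to show that, after dividing by $x_1\cdots x_r$, the partition function equals $P_\lambda(\boldsymbol{x};q)$.

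The conceptual backbone is the Yang-Baxter equation satisfied by the weights of Figure~\ref{fig:six.vertex.model}. Running the standard ``train argument'' -- inserting an auxiliary six-vertex $R$-vertex with parameters $(x_i,x_{i+1})$ between rows $i$ and $i+1$ at the left boundary and dragging it across the full width of the lattice by repeated use of the $RTT$-relation -- one finds that the regular left and right boundaries force the $R$-vertex into frozen configurations contributing only scalars at each end, so that the net effect is to interchange the spectral parameters of the two rows. This shows $\mathcal{Z}(\mathcal{A}_\lambda)$ is symmetric in $\boldsymbol{x}$, equivalently that the row-transfer matrices commute; I would use this both as a consistency check and as the structural input that legitimizes the one-row-at-a-time evaluation below (and, later in the paper, the analogous evaluations in types $B/C$).

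The decisive computation is the explicit evaluation of one row as a transfer matrix. Cutting the lattice along the horizontal edges, I describe a state by the particle configurations $\nu$ (below) and $\mu$ (above) it induces on the vertical edges bounding row $i$, and I sum the product of vertex weights over all admissible horizontal-edge configurations in that row. Reading off Figure~\ref{fig:six.vertex.model} -- the factors $1-q^m$ at the vertices where an incoming horizontal path merges into a vertical column of new multiplicity $m$, and the factors $x_i$ at the pass-through and turn-to-the-left vertices -- I expect this single-row weight to be nonzero precisely when $\nu/\mu$ is a horizontal strip, and then to equal $\psi_{\nu/\mu}(q)\,x_i^{|\nu|-|\mu|}$ up to the one extra factor of $x_i$ from the guaranteed boundary exit. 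Granting this, the partition function telescopes as $\mathcal{Z}(\mathcal{A}_\lambda)=x_1\sum_{\mu}\psi_{\lambda/\mu}(q)\,x_1^{|\lambda|-|\mu|}\,\mathcal{Z}(\mathcal{A}_\mu)$, where $\mathcal{A}_\mu$ now denotes the $(r-1)$-row model in the variables $x_2,\ldots,x_r$. Induction on $r$ (base case $r=1$, where a single path gives $\mathcal{Z}=x_1^{\lambda_1+1}=x_1 P_{(\lambda_1)}$) together with Macdonald's branching rule $P_\lambda(x_1,\ldots,x_r;q)=\sum_\mu \psi_{\lambda/\mu}(q)\,x_1^{|\lambda|-|\mu|}\,P_\mu(x_2,\ldots,x_r;q)$ then yields $\mathcal{Z}(\mathcal{A}_\lambda)=(x_1\cdots x_r)P_\lambda(\boldsymbol{x};q)$.

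I expect the main obstacle to be the single-row transfer computation rather than the symmetry. Identifying the summed horizontal contributions with the exact coefficient $\psi_{\nu/\mu}(q)$ -- a product of factors of the form $1-q^{m}$ indexed by the part-multiplicities of $\mu$ at which the horizontal strip forces a merge -- requires careful bookkeeping of the multiplicities $m$ that enter the weights $1-q^m$ at successive columns, and of which columns support a merge (weight $1-q^m$) rather than a pass-through or exit (weight $x_i$). The horizontal-strip condition and the vanishing of non-admissible transitions must be read off from this same local analysis. By contrast, once the frozen behavior of the $R$-vertex at the regular boundaries is verified, the train argument is routine; it serves mainly to certify that the telescoped, manifestly one-variable-biased product is genuinely the symmetric function $P_\lambda(\boldsymbol{x};q)$.
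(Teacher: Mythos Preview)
Your branching-rule approach is correct and is a genuinely different route from the one the paper (following Wheeler--Zinn-Justin) actually takes. The paper does not supply a self-contained proof of this theorem: after stating it, the discussion establishes only the $S_r$-symmetry of $\mathcal{Z}(\mathcal{A}_\lambda)$ via the Yang--Baxter equation and the train argument, and the full identification is deferred to \cite{Wheeler-Zinn-Justin}. The method there, which this paper later adapts in Section~\ref{sec:genmethods} for the type $B/C$ models, is the $\mathbf{F}$-matrix technique: one conjugates the column transfer matrices by $\mathbf{F}$ to obtain the ``twisted'' column operators of Figure~\ref{fig:nonlocal.weights}, and then reads off a closed symmetrized sum directly (Theorem~\ref{thm:twisted.grid} is the relevant computation). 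Your approach instead peels off one row at a time and matches the row weight to the Hall--Littlewood branching coefficient $\psi_{\nu/\mu}(q)$.

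Two remarks on the comparison. First, the single-row computation you flag as the main obstacle is in fact cleaner than you suggest: for fixed column profiles $\nu$ (below) and $\mu$ (above), conservation at each vertex determines the horizontal occupancies uniquely as $l_c=\nu'_c-\mu'_c$, so there is no sum at all; admissibility forces $\nu/\mu$ to be a horizontal strip, the type-3 vertices occur exactly at those $c\geq 1$ with $\nu'_c=\mu'_c$ and $\nu'_{c+1}>\mu'_{c+1}$, and the product of their weights $1-q^{m_c(\mu)}$ is precisely Macdonald's $\psi_{\nu/\mu}(q)$. So the ``careful bookkeeping'' collapses to a one-line identification. Second, what each approach buys: your argument is elementary, needs no auxiliary $R$-matrix or $\mathbf{F}$-matrix machinery, and makes the symmetry of $\mathcal{Z}(\mathcal{A}_\lambda)$ a corollary rather than an input; the $\mathbf{F}$-matrix approach is heavier but is the one that generalizes to the bend geometries of Sections~\ref{sec:rank1}--\ref{sec:rank2}, where a row-by-row branching picture is not available.
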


To prove this, one must demonstrate that the partition function $\mathcal{Z}(\mathcal{A}_\lambda)$ satisfies certain symmetries.  In this case, we want to ensure that $\mathcal{Z}(\mathcal{A}_\lambda)$ is left unchanged after an exchange of variables $x_i \leftrightarrow x_{i+1}$. Recalling that the weights depend on $x_i$ in row $i$, this is equivalent to saying the partition function is symmetric upon switching the roles of the rows $i$ and $i+1$, a property known as ``commuting transfer matrices'' for the row-to-row transfer matrix. 

Following Baxter \cite{Baxter}, one may do this by taking advantage of certain ``local symmetries" enjoyed by these Boltzmann weights. Consider an auxiliary family of Boltzmann weights for the six so-called ``$R$-vertices,'' or $R$-matrix weights depicted in Figure \ref{fig:R.weights} below. Their graphical depiction reflects the fact that each such vertex is the intersection of two horizontal edges, and hence each adjacent edge may admit at most one path. Their Boltzmann weights are a scaled version of the entries in the trigonometric $R$-matrix of the six-vertex model, now depending on the pair of parameters $x_j$ and $x_k$ on the respective rows. In addition to providing the explicit description of each weight in the bottom row of the table, the middle row gives an abstract name for each vertex, to be used in later computations.

\begin{figure}[!ht]
\begin{tabular}
{|c|c|c|c|c|c|}
\hline
$\begin{tikzpicture}[scale=0.5]
\node [label=left:$x_j$] at (0,1) {};
\node [label=left:$x_k$] at (0,0) {};
\node [label=right:$ $] at (2,1) {};
\node [label=right:$ $] at (2,0) {};
\node [label=above:$ $] at (0,1) {};
\node [label=below:$ $] at (0,0) {};
\draw [-] (0,1) .. controls (1,1) and (1,0) .. (2,0);
\draw [-] (0,0) .. controls (1,0) and (1,1) .. (2,1);
\draw [-,line width=1mm,red,opacity=0.5] (0,1) .. controls (1,1) and (1,0) .. (2,0);
\draw [-,line width=1mm,red,opacity=0.5] (0,0) .. controls (1,0) and (1,1) .. (2,1);
\end{tikzpicture}$
&
$\begin{tikzpicture}[scale=0.5]
\node [label=left:$x_j$] at (0,1) {};
\node [label=left:$x_k$] at (0,0) {};
\node [label=right:$ $] at (2,1) {};
\node [label=right:$ $] at (2,0) {};
\node [label=above:$ $] at (0,1) {};
\node [label=below:$ $] at (0,0) {};
\draw [-] (0,1) .. controls (1,1) and (1,0) .. (2,0);
\draw [-] (0,0) .. controls (1,0) and (1,1) .. (2,1);
\end{tikzpicture}$
&
$\begin{tikzpicture}[scale=0.5]
\node [label=left:$x_j$] at (0,1) {};
\node [label=left:$x_k$] at (0,0) {};
\node [label=right:$ $] at (2,1) {};
\node [label=right:$ $] at (2,0) {};
\node [label=above:$ $] at (0,1) {};
\node [label=below:$ $] at (0,0) {};
\draw [-] (0,1) .. controls (1,1) and (1,0) .. (2,0);
\draw [-] (0,0) .. controls (1,0) and (1,1) .. (2,1);
\draw [-,line width=1mm,red,opacity=0.5] (0,1) .. controls (1,1) and (1,0) .. (2,0);
\end{tikzpicture}$
&
$\begin{tikzpicture}[scale=0.5]
\node [label=left:$x_j$] at (0,1) {};
\node [label=left:$x_k$] at (0,0) {};
\node [label=right:$ $] at (2,1) {};
\node [label=right:$ $] at (2,0) {};
\node [label=above:$ $] at (0,1) {};
\node [label=below:$ $] at (0,0) {};
\draw [-] (0,1) .. controls (1,1) and (1,0) .. (2,0);
\draw [-] (0,0) .. controls (1,0) and (1,1) .. (2,1);
\draw [-,line width=1mm,red,opacity=0.5] (0,0) .. controls (1,0) and (1,1) .. (2,1);
\end{tikzpicture}$
&
$\begin{tikzpicture}[scale=0.5]
\node [label=left:$x_j$] at (0,1) {};
\node [label=left:$x_k$] at (0,0) {};
\node [label=right:$ $] at (2,1) {};
\node [label=right:$ $] at (2,0) {};
\node [label=above:$ $] at (0,1) {};
\node [label=below:$ $] at (0,0) {};
\draw [-] (0,1) .. controls (1,1) and (1,0) .. (2,0);
\draw [-] (0,0) .. controls (1,0) and (1,1) .. (2,1);
\draw [-,line width=1mm,red,opacity=0.5] (0,1) .. controls (1,1) and (1,0) .. (1,0.5);
\draw [-,line width=1mm,red,opacity=0.5] (1,0.5) .. controls (1,0) and (1,1) .. (2,1);
\end{tikzpicture}$
&
$\begin{tikzpicture}[scale=0.5]
\node [label=left:$x_j$] at (0,1) {};
\node [label=left:$x_k$] at (0,0) {};
\node [label=right:$ $] at (2,1) {};
\node [label=right:$ $] at (2,0) {};
\node [label=above:$ $] at (0,1) {};
\node [label=below:$ $] at (0,0) {};
\draw [-] (0,1) .. controls (1,1) and (1,0) .. (2,0);
\draw [-] (0,0) .. controls (1,0) and (1,1) .. (2,1);
\draw [-,line width=1mm,red,opacity=0.5] (1,0.5) .. controls (1,1) and (1,0) .. (2,0);
\draw [-,line width=1mm,red,opacity=0.5] (0,0) .. controls (1,0) and (1,1) .. (1,0.5);
\end{tikzpicture}$
\\ \hline
$a_2(k,j)$ & $a_1(k,j)$ & $b_2(k,j)$ & $b_1(k,j)$ & $c_2(k,j)$ & $c_1(k,j)$
\\ \hline
$1$ & $1$ & $\frac{q(x_k-x_j)}{x_k-qx_j}$ & $\frac{x_k-x_j}{x_k-qx_j}$ & $\frac{(1-q)x_j}{x_k-qx_j}$&$\frac{(1-q)x_k}{x_k-qx_j}$ \\ 
\hline
\end{tabular}
\caption{$R$-matrix weights from the trigonometric six-vertex model}\label{fig:R.weights}
\end{figure}

\begin{proposition}[Yang-Baxter equation, RTT relation]\label{le:YBE}
For any choice of edge labels $\alpha,\beta,\gamma,\delta,\epsilon,\phi$ and Boltzmann weights as in Figures~\ref{fig:six.vertex.model} and~\ref{fig:R.weights}, one has the equality of partition functions
\begin{equation*}\label{eq:YBE}
\mathcal{Z}
\left(
\begin{minipage}{1.1in}
\begin{tikzpicture}
\node [label=left:$x_j$] at (0,1) {};
\node [label=left:$x_k$] at (0,0) {};
\node [circle,draw,scale=.6] at (1.5,1.75) {$\phi$};
\node [circle,draw,scale=0.6] at (1.5,-.75) {$\gamma$};
\node [circle,draw,scale=.6] at (.25,1) {$\alpha$};
\node [circle,draw,scale=0.55] at (.25,0) {$\beta$};
\node [circle,draw,scale=.6] at (2.25,1) {$\epsilon$};
\node [circle,draw,scale=0.6] at (2.25,0) {$\delta$};

\draw [-] (1.5,1.5) -- (1.5,-.5);
\draw [-] (1.5,1) -- (2,1);
\draw [-] (1.5,0) -- (2,0);

\draw [-] (1.5,0) .. controls (1,0) and (1,1) .. (.5,1);
\draw [-] (1.5,1) .. controls (1,1) and (1,0) .. (.5,0);

\end{tikzpicture}
\end{minipage}
\quad \right) = 
\mathcal{Z}
\left(
\begin{minipage}{1.1in}
\begin{tikzpicture}
\node [label=left:$x_j$] at (0,1) {};
\node [label=left:$x_k$] at (0,0) {};
\node [circle,draw,scale=.6] at (1,1.75) {$\phi$};
\node [circle,draw,scale=0.6] at (1,-.75) {$\gamma$};
\node [circle,draw,scale=.6] at (.25,1) {$\alpha$};
\node [circle,draw,scale=0.55] at (.25,0) {$\beta$};
\node [circle,draw,scale=.6] at (2.25,1) {$\epsilon$};
\node [circle,draw,scale=0.6] at (2.25,0) {$\delta$};

\draw [-] (1,1.5) -- (1,-.5);
\draw [-] (.5,1) -- (1,1);
\draw [-] (.5,0) -- (1,0);

\draw [-] (2,0) .. controls (1.5,0) and (1.5,1) .. (1,1);
\draw [-] (2,1) .. controls (1.5,1) and (1.5,0) .. (1,0);
\end{tikzpicture}
\end{minipage}
\quad \right).
\end{equation*}
\end{proposition}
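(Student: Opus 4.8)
The plan is to prove this by the standard direct verification used for Yang--Baxter / RTT relations. I fix the six boundary labels $\alpha,\beta,\gamma,\delta,\epsilon,\phi$ and regard each side as a partition function on a small fragment of lattice: I sum the product of Boltzmann weights (from Figures~\ref{fig:six.vertex.model} and~\ref{fig:R.weights}) over all admissible assignments of the \emph{interior} edges, and show the two resulting quantities agree for every choice of boundary data. Since the external edges are held fixed, this reduces the proposition to a family of algebraic identities in $x_j$, $x_k$, and $q$, one for each boundary configuration.

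First I would record the conservation law visible in the weight tables: a tetravalent vertex in Figure~\ref{fig:six.vertex.model} has nonzero weight only when (multiplicity below) $+$ (horizontal path in) $=$ (multiplicity above) $+$ (horizontal path out), and the $R$-vertices of Figure~\ref{fig:R.weights} likewise preserve path number through the crossing. Summing this local constraint over the three vertices on either diagram shows that both sides vanish unless the boundary data is \emph{balanced} (the incoming and outgoing path counts match), which eliminates all but finitely many boundary patterns. Next I would enumerate the interior configurations. On the left-hand diagram the interior consists of the two horizontal edges joining the $R$-vertex to the two tetravalent vertices, each carrying $0$ or $1$, together with the single vertical edge between the two tetravalent vertices; on the right-hand diagram the interior edges are arranged symmetrically. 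Because the horizontal edges carry at most one path, conservation pins the interior vertical multiplicity and the horizontal occupations down to a handful of values once the boundary is fixed, so each partition function is a finite sum. I would then organize the casework according to the horizontal boundary pattern $(\alpha,\beta,\epsilon,\delta)\in\{0,1\}^4$, which after imposing balance leaves only a few genuinely distinct cases to check.

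The hard part will be the bosonic bookkeeping. Unlike the fermionic model, the interior vertical edge may carry an arbitrary multiplicity $m$ determined by $\gamma$ (which is unbounded), so in each case the identity is not a scalar equality but a rational-function identity in $x_j,x_k$ whose coefficients involve $q^m$ through the weight $1-q^m$ of Figure~\ref{fig:six.vertex.model}. I would clear the common denominator $x_k-qx_j$ arising from the $R$-weights $b_i,c_i$ and verify that the two sides agree as polynomials in $x_j,x_k$ for every fixed $m$. The nontrivial cases are exactly those in which a path turns at a tetravalent vertex, so that a factor $1-q^m$ multiplies the mixing weights $b_i(k,j),c_i(k,j)$; there the verification collapses to a short algebraic identity that must be checked to hold uniformly in $m$. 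Since these weights are precisely those of Equation~(35) of \cite{Wheeler-Zinn-Justin}, this computation reproduces their Yang--Baxter relation, and one may alternatively invoke that reference (or the underlying $U_q(\hat{sl}_2)$ module structure) to obtain the identity without carrying out the case analysis by hand.
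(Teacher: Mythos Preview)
Your proposal is correct and takes essentially the same approach as the paper: the paper simply notes that the identity ``may be checked explicitly, as the weights in Figure~\ref{fig:six.vertex.model} are uniformly expressed in terms of the number of particles $m$'' and cites Equation~(14) of \cite{Wheeler-Zinn-Justin} (and \cite{BogBull}) for the verification. Your write-up fleshes out exactly this direct case-by-case check and invokes the same reference, so there is no substantive difference.
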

\noindent This may be checked explicitly, as the weights in Figure~\ref{fig:six.vertex.model} are uniformly expressed in terms of the number of particles $m$. Such an identity was asserted in Equation~3.3 of~\cite{BogBull} and in Equation~(14) of~\cite{Wheeler-Zinn-Justin}. 

In addition to the previous result, it will be useful to know the following result; compare Equation (11) in~\cite{Wheeler-Zinn-Justin}.  Its proof is a direct computation and left to the reader.
\begin{proposition}[Unitarity relation]\label{thm:unitarity}
For any choice of edge labels $\alpha$ and $\beta$ and using Boltzmann weights as in Figure \ref{fig:R.weights}, one has
\[
\mathcal{Z}\left(
\begin{tikzpicture}[scale=1,baseline={([yshift=-\the\dimexpr\fontdimen22\textfont2\relax]current bounding box.center)}]
\node [label=left:$x_j$] at (-.25,1) {};
\node [label=left:$x_k$] at (-.25,0) {};
\node [circle,draw,scale=.6] (Lalpha) at (0,1) {$\alpha$};
\node [circle,draw,scale=0.55] (Lbeta) at (0,0) {$\beta$};
\node [circle,draw,scale=.6] (Ralpha) at (2,1) {$\alpha$};
\node [circle,draw,scale=0.55] (Rbeta) at (2,0) {$\beta$};

\draw [-] (Lalpha) .. controls (.5,1) and (.5,0) .. (1,0) .. controls (1.5,0) and (1.5,1) .. (Ralpha);
\draw [-] (Lbeta) .. controls (.5,0) and (.5,1) .. (1,1) .. controls (1.5,1) and (1.5,0) .. (Rbeta);
\end{tikzpicture}
\right)
=1.
\]
\end{proposition}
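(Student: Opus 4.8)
The plan is to evaluate the left-hand partition function directly, summing over all admissible configurations of the two internal horizontal edges shared by the two $R$-vertices, since the four external edges are pinned to $\alpha$ (top) and $\beta$ (bottom) by hypothesis. The first reduction I would make is to observe that every vertex in Figure~\ref{fig:R.weights} conserves the number of paths from left to right; hence the two internal edges must together carry occupation $\alpha+\beta$, and I can organize the whole computation according to the value of $\alpha+\beta \in \{0,1,2\}$. This immediately cuts the nominal sixteen internal configurations down to at most two in each case.

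Before running the cases, I would pin down the one genuinely delicate point of bookkeeping: which of the six weights of Figure~\ref{fig:R.weights} is attached to each of the two vertices. Reading the crossing in the diagram, the left vertex carries the spectral data in the order corresponding to $(x_k, x_j)$, so its weight is one of $a_i(k,j), b_i(k,j), c_i(k,j)$; but along the internal edges the two strands have interchanged heights, so the right vertex carries the opposite order and its weight is the corresponding $\ast(j,k)$. Getting this parameter swap correct is what makes the sum come out right, and I expect it to be the main obstacle — not because it is hard, but because it is the only step where a wrong reading of the picture produces a wrong answer.

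With the weights assigned, the cases $\alpha+\beta = 0$ and $\alpha+\beta = 2$ are immediate: there is a unique internal configuration, both vertices are of type $a_1$ (resp.\ $a_2$), and the product of weights is $1\cdot 1 = 1$. The case $\alpha+\beta=1$ is the only one requiring computation. There exactly two internal configurations contribute, one pairing $c$-type weights and one pairing $b$-type weights, and (say for $\alpha=1,\beta=0$) their contributions sum to
\[
\frac{(1-q)^2 x_j x_k}{(x_k - q x_j)(x_j - q x_k)} - \frac{q(x_k - x_j)^2}{(x_k - q x_j)(x_j - q x_k)}.
\]
The computation then collapses via the identity
\[
(1-q)^2 x_j x_k - q(x_k - x_j)^2 = (x_k - q x_j)(x_j - q x_k),
\]
both sides equaling $(1+q^2)x_j x_k - q(x_j^2 + x_k^2)$, so that the sum equals $1$. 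The same pairing of $b$- and $c$-weights handles the reflected boundary $\alpha=0,\beta=1$ identically. Once the swapped-parameter weights are correctly attached to the second vertex, the remaining work is precisely this short algebraic simplification, completing the verification.
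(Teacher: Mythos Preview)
Your proof is correct and matches the paper's approach: the paper states that ``its proof is a direct computation and left to the reader,'' and your case-by-case evaluation based on the conserved occupation $\alpha+\beta$, together with the swapped spectral parameters at the second vertex, is exactly the intended verification.
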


We say that a type $A$ model is ``solvable'' if its weights possess a solution $R$ to the Yang-Baxter equation above. It implies that the row-to-row transfer matrices commute, via the now familiar ``train argument,'' and hence the partition function $\mathcal{Z}(\mathcal{A}_\lambda)$ is symmetric under the exchange of variables $x_i \leftrightarrow x_{i+1}$. A graphical depiction of this argument is shown in Figure~\ref{fig:train.type.A}, which highlights both the repeated use of the Yang-Baxter equation and that the symmetry of $\mathcal{Z}$ hinges on the fact that the $R$ matrix weights $a_1(k,k+1)=a_2(k,k+1)$ for all $k$. Note that the figure doesn't attempt to depict ``generic'' boundary conditions (in-coming paths) according to $\lambda$ along the bottom boundary.

\begin{figure}[h]
\begin{align*}
&\mathcal{Z}\left(\ 
\begin{minipage}{1.15in}
\begin{centering}
\begin{tikzpicture}[scale=.5, baseline=.5ex, every node/.style={transform shape}]
\node [label=right:\normalsize{$x_r$}] at (4,2) {};
\node [label=right:\normalsize{$\vdots$}] at (4,1) {};
\node [label=right:\normalsize{$x_2$}] at (4,0) {};
\node [label=right:\normalsize{$x_1$}] at (4,-1) {};
\node [label=below:\normalsize{$0$}] at (-0.5,-1.5) {};
\node [label=below:\normalsize{$1$}] at (0.5,-1.5) {};
\node [label=below:\normalsize{$\cdots$}] at (1.5,-1.5) {};
\node [label=below:\normalsize{$\lambda_1-1$}] at (2.5,-1.5) {};
\node [label=below:\normalsize{$\lambda_1$}] at (3.5,-1.5) {};
\draw [-] (-1,2) -- (1,2);
\draw [densely dotted] (1,2) -- (2,2);
\draw [-] (2,2) -- (4,2);
\draw [-] (-1,0) -- (1,0);
\draw [densely dotted] (1,0) -- (2,0);
\draw [-] (2,0) -- (4,0);
\draw [-] (-1,-1) -- (1,-1);
\draw [densely dotted] (1,-1) -- (2,-1);
\draw [-] (2,-1) -- (4,-1);
\draw [-] (-0.5,2.5) -- (-.5,1.5);
\draw[densely dotted] (-.5,1.5) -- (-.5,0.5);
\draw[-] (-.5,0.5) -- (-0.5,-1.5);
\draw [-] (0.5,2.5) -- (.5,1.5);
\draw[densely dotted] (.5,1.5) -- (.5,0.5);
\draw[-] (.5,0.5) -- (0.5,-1.5);
\draw [-] (2.5,2.5) -- (2.5,1.5);
\draw[densely dotted] (2.5,1.5) -- (2.5,0.5);
\draw[-] (2.5,0.5) -- (2.5,-1.5);
\draw [-] (3.5,2.5) -- (3.5,1.5);
\draw[densely dotted] (3.5,1.5) -- (3.5,0.5);
\draw[-] (3.5,0.5) -- (3.5,-1.5);
\filldraw[white] (-1,2) circle (4pt);
\filldraw[white] (-1,0) circle (4pt);
\filldraw[white] (-1,-1) circle (4pt);
\filldraw[red,opacity=.5] (-1,2) circle (4pt);
\filldraw[red,opacity=.5] (-1,0) circle (4pt);
\filldraw[red,opacity=.5] (-1,-1) circle (4pt);
\end{tikzpicture}
\end{centering}
\end{minipage}
\right)
\textrm{wt}\left(\ 
\begin{minipage}{.55in}
\begin{centering}
\begin{tikzpicture}[scale=.5,baseline=1.5ex, every node/.style={transform shape}]
\node [label=right:\normalsize{$x_1$}] at (2,1) {};
\node [label=right:\normalsize{$x_2$}] at (2,0) {};
\draw [-] (0,1) .. controls (1,1) and (1,0) .. (2,0);
\draw [-] (0,0) .. controls (1,0) and (1,1) .. (2,1);
\end{tikzpicture}
\end{centering}
\end{minipage}
\right)\!=\mathcal{Z}\left(
\begin{minipage}{1.3in}
\begin{centering}
\begin{tikzpicture}[scale=.5, baseline=.5ex, every node/.style={transform shape}]
\node [label=right:\normalsize{$x_r$}] at (4,2) {};
\node [label=right:\normalsize{$\vdots$}] at (4,1) {};
\node [label=right:\normalsize{$x_1$}] at (5,0) {};
\node [label=right:\normalsize{$x_2$}] at (5,-1) {};
\node [label=below:\normalsize{$0$}] at (-0.5,-1.5) {};
\node [label=below:\normalsize{$1$}] at (0.5,-1.5) {};
\node [label=below:\normalsize{$\cdots$}] at (1.5,-1.5) {};
\node [label=below:\normalsize{$\lambda_1-1$}] at (2.5,-1.5) {};
\node [label=below:\normalsize{$\lambda_1$}] at (3.5,-1.5) {};
\draw [-] (-1,2) -- (1,2);
\draw [densely dotted] (1,2) -- (2,2);
\draw [-] (2,2) -- (4,2);
\draw [-] (-1,0) -- (1,0);
\draw [densely dotted] (1,0) -- (2,0);
\draw [-] (2,0) -- (4,0);
\draw [-] (-1,-1) -- (1,-1);
\draw [densely dotted] (1,-1) -- (2,-1);
\draw [-] (2,-1) -- (4,-1);
\draw [-] (4,0) .. controls (4.5,0) and (4.5,-1) .. (5,-1);
\draw [-] (4,-1) .. controls (4.5,-1) and (4.5,0) .. (5,0);
\draw [-] (-0.5,2.5) -- (-.5,1.5);
\draw[densely dotted] (-.5,1.5) -- (-.5,0.5);
\draw[-] (-.5,0.5) -- (-0.5,-1.5);
\draw [-] (0.5,2.5) -- (.5,1.5);
\draw[densely dotted] (.5,1.5) -- (.5,0.5);
\draw[-] (.5,0.5) -- (0.5,-1.5);
\draw [-] (2.5,2.5) -- (2.5,1.5);
\draw[densely dotted] (2.5,1.5) -- (2.5,0.5);
\draw[-] (2.5,0.5) -- (2.5,-1.5);
\draw [-] (3.5,2.5) -- (3.5,1.5);
\draw[densely dotted] (3.5,1.5) -- (3.5,0.5);
\draw[-] (3.5,0.5) -- (3.5,-1.5);
\filldraw[white] (-1,2) circle (4pt);
\filldraw[white] (-1,0) circle (4pt);
\filldraw[white] (-1,-1) circle (4pt);
\filldraw[red,opacity=.5] (-1,2) circle (4pt);
\filldraw[red,opacity=.5] (-1,0) circle (4pt);
\filldraw[red,opacity=.5] (-1,-1) circle (4pt);
\end{tikzpicture}
\end{centering}
\end{minipage}
\right)\!=\mathcal{Z}\left(
\begin{minipage}{1.15in}
\begin{centering}
\begin{tikzpicture}[scale=.5, baseline=.5ex, every node/.style={transform shape}]
\node [label=right:\normalsize{$x_r$}] at (4,2) {};
\node [label=right:\normalsize{$\vdots$}] at (4,1) {};
\node [label=right:\normalsize{$x_1$}] at (4,0) {};
\node [label=right:\normalsize{$x_2$}] at (4,-1) {};
\node [label=below:\normalsize{$0$}] at (-0.5,-1.5) {};
\node [label=below:\normalsize{$1$}] at (0.5,-1.5) {};
\node [label=below:\normalsize{$\cdots$}] at (1.5,-1.5) {};
\node [label=below:\normalsize{$\lambda_1-1$}] at (2.5,-1.5) {};
\node [label=below:\normalsize{$\lambda_1$}] at (3.5,-1.5) {};
\draw [-] (-1,2) -- (4,2);
\draw [-] (-1,0) -- (2.5,0);
\draw [-] (-1,-1) -- (2.5,-1);
\draw [-] (-1,2) -- ++(.5,0);
\draw [-] (-1,0) -- ++(.5,0);
\draw [-] (-1,-1) -- ++(.5,0);
\draw [-] (2.5,0) .. controls (3,0) and (3,-1) .. (3.5,-1);
\draw [-] (2.5,-1) .. controls (3,-1) and (3,0) .. (3.5,0);
\draw [-] (3.5,0) -- (4,0);
\draw [-] (3.5,-1) -- (4,-1);
\draw [-] (-0.5,2.5) -- (-0.5,-1.5);
\draw [-] (0.5,2.5) -- (0.5,-1.5);
\draw [-] (2.5,2.5) -- (2.5,-1.5);
\draw [-] (3.5,2.5) -- (3.5,-1.5);
\filldraw[white] (-1,2) circle (4pt);
\filldraw[white] (-1,0) circle (4pt);
\filldraw[white] (-1,-1) circle (4pt);
\filldraw[red,opacity=.5] (-1,2) circle (4pt);
\filldraw[red,opacity=.5] (-1,0) circle (4pt);
\filldraw[red,opacity=.5] (-1,-1) circle (4pt);
\end{tikzpicture}
\end{centering}
\end{minipage}
\right)\!=\!\cdots
\\[5pt]&=
\mathcal{Z}\left(
\begin{minipage}{1.15in}
\begin{centering}
\begin{tikzpicture}[scale=.5, baseline=.5ex, every node/.style={transform shape}]
\node [label=right:\normalsize{$x_r$}] at (4,2) {};
\node [label=right:\normalsize{$\vdots$}] at (4,1) {};
\node [label=right:\normalsize{$x_1$}] at (4,0) {};
\node [label=right:\normalsize{$x_2$}] at (4,-1) {};
\node [label=below:\normalsize{$0$}] at (-0.5,-1.5) {};
\node [label=below:\normalsize{$1$}] at (0.5,-1.5) {};
\node [label=below:\normalsize{$\cdots$}] at (1.5,-1.5) {};
\node [label=below:\normalsize{$\lambda_1-1$}] at (2.5,-1.5) {};
\node [label=below:\normalsize{$\lambda_1$}] at (3.5,-1.5) {};
\draw [-] (-1,2) -- (4,2);
\draw [-] (-1,0) -- (-0.5,0);
\draw [-] (-1,-1) -- (-0.5,-1);
\draw [-] (-1,2) -- ++(.5,0);
\draw [-] (-1,0) -- ++(.5,0);
\draw [-] (-1,-1) -- ++(.5,0);
\draw [-] (-0.5,0) .. controls (0,0) and (0,-1) .. (0.5,-1);
\draw [-] (-0.5,-1) .. controls (0,-1) and (0,0) .. (0.5,0);
\draw [-] (0.5,0) -- (4,0);
\draw [-] (0.5,-1) -- (4,-1);
\draw [-] (-0.5,2.5) -- (-0.5,-1.5);
\draw [-] (0.5,2.5) -- (0.5,-1.5);
\draw [-] (2.5,2.5) -- (2.5,-1.5);
\draw [-] (3.5,2.5) -- (3.5,-1.5);
\filldraw[white] (-1,2) circle (4pt);
\filldraw[white] (-1,0) circle (4pt);
\filldraw[white] (-1,-1) circle (4pt);
\filldraw[red,opacity=.5] (-1,2) circle (4pt);
\filldraw[red,opacity=.5] (-1,0) circle (4pt);
\filldraw[red,opacity=.5] (-1,-1) circle (4pt);
\end{tikzpicture}
\end{centering}
\end{minipage}
\right)
=
\mathcal{Z}\left(\ 
\begin{minipage}{1.3in}
\begin{centering}
\begin{tikzpicture}[scale=.5, baseline=.5ex, every node/.style={transform shape}]
\node [label=right:\normalsize{$x_r$}] at (4,2) {};
\node [label=right:\normalsize{$\vdots$}] at (4,1) {};
\node [label=right:\normalsize{$x_1$}] at (4,0) {};
\node [label=right:\normalsize{$x_2$}] at (4,-1) {};
\node [label=below:\normalsize{$0$}] at (-0.5,-1.5) {};
\node [label=below:\normalsize{$1$}] at (0.5,-1.5) {};
\node [label=below:\normalsize{$\cdots$}] at (1.5,-1.5) {};
\node [label=below:\normalsize{$\lambda_1-1$}] at (2.5,-1.5) {};
\node [label=below:\normalsize{$\lambda_1$}] at (3.5,-1.5) {};
\draw [-] (-2,0) .. controls (-1.5,0) and (-1.5,-1) .. (-1,-1);
\draw [-] (-2,-1) .. controls (-1.5,-1) and (-1.5,0) .. (-1,0);
\draw [-,line width=1mm,red,opacity=.5] (-2,0) .. controls (-1.5,0) and (-1.5,-1) .. (-1,-1);
\draw [-,line width=1mm,red,opacity=.5] (-2,-1) .. controls (-1.5,-1) and (-1.5,0) .. (-1,0);
\draw [-] (-1,2) -- (4,2);
\draw [-] (-1,0) -- (4,0);
\draw [-] (-1,-1) -- (4,-1);
\draw [-] (-1,2) -- ++(.5,0);
\draw [-,line width=1mm,red,opacity=.5] (-1,0) -- ++(.5,0);
\draw [-,line width=1mm,red,opacity=.5] (-1,-1) -- ++(.5,0);
\draw [-] (-0.5,2.5) -- (-0.5,-1.5);
\draw [-] (0.5,2.5) -- (0.5,-1.5);
\draw [-] (2.5,2.5) -- (2.5,-1.5);
\draw [-] (3.5,2.5) -- (3.5,-1.5);
\filldraw[white] (-1,2) circle (4pt);
\filldraw[red,opacity=.5] (-1,2) circle (4pt);
\end{tikzpicture}
\end{centering}
\end{minipage}
\right)
=
\textrm{wt}\left(\ 
\begin{minipage}{.55in}
\begin{centering}
\begin{tikzpicture}[scale=.5, baseline=1.5ex, every node/.style={transform shape}]
\node [label=right:\normalsize{$x_1$}] at (2,1) {};
\node [label=right:\normalsize{$x_2$}] at (2,0) {};
\draw [-,line width=1mm,red,opacity=.5] (0,1) .. controls (1,1) and (1,0) .. (2,0);
\draw [-,line width=1mm,red,opacity=.5] (0,0) .. controls (1,0) and (1,1) .. (2,1);
\draw [-] (0,1) .. controls (1,1) and (1,0) .. (2,0);
\draw [-] (0,0) .. controls (1,0) and (1,1) .. (2,1);
\end{tikzpicture}
\end{centering}
\end{minipage}
\right)\mathcal{Z}\left(\ 
\begin{minipage}{1.15in}
\begin{centering}
\begin{tikzpicture}[scale=.5, baseline=.5ex, every node/.style={transform shape}]
\node [label=right:\normalsize{$x_r$}] at (4,2) {};
\node [label=right:\normalsize{$\vdots$}] at (4,1) {};
\node [label=right:\normalsize{$x_1$}] at (4,0) {};
\node [label=right:\normalsize{$x_2$}] at (4,-1) {};
\node [label=below:\normalsize{$0$}] at (-0.5,-1.5) {};
\node [label=below:\normalsize{$1$}] at (0.5,-1.5) {};
\node [label=below:\normalsize{$\cdots$}] at (1.5,-1.5) {};
\node [label=below:\normalsize{$\lambda_1-1$}] at (2.5,-1.5) {};
\node [label=below:\normalsize{$\lambda_1$}] at (3.5,-1.5) {};
\draw [-] (-1,2) -- (4,2);
\draw [-] (-1,0) -- (4,0);
\draw [-] (-1,-1) -- (4,-1);
\draw [-] (-1,2) -- ++(.5,0);
\draw [-] (-1,0) -- ++(.5,0);
\draw [-] (-1,-1) -- ++(.5,0);
\draw [-] (-0.5,2.5) -- (-0.5,-1.5);
\draw [-] (0.5,2.5) -- (0.5,-1.5);
\draw [-] (2.5,2.5) -- (2.5,-1.5);
\draw [-] (3.5,2.5) -- (3.5,-1.5);
\filldraw[white] (-1,2) circle (4pt);
\filldraw[white] (-1,0) circle (4pt);
\filldraw[white] (-1,-1) circle (4pt);
\filldraw[red,opacity=.5] (-1,2) circle (4pt);
\filldraw[red,opacity=.5] (-1,0) circle (4pt);
\filldraw[red,opacity=.5] (-1,-1) circle (4pt);
\end{tikzpicture}
\end{centering}
\end{minipage}
\right)
\end{align*}
\caption{The Train Argument}
\label{fig:train.type.A}
\end{figure}

In the above figure, and throughout the remainder of the paper, we write ``\textrm{wt}(S)'' when evaluating the Boltzmann weight of a single admissible state or lattice configuration and we write $\mathcal{Z}(\mathcal{S})$ when evaluating the partition function of the set of admissible states $\mathcal{S}$. 

\section{Models for Cartan Type $B$ and $C$\label{sec:type.B}}


Our results pertain to solvable lattice models for root systems of Cartan type $B$ and $C$, so named because solvability implies an action of the Weyl group of type $B/C$ on the partition function. These models are similar to the Type $A$ models presented in Section \ref{sec:type.A} in many ways. Our Type $B/C$ model has vertices in a rectangular lattice dictated by a partition $\lambda = (\lambda_1 \geqslant \lambda_2 \geqslant \cdots \geqslant \lambda_r)$ -- with $\lambda_1+1$ columns numbered in ascending order from left to right, and with $2r$ rows whose associated spectral parameters are labeled in pairs $x_i^{-1}=: \overline{x}_i$ and $x_i$ in ascending order from bottom to top. The lattice is then augmented with a set of $r$ bivalent vertices connecting the rows corresponding to the spectral parameters $x_i$ and $\overline{x}_i$ for each $i$, forming u-turn bends along the left-hand boundary of the model.

\begin{figure}[h]
\begin{tikzpicture}[scale=.75, baseline=.5ex]
\node [label=right:$x_2$] at (4,2) {};
\node [label=right:$\overline{x}_2$] at (4,1) {};
\node [label=right:$x_1$] at (4,0) {};
\node [label=right:$\overline{x}_1$] at (4,-1) {};
\node [label=right:0] at (0,-2) {};
\node [label=right:1] at (1,-2) {};
\node [label=right:2] at (2,-2) {};
\node [label=right:3] at (3,-2) {};
\node [label=left:{$\mathbf{2}$}] at (-.75,1.5) {};
\node [label=left:{$\mathbf{1}$}] at (-.75,-.5) {};
\coordinate (top) at (-0.5,1.5);
\coordinate (bot) at (-0.5,-.5);

\draw [-]
	(top) arc (180:90:.5);
\draw [-]
	(top) arc (180:270:.5);

\draw [-]
	(bot) arc (180:90:.5);
\draw [-]
	(bot) arc (180:270:.5);

\draw [-] (0,2) -- (4,2);
\draw [-] (0,1) -- (4,1);
\draw [-] (0,0) -- (4,0);
\draw [-] (0,-1) -- (4,-1);

\draw [-] (0.5,2.5) -- (0.5,-1.5);
\draw [-] (1.5,2.5) -- (1.5,-1.5);
\draw [-] (2.5,2.5) -- (2.5,-1.5);
\draw [-] (3.5,2.5) -- (3.5,-1.5);

\draw [-,line width=1mm,red,opacity=.5]
    (top) arc (180:270:.5);
\draw [-,line width=1mm,red,opacity=.5, rounded corners=1pt] (0,1) -- (3.5,1) -- (3.5, -1.5);

\draw [-,line width=1mm,red,opacity=.5]
    (bot) arc (180:90:.5);
\draw [-,line width=1mm,red,opacity=.5,rounded corners=1pt] (0,0) -- (1.5,0) -- (1.5,-1) -- (2.5, -1) -- (2.5, -1.5);

\node [circle,draw=black, fill=black, inner sep=0pt,minimum size=5pt] at (top)  {};
\node [circle,draw=black, fill=black, inner sep=0pt,minimum size=5pt] at (bot) {};
\end{tikzpicture}
\caption{A sample state for the type $B/C$ model when $\lambda = (3,2)$.}\label{fig:rank.2.example}
\end{figure}

An admissible state in the rank $r$ type $B/C$ model is a configuration of $r$ paths, with the $i$th path starting from the $\lambda_i$-th edge along the bottom and moving upward and leftward through the lattice, ending at a bivalent vertex along the u-turn bend. An example of an admissible state in rank 2 is shown in Figure~\ref{fig:rank.2.example}. The set of all admissible configurations for the lattice with boundary conditions determined by $\lambda$ will be denoted $\mathcal{B}_\lambda$. It is natural to use these admissible states to represent special functions for type $B/C$ root systems. Indeed, there is a natural bijection between admissible states in $\mathcal{B}_\lambda$ and certain arrays of interleaving integers generalizing Gelfand-Tsetlin patterns in that they arise naturally from branching rules for symplectic and odd orthogonal groups (see~\cite{Proctor} for details and references).

There is further motivation for these geometries from quantum groups, where Boltzmann weights of tetravalent vertices represent matrix coefficients in $R$-matrices in $\textrm{End}(V(x) \otimes W)$ for a pair of quantum group modules $V(x),W$ representing the horizontal and vertical edges, respectively, while the bivalent vertices at the u-turn bends represent so-called $K$-matrices in $\textrm{Hom}(V(x_i) \otimes V(\bar{x}_i), \mathbb{C}).$ These have been featured in solvable lattice models for symmetric functions previously, for example in \cite{KuperbergOneRoof, Tsuchiya, Betea-Wheeler, Wheeler-Zinn-Justin}. We require the bare minimum set of conditions on the bend weights to guarantee that the resulting partition function has hyperoctahedral symmetry - these are the so-called ``fish'' and ``caduceus'' relations to be presented in this section after fully introducing the lattice model. Our ``caduceus'' relation will turn out to be equivalent to the ``reflection equation'' of Sklyanin \cite{Sklyanin}; see also Section~2.7 of~\cite{Wheeler-Zinn-Justin} where the $K$-matrix is referred to as the boundary covector.

At each of the tetravalent vertices in the rectangular part of the lattice model, we use the same Boltzmann weights from Figure~\ref{fig:six.vertex.model}, just as in the Type $A$ model. The remaining Boltzmann weights for the bivalent vertices at each bend remain to be determined. We have recorded the possible admissible configurations in Figure~\ref{fig:gen.bends}, but have only represented those weights with very generic labels. In particular, for any row index $j \in [1,r]$ we allow four possible configurations with respective bend weights $A_j, B_j, C_j, D_j$ as in Figure~\ref{fig:gen.bends}, which are each functions (potentially distinct for each $j$) taking values in the polynomial ring $\mathbb{Z}[q](x_k, \overline{x}_k)$ where $x_k$ is the associated spectral parameter for the pair of rows. We will momentarily determine conditions on these bend weights in order that the type $B/C$ models are solvable.

Just as in type $A$, when the choice of Boltzmann weights is understood for a set of admissible configurations $\mathcal{B}_\lambda$, we refer to the set of states with associated weights as the ``model'' associated to $\lambda$ and continue to refer to the model as $\mathcal{B}_\lambda$.
\begin{figure}[!ht]
\begin{tabular}
{|c|c|c|c|}
\hline
$\begin{tikzpicture}[scale=0.5]
\node [label=above:$ $] at (0,1) {};
\node [label=below:$ $] at (0,-1) {};
\node [label=left: {$\mathbf{j}$}] at (-1,0) {};
\draw [-]
	(0,1) arc (90:270:1);
\filldraw[black] (-1,0) circle (4pt);
\end{tikzpicture}$
&
$\begin{tikzpicture}[scale=0.5]
\node [label=above:$ $] at (0,1) {};
\node [label=below:$ $] at (0,-1) {};
\node [label=left: {$\mathbf{j}$}] at (-1,0) {};
\draw [-]
	(0,1) arc (90:270:1);
\draw [-,line width=1mm,red,opacity=0.5]
	(-1,0) arc (180:270:1);
\filldraw[black] (-1,0) circle (4pt);
\end{tikzpicture}$
&
$\begin{tikzpicture}[scale=0.5]
\node [label=above:$ $] at (0,1) {};
\node [label=below:$ $] at (0,-1) {};
\node [label=left: {$\mathbf{j}$}] at (-1,0) {};
\draw [-]
	(0,1) arc (90:270:1);
\draw [-,line width=1mm,red,opacity=0.5]
	(0,1) arc (90:180:1);
\filldraw[black] (-1,0) circle (4pt);
\end{tikzpicture}$
&
$\begin{tikzpicture}[scale=0.5]
\node [label=above:$ $] at (0,1) {};
\node [label=below:$ $] at (0,-1) {};
\node [label=left: {$\mathbf{j}$}] at (-1,0) {};
\draw [-]
	(0,1) arc (90:270:1);
\draw [-,line width=1mm,red,opacity=0.5]
	(0,1) arc (90:270:1);
\filldraw[black] (-1,0) circle (4pt);
\end{tikzpicture}$
\\
\hline
$A_j$&$B_j$&$C_j$&$D_j$\\ \hline
\end{tabular}
\caption{Generic Bend Weights}\label{fig:gen.bends}
\end{figure}

\noindent With these definitions in place, we can again create a partition function by summing the weights of all admissible configurations: $$\mathcal{Z}(\mathcal{B}_\lambda) = \sum_{S \in \mathcal{B}_\lambda} \textrm{wt}(S).$$

As in the case of $\mathcal{Z}(\mathcal{A}_\lambda)$, we may evaluate $\mathcal{Z}(\mathcal{B}_\lambda)$ by understanding its behavior under action of the Weyl group. In order to exhibit this action, we need both the Yang-Baxter equation (Proposition \ref{le:YBE}) as well as two additional identities: the Fish relation (Lemma \ref{le:fish}) and the Caduceus relation (Lemmas \ref{le:caduceus}, \ref{le:caduceus3}, and~\ref{le:caduceus1}). These lemmas establish all non-trivial bend weight choices among monomials in the $x_i^{\pm1}$ in the Laurent polynomial ring $\mathbb{Z}[q](x_1^{\pm 1},\ldots,x_r^{\pm 1})$ which achieve these relations. The degree $deg(M)$ of a monomial $M$ in $\mathbb{Z}[q](x^{\pm1})$ is the integer exponent of $x$. In each of the subsequent lemmas, it is sometimes convenient to identify the set of edge labels (thus far described by paths) with the non-negative integers. In particular, horizontal edges can only have $\{0,1\}$ (no path, one path, respectively) as potential labels.

\begin{lemma}[Fish relation]\label{le:fish}
Let the $R$-matrix Boltzmann weights be given as in Figure~\ref{fig:R.weights} with spectral parameters $x$ and $\overline{x}$. We further assume that the bend weights are monomial in $x, \bar{x}$ with coefficients in $\mathbb{C}[q]$. Then the quantities

\begin{equation}\label{eq:fish}
\mathcal{Z}
\left(
\begin{tikzpicture}[scale=1,baseline={([yshift=-\the\dimexpr\fontdimen22\textfont2\relax]current bounding box.center)}]
\node [label=right:$\overline{x}$] at (0.5,1) {};
\node [label=right:$x$] at (0.5,0) {};
\node [circle,draw,scale=.6] at (.25,1) {$\alpha$};
\node [circle,draw,scale=0.55] at (.25,0) {$\beta$};
\node [label=left: {$\mathbf{j}$}] at (-1.5,0.5) {};

\draw [-] (0,0) .. controls (-0.5,0) and (-.5,1) .. (-1,1);
\draw [-] (0,1) .. controls (-0.5,1) and (-.5,0) .. (-1,0);

\draw [-]
	(-1,1) arc (90:180:.5);
\filldraw[black] (-1.5,.5) circle (2pt);
\draw [-]
	(-1.5,.5) arc (180:270:.5);
\end{tikzpicture}
\right) \quad \textrm{and} \quad \emph{wt}\left(
\begin{tikzpicture}[scale=1,baseline={([yshift=-\the\dimexpr\fontdimen22\textfont2\relax]current bounding box.center)}]
\node [label=right:$\overline{x}$] at (.5,1) {};
\node [label=right:$x$] at (.5,0) {};
\node [circle,draw,scale=.6] at (.25,1) {$\alpha$};
\node [circle,draw,scale=0.55] at (.25,0) {$\beta$};
\node [label=left: {$\mathbf{j}$}] at (-0.5,0.5) {};

\draw [-]
	(0,1) arc (90:180:.5);
\filldraw[black] (-.5,.5) circle (2pt);
\draw [-]
	(-.5,0.5) arc (180:270:.5);
\end{tikzpicture}
\right)
\end{equation}
are proportional with proportionality constant $F$ in $\mathbb{Z}[q](x^{\pm1})$ independent of the choice of boundary labels $\alpha$ and $\beta$ in $\{0,1\}$ if and only if one of the following conditions on the Boltzmann weights at bivalent vertices in Figure~\ref{fig:gen.bends} hold:
\begin{enumerate} \item Either $\textrm{deg}(B_j(x,\bar{x})) = \textrm{deg}(C_j(x,\bar{x}))$ and either:
\begin{enumerate}
    \item $B_j(x,\bar{x})= C_j(x,\bar{x})$, in which case the constant of proportionality $F=1$. In this case, $deg(A_j(x,\bar{x}))=deg(D_j(x,\bar{x}))=0$; or
    \item  $B_j(x,\bar{x})=-q C_j(x,\bar{x})$ with $F =(x^2-q)/(1-qx^2)$. In this case, $A_j(x,\bar{x})=D_j(x,\bar{x})=0$.
\end{enumerate}
   \item Or $\textrm{deg}(B_j(x,\bar{x})) = \textrm{deg}(C_j(x,\bar{x})) + 2$ and either:
   \begin{enumerate}
    \item $B_j(x,\bar{x}) = -x^2 C_j(x,\bar{x})$, with $F=(x^2-q)/(1-qx^2)$, and then $A_j(x,\bar{x})=D_j(x,\bar{x})=0$; or
    \item  $B_j(x,\bar{x}) = q x^2 C_j(x,\bar{x})$ with $F=1$ and then $deg(A_j(x,\bar{x}))=deg(D_j(x,\bar{x}))=0$. 
\end{enumerate}
\end{enumerate}
\end{lemma}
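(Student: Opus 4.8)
The plan is to treat the fish relation as an explicit identity between two vectors indexed by the boundary data $(\alpha,\beta)\in\{0,1\}^2$ and to reduce it to a small system of equations in the bend weights. First I would expand the left-hand partition function in~(\ref{eq:fish}) by inserting the intermediate horizontal labels between the $R$-vertex and the bend and summing the product of the $R$-matrix weight (Figure~\ref{fig:R.weights}, with $x_j=\overline{x}$ and $x_k=x$) against the appropriate bend weight from Figure~\ref{fig:gen.bends}, writing $\overline{x}=x^{-1}$ throughout. Because the six-vertex $R$-matrix conserves the total number of incoming paths, each boundary choice contributes at most two terms: the rigid configurations $(\alpha,\beta)=(0,0)$ and $(1,1)$ contribute $A_j$ and $D_j$, while the mixed configurations $(1,0)$ and $(0,1)$ each split into a ``straight'' and a ``crossed'' intermediate state, producing terms of the form $c_2C_j+b_1B_j$ and $b_2C_j+c_1B_j$. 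The right-hand side is simply the bend weight read directly off the four pictures. Matching the two sides against a single unknown factor $F$ then yields four scalar equations.

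Next I would organize these. The $(0,0)$ and $(1,1)$ equations reduce to $A_j=FA_j$ and $D_j=FD_j$, so that either $F=1$ or $A_j=D_j=0$; this is the source of the two distinct values of $F$ in the statement. The two mixed equations involve only $B_j$ and $C_j$ together with the diagonal and off-diagonal six-vertex weights, and they say precisely that the covector $(B_j,C_j)$ is an eigenvector, with eigenvalue $F$, of the one-particle block of the $R$-matrix. Diagonalizing this $2\times 2$ block (after setting $\overline{x}=x^{-1}$) gives eigenvalues $1$ and $(x^2-q)/(1-qx^2)$ in the normalization of the statement, with eigenlines $B_j=C_j$ and $B_j=-qC_j$. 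Equivalently, eliminating $F$ from the two mixed equations produces the homogeneous relation $B_j^2-qC_j^2-(1-q)B_jC_j=0$, whose solutions $B_j/C_j\in\{1,-q\}$ are exactly case~(1). Here $F=1$ leaves $A_j,D_j$ otherwise free while $F\neq 1$ forces $A_j=D_j=0$, matching the two subcases.

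Finally I would bring in the hypothesis that $B_j$ and $C_j$ are monomials in $x,\overline{x}$, which is what admits the second family: since the $R$-vertex interchanges the spectral parameters of the two rows, the correct comparison is between the bend and its reflected image, so the graded structure in $x$ of the relation becomes relevant rather than just its value at $\overline{x}=x^{-1}$. Writing $B_j=\mu x^{s}$ and $C_j=\nu x^{t}$ and separating the resulting identity into homogeneous components in $x$, I would match exponents: the offset $s=t$ recovers case~(1), while an offset of $\pm 2$ produces $B_j=-x^2C_j$ and $B_j=qx^2C_j$ with the stated values of $F$, and every other offset forces a coefficient identity of the type $\mu^2=q\nu^2$, which has no solution with $\mu,\nu\in\mathbb{C}[q]$ and is therefore excluded. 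I expect this graded exponent bookkeeping to be the main obstacle, since one must prove that matching all $x$-graded pieces, in tandem with the $A_j,D_j$ equations, yields exactly the four listed families and no others, and that the degree conditions $\deg(A_j)=\deg(D_j)=0$ (resp.\ $A_j=D_j=0$) are forced in each case. The reverse direction, sufficiency, is then the routine verification that each of the four families makes all four scalar equations hold with one common $F$, which I would carry out by direct substitution.
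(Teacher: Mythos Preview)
Your plan matches the paper's proof: expand both sides of~(\ref{eq:fish}) for each $(\alpha,\beta)$, match against a common $F$, and then use the monomial hypothesis via a degree count. The paper equates the $(1,0)$ and $(0,1)$ ratios to obtain the key identity~(\ref{eq:cleanedfish}), reads off the two degree cases from it, and finishes with $(0,0),(1,1)$ exactly as you sketch.

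One concrete error, though: the right-hand bend in~(\ref{eq:fish}) carries spectral parameters $(\bar{x},x)$, not $(x,\bar{x})$, so the $(0,0)$ and $(1,1)$ equations are \emph{not} $A_j=FA_j$ and $D_j=FD_j$ but rather $A_j(x)=F\cdot A_j(\bar{x})$ and $D_j(x)=F\cdot D_j(\bar{x})$. Consequently it is not these equations that produce the two values of $F$; the value of $F$ is fixed by the $B_j,C_j$ system, and only \emph{then} do the $A_j,D_j$ equations become constraints. For monomial $A_j$ of degree $k$ the equation reads $x^{2k}=F$, which forces $k=0$ when $F=1$ and forces $A_j=0$ when $F=(x^2-q)/(1-qx^2)$, since no monomial ratio equals that rational function. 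The same argument-swap issue affects your eigenvector framing: the mixed equations have $C_j(\bar{x}),B_j(\bar{x})$ on the right, so it is only a genuine $2\times2$ eigenvalue problem after you have already reduced to the degree-$0$ case. The paper sidesteps this by equating the two ratios directly, clearing denominators to get~(\ref{eq:cleanedfish}), and comparing degrees on both sides; this immediately shows that only offsets $0$ and $+2$ are possible, without needing your separate claim that other offsets force an unsolvable $\mu^2=q\nu^2$.
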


\begin{proof} In what follows, we abbreviate $A_j(x,\bar{x})$ by $A_j(x)$ and $A_j(\bar{x},x)$ by $A_j(\bar{x})$, with similar abbreviations for the functions $B_j, C_j$ and $D_j$. Let's begin by considering the case where $B_j, C_j \neq 0$. In the case $(\alpha,\beta) = (1,0)$, we have 
\begin{equation}\label{eq:ratio1} 
\mathcal{Z} \left(
\begin{tikzpicture}[scale=1,baseline={([yshift=-\the\dimexpr\fontdimen22\textfont2\relax]current bounding box.center)}]
\node [label=right:$\overline{x}$] at (0.5,1) {};
\node [label=right:$x$] at (0.5,0) {};
\node [circle,draw,scale=.6] at (.25,1) {$1$};
\node [circle,draw,scale=0.55] at (.25,0) {$0$};
\node [label=left: {$\mathbf{j}$}] at (-1.5,0.5) {};

\draw [-] (0,0) .. controls (-0.5,0) and (-.5,1) .. (-1,1);
\draw [-] (0,1) .. controls (-0.5,1) and (-.5,0) .. (-1,0);

\draw [-]
	(-1,1) arc (90:180:.5);
\filldraw[black] (-1.5,.5) circle (2pt);
\draw [-]
	(-1.5,.5) arc (180:270:.5);
\end{tikzpicture}
\right) \left/ \textrm{wt}
\left(
\begin{tikzpicture}[scale=1,baseline={([yshift=-\the\dimexpr\fontdimen22\textfont2\relax]current bounding box.center)}]
\node [label=right:$\overline{x}$] at (.5,1) {};
\node [label=right:$x$] at (.5,0) {};
\node [circle,draw,scale=.6] at (.25,1) {$1$};
\node [circle,draw,scale=0.55] at (.25,0) {$0$};
\node [label=left: {$\mathbf{j}$}] at (-0.5,0.5) {};

\draw [-]
	(0,1) arc (90:180:.5);
\filldraw[black] (-.5,.5) circle (2pt);
\draw [-]
	(-.5,0.5) arc (180:270:.5);
\end{tikzpicture}
\right)
\right. = \left(C_j(x) \cdot \frac{(1-q)x^2}{1-qx^2}+ B_j(x) \cdot \frac{1-x^2}{1-qx^2}\right)/ C_j(\bar{x}),
\end{equation}
where the Boltzmann weights $\frac{(1-q)x^2}{1-qx^2}$ and $\frac{1-x^2}{1-qx^2}$ come from the twisted weights in Figure \ref{fig:R.weights}. Similarly, in the case $(\alpha,\beta) = (0,1)$, the ratio is
\begin{equation}\label{eq:ratio2}
\left(C_j(x) \cdot \frac{q(1-x^2)}{1-qx^2} + B_j(x) \cdot \frac{1-q}{1-qx^2}\right) / B_j(\bar{x}).
\end{equation}
Equating these two expressions and clearing denominators, we require 
\begin{equation} \label{eq:cleanedfish} \left[ C_j(x)(1-q)x^2 + B_j(x)(1-x^2) \right] B_j(\bar{x}) = C_j(\bar{x}) \left[ q (1-x^2) C_j(x) + B_j(x)(1-q) \right]. \end{equation}
Since we have assumed that $B_j(x)$ and $C_j(x)$ are monomial in $x$, then a simple degree comparison of the two sides shows that the possibilities above (equal degree, or degree differing by 2 as stated) are the only possibilities.

Suppose first that $\textrm{deg}(C_j(x,\bar{x})) = \textrm{deg}(B_j(x,\bar{x}))$. Since our required identity is a homogeneous quadratic in $B_j$ and $C_j$ we may scale and assume, without loss of generality, that the common degree is 0. Equating the coefficients of $x^2$ and equating the constant terms on either side of~(\ref{eq:cleanedfish}), both turn out to produce the same condition:
\[ ( C_j(x) q + B_j(x) ) ( C_j(x) - B_j(x)) = 0. \]
Plugging in any solution to the above to~(\ref{eq:ratio1}) or~(\ref{eq:ratio2}) gives the respective ratios $F$ in Cases (1a) and (1b) of the Lemma.

If instead $\textrm{deg}(C_j(x,\bar{x})) = \textrm{deg}(B_j(x,\bar{x}))-2$ then we may again scale the monomial weights so that $C_j(x) = c \bar{x}$ for some $c \in \mathbb{C}[q]$ and $B_j(x)= b x$ for some $b \in \mathbb{C}[q]$. Substituting these expressions into~(\ref{eq:cleanedfish}) we again find that both the $x^2$ coefficient and the constant term produce the same required identity on $b,c$:
\[ (b + c) (b - qc) = 0. \]
If $b=-c$, then substituting into (\ref{eq:ratio2}) and simplifying, the resulting constant of proportionality $F=(x^2-q)/(1-qx^2)$. The case of $b = qc$ gives $F=1$.

Observe that if $B_j=0$, then \eqref{eq:ratio2} implies that $C_j$ must also be 0 for the quantities in \eqref{eq:fish} to be proportional. Similarly, if $C_j=0$, then $B_j$ must be 0 in order for the ratio of terms $F$ in \eqref{eq:fish} to be a constant independent of $\alpha$ and $\beta$.

Lastly, we consider the cases where $(\alpha,\beta) = (1,1)$ or $(0,0)$. Only bends of type $A_j$ and $D_j$ arise here. If $(\alpha,\beta) = (1,1)$, then $F \cdot D_j(\bar{x})= (D_j(x) \cdot 1)$ and one may easily see the conditions on $D(x)$ in the statement of the Lemma follow for the two cases for $F$. In particular, if $F=(x^2-q)/(1-qx^2)$, then no such monomial $D(x)$ can satisfy $F = D_j(x) / D_j(\bar{x})$ which forces $D(x)=0.$ Similar conditions hold for $A_j(x)$ following from the $(0,0)$ case. \end{proof}

The Fish relation is needed to prove that $\mathcal{Z}(\mathcal{B}_{\lambda})$ is symmetric under $x_i\leftrightarrow \overline{x}_i$. In order to prove that $\mathcal{Z}(\mathcal{B}_{\lambda})$ is symmetric under the remaining relations $x_i \leftrightarrow x_{i+1}$, however, we will need to use an additional relation that we call the Caduceus relation (to be described below).  That relation involves interactions \emph{between} bends, and for this reason one is forced to consider how the fish relations for various bends are related to each other (or not).  For this reason, we introduce the following

\begin{definition}
A weighting scheme for which each bend satisfies the same subcase of the Fish relation (Lemma \ref{le:fish}) is said to be of uniform regime.  If different bends satisfy different subcases of the Fish relation, we say the model is mixed regime.
\end{definition}

It is sometimes convenient to be explicit about which condition of the Fish relation is satisfied for a model of uniform regime, in which case we will say that the model is uniform in regime $R$ (where $R$ is some condition from $\{1(a),1(b),2(a),2(b)\}$ as enumerated in Lemma~\ref{le:fish}). 

The lure of mixed regimes is that they offer more flexibility; unfortunately, this also makes them more complex to study.  For the duration of this paper, we will focus on analyzing the behavior of uniform regime models.

The Caduceus relation gives algebraic conditions that are described via a case analysis of the number of particles allowed on the boundary (always between 0 and 4, as the caduceus has 4 boundary edges). We will treat the cases where there are one, two, or three particles allowed on the boundary in separate lemmas below. Note that our diagrams show only configurations in rank two, but the methods apply generally to any pair of adjacent bends in a lattice of arbitrary rank.



\begin{lemma}[Caduceus relation, 2 particles]\label{le:caduceus}
Let the $R$-matrix Boltzmann weights be given as in Figure 3, and assume that the bend weights are monomial with coefficients in $\mathbb{C}[q]$ of uniform regime.  Then the quantities
\begin{equation}\label{eq:caduceus}
\mathcal{Z}\left(
\begin{tikzpicture}[scale=.5,baseline={([yshift=-\the\dimexpr\fontdimen22\textfont2\relax]current bounding box.center)}]
\node [label=right:$x_1$] at (4,2) {};
\node [label=right:$\overline{x}_1$] at (4,1) {};
\node [label=right:$x_2$] at (4,0) {};
\node [label=right:$\overline{x}_2$] at (4,-1) {};
\node [label=left:$\mathbf{1}$] at (-.5,-.5) {};
\node [label=left:$\mathbf{2}$] at (-.5,1.5) {};

\node [circle,draw,scale=.6] at (3.5,2) {$\alpha$};
\node [circle,draw,scale=0.55] at (3.5,1) {$\beta$};
\node [circle,draw,scale=.6] at (3.5,0) {$\gamma$};
\node [circle,draw,scale=0.6] at (3.5,-1) {$\delta$};

\draw [-]
	(0,2) arc (90:270:.5);
\draw [-]
	(0,0) arc (90:270:.5);

\draw [-] (0,2) -- (1,2);
\draw [-] (2,2) -- (3,2);
\draw [-] (0,-1) -- (1,-1);
\draw [-] (2,-1) -- (3,-1);

\draw [-] (1,0) .. controls (.5,0) and (.5,1) .. (0,1);
\draw [-] (1,1) .. controls (.5,1) and (.5,0) .. (0,0);
\draw [-] (2,-1) .. controls (1.5,-1) and (1.5,0) .. (1,0);
\draw [-] (2,0) .. controls (1.5,0) and (1.5,-1) .. (1,-1);
\draw [-] (2,1) .. controls (1.5,1) and (1.5,2) .. (1,2);
\draw [-] (2,2) .. controls (1.5,2) and (1.5,1) .. (1,1);
\draw [-] (3,0) .. controls (2.5,0) and (2.5,1) .. (2,1);
\draw [-] (3,1) .. controls (2.5,1) and (2.5,0) .. (2,0);
\filldraw[black] (-.5,1.5) circle (4pt);
\filldraw[black] (-.5,-.5) circle (4pt);

\end{tikzpicture}\right)
\quad \text{ and } \quad  \emph{wt}\left(
\begin{tikzpicture}[scale=0.4,baseline={([yshift=-\the\dimexpr\fontdimen22\textfont2\relax]current bounding box.center)}]
\node [label=above:$ $] at (0,1) {};
\node [label=below:$ $] at (0,-1) {};
\node [label=left:$\mathbf{2}$] at (-1,0) {};

\node [circle,draw,scale=.6] at (.5,1) {$\alpha$};
\node [circle,draw,scale=0.55] at (.5,-1) {$\beta$};
\node [label=center:$x_1$] at (2,1) {};
\node [label=center:$\overline{x}_1$] at (2,-1) {};
\draw [-]
	(0,1) arc (90:180:1);
\filldraw[black] (-1,0) circle (4pt);
\draw [-]
	(-1,0) arc (180:270:1);
\draw [-]
	(0,1) arc (90:180:1);
\end{tikzpicture}\right)\cdot \emph{wt}\left(
\begin{tikzpicture}[scale=0.4,baseline={([yshift=-\the\dimexpr\fontdimen22\textfont2\relax]current bounding box.center)}]
\node [label=above:$ $] at (0,1) {};
\node [label=below:$ $] at (0,-1) {};
\node [label=left:$\mathbf{1}$] at (-1,0) {};

\node [circle,draw,scale=.6] at (.5,1) {$\gamma$};
\node [circle,draw,scale=0.6] at (.5,-1) {$\delta$};
\node [label=center:$x_2$] at (2,1) {};
\node [label=center:$\overline{x}_2$] at (2,-1) {};
\draw [-]
	(0,1) arc (90:180:1);
\filldraw[black] (-1,0) circle (4pt);
\draw [-]
	(-1,0) arc (180:270:1);
\draw [-]
	(-1,0) arc (180:270:1);
\end{tikzpicture}\right)
\end{equation}
are proportional with proportionality constant $F \in \mathbb{Z}[q,q^{-1}](x_1^{\pm 1}, x_2^{\pm 2})$ independent of the choice of boundary labels $\alpha,\beta,\gamma, \delta$ in $\{0,0,1,1\}$ if and only if one of the following regime-dependent conditions hold:
\begin{enumerate}
\item when the weights are regime 1(a), we have $C_j \in \mathbb{C}[q]$ for $j \in \{1,2\}$, and \begin{equation}\label{eq:cad.condition}q^2(A_1D_2-C_1 C_2) - (A_2D_1-C_1 C_2) = 0,\end{equation} in which case $F=1$;
\item when the weights are regime 2(b), we have $B_j(x) = qm_jx$, $C_j(x) = m_j\bar{x}$, where $m_j \in \mathbb{C}[q]$ for $j \in \{1,2\}$, and \begin{equation}\label{eq:cad.condition2} q^3m_2m_1 + q^2D_2A_1 - qm_2m_1 - A_2D_1 = 0, \end{equation}
in which case $F=1$;
\item when the weights are regime 1(b) the Caduceus relation automatically holds, in which case $F=(C_1(x_1)C_2(x_2))/(C_1(x_2)C_2(x_1))$; and
\item when the weights are regime 2(a), the Caduceus relation automatically holds, in which case $F=(C_1(x_1)C_2(x_2))/(C_1(x_2)C_2(x_1))$.
\end{enumerate}
\end{lemma}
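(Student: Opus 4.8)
The plan is to verify the proportionality (\ref{eq:caduceus}) sector by sector in the boundary occupancy. Since the six-vertex and bend weights all conserve the number of paths, and exactly two of $\alpha,\beta,\gamma,\delta$ equal $1$, the admissible states split into the $\binom{4}{2}=6$ sectors indexed by the placement of the two particles among the four right-hand legs. For each fixed sector I would expand the left-hand partition function as a finite sum over the internal edge labels (the edges joining the two bends to the R-matrix crossings and the crossings to one another), weighting each internal state by the product of R-weights from Figure~\ref{fig:R.weights} and the two bend weights from Figure~\ref{fig:gen.bends}; the right-hand side is in each sector a single monomial, namely the product of the two bend weights selected by the local occupancies $(\alpha,\beta)$ and $(\gamma,\delta)$. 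The caduceus relation is then exactly the assertion that the ratio (left)/(right) is one and the same element $F$ in all six sectors, so the whole proof reduces to computing these six ratios and comparing them.

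Before comparing, I would use Lemma~\ref{le:fish} to put the bend weights into their regime-specific normal form, which is legitimate because the uniform regime hypothesis forces both bends into the same subcase. In regimes $1(b)$ and $2(a)$ one has $A_j=D_j=0$, which annihilates every sector in which some bend is either empty (an $A$-configuration) or doubly occupied (a $D$-configuration); only the sectors in which each bend carries exactly one path survive. There the internal summation collapses — the braiding of the two singly-occupied strands through the crossings is essentially deterministic — so each surviving ratio reduces to a fixed rational function, and after the R-weight factors from the crossings combine with the bend monomials this common value works out to $C_1(x_1)C_2(x_2)/\bigl(C_1(x_2)C_2(x_1)\bigr)$ with no additional algebraic constraint, establishing parts (3) and (4). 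This part is comparatively painless precisely because the vanishing of $A_j$ and $D_j$ removes all the sectors in which two internal channels could interfere.

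The substance of the lemma is in regimes $1(a)$ and $2(b)$, where $A_j,D_j\neq 0$ and every bend type occurs. Here the sectors in which the two particles sit on legs belonging to a single bend-pair force that bend into its $A$- or $D$-configuration while routing the braid through the other bend, and the ratio in such a sector mixes the monomials $A_iD_j$ against $C_1C_2$; the R-weights picked up along the crossings, evaluated at the swapped spectral parameters $x_1,\overline{x}_1,x_2,\overline{x}_2$, supply exactly the factors $q^2$ and the signs appearing in the stated relations. Demanding that these ratios agree with the ratio coming from the ``both singly occupied'' sector — where $F=1$, as dictated by the Fish normalization $B_j=C_j$ in $1(a)$ and $B_j=qm_jx$, $C_j=m_j\overline{x}$ in $2(b)$ — and clearing denominators collapses, after simplification, to the single bilinear identity (\ref{eq:cad.condition}) in regime $1(a)$ and to (\ref{eq:cad.condition2}) in regime $2(b)$. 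Running the equality of ratios in both directions yields the ``if and only if'': the relations are necessary because they arise from equating two specific sectors, and sufficient because once they hold all six ratios coincide.

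The step I expect to be the genuine obstacle is the internal summation in the interfering sectors of regimes $1(a)$ and $2(b)$, where two internal edge labelings contribute to the same boundary sector and their R-weights are rational functions of $x_1,x_2$ that must conspire to produce a ratio matching a \emph{monomial} right-hand side. Keeping careful track of which R-weight ($b_1,b_2,c_1,c_2$) occurs at each crossing — and at which ordering of spectral parameters — is where errors are easy to make, and it is exactly this bookkeeping that decides whether the surviving condition carries the factor $q^2$ (regime $1(a)$) or the factors $q$ and $q^3$ (regime $2(b)$). Organizing the six sectors so that the homogeneity forced by monomiality is visible from the outset should keep these rational-function identities from proliferating.
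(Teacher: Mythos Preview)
Your overall strategy --- enumerate the six two-particle boundary sectors, expand the left-hand caduceus partition function in each, and compare ratios against the product of bend weights on the right --- is exactly what the paper does. But two of your intuitions about how the computation will go are wrong, and if you carried them out as stated you would get stuck.

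First, in regimes $1(b)$ and $2(a)$ you claim that once $A_j=D_j=0$ the surviving sectors have an ``essentially deterministic'' internal filling. This is false. For example, in the sector $(\alpha,\beta,\gamma,\delta)=(1,0,1,0)$ the paper lists eight admissible internal states; killing the three with an $A$ or $D$ bend still leaves five states with bend types $C_1C_2$, $C_1C_2$, $B_1C_2$, $C_1B_2$, $B_1B_2$, each multiplied by a distinct product of four $R$-weights. One must substitute $B_j=-qC_j$ (or $B_j=-x^2C_j$), factor out $C_1(x_1)C_2(x_2)$, and verify that the resulting five-term rational function in the $a,b,c$ weights simplifies to a constant --- that is the actual content of the verification, and it is not a collapse. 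The paper does this explicitly for $(1,0,1,0)$ and $(1,0,0,1)$ and asserts the remaining four by direct check.

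Second, in regimes $1(a)$ and $2(b)$ you say that the ``both singly occupied'' sector already gives $F=1$ and the other sectors then furnish the condition. That is not how the argument runs. The paper compares the two singly-occupied sectors $(1,0,1,0)$ and $(1,0,0,1)$: in regime $1(a)$ their right-hand sides coincide (since $B_j=C_j$), so equality of ratios forces the \emph{difference} of the left-hand partition functions to vanish, and that difference factors as the bilinear expression (\ref{eq:cad.condition}) times a nonzero rational function of $x_1,x_2$. This simultaneously yields the condition and forces $\deg C_j=0$; only then does one see $F=1$, and sufficiency is checked on the remaining four sectors. The regime-$2(b)$ analysis is parallel. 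So the constraints (\ref{eq:cad.condition}) and (\ref{eq:cad.condition2}) do not separate cleanly into ``one sector gives $F$, another gives the condition'' --- they emerge together from equating two sectors with matching right-hand sides.
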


\begin{proof}

As an example, consider the case $(\alpha,\beta,\gamma,\delta) = (1,0,1,0)$. Note that there are eight different states with these boundary conditions, depicted below with Boltzmann weights from Figures \ref{fig:R.weights} and \ref{fig:gen.bends}:\\

\begingroup
\renewcommand*{\arraystretch}{1.3}
\begin{tabular}{|c|c|c|}

\hline

$\textrm{wt}\left(
\begin{tikzpicture}[scale=.5,baseline={([yshift=-\the\dimexpr\fontdimen22\textfont2\relax]current bounding box.center)}]
\node [label=right:$x_1$] at (3,2) {};
\node [label=right:$\overline{x}_1$] at (3,1) {};
\node [label=right:$x_2$] at (3,0) {};
\node [label=right:$\overline{x}_2$] at (3,-1) {};
\node [label=left:$\mathbf{1}$] at (-.5,-.5) {};
\node [label=left:$\mathbf{2}$] at (-.5,1.5) {};

\draw [-] 
    (3,-1) -- (2,-1) 
    .. controls (1.5,-1) and (1.5,0) .. (1,0) 
    .. controls (.5,0) and (.5,1) .. (0,1)
    arc (270:90:.5)
    -- (1,2)
    .. controls (1.5,2) and (1.5,1) .. (2,1)
    .. controls (2.5,1) and (2.5,0) .. (3,0);

\draw [-] 
    (3,2) -- (2,2)
    .. controls (1.5,2) and (1.5,1) .. (1,1)
    .. controls (.5,1) and (.5,0) .. (0,0)
    arc (90:270:.5)
    -- (1,-1)
    .. controls (1.5,-1) and (1.5,0) .. (2,0)
    .. controls (2.5,0) and (2.5,1) .. (3,1);
    
\draw [-,line width=1mm,red,opacity=.5,rounded corners=0pt]
    (3,2) -- (2,2)
    .. controls (1.5,2) and (1.5,1) .. (1,1)
    .. controls (.6,1) and (.6,.5) .. (0.5,0.5)
    .. controls (.4,.5) and (.4,1) .. (0,1)
    arc (270:90:.5)
    -- (1,2)
    .. controls (1.5,2) and (1.5,1) .. (2,1)
    .. controls (2.5,1) and (2.5,0) .. (3,0);


\filldraw[black] (-.5,1.5) circle (4pt);
\filldraw[black] (-.5,-.5) circle (4pt);
\end{tikzpicture}\right)$

&

$\textrm{wt}\left(
\begin{tikzpicture}[scale=.5,baseline={([yshift=-\the\dimexpr\fontdimen22\textfont2\relax]current bounding box.center)}]
\node [label=right:$x_1$] at (3,2) {};
\node [label=right:$\overline{x}_1$] at (3,1) {};
\node [label=right:$x_2$] at (3,0) {};
\node [label=right:$\overline{x}_2$] at (3,-1) {};
\node [label=left:$\mathbf{1}$] at (-.5,-.5) {};
\node [label=left:$\mathbf{2}$] at (-.5,1.5) {};

\draw [-] 
    (3,-1) -- (2,-1) 
    .. controls (1.5,-1) and (1.5,0) .. (1,0) 
    .. controls (.5,0) and (.5,1) .. (0,1)
    arc (270:90:.5)
    -- (1,2)
    .. controls (1.5,2) and (1.5,1) .. (2,1)
    .. controls (2.5,1) and (2.5,0) .. (3,0);

\draw [-] 
    (3,2) -- (2,2)
    .. controls (1.5,2) and (1.5,1) .. (1,1)
    .. controls (.5,1) and (.5,0) .. (0,0)
    arc (90:270:.5)
    -- (1,-1)
    .. controls (1.5,-1) and (1.5,0) .. (2,0)
    .. controls (2.5,0) and (2.5,1) .. (3,1);

\draw [-,line width=1mm,red,opacity=.5,rounded corners=0pt]
    (3,2) -- (2,2)
    .. controls (1.6,2) and (1.6,1.5) .. (1.5,1.5)
    .. controls (1.4,1.5) and (1.4,2) .. (1,2)
    -- (0,2)
    arc (90:270:.5)
    .. controls (.5,1) and (.5,0) .. (1,0)
    .. controls (1.4,0) and (1.4,-.5) .. (1.5,-.5)
    .. controls (1.6,-.5) and (1.6,0) .. (2,0)
    .. controls (2.4,0) and (2.4,0.5) .. (2.5,0.5)
    .. controls (2.6,0.5) and (2.6,0) .. (3,0);
    



\filldraw[black] (-.5,1.5) circle (4pt);
\filldraw[black] (-.5,-.5) circle (4pt);

\end{tikzpicture}\right)$

&

$\textrm{wt}\left(
\begin{tikzpicture}[scale=.5,baseline={([yshift=-\the\dimexpr\fontdimen22\textfont2\relax]current bounding box.center)}]
\node [label=right:$x_1$] at (3,2) {};
\node [label=right:$\overline{x}_1$] at (3,1) {};
\node [label=right:$x_2$] at (3,0) {};
\node [label=right:$\overline{x}_2$] at (3,-1) {};
\node [label=left:$\mathbf{1}$] at (-.5,-.5) {};
\node [label=left:$\mathbf{2}$] at (-.5,1.5) {};

\draw [-] 
    (3,-1) -- (2,-1) 
    .. controls (1.5,-1) and (1.5,0) .. (1,0) 
    .. controls (.5,0) and (.5,1) .. (0,1)
    arc (270:90:.5)
    -- (1,2)
    .. controls (1.5,2) and (1.5,1) .. (2,1)
    .. controls (2.5,1) and (2.5,0) .. (3,0);

\draw [-] 
    (3,2) -- (2,2)
    .. controls (1.5,2) and (1.5,1) .. (1,1)
    .. controls (.5,1) and (.5,0) .. (0,0)
    arc (90:270:.5)
    -- (1,-1)
    .. controls (1.5,-1) and (1.5,0) .. (2,0)
    .. controls (2.5,0) and (2.5,1) .. (3,1);

\draw [-,line width=1mm,red,opacity=.5,rounded corners=0pt]
    (3,2) -- (2,2)
    .. controls (1.5,2) and (1.5,1) .. (1,1)
    .. controls (0.5,1) and (0.5,0) .. (0,0)
    arc (90:180:.5);
    
\draw [-,line width=1mm,red,opacity=.5,rounded corners=0pt]
    (3,0) .. controls (2.5,0) and (2.5,1) .. (2,1)
    .. controls (1.5,1) and (1.5,2) .. (1,2)
    -- (0,2)
    arc (90:180:.5);



\filldraw[black] (-.5,1.5) circle (4pt);
\filldraw[black] (-.5,-.5) circle (4pt);
\end{tikzpicture}\right)$

\\ \hline

{\footnotesize  $A_1(x_1)D_2(x_2)a_2(1,2)c_2(1,\overline{2})b_2(\overline{1},2)a_1(\overline{1},\overline{2})$} & {\footnotesize $A_1(x_1)D_2(x_2)c_2(1,2)b_2(1,\overline{2})c_1(\overline{1},2)c_2(\overline{1},\overline{2})$} & 
{\footnotesize  $C_1(x_1) C_2(x_2) a_2(1,2)b_1(1,\overline{2})b_2(\overline{1},2)a_1(\overline{1},\overline{2})$}

\\ \hline

$\textrm{wt}\left(
\begin{tikzpicture}[scale=.5,baseline={([yshift=-\the\dimexpr\fontdimen22\textfont2\relax]current bounding box.center)}]
\node [label=right:$x_1$] at (3,2) {};
\node [label=right:$\overline{x}_1$] at (3,1) {};
\node [label=right:$x_2$] at (3,0) {};
\node [label=right:$\overline{x}_2$] at (3,-1) {};
\node [label=left:$\mathbf{1}$] at (-.5,-.5) {};
\node [label=left:$\mathbf{2}$] at (-.5,1.5) {};

\draw [-] 
    (3,-1) -- (2,-1) 
    .. controls (1.5,-1) and (1.5,0) .. (1,0) 
    .. controls (.5,0) and (.5,1) .. (0,1)
    arc (270:90:.5)
    -- (1,2)
    .. controls (1.5,2) and (1.5,1) .. (2,1)
    .. controls (2.5,1) and (2.5,0) .. (3,0);

\draw [-] 
    (3,2) -- (2,2)
    .. controls (1.5,2) and (1.5,1) .. (1,1)
    .. controls (.5,1) and (.5,0) .. (0,0)
    arc (90:270:.5)
    -- (1,-1)
    .. controls (1.5,-1) and (1.5,0) .. (2,0)
    .. controls (2.5,0) and (2.5,1) .. (3,1);

\draw [-,line width=1mm,red,opacity=.5,rounded corners=0pt]
    (3,2) -- (2,2)
    .. controls (1.6,2) and (1.6,1.5) .. (1.5,1.5)
    .. controls (1.4,1.5) and (1.4,2) .. (1,2)
    -- (0,2)
    arc (90:180:.5);
    
\draw [-,line width=1mm,red,opacity=.5,rounded corners=0pt]
    (3,0) .. controls (2.6,0) and (2.6,0.5) .. (2.5,0.5)
    .. controls (2.4,0.5) and (2.4,0) .. (2,0)
    .. controls (1.6,0) and (1.6,-.5) .. (1.5,-.5)
    .. controls (1.4,-.5) and (1.4,0) .. (1,0)
    .. controls (0.6,0) and (0.6,0.5) .. (0.5,0.5)
    .. controls (0.4,0.5) and (0.4,0) .. (0,0)
    arc (90:180:.5);
    


\filldraw[black] (-.5,1.5) circle (4pt);
\filldraw[black] (-.5,-.5) circle (4pt);
\end{tikzpicture}\right)$

&

$\textrm{wt}\left(
\begin{tikzpicture}[scale=.5,baseline={([yshift=-\the\dimexpr\fontdimen22\textfont2\relax]current bounding box.center)}]
\node [label=right:$x_1$] at (3,2) {};
\node [label=right:$\overline{x}_1$] at (3,1) {};
\node [label=right:$x_2$] at (3,0) {};
\node [label=right:$\overline{x}_2$] at (3,-1) {};
\node [label=left:$\mathbf{1}$] at (-.5,-.5) {};
\node [label=left:$\mathbf{2}$] at (-.5,1.5) {};

\draw [-] 
    (3,-1) -- (2,-1) 
    .. controls (1.5,-1) and (1.5,0) .. (1,0) 
    .. controls (.5,0) and (.5,1) .. (0,1)
    arc (270:90:.5)
    -- (1,2)
    .. controls (1.5,2) and (1.5,1) .. (2,1)
    .. controls (2.5,1) and (2.5,0) .. (3,0);

\draw [-] 
    (3,2) -- (2,2)
    .. controls (1.5,2) and (1.5,1) .. (1,1)
    .. controls (.5,1) and (.5,0) .. (0,0)
    arc (90:270:.5)
    -- (1,-1)
    .. controls (1.5,-1) and (1.5,0) .. (2,0)
    .. controls (2.5,0) and (2.5,1) .. (3,1);

\draw [-,line width=1mm,red,opacity=.5,rounded corners=0pt]
    (3,2) -- (2,2)
    .. controls (1.6,2) and (1.6,1.5) .. (1.5,1.5)
    .. controls (1.4,1.5) and (1.4,2) .. (1,2)
    -- (0,2)
    arc (90:180:.5);

\draw [-,line width=1mm,red,opacity=.5,rounded corners=0pt]
    (3,0) .. controls (2.6,0) and (2.6,0.5) .. (2.5,0.5)
    .. controls (2.4,0.5) and (2.4,0) .. (2,0)
    .. controls (1.5,0) and (1.5,-1) .. (1,-1)
    -- (0,-1)
    arc (270:180:.5);



\filldraw[black] (-.5,1.5) circle (4pt);
\filldraw[black] (-.5,-.5) circle (4pt);
\end{tikzpicture}\right)$

&

$\textrm{wt}\left(
\begin{tikzpicture}[scale=.5,baseline={([yshift=-\the\dimexpr\fontdimen22\textfont2\relax]current bounding box.center)}]
\node [label=right:$x_1$] at (3,2) {};
\node [label=right:$\overline{x}_1$] at (3,1) {};
\node [label=right:$x_2$] at (3,0) {};
\node [label=right:$\overline{x}_2$] at (3,-1) {};
\node [label=left:$\mathbf{1}$] at (-.5,-.5) {};
\node [label=left:$\mathbf{2}$] at (-.5,1.5) {};

\draw [-] 
    (3,-1) -- (2,-1) 
    .. controls (1.5,-1) and (1.5,0) .. (1,0) 
    .. controls (.5,0) and (.5,1) .. (0,1)
    arc (270:90:.5)
    -- (1,2)
    .. controls (1.5,2) and (1.5,1) .. (2,1)
    .. controls (2.5,1) and (2.5,0) .. (3,0);

\draw [-] 
    (3,2) -- (2,2)
    .. controls (1.5,2) and (1.5,1) .. (1,1)
    .. controls (.5,1) and (.5,0) .. (0,0)
    arc (90:270:.5)
    -- (1,-1)
    .. controls (1.5,-1) and (1.5,0) .. (2,0)
    .. controls (2.5,0) and (2.5,1) .. (3,1);

\draw [-,line width=1mm,red,opacity=.5,rounded corners=0pt]
    (3,2) -- (2,2)
    .. controls (1.5,2) and (1.5,1) .. (1,1)
    .. controls (.5,1) and (.5,0) .. (0,0)
    arc (90:180:.5);

\draw [-,line width=1mm,red,opacity=.5,rounded corners=0pt]
    (3,0) .. controls (2.6,0) and (2.6,0.5) .. (2.5,0.5)
    .. controls (2.4,0.5) and (2.4,0) .. (2,0)
    .. controls (1.6,0) and (1.6,-.5) .. (1.5,-.5)
    .. controls (1.4,-.5) and (1.4,0) .. (1,0)
    .. controls (.5,0) and (.5,1) .. (0,1)
    arc (270:180:.5);



\filldraw[black] (-.5,1.5) circle (4pt);
\filldraw[black] (-.5,-.5) circle (4pt);
\end{tikzpicture}\right)$

\\ \hline

{\footnotesize $C_1(x_1) C_2(x_2) c_2(1,2)c_1(1,\overline{2})c_1(\overline{1},2)c_2(\overline{1},\overline{2})$} & {\footnotesize $B_1(x_1) C_2(x_2) c_2(1,2)a_1(1,\overline{2})c_1(\overline{1},2)b_1(\overline{1},\overline{2})$ }& {\footnotesize $C_1(x_1) B_2(x_2) b_1(1,2)a_2(1,\overline{2})c_1(\overline{1},2)c_2(\overline{1},\overline{2})$}

\\ \hline

$\textrm{wt}\left(
\begin{tikzpicture}[scale=.5,baseline={([yshift=-\the\dimexpr\fontdimen22\textfont2\relax]current bounding box.center)}]
\node [label=right:$x_1$] at (3,2) {};
\node [label=right:$\overline{x}_1$] at (3,1) {};
\node [label=right:$x_2$] at (3,0) {};
\node [label=right:$\overline{x}_2$] at (3,-1) {};
\node [label=left:$\mathbf{1}$] at (-.5,-.5) {};
\node [label=left:$\mathbf{2}$] at (-.5,1.5) {};

\draw [-] 
    (3,-1) -- (2,-1) 
    .. controls (1.5,-1) and (1.5,0) .. (1,0) 
    .. controls (.5,0) and (.5,1) .. (0,1)
    arc (270:90:.5)
    -- (1,2)
    .. controls (1.5,2) and (1.5,1) .. (2,1)
    .. controls (2.5,1) and (2.5,0) .. (3,0);

\draw [-] 
    (3,2) -- (2,2)
    .. controls (1.5,2) and (1.5,1) .. (1,1)
    .. controls (.5,1) and (.5,0) .. (0,0)
    arc (90:270:.5)
    -- (1,-1)
    .. controls (1.5,-1) and (1.5,0) .. (2,0)
    .. controls (2.5,0) and (2.5,1) .. (3,1);

\draw [-,line width=1mm,red,opacity=.5,rounded corners=0pt]
    (3,2) -- (2,2)
    .. controls (1.5,2) and (1.5,1) .. (1,1)
    .. controls (.6,1) and (.6,.5) .. (.5,.5)
    .. controls (.4,.5) and (.4,1) .. (0,1)
    arc (270:180:.5);

\draw [-,line width=1mm,red,opacity=.5,rounded corners=0pt]
    (3,0) .. controls (2.6,0) and (2.6,0.5) .. (2.5,0.5)
    .. controls (2.4,0.5) and (2.4,0) .. (2,0)
    .. controls (1.5,0) and (1.5,-1) .. (1,-1)
    -- (0,-1)
    arc (270:180:.5);



\filldraw[black] (-.5,1.5) circle (4pt);
\filldraw[black] (-.5,-.5) circle (4pt);
\end{tikzpicture}\right)$

&

$\textrm{wt}\left(
\begin{tikzpicture}[scale=.5,baseline={([yshift=-\the\dimexpr\fontdimen22\textfont2\relax]current bounding box.center)}]
\node [label=right:$x_1$] at (3,2) {};
\node [label=right:$\overline{x}_1$] at (3,1) {};
\node [label=right:$x_2$] at (3,0) {};
\node [label=right:$\overline{x}_2$] at (3,-1) {};
\node [label=left:$\mathbf{1}$] at (-.5,-.5) {};
\node [label=left:$\mathbf{2}$] at (-.5,1.5) {};

\draw [-] 
    (3,-1) -- (2,-1) 
    .. controls (1.5,-1) and (1.5,0) .. (1,0) 
    .. controls (.5,0) and (.5,1) .. (0,1)
    arc (270:90:.5)
    -- (1,2)
    .. controls (1.5,2) and (1.5,1) .. (2,1)
    .. controls (2.5,1) and (2.5,0) .. (3,0);

\draw [-] 
    (3,2) -- (2,2)
    .. controls (1.5,2) and (1.5,1) .. (1,1)
    .. controls (.5,1) and (.5,0) .. (0,0)
    arc (90:270:.5)
    -- (1,-1)
    .. controls (1.5,-1) and (1.5,0) .. (2,0)
    .. controls (2.5,0) and (2.5,1) .. (3,1);

\draw [-,line width=1mm,red,opacity=.5,rounded corners=0pt]
    (3,0) .. controls (2.6,0) and (2.6,0.5) .. (2.5,0.5)
    .. controls (2.4,0.5) and (2.4,0) .. (2,0)
    .. controls (1.5,0) and (1.5,-1) .. (1,-1)
    -- (0,-1)
    arc (270:90:.5)
    .. controls (.5,0) and (.5,1) .. (1,1)
    .. controls (1.5,1) and (1.5,2) .. (2,2)
    -- (3,2);



\filldraw[black] (-.5,1.5) circle (4pt);
\filldraw[black] (-.5,-.5) circle (4pt);

\end{tikzpicture}\right)$ &

\\ \hline

{\footnotesize $B_1(x_1) B_2(x_2) b_1(1,2)c_2(1,\overline{2})c_1(\overline{1},2)b_1(\overline{1},\overline{2})$} & {\footnotesize $D_1(x_1)A_2(x_2)b_1(1,2)b_1(1,\overline{2})c_1(\overline{1},2)b_1(\overline{1},\overline{2})$ }& 

\\ \hline

\end{tabular}
\endgroup

\vskip .1 in

Note that the partition function of the lattice on the left in \eqref{eq:caduceus} in this case is simply equal to the sum of the Boltzmann weights of the states depicted above. Similarly, in the case $(1,0,0,1)$, the associated partition function of the lattice on the left in \eqref{eq:caduceus} is 

\begin{align*}
    & A_1(x_1)D_2(x_2)c_2(1,2)b_2(1,\overline{2})a_1(\overline{1},2)b_2(\overline{1},\overline{2}) 
    + C_1(x_1) C_2(x_2) c_2(1,2)c_1(1,\overline{2})a_1(\overline{1},2)b_2(\overline{1},\overline{2}) + \\
    & B_1(x_1) C_2(x_2) c_2(1,2)a_1(1,\overline{2})a_1(\overline{1},2)c_1(\overline{1},\overline{2})
     + C_1(x_1) B_2(x_2) b_1(1,2)a_2(1,\overline{2})a_1(\overline{1},2)b_2(\overline{1},\overline{2}) +
     \\  &B_1(x_1) B_2(x_2) b_1(1,2)c_2(1,\overline{2})a_1(\overline{1},2)c_1(\overline{1},\overline{2}) + D_1(x_1)A_2(x_2)b_1(1,2)b_1(1,\overline{2})a_1(\overline{1},2)c_1(\overline{1},\overline{2})
\end{align*}


\vskip .1 in

In order for a set of Boltzmann weights to be of uniform regime, the bivalent weights must fall into one of the four cases described in Lemma \ref{le:fish}. Suppose first that we are in regime 1(a), so that $B_1(x) = C_1(x)$, $B_2(x) = C_2(x)$, and $A_1, A_2, D_1, D_2$ are polynomials in $q$. In this case, the partition functions for the $(1,0,1,0)$ and $(1,0,0,1)$ cases have the same denominator, so we can calculate the ratio of terms from \eqref{eq:caduceus}, $F$, by setting the two partition functions equal to each other. This yields 
\[\left(q^2(A_1D_2 - C_1(x_1) C_2(x_2)) - (A_2D_1 - C_1(x_1) C_2(x_2))\right)\frac{(1-q)(qx_2^2 - 1)(x_1 - \overline{x}_2)(\overline{x}_1 - \overline{x}_2)}{(x_1 - qx_2)(x_1 - q\overline{x}_2)(\overline{x}_1 - qx_2)(\overline{x}_1 - q\overline{x}_2)} = 0,\] 
and hence, in order to have $F$ be a constant independent of boundary conditions, it is necessary that 
\[q^2(A_1D_2-C_1(x_1) C_2(x_2)) - (A_2D_1-C_1(x_1) C_2(x_2)) = 0.\] Since $\textrm{deg}(A_j)=\textrm{deg}(D_j)=0$, it must be the case that $\textrm{deg}(C_j)=0$ as well. Upon checking the four remaining cases, we see that satisfying \eqref{eq:cad.condition} is also a sufficient condition to have $F$ be a constant independent of boundary conditions in the case $B_j=C_j$. It is readily verified that if \eqref{eq:cad.condition} is satisfied, then $F= 1$.

Next, consider the case where the weights are regime 2(b), so that $B_j(x) = qx^2C_j(x)$ and $A_1, A_2, D_1, D_2$ are polynomials in $q$. Calculating the ratio of terms from \eqref{eq:caduceus} in the cases $(1,0,1,0)$ and $(1,0,0,1)$ and equating them yields \[\frac{q^3C_1(x_1)C_2(x_2)x_1x_2 + q^2D_2A_1 - qC_1(x_1)C_2(x_2)x_1x_2 - A_2D_1}{C_1(x_2)C_2(x_1)} \cdot \frac{(1-q)(1-qx_2^2)(x_1-\bar{x}_2)(x_1-x_2)}{qx_1^2x_2^2(x_1-qx_2)(x_1-q\bar{x}_2)(\bar{x}_1-qx_2)(\bar{x}_1-q\bar{x}_2)} = 0. \] Since $\textrm{deg}(A_j) = \textrm{deg}(D_j) = 0$, we have that $\textrm{deg}(C_1(x)) = \textrm{deg}(C_2(x)) = -1$ and $\textrm{deg}(B_j(x)) = 1$. Using this fact in the analysis of the remaining cases, it can be shown that satisfying \eqref{eq:cad.condition2} is sufficient to ensure that $F$ is a constant independent of boundary conditions in this case. If $\eqref{eq:cad.condition2}$ is satisfied, then $F = 1$.


Now suppose we are in regime 1(b), so that $B_j(x) = -qC_j(x)$ and $A_j=D_j=0$. Then for $(1,0,1,0)$, 
\begin{align*}
F&= C_1(x_1)C_2(x_2) (a_2(1,2)b_1(1,\overline{2})b_2(\overline{1},2)a_1(\overline{1},\overline{2}) + c_2(1,2)c_1(1,\overline{2})c_1(\overline{1},2)c_2(\overline{1},\overline{2}) - qc_2(1,2)a_1(1,\overline{2})c_1(\overline{1},2)b_1(\overline{1},\overline{2}) \\
&- qb_1(1,2)a_2(1,\overline{2})c_1(\overline{1},2)c_2(\overline{1},\overline{2}) + q^2b_1(1,2)c_2(1,\overline{2})c_1(\overline{1},2)b_1(\overline{1},\overline{2}))/(C_1(x_2)C_2(x_1)) = (C_1(x_1)C_2(x_2))/(C_1(x_2)C_2(x_1)),
\end{align*}
and for $(1,0,0,1)$, 
\begin{align*}
F &= C_1(x_1)C_2(x_2)(c_2(1,2)c_1(1,\overline{2})a_1(\overline{1},2)b_2(\overline{1},\overline{2}) - qc_2(1,2)a_1(1,\overline{2})a_1(\overline{1},2)c_1(\overline{1},\overline{2}) - qb_1(1,2)a_2(1,\overline{2})a_1(\overline{1},2)b_2(\overline{1},\overline{2}) \\
&+ q^2b_1(1,2)c_2(1,\overline{2})a_1(\overline{1},2)c_1(\overline{1},\overline{2}))/(C_1(x_2)C_2(x_1)) = (C_1(x_1)C_2(x_2))/(C_1(x_2)C_2(x_1)).
\end{align*}
Upon checking the four remaining cases, it can be verified that $F= (C_1(x_1)C_2(x_2))/(C_1(x_2)C_2(x_1))$ in this case, regardless of the values of $\alpha,\beta,\gamma,\delta$.

Finally, consider regime 2(a), where $B_j(x) = -x^2C_j(x)$ and $A_j=D_j=0$. In this case, if the boundary conditions are $(1,0,1,0)$, then 
\begin{align*}
F&= C_1(x_1)C_2(x_2) (a_2(1,2)b_1(1,\overline{2})b_2(\overline{1},2)a_1(\overline{1},\overline{2}) + c_2(1,2)c_1(1,\overline{2})c_1(\overline{1},2)c_2(\overline{1},\overline{2}) - x^2c_2(1,2)a_1(1,\overline{2})c_1(\overline{1},2)b_1(\overline{1},\overline{2}) \\
&- x^2b_1(1,2)a_2(1,\overline{2})c_1(\overline{1},2)c_2(\overline{1},\overline{2}) + x^4b_1(1,2)c_2(1,\overline{2})c_1(\overline{1},2)b_1(\overline{1},\overline{2}))/(C_1(x_2)C_2(x_1)) = (C_1(x_1)C_2(x_2))/(C_1(x_2)C_2(x_1)).
\end{align*}
If the boundary conditions are $(1,0,0,1)$, then
\begin{align*}
F &= C_1(x_1)C_2(x_2)(c_2(1,2)c_1(1,\overline{2})a_1(\overline{1},2)b_2(\overline{1},\overline{2}) - x^2c_2(1,2)a_1(1,\overline{2})a_1(\overline{1},2)c_1(\overline{1},\overline{2}) - x^2b_1(1,2)a_2(1,\overline{2})a_1(\overline{1},2)b_2(\overline{1},\overline{2}) \\
&+ x^4b_1(1,2)c_2(1,\overline{2})a_1(\overline{1},2)c_1(\overline{1},\overline{2}))/(C_1(x_2)C_2(x_1)) = (C_1(x_1)C_2(x_2))/(C_1(x_2)C_2(x_1)).
\end{align*}
Upon checking the four remaining cases, it can be verified that $F= (C_1(x_1)C_2(x_2))/(C_1(x_2)C_2(x_1))$ in this case, regardless of the values of $\alpha,\beta,\gamma,\delta$.
\end{proof}





\begin{lemma}[Caduceus relation, 3 particles]\label{le:caduceus3}
Let the $R$-matrix Boltzmann weights be given as in Figure 3, and assume that the bend weights are monomial with coefficients in $\mathbb{C}[q]$ of uniform regime. Then the quantities
\begin{equation}\label{eq:caduceus3}
\mathcal{Z}\left(
\begin{tikzpicture}[scale=.5,baseline={([yshift=-\the\dimexpr\fontdimen22\textfont2\relax]current bounding box.center)}]
\node [label=right:$x_1$] at (4,2) {};
\node [label=right:$\overline{x}_1$] at (4,1) {};
\node [label=right:$x_{2}$] at (4,0) {};
\node [label=right:$\overline{x}_{2}$] at (4,-1) {};
\node [label=left:$\mathbf{1}$] at (-.5,-.5) {};
\node [label=left:$\mathbf{2}$] at (-.5,1.5) {};

\node [circle,draw,scale=.6] at (3.5,2) {$\alpha$};
\node [circle,draw,scale=0.55] at (3.5,1) {$\beta$};
\node [circle,draw,scale=.6] at (3.5,0) {$\gamma$};
\node [circle,draw,scale=0.6] at (3.5,-1) {$\delta$};

\draw [-]
	(0,2) arc (90:270:.5);
\draw [-]
	(0,0) arc (90:270:.5);

\draw [-] (0,2) -- (1,2);
\draw [-] (2,2) -- (3,2);
\draw [-] (0,-1) -- (1,-1);
\draw [-] (2,-1) -- (3,-1);

\draw [-] (1,0) .. controls (.5,0) and (.5,1) .. (0,1);
\draw [-] (1,1) .. controls (.5,1) and (.5,0) .. (0,0);
\draw [-] (2,-1) .. controls (1.5,-1) and (1.5,0) .. (1,0);
\draw [-] (2,0) .. controls (1.5,0) and (1.5,-1) .. (1,-1);
\draw [-] (2,1) .. controls (1.5,1) and (1.5,2) .. (1,2);
\draw [-] (2,2) .. controls (1.5,2) and (1.5,1) .. (1,1);
\draw [-] (3,0) .. controls (2.5,0) and (2.5,1) .. (2,1);
\draw [-] (3,1) .. controls (2.5,1) and (2.5,0) .. (2,0);
\filldraw[black] (-.5,1.5) circle (4pt);
\filldraw[black] (-.5,-.5) circle (4pt);

\end{tikzpicture}\right)
 \quad \text{ and } \quad 
 \textrm{wt}\left(
\begin{tikzpicture}[scale=0.4,baseline={([yshift=-\the\dimexpr\fontdimen22\textfont2\relax]current bounding box.center)}]
\node [label=above:$ $] at (0,1) {};
\node [label=below:$ $] at (0,-1) {};
\node [label=left:$\mathbf{2}$] at (-1,0) {};
\node [circle,draw,scale=.6] at (.5,1) {$\alpha$};
\node [circle,draw,scale=0.55] at (.5,-1) {$\beta$};
\node [label=center:$x_1$] at (2,1) {};
\node [label=center:$\overline{x}_1$] at (2,-1) {};
\draw [-]
	(0,1) arc (90:180:1);
\filldraw[black] (-1,0) circle (4pt);
\draw [-]
	(-1,0) arc (180:270:1);
\draw [-]
	(0,1) arc (90:180:1);
\end{tikzpicture}\right) \cdot\textrm{wt}\left(
\begin{tikzpicture}[scale=0.4,baseline={([yshift=-\the\dimexpr\fontdimen22\textfont2\relax]current bounding box.center)}]
\node [label=above:$ $] at (0,1) {};
\node [label=below:$ $] at (0,-1) {};
\node [circle,draw,scale=.6] at (.5,1) {$\gamma$};
\node [circle,draw,scale=0.6] at (.5,-1) {$\delta$};
\node [label=left:$\mathbf{1}$] at (-1,0) {};
\node [label=center:$x_{2}$] at (2,1) {};
\node [label=center:$\overline{x}_{2}$] at (2,-1) {};
\draw [-]
	(0,1) arc (90:180:1);
\filldraw[black] (-1,0) circle (4pt);
\draw [-]
	(-1,0) arc (180:270:1);
\draw [-]
	(-1,0) arc (180:270:1);
\end{tikzpicture}\right)
\end{equation}
are proportional with proportionality constant $F \in \mathbb{Z}[q,q^{-1}](x_1^{\pm 1}, x_2^{\pm 2})$ independent of the choice of boundary labels $\alpha,\beta,\gamma, \delta$ in $\{0,1,1,1\}$ if and only if one of the following the following regime-dependent conditions hold:
\begin{enumerate}
\item when the weights are regime 1(a) and $C_1,C_2$ are both nonzero, we have $C_1,C_2 \in \mathbb{C}[q]$, and $C_2D_1 = C_1D_2$, in which case $F=1$; 
\item when the weights are regime 2(b), we have $B_j(x) = qm_jx$, $C_j(x) = m_j\bar{x}$, where $m_j \in \mathbb{C}[q]$, and $m_2D_1=q^2m_1D_2$, in which case $F=1$; and
\item when the weights are either regime 1(b), regime 2(a), or regime 1(a) with $C_1=C_2=0$, then the quantities in question are all zero, in which case they are vacuously proportional.
\end{enumerate}
\end{lemma}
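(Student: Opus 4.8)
The plan is to follow the same template as the proof of Lemma~\ref{le:caduceus}, adapted to the three-particle sector. There are exactly four boundary conditions to consider, namely the four placements of the single empty edge, $(\alpha,\beta,\gamma,\delta) \in \{(0,1,1,1),(1,0,1,1),(1,1,0,1),(1,1,1,0)\}$. For each I would expand the left-hand lattice of~\eqref{eq:caduceus3} as a sum of Boltzmann weights over its admissible states, using the $R$-matrix weights of Figure~\ref{fig:R.weights} together with the bend weights of Figure~\ref{fig:gen.bends}, and compute the right-hand product of single bends directly. The proportionality constant $F$ is the ratio of these two quantities, and the content of the lemma is the condition, in each regime, under which $F$ is independent of $(\alpha,\beta,\gamma,\delta)$.

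Before any computation I would record a particle-conservation reduction that organizes every case. The six-vertex weights of Figure~\ref{fig:R.weights} conserve the number of paths, so the three particles entering along the right boundary must be absorbed by the two bends; a bend of type $A_j,B_j,C_j,D_j$ absorbs $0,1,1,2$ particles respectively. Since the only partitions of $3$ into two parts from $\{0,1,2\}$ are $(2,1)$ and $(1,2)$, every admissible state forces exactly one bend to be of type $D$ and the other of type $B$ or $C$. Part~(3) is then immediate: in regimes 1(b) and 2(a) the type-$D$ weight vanishes ($A_j=D_j=0$), while in regime 1(a) with $C_1=C_2=0$ the type-$B/C$ weight vanishes ($B_j=C_j=0$), so in each sub-case every admissible state contains a zero bend factor and the left-hand partition function is identically zero. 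The identical split shows the right-hand product also contains a zero factor, so the two quantities are vacuously proportional.

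For parts~(1) and~(2) both $D_j$ and $\{B_j,C_j\}$ are nonzero, so the surviving states are precisely those with one type-$D$ bend and one type-$B/C$ bend, routed through the middle by the $R$-vertices. I would compute the partition functions for two boundary conditions whose denominators agree---say $(1,1,0,1)$ and $(1,1,1,0)$, where the upper pair of rows carries the type-$D$ bend and the empty edge sits on the lower pair---set their two expressions for $F$ equal, and clear denominators. As in the two-particle argument, a degree comparison in $x_1,x_2$ (using $\deg A_j=\deg D_j=0$ from Lemma~\ref{le:fish}) pins down $\deg C_j=0$ in regime 1(a) and $\deg C_j=-1$, $\deg B_j=1$ in regime 2(b), and the vanishing of the resulting boundary-dependent factor yields the stated relation: $C_2D_1=C_1D_2$ in regime 1(a), and, after writing $B_j=qm_jx$ and $C_j=m_j\bar x$, the relation $m_2D_1=q^2m_1D_2$ in regime 2(b). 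I would then verify sufficiency by checking that the remaining two boundary conditions produce the same ratio under these relations, and substitute back to confirm $F=1$ in both regimes.

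The main obstacle is the bookkeeping in parts~(1) and~(2): enumerating the admissible states for each boundary condition, assigning the correct product of $R$-matrix entries $a_i,b_i,c_i$ evaluated at the appropriate ordered pairs of spectral parameters such as $(1,\overline 2)$ and $(\overline 1, 2)$, and carrying out the simplification that exposes the common boundary-dependent factor. The conceptual steps---the conservation reduction and the degree argument---are short, so the real risk lies in sign and indexing errors among the $R$-weights and in confirming that a single algebraic relation makes $F$ constant across all four boundary conditions rather than merely across the chosen pair. Performing the explicit verification for all four boundary conditions (ideally cross-checked symbolically) is precisely what upgrades necessity to the full equivalence asserted.
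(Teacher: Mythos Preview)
Your proposal is correct and follows essentially the same approach as the paper: compute the ratio $F$ for the boundary conditions $(1,1,1,0)$ and $(1,1,0,1)$, equate them to extract the necessary condition via a degree comparison (forcing $\deg C_j=0$ in regime 1(a) and $\deg C_j=-1$ in regime 2(b)), then verify sufficiency on the remaining two boundary conditions and confirm $F=1$. Your particle-conservation argument for part~(3)---that every admissible state must contain one type-$D$ bend and one type-$B/C$ bend, so the vanishing of either forces the whole partition function to zero---is a cleaner statement of what the paper dismisses as ``follows trivially,'' but the substance is identical.
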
 

\begin{proof}

As an example, consider the case $(\alpha, \beta, \gamma, \delta) = (1,1,1,0)$. Note that there are three different states with these boundary conditions, depicted below with Boltzmann weights from Figures \ref{fig:R.weights} and \ref{fig:gen.bends}:

\vskip .1 in

\begin{center}
\begingroup
\renewcommand*{\arraystretch}{2}
\begin{tabular}{|c|c|c|}

\hline

$\textrm{wt}\left(
\begin{tikzpicture}[scale=.5,baseline={([yshift=-\the\dimexpr\fontdimen22\textfont2\relax]current bounding box.center)}]
\node [label=right:$x_1$] at (3,2) {};
\node [label=right:$\overline{x}_1$] at (3,1) {};
\node [label=right:$x_2$] at (3,0) {};
\node [label=right:$\overline{x}_2$] at (3,-1) {};
\node [label=left:$\mathbf{1}$] at (-.5,-.5) {};
\node [label=left:$\mathbf{2}$] at (-.5,1.5) {};

\draw [-] 
    (3,-1) -- (2,-1) 
    .. controls (1.5,-1) and (1.5,0) .. (1,0) 
    .. controls (.5,0) and (.5,1) .. (0,1)
    arc (270:90:.5)
    -- (1,2)
    .. controls (1.5,2) and (1.5,1) .. (2,1)
    .. controls (2.5,1) and (2.5,0) .. (3,0);

\draw [-] 
    (3,2) -- (2,2)
    .. controls (1.5,2) and (1.5,1) .. (1,1)
    .. controls (.5,1) and (.5,0) .. (0,0)
    arc (90:270:.5)
    -- (1,-1)
    .. controls (1.5,-1) and (1.5,0) .. (2,0)
    .. controls (2.5,0) and (2.5,1) .. (3,1);
    
\draw [-,line width=1mm,red,opacity=.5,rounded corners=0pt]
    (3,2) -- (2,2)
    .. controls (1.5,2) and (1.5,1) .. (1,1)
    .. controls (.6,1) and (.6,.5) .. (0.5,0.5)
    .. controls (.4,.5) and (.4,0) .. (0,0)
    arc (90:180:.5);

\draw [-,line width=1mm,red,opacity=.5,rounded corners=0pt]
    (3,0) 
    .. controls (2.5,0) and (2.5,1) .. (2,1)
    .. controls (1.5,1) and (1.5,2) .. (1,2)
    -- (0,2)
    arc (90:270:.5)
    .. controls (0.5,1) and (0.5,0) .. (1,0)
    .. controls (1.4,0) and (1.4,-0.5) .. (1.5,-0.5)
    .. controls (1.6,-0.5) and (1.6,0) .. (2,0)
    .. controls (2.5,0) and (2.5,1) .. (3,1);
    
\filldraw[black] (-.5,1.5) circle (4pt);
\filldraw[black] (-.5,-.5) circle (4pt);
\end{tikzpicture}\right)$

&

$\textrm{wt}\left(
\begin{tikzpicture}[scale=.5,baseline={([yshift=-\the\dimexpr\fontdimen22\textfont2\relax]current bounding box.center)}]
\node [label=right:$x_1$] at (3,2) {};
\node [label=right:$\overline{x}_1$] at (3,1) {};
\node [label=right:$x_2$] at (3,0) {};
\node [label=right:$\overline{x}_2$] at (3,-1) {};
\node [label=left:$\mathbf{1}$] at (-.5,-.5) {};
\node [label=left:$\mathbf{2}$] at (-.5,1.5) {};

\draw [-] 
    (3,-1) -- (2,-1) 
    .. controls (1.5,-1) and (1.5,0) .. (1,0) 
    .. controls (.5,0) and (.5,1) .. (0,1)
    arc (270:90:.5)
    -- (1,2)
    .. controls (1.5,2) and (1.5,1) .. (2,1)
    .. controls (2.5,1) and (2.5,0) .. (3,0);

\draw [-] 
    (3,2) -- (2,2)
    .. controls (1.5,2) and (1.5,1) .. (1,1)
    .. controls (.5,1) and (.5,0) .. (0,0)
    arc (90:270:.5)
    -- (1,-1)
    .. controls (1.5,-1) and (1.5,0) .. (2,0)
    .. controls (2.5,0) and (2.5,1) .. (3,1);

\draw [-,line width=1mm,red,opacity=.5,rounded corners=0pt]
    (3,2) -- (2,2)
    .. controls (1.5,2) and (1.5,1) .. (1,1)
    .. controls (.6,1) and (.6,.5) .. (0.5,0.5)
    .. controls (.4,.5) and (.4,0) .. (0,0)
    arc (90:270:.5)
    -- (1,-1)
    .. controls (1.5,-1) and (1.5,0) .. (2,0)
    .. controls (2.5,0) and (2.5,1) .. (3,1);
    
\draw [-,line width=1mm,red,opacity=.5,rounded corners=0pt]
    (3,0) 
    .. controls (2.5,0) and (2.5,1) .. (2,1)
    .. controls (1.5,1) and (1.5,2) .. (1,2)
    -- (0,2)
    arc (90:180:.5);

\filldraw[black] (-.5,1.5) circle (4pt);
\filldraw[black] (-.5,-.5) circle (4pt);
\end{tikzpicture}\right)$

&

$\textrm{wt}\left(
\begin{tikzpicture}[scale=.5,baseline={([yshift=-\the\dimexpr\fontdimen22\textfont2\relax]current bounding box.center)}]
\node [label=right:$x_1$] at (3,2) {};
\node [label=right:$\overline{x}_1$] at (3,1) {};
\node [label=right:$x_2$] at (3,0) {};
\node [label=right:$\overline{x}_2$] at (3,-1) {};
\node [label=left:$\mathbf{1}$] at (-.5,-.5) {};
\node [label=left:$\mathbf{2}$] at (-.5,1.5) {};

\draw [-] 
    (3,-1) -- (2,-1) 
    .. controls (1.5,-1) and (1.5,0) .. (1,0) 
    .. controls (.5,0) and (.5,1) .. (0,1)
    arc (270:90:.5)
    -- (1,2)
    .. controls (1.5,2) and (1.5,1) .. (2,1)
    .. controls (2.5,1) and (2.5,0) .. (3,0);

\draw [-] 
    (3,2) -- (2,2)
    .. controls (1.5,2) and (1.5,1) .. (1,1)
    .. controls (.5,1) and (.5,0) .. (0,0)
    arc (90:270:.5)
    -- (1,-1)
    .. controls (1.5,-1) and (1.5,0) .. (2,0)
    .. controls (2.5,0) and (2.5,1) .. (3,1);

\draw [-,line width=1mm,red,opacity=.5,rounded corners=0pt]
    (3,0) 
    .. controls (2.5,0) and (2.5,1) .. (2,1)
    .. controls (1.5,1) and (1.5,2) .. (1,2)
    -- (0,2)
    arc (90:270:.5)
    .. controls (0.4,1) and (0.4,0.5) .. (0.5,0.5)
    .. controls (0.6,0.5) and (0.6,1) .. (1,1)
    .. controls (1.5,1) and (1.5,2) .. (2,2)
    -- (3,2);

\draw [-,line width=1mm,red,opacity=.5,rounded corners=0pt]
    (-.5,-.5)
    arc (180:270:.5)
    -- (1,-1)
    .. controls (1.5,-1) and (1.5,0) .. (2,0)
    .. controls (2.5,0) and (2.5,1) .. (3,1);

\filldraw[black] (-.5,1.5) circle (4pt);
\filldraw[black] (-.5,-.5) circle (4pt);
\end{tikzpicture}\right)$

\\ \hline

{\footnotesize $C_1(x_1)D_2(x_2)a_2(1,2)a_2(1,\overline{2})a_2(\overline{1},2)c_2(\overline{1},\overline{2})$ }& {\footnotesize $D_1(x_1)C_2(x_2)a_2(1,2)c_2(1,\overline{2})a_2(\overline{1},2)b_1(\overline{1},\overline{2})$ }& {\footnotesize $B_1(x_1)D_2(x_2)a_2(1,2)b_1(1,\overline{2})a_2(\overline{1},2)b_1(\overline{1},\overline{2})$}

\\ \hline
\end{tabular}
\endgroup
\end{center}

\vskip .1 in

In this case, the ratio of terms from \eqref{eq:caduceus3}, $F$, is equal to 
\begin{align*}
& (C_1(x_1)D_2(x_2)a_2(1,2)a_2(1,\overline{2})a_2(\overline{1},2)c_2(\overline{1},\overline{2}) + D_1(x_1)C_2(x_2)a_2(1,2)c_2(1,\overline{2})a_2(\overline{1},2)b_1(\overline{1},\overline{2}) 
\\&+ B_1(x_1)D_2(x_2)a_2(1,2)b_1(1,\overline{2})a_2(\overline{1},2)b_1(\overline{1},\overline{2}))/C_1(x_1)D_2(x_2). 
\end{align*}

Similarly, in the case $(1,1,0,1)$, one can show that
\begin{align*}
F &= (C_1(x_1)D_2(x_2)a_2(1,2)a_2(1,\overline{2}) c_2(\overline{1},2)b_2(\overline{1},\overline{2}) + B_1(x_1)D_2(x_2)a_2(1,2)c_2(1,\overline{2})c_2(\overline{1},2)c_1(\overline{1},\overline{2})
\\&+ B_1(x_1)D_2(x_2)c_2(1,2)b_2(1,\overline{2})b_1(\overline{1},2)a_2(\overline{1},\overline{2}) + D_1(x_1)C_2(x_2)a_2(1,2)b_1(1,\overline{2})c_2(\overline{1},2)c_1(\overline{1},\overline{2}) 
\\&+ D_1(x_1)C_2(x_2)c_2(1,2)c_1(1,\overline{2})b_1(\overline{1},2)a_2(\overline{1},\overline{2}) + D_1(x_1)B_2(x_2)b_1(1,2)a_2(1,\overline{2})b_1(\overline{1},2)a_2(\overline{1},\overline{2}))/B_1(x_1)D_2(x_2). 
\end{align*}


In order for a set of Boltzmann weights to be of uniform regime, the bivalent weights must fall into one of the four cases described in Lemma \ref{le:fish}. If the bivalent weights satisfy cases 1(b), 2(a), or 1(a) with $C_1=C_2=0$ of Lemma \ref{le:fish}, then the result follows trivially, so suppose first that the weights are uniform of regime 1(a) with both $C_1$ and $C_2$ nonzero: $B_j(x) = C_j(x)$ and $A_1, A_2, D_1, D_2$ are polynomials in $q$. By equating the ratio of terms from \eqref{eq:caduceus3} and simplifying, we have \[ \frac{C_1(x_1)D_2-C_2(x_2)D_1}{C_1(x_1)D_2}\cdot \frac{(1-q^2)(qx_2^2-1)(x_1-x_2^{-1})(x_1^{-1}-x_2^{-1})}{(x_1-qx_2)(x_1-qx_2^{-1})(x_1^{-1}-qx_2)(x_1^{-1}-qx_2^{-1})}=0.\] 

In order to have the ratio of terms from \eqref{eq:caduceus3} be a constant independent of boundary conditions, it is necessary that $C_2(x_2)D_1=C_1(x_1)D_2$; in particular, since $\textrm{deg}(D_j) = 0$, it must be the case that $\textrm{deg}(C_j) = 0$. Examining the remaining two cases, the condition $C_2D_1=C_1D_2$ is easily shown to be sufficient. One can quickly verify that $F = 1$ in this case.

Suppose then that the model is uniform of regime 2(b): $B_j(x) = qx^2C_j(x)$ and $A_1, A_2, D_1, D_2 \in \mathbb{C}[q]$. Calculating the ratio of terms from \eqref{eq:caduceus3} in the cases $(1,1,1,0)$ and $(1,1,0,1)$ and equating them yields \[\frac{q^2x_1C_1(x_1)D_2-x_2C_2(x_2)D_1)}{qx_2C_1(x_2)D_2}\frac{(1-q^2)(1-qx_2^2)(x_1-\bar{x}_2)(x_1-x_2)}{x_1x_2(\bar{x}_1-qx_2)(\bar{x}_1-q\bar{x}_2)(x_1-qx_2)(x_1-q\bar{x}_2)}.\] Since $\textrm{deg}(D_j) = 0$, it must be the case that $\textrm{deg}(C_j) = -1$. Using this fact, it is easy to check that $F = 1$ in all four cases.

\end{proof}





\begin{lemma}[Caduceus relation, 1 particle]\label{le:caduceus1}
Let the $R$-matrix Boltzmann weights be given as in Figure 3, and assume that the bend weights are monomial with coefficients in $\mathbb{C}[q]$ of uniform regime.  
Then the quantities

\begin{equation}\label{eq:caduceus1}
\mathcal{Z}\left(
\begin{tikzpicture}[scale=.5,baseline={([yshift=-\the\dimexpr\fontdimen22\textfont2\relax]current bounding box.center)}]
\node [label=right:$x_1$] at (4,2) {};
\node [label=right:$\overline{x}_1$] at (4,1) {};
\node [label=right:$x_{2}$] at (4,0) {};
\node [label=right:$\overline{x}_{2}$] at (4,-1) {};
\node [label=left:$\mathbf{1}$] at (-.5,-.5) {};
\node [label=left:$\mathbf{2}$] at (-.5,1.5) {};

\node [circle,draw,scale=.6] at (3.5,2) {$\alpha$};
\node [circle,draw,scale=0.55] at (3.5,1) {$\beta$};
\node [circle,draw,scale=.6] at (3.5,0) {$\gamma$};
\node [circle,draw,scale=0.6] at (3.5,-1) {$\delta$};

\draw [-]
	(0,2) arc (90:270:.5);
\draw [-]
	(0,0) arc (90:270:.5);

\draw [-] (0,2) -- (1,2);
\draw [-] (2,2) -- (3,2);
\draw [-] (0,-1) -- (1,-1);
\draw [-] (2,-1) -- (3,-1);

\draw [-] (1,0) .. controls (.5,0) and (.5,1) .. (0,1);
\draw [-] (1,1) .. controls (.5,1) and (.5,0) .. (0,0);
\draw [-] (2,-1) .. controls (1.5,-1) and (1.5,0) .. (1,0);
\draw [-] (2,0) .. controls (1.5,0) and (1.5,-1) .. (1,-1);
\draw [-] (2,1) .. controls (1.5,1) and (1.5,2) .. (1,2);
\draw [-] (2,2) .. controls (1.5,2) and (1.5,1) .. (1,1);
\draw [-] (3,0) .. controls (2.5,0) and (2.5,1) .. (2,1);
\draw [-] (3,1) .. controls (2.5,1) and (2.5,0) .. (2,0);
\filldraw[black] (-.5,1.5) circle (4pt);
\filldraw[black] (-.5,-.5) circle (4pt);

\end{tikzpicture}\right)
\quad \text{ and } \quad 
 \textrm{wt}\left(
\begin{tikzpicture}[scale=0.4,baseline={([yshift=-\the\dimexpr\fontdimen22\textfont2\relax]current bounding box.center)}]
\node [label=above:$ $] at (0,1) {};
\node [label=below:$ $] at (0,-1) {};
\node [label=left:$\mathbf{2}$] at (-1,0) {};
\node [circle,draw,scale=.6] at (.5,1) {$\alpha$};
\node [circle,draw,scale=0.55] at (.5,-1) {$\beta$};
\node [label=center:$x_1$] at (2,1) {};
\node [label=center:$\overline{x}_1$] at (2,-1) {};
\draw [-]
	(0,1) arc (90:180:1);
\filldraw[black] (-1,0) circle (4pt);
\draw [-]
	(-1,0) arc (180:270:1);
\draw [-]
	(0,1) arc (90:180:1);
\end{tikzpicture}\right)\cdot \textrm{wt}\left(
\begin{tikzpicture}[scale=0.4,baseline={([yshift=-\the\dimexpr\fontdimen22\textfont2\relax]current bounding box.center)}]
\node [label=above:$ $] at (0,1) {};
\node [label=below:$ $] at (0,-1) {};
\node [label=left:$\mathbf{1}$] at (-1,0) {};
\node [circle,draw,scale=.6] at (.5,1) {$\gamma$};
\node [circle,draw,scale=0.6] at (.5,-1) {$\delta$};
\node [label=center:$x_{2}$] at (2,1) {};
\node [label=center:$\overline{x}_{2}$] at (2,-1) {};
\draw [-]
	(0,1) arc (90:180:1);
\filldraw[black] (-1,0) circle (4pt);
\draw [-]
	(-1,0) arc (180:270:1);
\draw [-]
	(-1,0) arc (180:270:1);
\end{tikzpicture}\right)
\end{equation}
are proportional with proportionality constant $F \in \mathbb{Z}[q,q^{-1}](x_1^{\pm 1}, x_2^{\pm 2})$ independent of the choice of boundary labels $\alpha,\beta,\gamma, \delta$ in $\{0,0,0,1\}$ if and only if one of the following regime-dependent conditions hold:
\begin{enumerate}
\item when the weights are regime 1(a) and $C_1,C_2$ are both nonzero, we have $C_1,C_2 \in \mathbb{C}[q]$, and $C_2A_1 = C_1A_2$, in which case $F=1$; 
\item when the weights are regime 2(b), we have $B_j(x) = qm_jx$, $C_j(x) = m_j\bar{x}$ where $m_j \in \mathbb{C}[q]$, and $m_1A_2=q^2m_2A_1$,
in which case $F=1$; and
\item when the weights are either regime 1(b), regime 2(a), or regime 1(a) with $C_1=C_2=0$, then the quantities in question are all zero, in which case they are vacuously proportional.
\end{enumerate}
\end{lemma}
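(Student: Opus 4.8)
The plan is to specialize the template of Lemmas~\ref{le:caduceus} and~\ref{le:caduceus3} to single-particle boundary data. There are exactly four assignments to check, namely the four placements of the lone $1$ among $(\alpha,\beta,\gamma,\delta)$: $(1,0,0,0)$, $(0,1,0,0)$, $(0,0,1,0)$, and $(0,0,0,1)$. For each I would enumerate the admissible states of the caduceus on the left of~\eqref{eq:caduceus1}, take for each state the product of the tetravalent weights of Figure~\ref{fig:R.weights} and the bend weights of Figure~\ref{fig:gen.bends}, and sum to obtain $\mathcal{Z}$. The right-hand side of~\eqref{eq:caduceus1} is in each case a single product of two bend weights: reading the top bend (parameter $x_1$) off $(\alpha,\beta)$ and the bottom bend (parameter $x_2$) off $(\gamma,\delta)$, the four assignments give $C_2(x_1)A_1(x_2)$, $B_2(x_1)A_1(x_2)$, $A_2(x_1)C_1(x_2)$, and $A_2(x_1)B_1(x_2)$ respectively. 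Forming the ratio $F$ for each assignment and demanding that the four ratios coincide then yields the asserted conditions.

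Before any computation I would dispatch case~(3) by a particle-number parity argument. Since every tetravalent vertex preserves the number of occupied horizontal edges across it, the number of particles on the four external edges equals the number emitted by the two bends on the left. In regimes $1(b)$ and $2(a)$ we have $A_j=D_j=0$, so each bend is forced into a $B$- or $C$-type configuration carrying exactly one particle; the two bends then emit two particles, forcing an even external count and contradicting the single external particle. Hence no admissible state exists and $\mathcal{Z}=0$, while each right-hand product above contains a factor $A_j=0$, so both sides vanish identically. The same parity obstruction applies in regime $1(a)$ with $C_1=C_2=0$, where $B_j=C_j=0$ forces each bend to be $A$- or $D$-type and hence the external count to be even. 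This yields the vacuous proportionality of case~(3).

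For the two substantive regimes I would argue exactly as in Lemma~\ref{le:caduceus3}, now with the roles of $A_j$ and $D_j$ interchanged, since this lemma is the particle--hole mirror of the three-particle case. In regime $1(a)$ with $C_1,C_2\neq 0$ we have $B_j=C_j$ and $A_j,D_j\in\mathbb{C}[q]$, and admissibility forces one bend to be empty while the other emits the single particle. The assignments $(0,0,1,0)$ and $(0,0,0,1)$ share the common denominator $A_2(x_1)C_1(x_2)$ (using $B_1=C_1$), so equating their ratios amounts to equating their partition functions; after clearing denominators the difference factors as $(C_2A_1-C_1A_2)$ times a nonzero rational function of $x_1,x_2,q$, and constancy of $F$ forces $C_2A_1=C_1A_2$. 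Since $\deg A_j=0$ this also forces $\deg C_j=0$, i.e.\ $C_j\in\mathbb{C}[q]$, and one then checks the remaining two assignments to confirm that this single relation is sufficient and that $F=1$. Regime $2(b)$ is identical in spirit: substituting $B_j(x)=qm_jx$ and $C_j(x)=m_j\bar x$ and equating the ratios for the same pair isolates the relation $m_1A_2=q^2m_2A_1$ with $F=1$, the degree bookkeeping now forcing $\deg C_j=-1$.

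The main obstacle is purely organizational: correctly enumerating every admissible state of the caduceus for each of the four assignments---including those in which the emitted particle makes several U-turns and crossings before exiting---and then verifying that the relation extracted from one pair of assignments genuinely makes $F$ constant across \emph{all four}, not merely the pair used to derive it. As in the earlier Caduceus lemmas this reduces to a finite collection of rational-function identities in $x_1,x_2$ and $q$, and the particle--hole correspondence with Lemma~\ref{le:caduceus3} (sending $A_j\leftrightarrow D_j$) provides a useful check on the shape of the resulting conditions.
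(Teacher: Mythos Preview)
Your proposal is correct and matches the paper's approach exactly: the paper itself declines to write out this proof, stating only that it ``is analogous to the proof of Lemma~\ref{le:caduceus3} and is left to the reader,'' and you have supplied precisely that analogy, correctly identifying the particle--hole duality $A_j\leftrightarrow D_j$ between the one- and three-particle cases and handling the vacuous regimes by the same parity count implicit in the earlier lemmas.
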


The proof of Lemma \ref{le:caduceus1} is analogous to the proof of Lemma \ref{le:caduceus3} and is left to the reader.

Now we are ready to define an analogous notion of solvability for these lattice models with bends. This isn't a standard definition in the literature, as this bend geometry is somewhat exceptional, but the following conditions ensure that we can perform an analogous ``train argument'' to the type A case in Figure~\ref{fig:train.type.A} for models with bends.

\begin{definition}\label{def:solvable} We say that a type $B/C$ model is {\bf solvable} (with respect to the trigonometric six-vertex model) if it satisfies the following three local identities:
\begin{itemize}
    \item the Yang-Baxter equation \eqref{eq:YBE} for all pairs $j,k$
    \item uniform regime in the Fish relation \eqref{eq:fish}
    \item the Caduceus relations \eqref{eq:caduceus}, \eqref{eq:caduceus3}, \eqref{eq:caduceus1} for all pairs $j,k$
\end{itemize}
\end{definition}

\begin{remark*}
Whereas our definition relies on these three families of relations, work in \cite{Wheeler-Zinn-Justin} instead focuses on the Yang-Baxter and Fish relations together with the reflection equation of Cherednik and Skylyanin (see for example \cite{Sklyanin}); for a diagrammatic presentation of this relation close to our point of view, see Equation~(23) in Section~2.7 of~\cite{Wheeler-Zinn-Justin}.  Given our choice of $R$-matrix weights from Figure \ref{fig:R.weights}, and in particular the unitarity relation in Proposition~\ref{thm:unitarity}, the reflection equation is equivalent to our Caduceus relations.

General solutions to the reflection equation (and hence the Caduceus relations) for the trigonometric $R$-matrix are explored in~\cite{deVega-Gonzalez-Ruiz} where the weight of the bend vertices is encoded in a so-called $K$-matrix, a notation that we'll use in subsequent sections. We've chosen to reprove the caduceus relations here, assuming the fish relation, because our subsequent analysis depending on the rank will require us to make distinctions between the ``one-,'' ``two-,'' and ``three-particle'' cases as presented above, while a solution to the reflection equation would assume all three are satisfied. Note also that we're assuming the Fish relation in our proof, simplifying some of the analysis. 
\end{remark*}

The solvability of the type $B/C$ model ensures that the partition function inherits functional equations under the action of the Weyl group of type $B/C$, otherwise known as the hyperoctahedral group with generating simple reflections \begin{align}\label{eq:hypteroctahedral.generators} s_i&: (x_1, \ldots, x_i, x_{i+1}, \ldots, x_r) \longmapsto (x_1, \ldots, x_{i+1}, x_i, \ldots x_r) \qquad \text{ for }1 \leq i \leq r-1\\
s_r&: (x_1, \ldots, x_{r-1}, x_r) \longmapsto (x_1, \ldots, x_{r-1}, x_r^{-1})\nonumber.
\end{align}

We demonstrate this hyperoctahedral symmetry of the partition function using the respective train arguments for the associated lattice models with bends in the next two lemmas.

\begin{lemma}\label{le:inversion} Let $\mathcal{B}_\lambda$ be a type $B/C$ solvable model with tetravalent Boltzmann weights from the trigonometric six-vertex model. Then if the bend weights in row $i$ satisfy condition 1(a) or 2(b) of Lemma~\ref{le:fish},
$\mathcal{Z}(\mathcal{B}_{\lambda})$ is invariant under $x_i\leftrightarrow \overline{x}_i$. If instead the bend weights in row $i$ satisfy condition 1(b) or 2(a) of Lemma~\ref{le:fish}, then $(x_i^{-1} - q x_i) \mathcal{Z}(\mathcal{B}_\lambda)$ is invariant under the inversion $x_i\leftrightarrow \overline{x}_i$.
\end{lemma}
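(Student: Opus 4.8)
The plan is to adapt the type $A$ train argument of Figure~\ref{fig:train.type.A} to a single u-turn, using the Fish relation to absorb the traveling $R$-vertex into the bend. Fix $i$ and consider the two adjacent rows carrying $x_i$ and $\overline{x}_i$, joined on the left by the bend in row $i$; in every admissible state the far-right horizontal edges of both rows are unoccupied. First I would attach a crossing $R$-vertex with parameters $x_i, \overline{x}_i$ at the right boundary of this pair of rows. Because both right-hand edges are empty and $a_1 = a_2 = 1$ in Figure~\ref{fig:R.weights}, the crossing is forced into its vacuum configuration of weight $1$, so inserting it leaves $\mathcal{Z}(\mathcal{B}_\lambda)$ unchanged.

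Next I would transport this crossing leftward one column at a time by repeated application of the Yang--Baxter equation (Proposition~\ref{le:YBE}), exactly as in the type $A$ train argument. This only rearranges the two rows in question; the remaining rows and bends, as well as the vertical edges bounding the pair from above and below, are untouched, so the manipulation is an identity of partition functions carried out fiberwise over the states of the rest of the lattice. When the crossing has passed every tetravalent vertex it sits immediately to the right of the bend, and the local configuration of crossing-followed-by-bend is precisely the left-hand side of the Fish relation~\eqref{eq:fish}.

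Now I would apply Lemma~\ref{le:fish}. Since the model is of uniform regime, the Fish constant $F$ is independent of the two edge labels joining this local piece to the rest of the lattice, so it factors out of the state sum; replacing crossing-plus-bend by $F$ times the bare bend produces $F$ times the partition function of the lattice in which the two rows have been interchanged. The crucial observation is that interchanging the rows carrying $x_i$ and $\overline{x}_i$ is the same as the substitution $x_i \mapsto x_i^{-1}$: since $\overline{x}_i = x_i^{-1}$, this substitution sends the upper parameter $x_i$ to $x_i^{-1}$ and the lower parameter $\overline{x}_i = x_i^{-1}$ to $x_i$. Writing $\iota$ for this inversion, the train argument yields $\mathcal{Z}(\mathcal{B}_\lambda) = F\cdot\iota\big(\mathcal{Z}(\mathcal{B}_\lambda)\big)$, with $F$ the Fish constant attached to bend $i$.

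It then remains to read off the cases. In regimes $1(a)$ and $2(b)$ Lemma~\ref{le:fish} gives $F = 1$, so $\mathcal{Z}(\mathcal{B}_\lambda) = \iota\big(\mathcal{Z}(\mathcal{B}_\lambda)\big)$, i.e.\ invariance under $x_i \leftrightarrow \overline{x}_i$. In regimes $1(b)$ and $2(a)$ we have $F = (x_i^2-q)/(1-qx_i^2)$, hence $\iota(\mathcal{Z}) = \tfrac{1-qx_i^2}{x_i^2-q}\,\mathcal{Z}$, and a direct computation confirms the claimed prefactor: using $\iota(x_i^{-1}-qx_i) = x_i - qx_i^{-1} = \tfrac{x_i^2-q}{x_i}$ one finds $\iota\big((x_i^{-1}-qx_i)\mathcal{Z}\big) = \tfrac{x_i^2-q}{x_i}\cdot\tfrac{1-qx_i^2}{x_i^2-q}\,\mathcal{Z} = \tfrac{1-qx_i^2}{x_i}\,\mathcal{Z} = (x_i^{-1}-qx_i)\mathcal{Z}$, so $(x_i^{-1}-qx_i)\mathcal{Z}(\mathcal{B}_\lambda)$ is $\iota$-invariant. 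The main obstacle is the bookkeeping in the third step: one must verify that the Fish proportionality constant is genuinely boundary-independent (this is exactly what uniform regime secures via Lemma~\ref{le:fish}) so that it can be extracted from the sum, and one must correctly orient the Fish relation against the bend's parameters so that the factor $F$ rather than $F^{-1}$ appears. Consistency with the final invariance computation pins the orientation down, since the reciprocal choice would instead make $(x_i - qx_i^{-1})\mathcal{Z}$ invariant.
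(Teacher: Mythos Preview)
Your argument is correct and follows the same train argument as the paper: attach an $R$-vertex on the right (forced to weight $1$ by the empty boundary), push it leftward via Proposition~\ref{le:YBE}, and absorb it into the bend via Lemma~\ref{le:fish} to obtain $\mathcal{Z}(\mathcal{B}_\lambda)=F\cdot\iota(\mathcal{Z}(\mathcal{B}_\lambda))$. Your explicit verification that the prefactor $(x_i^{-1}-qx_i)$ makes the product $\iota$-invariant in regimes~1(b)/2(a) is a nice addition that the paper leaves to the reader.
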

\begin{proof}
We begin by adding a single $R$ vertex at the right edge of the ice between the rows with parameters $x_i$ and $\overline{x}_i$, as depicted below in diagram \eqref{eq:inversion1}. Since no paths exit through the right edge of the ice, the only admissible $R$ vertex is of type $a_1(k,j)$ in Figure~\ref{fig:R.weights} with $(k,j)=(\overline{i}, i).$
Hence, we have the equality of partition functions given below (depicted in the case where $i=1$, for simplicity):

\begin{equation}\label{eq:inversion1}
\mathcal{Z}(\mathcal{B}_{\lambda}) = \mathcal{Z}\left(   
\begin{tikzpicture}[scale=.5,baseline={([yshift=-\the\dimexpr\fontdimen22\textfont2\relax]current bounding box.center)}]
\node [label=right:$x_2$] at (5,2) {};
\node [label=right:$\overline{x}_2$] at (5,1) {};
\node [label=right:$\overline{x}_1$] at (5,0) {};
\node [label=right:$x_1$] at (5,-1) {};

\draw [-]
	(0,2) arc (90:270:.5);
\draw [-]
	(0,0) arc (90:270:.5);
\filldraw[black] (-.5,1.5) circle (4pt);
\filldraw[black] (-.5,-.5) circle (4pt);
\node [label=left: {$\mathbf{2}$}] at (-0.4,1.5) {};
\node [label=left: {$\mathbf{1}$}] at (-0.4,-0.5) {};

\draw [-] (0,2) -- (4,2);
\draw [-] (0,1) -- (4,1);
\draw [-] (0,0) -- (4,0);
\draw [-] (0,-1) -- (4,-1);

\draw [-] (0.5,2.5) -- (0.5,-1.5);
\draw [-] (1.5,2.5) -- (1.5,-1.5);
\draw [-] (2.5,2.5) -- (2.5,-1.5);
\draw [-] (3.5,2.5) -- (3.5,-1.5);

\draw [-] (5,-1) .. controls (4.5,-1) and (4.5,0) .. (4,0);
\draw [-] (5,0) .. controls (4.5,0) and (4.5,-1) .. (4,-1);

\end{tikzpicture} \right).\end{equation}
From here, we can repeatedly apply the Yang-Baxter equation, so that the partition function for diagram \eqref{eq:inversion1} is equal to

\begin{equation}\label{eq:inversion2}
\mathcal{Z}\left(  \!\! 
\begin{tikzpicture}[scale=.5,baseline={([yshift=-\the\dimexpr\fontdimen22\textfont2\relax]current bounding box.center)}]
\node [label=right:$x_2$] at (5,2) {};
\node [label=right:$\overline{x}_2$] at (5,1) {};
\node [label=right:$\overline{x}_1$] at (5,0) {};
\node [label=right:$x_1$] at (5,-1) {};

\draw [-]
	(0,2) arc (90:270:.5);
\draw [-]
	(0,0) arc (90:270:.5);
\filldraw[black] (-.5,1.5) circle (4pt);
\filldraw[black] (-.5,-.5) circle (4pt);
\node [label=left: {$\mathbf{2}$}] at (-0.4,1.5) {};
\node [label=left: {$\mathbf{1}$}] at (-0.4,-0.5) {};

\draw [-] (0,2) -- (3,2);
\draw [-] (0,1) -- (3,1);
\draw [-] (0,0) -- (3,0);
\draw [-] (0,-1) -- (3,-1);

\draw [-] (0.5,2.5) -- (0.5,-1.5);
\draw [-] (1.5,2.5) -- (1.5,-1.5);
\draw [-] (2.5,2.5) -- (2.5,-1.5);

\draw [-] (3,2) -- (5,2);
\draw [-] (3,1) -- (5,1);
\draw [-] (4,0) -- (5,0);
\draw [-] (4,-1) -- (5,-1);

\draw [-] (4.5,2.5) -- (4.5,-1.5);

\draw [-] (4,-1) .. controls (3.5,-1) and (3.5,0) .. (3,0);
\draw [-] (4,0) .. controls (3.5,0) and (3.5,-1) .. (3,-1);

\end{tikzpicture} \right) = \cdots =  \mathcal{Z}\left(  \!\!
\begin{tikzpicture}[scale=.5,baseline={([yshift=-\the\dimexpr\fontdimen22\textfont2\relax]current bounding box.center)}]
\node [label=right:$x_2$] at (5,2) {};
\node [label=right:$\overline{x}_2$] at (5,1) {};
\node [label=right:$\overline{x}_1$] at (5,0) {};
\node [label=right:$x_1$] at (5,-1) {};

\draw [-]
	(0,2) arc (90:270:.5);
\draw [-]
	(0,0) arc (90:270:.5);
\filldraw[black] (-.5,1.5) circle (4pt);
\filldraw[black] (-.5,-.5) circle (4pt);
\node [label=left: {$\mathbf{2}$}] at (-0.4,1.5) {};
\node [label=left: {$\mathbf{1}$}] at (-0.4,-0.5) {};

\draw [-] (0,2) -- (5,2);
\draw [-] (0,1) -- (5,1);
\draw [-] (1,0) -- (5,0);
\draw [-] (1,-1) -- (5,-1);

\draw [-] (1.5,2.5) -- (1.5,-1.5);
\draw [-] (2.5,2.5) -- (2.5,-1.5);
\draw [-] (3.5,2.5) -- (3.5,-1.5);
\draw [-] (4.5,2.5) -- (4.5,-1.5);

\draw [-] (1,-1) .. controls (.5,-1) and (.5,0) .. (0,0);
\draw [-] (1,0) .. controls (.5,0) and (.5,-1) .. (0,-1);

\end{tikzpicture} \right) = 
F \cdot \mathcal{Z}\left(   \!\!
\begin{tikzpicture}[scale=.5,baseline={([yshift=-\the\dimexpr\fontdimen22\textfont2\relax]current bounding box.center)}]
\node [label=right:$x_2$] at (4,2) {};
\node [label=right:$\overline{x}_2$] at (4,1) {};
\node [label=right:$\overline{x}_1$] at (4,0) {};
\node [label=right:$x_1$] at (4,-1) {};

\draw [-]
	(0,2) arc (90:270:.5);
\draw [-]
	(0,0) arc (90:270:.5);
\filldraw[black] (-.5,1.5) circle (4pt);
\filldraw[black] (-.5,-.5) circle (4pt);
\node [label=left: {$\mathbf{2}$}] at (-0.4,1.5) {};
\node [label=left: {$\mathbf{1}$}] at (-0.4,-0.5) {};

\draw [-] (0,2) -- (4,2);
\draw [-] (0,1) -- (4,1);
\draw [-] (0,0) -- (4,0);
\draw [-] (0,-1) -- (4,-1);

\draw [-] (0.5,2.5) -- (0.5,-1.5);
\draw [-] (1.5,2.5) -- (1.5,-1.5);
\draw [-] (2.5,2.5) -- (2.5,-1.5);
\draw [-] (3.5,2.5) -- (3.5,-1.5);
\end{tikzpicture} \right).
\end{equation}
where we have used an application of the Fish relation (Lemma \ref{le:fish}) in the final equality, and the respective cases result from the two possibilities for the constant of proportionality $F$ in the fish relation.
\end{proof}

\begin{lemma}\label{le:symmetric}
Let $\mathcal{B}_\lambda$ be a type $B/C$ solvable model with tetravalent Boltzmann weights from the trigonometric six-vertex model. Then if the associated caduceus constants $F=1$ for all simple transpositions $s_i = (i \; i+1)$ with $1 \leq i \leq r-1$, then $\mathcal{Z}(\mathcal{B}_{\lambda})$ is a symmetric function under permutation of the $\{x_i\}$.
\end{lemma}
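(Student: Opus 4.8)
The plan is to establish invariance of $\mathcal{Z}(\mathcal{B}_\lambda)$ under each simple transposition $s_i = (i\ i{+}1)$ for $1 \le i \le r-1$; since these generate the symmetric group $S_r$ acting on $\{x_1,\dots,x_r\}$, full symmetry under permutation of the $\{x_i\}$ follows at once. The mechanism is the ``train argument'' of Figure~\ref{fig:train.type.A}, adapted to the bend geometry, with the Caduceus relations playing the role that the equality $a_1(k,k{+}1)=a_2(k,k{+}1)$ played in type $A$, and in exact analogy with the inversion argument of Lemma~\ref{le:inversion}.

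Fix such an $i$. As in the proof of Lemma~\ref{le:inversion}, I would adjoin $R$-vertices at the \emph{right} edge of the lattice, this time a whole braid of crossings that interchanges the two adjacent blocks of rows $(x_i,\overline{x}_i)$ and $(x_{i+1},\overline{x}_{i+1})$. Interchanging two two-row blocks is realized by precisely the four-crossing configuration appearing on the left-hand side of the Caduceus relation~\eqref{eq:caduceus}. Because paths in $\mathcal{B}_\lambda$ begin at the bottom and exit at the bends on the left, no path reaches the right boundary; the braid's boundary-facing edges are therefore all empty, and since a braid merely permutes strands, every edge of the braid is empty and every crossing is of type $a_1$ with weight $1$. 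Hence adjoining the braid does not change $\mathcal{Z}(\mathcal{B}_\lambda)$. I would then slide the braid leftward, column by column, by repeated application of the Yang--Baxter equation (Proposition~\ref{le:YBE}), each application leaving the partition function unchanged, until the braid comes to rest immediately to the right of the two bend vertices for row-pairs $i$ and $i+1$.

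At that stage the subdiagram consisting of the two bends together with the four-crossing braid is exactly the left-hand side of the Caduceus relation, with the rest of the lattice supplying the four boundary labels $\alpha,\beta,\gamma,\delta$ on the edges where the braid meets the remaining columns. Summing over admissible states is a sum over these labels, and the Caduceus relations (Lemmas~\ref{le:caduceus},~\ref{le:caduceus3}, and~\ref{le:caduceus1}, together with the trivial zero- and four-particle cases) let me replace ``two bends $+$ braid'' by the two bends with their spectral parameters interchanged, at the cost of the global constant $F$. Since the braid has carried the swap $x_i\leftrightarrow x_{i+1}$ (and hence $\overline{x}_i\leftrightarrow\overline{x}_{i+1}$) across the bends while leaving the bottom boundary data $\lambda$ untouched, the resulting diagram is precisely $\mathcal{B}_\lambda$ with those two row-pairs interchanged, whose partition function is $s_i\,\mathcal{Z}(\mathcal{B}_\lambda)$. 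Thus $\mathcal{Z}(\mathcal{B}_\lambda)=F\cdot s_i\,\mathcal{Z}(\mathcal{B}_\lambda)$, and the hypothesis $F=1$ gives the desired $s_i$-invariance.

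The delicate point — and the reason three separate Caduceus lemmas are needed — is the uniformity of $F$ across \emph{all} particle-number sectors occurring in the sum over $\alpha,\beta,\gamma,\delta$. To pull a single constant out of the state sum one needs the one-, two-, and three-particle cases to return the \emph{same} proportionality constant, which happens exactly in regimes $1(a)$ and $2(b)$, where each of Lemmas~\ref{le:caduceus},~\ref{le:caduceus3}, and~\ref{le:caduceus1} gives $F=1$; this is precisely what the hypothesis encodes. In the remaining regimes $1(b)$ and $2(a)$ the odd-particle sectors vanish identically but the two-particle constant is the nontrivial ratio $(C_1(x_1)C_2(x_2))/(C_1(x_2)C_2(x_1))$ rather than $1$, so the hypothesis $F=1$ excludes them. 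A final book-keeping check is that the four-crossing braid arrives at the bends in exactly the orientation drawn in the Caduceus lemmas, which holds because interchanging two adjacent two-row blocks is realized by precisely that braid word.
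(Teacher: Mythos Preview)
Your proposal is correct and follows essentially the same approach as the paper: attach the four-crossing braid on the right (where all crossings are forced to be $a_1$-type with weight $1$), push it leftward by repeated Yang--Baxter moves, and then invoke the Caduceus relations with $F=1$ to absorb the braid into the bends with swapped spectral parameters. Your additional remarks on why $F$ must be uniform across the one-, two-, and three-particle sectors, and on which regimes the hypothesis $F=1$ selects, go slightly beyond what the paper spells out but are accurate and clarifying.
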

\begin{proof}
Similarly to the proof of Lemma \ref{le:inversion}, we start by adding four $R$ vertices at the right edge of the ice, as depicted below in rank two for simplicity:
\begin{equation}\label{eq:symmetric1}
\mathcal{Z}(\boldsymbol{x}; \mathcal{B}_{\lambda}) = 
\mathcal{Z}\left(   
\begin{tikzpicture}[scale=.5,baseline={([yshift=-\the\dimexpr\fontdimen22\textfont2\relax]current bounding box.center)}]
\node [label=right:$x_2$] at (4,2) {};
\node [label=right:$\overline{x}_2$] at (4,1) {};
\node [label=right:$x_1$] at (4,0) {};
\node [label=right:$\overline{x}_1$] at (4,-1) {};

\draw [-]
	(0,2) arc (90:270:.5);
\draw [-]
	(0,0) arc (90:270:.5);
\filldraw[black] (-.5,1.5) circle (4pt);
\filldraw[black] (-.5,-.5) circle (4pt);
\node [label=left: {$\mathbf{2}$}] at (-0.4,1.5) {};
\node [label=left: {$\mathbf{1}$}] at (-0.4,-0.5) {};

\draw [-] (0,2) -- (4,2);
\draw [-] (0,1) -- (4,1);
\draw [-] (0,0) -- (4,0);
\draw [-] (0,-1) -- (4,-1);

\draw [-] (0.5,2.5) -- (0.5,-1.5);
\draw [-] (1.5,2.5) -- (1.5,-1.5);
\draw [-] (2.5,2.5) -- (2.5,-1.5);
\draw [-] (3.5,2.5) -- (3.5,-1.5);

\end{tikzpicture} \right) = \mathcal{Z}\left(   
\begin{tikzpicture}[scale=.5,baseline={([yshift=-\the\dimexpr\fontdimen22\textfont2\relax]current bounding box.center)}]
\node [label=right:$x_1$] at (7,2) {};
\node [label=right:$\overline{x}_1$] at (7,1) {};
\node [label=right:$x_2$] at (7,0) {};
\node [label=right:$\overline{x}_2$] at (7,-1) {};

\draw [-]
	(0,2) arc (90:270:.5);
\draw [-]
	(0,0) arc (90:270:.5);
\filldraw[black] (-.5,1.5) circle (4pt);
\filldraw[black] (-.5,-.5) circle (4pt);
\node [label=left: {$\mathbf{2}$}] at (-0.4,1.5) {};
\node [label=left: {$\mathbf{1}$}] at (-0.4,-0.5) {};

\draw [-] (0,2) -- (5,2);
\draw [-] (6,2) -- (7,2);
\draw [-] (0,1) -- (4,1);
\draw [-] (0,0) -- (4,0);
\draw [-] (0,-1) -- (5,-1);
\draw [-] (6,-1) -- (7,-1);

\draw [-] (0.5,2.5) -- (0.5,-1.5);
\draw [-] (1.5,2.5) -- (1.5,-1.5);
\draw [-] (2.5,2.5) -- (2.5,-1.5);
\draw [-] (3.5,2.5) -- (3.5,-1.5);

\draw [-] (5,0) .. controls (4.5,0) and (4.5,1) .. (4,1);
\draw [-] (5,1) .. controls (4.5,1) and (4.5,0) .. (4,0);
\draw [-] (6,-1) .. controls (5.5,-1) and (5.5,0) .. (5,0);
\draw [-] (6,0) .. controls (5.5,0) and (5.5,-1) .. (5,-1);
\draw [-] (6,1) .. controls (5.5,1) and (5.5,2) .. (5,2);
\draw [-] (6,2) .. controls (5.5,2) and (5.5,1) .. (5,1);
\draw [-] (7,0) .. controls (6.5,0) and (6.5,1) .. (6,1);
\draw [-] (7,1) .. controls (6.5,1) and (6.5,0) .. (6,0);

\end{tikzpicture} \right).
\end{equation}
Note that, as in the proof of Lemma \ref{le:inversion}, the boundary conditions along the right edge of the diagram mean that all four of these $R$ vertices are of type $a_1(k,j)$ in Figure~\ref{fig:R.weights} with $(k,j)$ in $\{1,\overline{1},2,\overline{2}\}.$
From here, we can repeatedly apply the Yang-Baxter equation \eqref{eq:YBE}, so that the partition function in \eqref{eq:symmetric1} is equal to 
\begin{align}\label{eq:symmetric2}
\mathcal{Z}&\left(\begin{tikzpicture}[scale=.5,baseline={([yshift=-\the\dimexpr\fontdimen22\textfont2\relax]current bounding box.center)}]
\node [label=right:$x_1$] at (7,2) {};
\node [label=right:$\overline{x}_1$] at (7,1) {};
\node [label=right:$x_2$] at (7,0) {};
\node [label=right:$\overline{x}_2$] at (7,-1) {};
\draw [-]
	(0,2) arc (90:270:.5);
\draw [-]
	(0,0) arc (90:270:.5);
\filldraw[black] (-.5,1.5) circle (4pt);
\filldraw[black] (-.5,-.5) circle (4pt);
\node [label=left: {$\mathbf{2}$}] at (-0.4,1.5) {};
\node [label=left: {$\mathbf{1}$}] at (-0.4,-0.5) {};
\draw [-] (0,2) -- (4,2);
\draw [-] (5,2) -- (7,2);
\draw [-] (0,1) -- (3,1);
\draw [-] (6,1) -- (7,1);
\draw [-] (0,0) -- (3,0);
\draw [-] (6,0) -- (7,0);
\draw [-] (0,-1) -- (4,-1);
\draw [-] (5,-1) -- (7,-1);
\draw [-] (0.5,2.5) -- (0.5,-1.5);
\draw [-] (1.5,2.5) -- (1.5,-1.5);
\draw [-] (2.5,2.5) -- (2.5,-1.5);
\draw [-] (6.5,2.5) -- (6.5,-1.5);
\draw [-] (4,0) .. controls (3.5,0) and (3.5,1) .. (3,1);
\draw [-] (4,1) .. controls (3.5,1) and (3.5,0) .. (3,0);
\draw [-] (5,-1) .. controls (4.5,-1) and (4.5,0) .. (4,0);
\draw [-] (5,0) .. controls (4.5,0) and (4.5,-1) .. (4,-1);
\draw [-] (5,1) .. controls (4.5,1) and (4.5,2) .. (4,2);
\draw [-] (5,2) .. controls (4.5,2) and (4.5,1) .. (4,1);
\draw [-] (6,0) .. controls (5.5,0) and (5.5,1) .. (5,1);
\draw [-] (6,1) .. controls (5.5,1) and (5.5,0) .. (5,0);
\end{tikzpicture} \right)
= \mathcal{Z}\left(
\begin{tikzpicture}[scale=.5,baseline={([yshift=-\the\dimexpr\fontdimen22\textfont2\relax]current bounding box.center)}]
\node [label=right:$x_1$] at (7,2) {};
\node [label=right:$\overline{x}_1$] at (7,1) {};
\node [label=right:$x_2$] at (7,0) {};
\node [label=right:$\overline{x}_2$] at (7,-1) {};
\node [label=left: {$\mathbf{2}$}] at (-0.4,1.5) {};
\node [label=left: {$\mathbf{1}$}] at (-0.4,-0.5) {};
\draw [-]
	(0,2) arc (90:270:.5);
\draw [-]
	(0,0) arc (90:270:.5);
\filldraw[black] (-.5,1.5) circle (4pt);
\filldraw[black] (-.5,-.5) circle (4pt);
\node [label=left: {$\mathbf{2}$}] at (-0.4,1.5) {};
\node [label=left: {$\mathbf{1}$}] at (-0.4,-0.5) {};
\draw [-] (0,2) -- (1,2);
\draw [-] (2,2) -- (7,2);
\draw [-] (3,1) -- (7,1);
\draw [-] (3,0) -- (7,0);
\draw [-] (0,-1) -- (1,-1);
\draw [-] (2,-1) -- (7,-1);
\draw [-] (3.5,2.5) -- (3.5,-1.5);
\draw [-] (4.5,2.5) -- (4.5,-1.5);
\draw [-] (5.5,2.5) -- (5.5,-1.5);
\draw [-] (6.5,2.5) -- (6.5,-1.5);
\draw [-] (1,0) .. controls (.5,0) and (.5,1) .. (0,1);
\draw [-] (1,1) .. controls (.5,1) and (.5,0) .. (0,0);
\draw [-] (2,-1) .. controls (1.5,-1) and (1.5,0) .. (1,0);
\draw [-] (2,0) .. controls (1.5,0) and (1.5,-1) .. (1,-1);
\draw [-] (2,1) .. controls (1.5,1) and (1.5,2) .. (1,2);
\draw [-] (2,2) .. controls (1.5,2) and (1.5,1) .. (1,1);
\draw [-] (3,0) .. controls (2.5,0) and (2.5,1) .. (2,1);
\draw [-] (3,1) .. controls (2.5,1) and (2.5,0) .. (2,0);
\end{tikzpicture}\right) \\ 
&= \mathcal{Z}\left(   
\begin{tikzpicture}[scale=.5,baseline={([yshift=-\the\dimexpr\fontdimen22\textfont2\relax]current bounding box.center)}]
\node [label=right:$x_1$] at (4,2) {};
\node [label=right:$\overline{x}_1$] at (4,1) {};
\node [label=right:$x_2$] at (4,0) {};
\node [label=right:$\overline{x}_2$] at (4,-1) {};
\node [label=left: {$\mathbf{2}$}] at (-0.4,1.5) {};
\node [label=left: {$\mathbf{1}$}] at (-0.4,-0.5) {};
\draw [-]
	(0,2) arc (90:270:.5);
\draw [-]
	(0,0) arc (90:270:.5);
\filldraw[black] (-.5,1.5) circle (4pt);
\filldraw[black] (-.5,-.5) circle (4pt);
\draw [-] (0,2) -- (4,2);
\draw [-] (0,1) -- (4,1);
\draw [-] (0,0) -- (4,0);
\draw [-] (0,-1) -- (4,-1);
\draw [-] (0.5,2.5) -- (0.5,-1.5);
\draw [-] (1.5,2.5) -- (1.5,-1.5);
\draw [-] (2.5,2.5) -- (2.5,-1.5);
\draw [-] (3.5,2.5) -- (3.5,-1.5);
\end{tikzpicture} \right) 
= 
\mathcal{Z}(s_1(\boldsymbol{x});\mathcal{B}_\lambda) , \nonumber
\end{align}
where we apply the Caduceus relation (Lemma \ref{le:caduceus}, \ref{le:caduceus3}, or \ref{le:caduceus1}) in the final equality, noting that the constant of proportionality $F$ is equal to 1 in all cases, to obtain the desired result.
\end{proof}

Further generalizations of the prior Lemma are possible using other constants of proportionality from the caduceus relations in Lemmas~\ref{le:caduceus}, \ref{le:caduceus3}, and \ref{le:caduceus1}, but we refrain from stating them to focus on the case of symmetric functions. Indeed, combining the previous two lemmas, we may immediately conclude the following result, noting that the subcases involved imply that the caduceus constants $F$ are all equal to one.

\begin{proposition} Let $\mathcal{B}_\lambda$ be a type $B/C$ solvable model with tetravalent Boltzmann weights from the trigonometric six-vertex model. If the bend weights are uniform of case 1(a) or 2(b) in Lemma 3.1, then $\mathcal{Z}(\boldsymbol{x}; \mathcal{B}_\lambda)$ is invariant under the usual action of the hyperoctahedral group on $\boldsymbol{x}$ given above.
\end{proposition}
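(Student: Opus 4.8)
The plan is to reduce the claim to invariance under the generating reflections $s_1, \ldots, s_r$ of the hyperoctahedral group listed in~(\ref{eq:hypteroctahedral.generators}), and then to dispatch the two types of generators separately: the transpositions $s_i$ for $1 \le i \le r-1$ via Lemma~\ref{le:symmetric}, and the single sign change $s_r$ via Lemma~\ref{le:inversion}. Since these reflections generate $W_0$, establishing $\mathcal{Z}(\boldsymbol{x};\mathcal{B}_\lambda) = \mathcal{Z}(s\,\boldsymbol{x};\mathcal{B}_\lambda)$ for each generator $s$ immediately yields invariance under the entire group.

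For the transpositions I would invoke Lemma~\ref{le:symmetric}, whose hypothesis is that the caduceus constant $F$ equals $1$ for every simple transposition $s_i = (i\ i+1)$. The key step is to verify that this hypothesis is met in both permitted regimes. Because the model is solvable, the bend weights satisfy all three caduceus relations (Lemmas~\ref{le:caduceus}, \ref{le:caduceus3}, and~\ref{le:caduceus1}) for every adjacent pair of rows; in the uniform regime 1(a) the side conditions such as~(\ref{eq:cad.condition}) hold and each of the one-, two-, and three-particle caduceus constants is $F=1$, and the same conclusion holds verbatim in regime 2(b). Feeding $F=1$ into Lemma~\ref{le:symmetric} gives that $\mathcal{Z}(\boldsymbol{x};\mathcal{B}_\lambda)$ is symmetric under permutations of $\{x_1, \ldots, x_r\}$, which covers $s_1, \ldots, s_{r-1}$.

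For the sign change $s_r \colon x_r \mapsto x_r^{-1}$ I would apply the first case of Lemma~\ref{le:inversion}. By hypothesis the bend weights in every row — in particular row $r$ — satisfy condition 1(a) or 2(b) of the Fish relation (Lemma~\ref{le:fish}), so Lemma~\ref{le:inversion} yields that $\mathcal{Z}(\mathcal{B}_\lambda)$ is genuinely invariant (rather than invariant only up to a prefactor) under $x_r \leftrightarrow \overline{x}_r$. Since $\overline{x}_r = x_r^{-1}$, this is precisely invariance under $s_r$, completing the list of generators.

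The only real content — and the reason the statement restricts to regimes 1(a) and 2(b) — is the bookkeeping behind confirming $F=1$ in the two preceding paragraphs. In regimes 1(b) and 2(a) the caduceus constant is instead $F = C_1(x_1)C_2(x_2)/\bigl(C_1(x_2)C_2(x_1)\bigr) \ne 1$, and correspondingly Lemma~\ref{le:inversion} only gives invariance of $(x_r^{-1}-qx_r)\,\mathcal{Z}(\mathcal{B}_\lambda)$; thus both lemmas produce honest invariance of $\mathcal{Z}$ exactly in the two cases singled out here. No computation beyond checking that the solvability hypothesis furnishes the algebraic side conditions forcing $F=1$ in each particle count is required, so the proposition follows as an immediate corollary of Lemmas~\ref{le:inversion} and~\ref{le:symmetric}.
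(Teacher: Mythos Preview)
Your proposal is correct and follows essentially the same approach as the paper: the paper states the proposition as an immediate corollary of Lemmas~\ref{le:inversion} and~\ref{le:symmetric}, noting that in regimes 1(a) and 2(b) the caduceus constants $F$ are all equal to one. You supply exactly this argument, with slightly more explicit bookkeeping about which generator is handled by which lemma.
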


\section{General Methods for Closed Form Solutions of Solvable B/C Models\label{sec:genmethods}}

We can now take advantage of solvability, providing a general strategy for evaluating the rank $r$ partition functions for lattice models with bends, mimicking the approach taken in Section 3.4 of \cite{Wheeler-Zinn-Justin}. In particular, we adopt the Dirac bra-ket notation to describe the partition function:
\begin{equation} \mathcal{Z}(\mathcal{B}_\lambda) = \braket{\mathbf{K} | \mathbf{C}_0 \mathbf{C}_1 \cdots \mathbf{C}_{\lambda_r} | \varnothing}, \label{mainthmbraket} \end{equation}
where the ket $\ket{\varnothing}$ denotes the right-hand boundary of $2r$ empty rows, viewed as a column vector with $2r$ components. The operators $\mathbf{C}_i$ denote the column transfer matrix in column $i$, and the $\mathbf{K}$ operator in $\bra{K}$ denotes the partition function of the resulting $r$ caps along the left boundary. If we let $W_i$ and $W_{\bar{i}}$ denote the two-dimensional vector space in rows $i$ and $\bar{i}$, respectively, with basis indexed by particle and hole, then $\ket{\varnothing}$ represents the (column) vector of $2r$ holes in $W_r \otimes W_{\bar{r}} \otimes \cdots \otimes W_1 \otimes W_{\bar{1}}$ and each column transfer matrix $\mathbf{C}_i$ is an element of $\textrm{End}(W_r \otimes \cdots \otimes W_{\bar{1}})$. In \cite{Wheeler-Zinn-Justin}, the analysis of this braket is greatly simplified by the use of a matrix $\mathbf{F} := F_{r,\bar{r},\ldots,1,\bar{1}}$ in $\textrm{End}(W_r \otimes \cdots \otimes W_{\bar{1}})$ such that
$$ \mathbf{F}^\sigma \mathbf{R}_\sigma = \mathbf{F} $$
where $\mathbf{R}_\sigma$ is the $R$-matrix that acts on the left of a column of particles by a tangle of crossings (i.e., $R$-matrices of simple transpositions), taking row $i$ to row $\sigma(i)$, and $\mathbf{F}^\sigma := \mathbf{F}_{\sigma(r), \sigma(\bar{r}),\ldots, \sigma(1), \sigma(\bar{1})}$. For example, in the case $\sigma = \left( \begin{array}{c} 2 \bar{2} 1 \bar{1} \\ \bar{2} 1 \bar{1} 2 \end{array} \right)$ in two-line permutation notation, then $\mathbf{R}_\sigma$ has the form: 
$$
\begin{tikzpicture}[scale=.5, baseline=.5ex]
\node [label=right:$2$] at (4,2) {};
\node [label=right:$\overline{2}$] at (4,1) {};
\node [label=right:$1$] at (4,0) {};
\node [label=right:$\overline{1}$] at (4,-1) {};
\node [label=left:$\sigma(2)$] at (-1,2) {};
\node [label=left:$\sigma(\overline{2})$] at (-1,1) {};
\node [label=left:$\sigma(1)$] at (-1,0) {};
\node [label=left:$\sigma(\overline{1})$] at (-1,-1) {};

\draw [-] 
    (4,2) -- (3,2)
    .. controls (2.5,2) and (2.5,1) .. (2,1)
    .. controls (1.5,1) and (1.5,0) .. (1,0)
    .. controls (0.5,0) and (0.5,-1) .. (0,-1)
    -- (-1,-1);

\draw [-]    
    (4,1) -- (3,1)
    .. controls (2.5,1) and (2.5,2) .. (2,2)
    -- (-1,2);
    
\draw [-]    
    (4,0) -- (2,0)
    .. controls (1.5,0) and (1.5,1) .. (1,1)
    -- (-1,1);
    
\draw [-]    
    (4,-1) -- (1,-1)
    .. controls (.5,-1) and (.5,0) .. (0,0)
    -- (-1,0);

\end{tikzpicture}
$$


It is not immediately clear that $\mathbf{R}_\sigma$ is well-defined.  In order for the value of $\mathbf{R}_\sigma$ to be consistent regardless of how one writes $\sigma$ as a product of simple reflections, one needs to ensure that products of associated $R$-matrices satisfy the defining relations of the symmetric group. The Yang-Baxter equations (Proposition~\ref{le:YBE}) give the braid relations, while the unitarity relation (Proposition~\ref{thm:unitarity}) shows that R-matrices for simple reflections square to the identity, ensuring that crossing strands and crossing back result in no change.

An explicit expression for $\mathbf{F}_{r,\bar{r},\cdots,1,\bar{1}}$ is given by making some small changes to an analogous computation in \cite[Appendix A]{Wheeler-Zinn-Justin}. To state this result, we will assume the ordering $r \succ \bar{r} \succ \cdots \succ 1 \succ \bar{1}$, we will write $\mathcal{I}=\{r,\bar{r},\cdots,1,\bar{1}\}$, and for any given $i \in \mathcal{I}\setminus \{r\}$ we will write $i\dotplus 1$ for the immediate successor of $i$ in $\mathcal{I}$ (e.g., $1 \dotplus 1 = \bar{2}$ and $\bar{2}\dotplus 1 = 2$):
$$ \mathbf{F}_{r,\bar{r},\cdots,1,\bar{1}} = \sum_{i\in\mathcal{I}} \left( \prod_{i \preccurlyeq j \preccurlyeq r} \mathbf{E}_j^{(22)} \prod_{\bar{1} \preccurlyeq k \prec i} \mathbf{E}_k^{(11)} \right) + \sum_{i\in \mathcal{I}} \mathop{\sum_{\rho \in S_{\mathcal{I}}}}_{\rho(i)\succ\rho(i\dotplus 1)} \left(\prod_{i \preccurlyeq j \preccurlyeq r} \mathbf{E}_{\rho(j)}^{(22)} \prod_{\bar{1}\preccurlyeq k \prec i} \mathbf{E}_{\rho(k)}^{(11)}\right)\mathbf{R}_\rho^{r \bar{r}\dots 1\bar{1}} $$
where $\mathbf{E}_j^{(kk)}$ denotes the elementary $2 \times 2$ matrix with entry 1 at position $(k,k)$ and 0 elsewhere, as an endomorphism of the vector space $W_j$, and the final sum is over permutations $\rho$ with a single inversion $\rho(i) \succ \rho(i\dotplus 1)$. As noted in \cite{Wheeler-Zinn-Justin}, one can determine $\mathbf{F}^{-1}$ by identifying a natural operator 
\begin{equation} \label{eq:fstarform} \mathbf{F}^\ast_{r,\bar{r},\cdots,1,\bar{1}}= \sum_{i\in\mathcal{I}} 
\left(\prod_{i \preccurlyeq j \preccurlyeq r} \mathbf{E}_j^{(11)} \prod_{\bar{1}\preccurlyeq k \prec i} \mathbf{E}_k^{(22)}\right)+\sum_{i\in\mathcal{I}} \mathop{\sum_{\rho \in S_{\mathcal{I}}}}_{\rho(i)\succ\rho(i\dotplus 1)} \mathbf{R}^\rho_{r\bar{r}\cdots 1\bar{1}} \left(\prod_{i \preccurlyeq j \preccurlyeq r} \mathbf{E}_{\rho(j)}^{(11)} \prod_{\bar{1} \preccurlyeq k \prec i} \mathbf{E}_{\rho(k)}^{(22)}\right)
\end{equation}
such that $\mathbf{F} \mathbf{F}^\ast$ is an explicit diagonal matrix. Specifically, if we define \begin{equation}\label{eq:Delta}
[\Delta_{kl}(x_k,x_l)]^{j_kj_l}_{i_ki_l} = \delta_{i_k,j_k}\delta_{i_l,j_l} b_{i_k,i_l}(x_k,x_l), \qquad b_{i_k,i_l}(x_k,x_l) = \left\{\begin{array}{ll}1,&\text{ if } i_k=i_l,\\(x_l-x_k)/(x_l-qx_k),&\text{ if }i_k<i_l,\\(x_k-x_l)/(x_k-qx_l),&\text{ if }i_k>i_l,\end{array}\right.\end{equation}
then we have $\mathbf{F}_{r\dots \bar{1}}\mathbf{F}^\ast_{r\dots \bar{1}} = \prod_{\bar{1} \preccurlyeq l \prec k \preccurlyeq r} \Delta_{kl}(x_k,x_l)$, and so we have $\mathbf{F}^{-1}_{r \cdots \bar{1}} = \mathbf{F}^\ast _{r \cdots \bar{1}} \prod_{\bar{1} \preccurlyeq l \prec k \preccurlyeq r} \Delta^{-1}_{kl} (x_k,x_l).$ Using this explicit formula, one can check that
\begin{equation} \label{finvonvacuum}
     \mathbf{F}_{r \cdots \bar{1}}^{-1} \ket{\varnothing} = \ket{\varnothing}. 
\end{equation}

With these facts in hand, we can return to analyzing the braket in (\ref{mainthmbraket}). We find
\begin{eqnarray}
\braket{\mathbf{K} | \mathbf{C}_0 \cdots \mathbf{C}_{\lambda_r} | \varnothing} & = & \braket{\mathbf{K} | \mathbf{F}^{-1} \mathbf{F} \mathbf{C}_0 \mathbf{F}^{-1} \mathbf{F} \cdots \mathbf{C}_{\lambda_r} \mathbf{F}^{-1}| \varnothing} \\
& = & \braket{ \mathbf{K} | \mathbf{F}^{-1} \widetilde{\mathbf{C}}_0 \cdots \widetilde{\mathbf{C}}_{\lambda_r} | \varnothing } \label{eq:twoparts}
\end{eqnarray}
where we have used (\ref{finvonvacuum}) and introduced the notation $\widetilde{\mathbf{C}}_i := \mathbf{F} \mathbf{C}_i \mathbf{F}^{-1}$ for a new family of column operators, the so called ``twisted'' column operators. The Boltzmann weights for the column operators $\widetilde{\mathbf{C}}_i$ are no longer local, but rather depend on the configurations above and below it in the same column. Nevertheless, the weights may be given explicitly in Figure \ref{fig:nonlocal.weights}.

\begin{figure}[!ht]
\begin{tabular}
{|c|c|c|c|c|c|c|c|}
\hline
$\begin{tikzpicture}
\coordinate (left) at (0.25,1);
\coordinate (right) at (1.75,1);
\coordinate (top) at (1,1.75);
\coordinate (bottom) at (1,0.25);
\node [label=left:$x_j$] (L) at (left) {};
\node [label=right:$ $] (R) at (right) {};
\draw [-] (L.east) -- (R.west);
\draw [-] (top) -- (bottom);
\draw [-,line width=1mm,red,opacity=.5] (top) -- (bottom);
\node [rectangle,draw,red,fill=white] (T) at (1,1.35) {\tiny{$m$}};
\node [rectangle,draw,red,fill=white] (T) at (1,.65) {\tiny{$m$}};
\end{tikzpicture}$
&
$\begin{tikzpicture}
\coordinate (left) at (0.25,1);
\coordinate (right) at (1.75,1);
\coordinate (top) at (1,1.75);
\coordinate (bottom) at (1,0.25);
\node [label=left:$x_j$] (L) at (left) {};
\node [label=right:$ $] (R) at (right) {};
\draw [-] (L.east) -- (R.west);
\draw [-] (top) -- (bottom);
\draw [-,line width=1mm,red,opacity=.5] (top) -- (bottom);
\draw [-,line width=1mm,red,opacity=.5] (L) -- (R);
\node [rectangle,draw,red,fill=white] (T) at (1,1.35) {\tiny{$m$}};
\node [rectangle,draw,red,fill=white] (B) at (1,.65) {\tiny{$m$}};
\end{tikzpicture}$
&
$\begin{tikzpicture}
\coordinate (left) at (0.25,1);
\coordinate (right) at (1.75,1);
\coordinate (top) at (1,1.75);
\coordinate (bottom) at (1,0.25);
\node [label=left:$x_j$] (L) at (left) {};
\node [label=right:$ $] (R) at (right) {};
\node [] at (bottom) {};
\draw [-] (L.east) -- (R.west);
\draw [-] (top) -- (bottom);
\draw [-,line width=1mm,red,opacity=.5] (top) -- (bottom);
\draw [-,line width=1mm,red,opacity=.5] (1,1) -- (R);
\node [rectangle,draw,red,fill=white] (T) at (1,1.35) {\tiny{$m$}};
\node [rectangle,draw,red,fill=white] (B) at (1,.65) {\tiny{$m-1$}};
\end{tikzpicture}$
&
$\begin{tikzpicture}
\coordinate (left) at (0.25,1);
\coordinate (right) at (1.75,1);
\coordinate (top) at (1,1.75);
\coordinate (bottom) at (1,0.25);
\node [label=left:$x_j$] (L) at (left) {};
\node [label=right:$ $] (R) at (right) {};
\node []  at (bottom) {};
\node []  at (top) {};
\draw [-] (L.east) -- (R.west);
\draw [-] (top) -- (bottom);
\draw [-,line width=1mm,red,opacity=.5] (top) -- (bottom);
\draw [-,line width=1mm,red,opacity=.5] (L) -- (1,1);
\node [rectangle,draw,red,fill=white] (T) at (1,1.35) {\tiny{$m$}};
\node [rectangle,draw,red,fill=white] (B) at (1,.65) {\tiny{$m+1$}};
\end{tikzpicture}$
\\
\hline
$1$ & $x_j\cdot \displaystyle\prod_{k \in \mathcal{A}_{j,i}} \frac{x_j-qx_k}{x_j-x_k}$ & $0$ & $x_j$\\ \hline
\end{tabular}
\caption{Weights for the column operators $\widetilde{\mathbf{C}}_i$}\label{fig:nonlocal.weights}
\end{figure}

Here the Boltzmann weight in the second column of the table depends on a set $\mathcal{A}_{j,i}$, defined as the set of indices $k \ne j$ in $\mathcal{I}$ such that the vertex $v_{k,i}$ in row $k$ and column $i$ is of the type appearing in the rightmost column of Figure \ref{fig:nonlocal.weights} (i.e., those vertices where a path enters from the left and moves downward). This set is the only way in which the above Boltzmann weights fail to be local.  The interested reader can find an example of the weight associated to a particular configuration in rank 3 in Figure \ref{fig:sample.computation.of.bra.ket}.

Thus we've reduced the calculation of the partition function for general rank to a pair of computations according to~
(\ref{eq:twoparts}), a computation of $\bra{\mathbf{K}} \mathbf{F}^{-1}$ (graphically represented as a set of braids attached to bend vertices) and $\widetilde{\mathbf{C}}_0 \cdots \widetilde{\mathbf{C}}_{\lambda_r} \ket{\varnothing}$ (graphically represented as a partition function consisting entirely of tetravalent vertices in the rectangular lattice). As we will see in subsequent sections, explicit formulas for the former are very dependent on the choice of weights for the bend vertices in a given rank, while the latter may be evaluated in general. In particular, we will see that $\bra{\mathbf{K}} \mathbf{F}^{-1}$ consists of partition functions whose right-hand boundary has paths exiting in each row of the top half of the model and no paths in rows in the bottom half of the model; for ease of notation, we will use $\hat{u}$ to denote the vector with this occupancy, so that for example in rank three we have $$\hat{u} = \stasisthree.$$ Since the states in $\bra{\mathbf{K}} \mathbf{F}^{-1}$ must pair with those in $\widetilde{\mathbf{C}}_0 \cdots \widetilde{\mathbf{C}}_{\lambda_r} \ket{\varnothing}$, this greatly reduces the number of cases we must consider in the latter ket (i.e., the possible left-hand boundaries resulting from this ket).  The right-hand side of Figure~\ref{fig:sample.computation.of.bra.ket} illustrates one such non-zero configuration in this process.

\begin{figure}[ht!]
$$\textrm{wt}\left(
\begin{tikzpicture}[scale=.5,baseline={([yshift=-\the\dimexpr\fontdimen22\textfont2\relax]current bounding box.center)}]
\node [label=left:$\mathbf{3}$] at (-.5,1.5) {};
\node [label=left:$\mathbf{2}$] at (-.5,-.5) {};
\node [label=left:$\mathbf{1}$] at (-.5,-2.5) {};
\node [label=right:$x_3$] at (4,2) {};
\node [label=right:$x_2$] at (4,1) {};
\node [label=right:$x_1$] at (4,0) {};
\draw [-]
	(0,2) arc (90:270:.5);
\draw [-]
	(0,0) arc (90:270:.5);
\draw [-]
	(0,-2) arc (90:270:.5);
\draw [-] (2,2) -- (4,2);
\draw [-] (2,1) -- (4,1);
\draw [-] (2,0) -- (4,0);
\draw [-] (2,-1) -- (4,-1);
\draw [-] (2,-2) -- (4,-2);
\draw [-] (2,-3) -- (4,-3);

\draw [-] (2.5,2.5) -- (2.5,-3.5);
\draw [-] (3.5,2.5) -- (3.5,-3.5);

\draw [-] (0,2) -- (2,2);
\draw [-] (0,1) .. controls (.5,1) and (.5,0) .. (1,0)
            .. controls (1.5,0) and (1.5,-1) .. (2,-1);
\draw [-] (0,0) .. controls (.5,0) and (.5,1) .. (1,1) -- (2,1);
\draw [-] (0,-1) .. controls (.5,-1) and (.5,-2) .. (1,-2) -- (2,-2);
\draw [-] (0,-2) .. controls (0.5,-2) and (0.5,-1) .. (1,-1)
            .. controls (1.5,-1) and (1.5,0) .. (2,0);
\draw [-] (0,-3) -- (2,-3);

\draw [-,line width=1mm,red,opacity=.5,rounded corners=0pt]
    (-.5,1.5) arc (180:90:.5) 
    -- (2.5,2)
    -- (2.5,-3.5);

\draw [-,line width=1mm,red,opacity=.5,rounded corners=0pt]
    (-.5,-.5) arc (180:90:.5) 
    .. controls (.5,0) and (.5,1) .. (1,1) 
    -- (3.5,1)
    -- (3.5,-3.5);

\draw [-,line width=1mm,red,opacity=.5,rounded corners=0pt]
    (-.5,-.5) arc (180:270:.5) 
    .. controls (.4,-1) and (.4,-1.5) .. (0.5,-1.5)
    .. controls (0.6,-1.5) and (0.6,-1) .. (1,-1)
    .. controls (1.5,-1) and (1.5,0) .. (2,0)
    -- (2.5,0)
    -- (2.5,-3.5);

\node [rectangle,draw,red,fill=white,scale=.75] at (2.5,-.5) {\tiny{$2$}};
\node [rectangle,draw,red,fill=white,scale=.75] at (2.5,-1.5) {\tiny{$2$}};
\node [rectangle,draw,red,fill=white,scale=.75] at (2.5,-2.5) {\tiny{$2$}};

\filldraw[black] (-.5,1.5) circle (4pt);
\filldraw[black] (-.5,-.5) circle (4pt);
\filldraw[black] (-.5,-2.5) circle (4pt);


\end{tikzpicture}\right) = 
\textrm{wt}\left(
\begin{tikzpicture}[scale=.5,baseline={([yshift=-\the\dimexpr\fontdimen22\textfont2\relax]current bounding box.center)}]
\node [label=left:$\mathbf{3}$] at (-.5,1.5) {};
\node [label=left:$\mathbf{2}$] at (-.5,-.5) {};
\node [label=left:$\mathbf{1}$] at (-.5,-2.5) {};
\node [label=right:$x_3$] at (2,2) {};
\node [label=right:$x_2$] at (2,1) {};
\node [label=right:$x_1$] at (2,0) {};

\draw [-]
	(0,2) arc (90:270:.5);
\draw [-]
	(0,0) arc (90:270:.5);
\draw [-]
	(0,-2) arc (90:270:.5);



\draw [-] (0,2) -- (2,2);
\draw [-] (0,1) .. controls (.5,1) and (.5,0) .. (1,0)
            .. controls (1.5,0) and (1.5,-1) .. (2,-1);
\draw [-] (0,0) .. controls (.5,0) and (.5,1) .. (1,1) -- (2,1);
\draw [-] (0,-1) .. controls (.5,-1) and (.5,-2) .. (1,-2) -- (2,-2);
\draw [-] (0,-2) .. controls (0.5,-2) and (0.5,-1) .. (1,-1)
            .. controls (1.5,-1) and (1.5,0) .. (2,0);
\draw [-] (0,-3) -- (2,-3);

\draw [-,line width=1mm,red,opacity=.5,rounded corners=0pt]
    (-.5,1.5) arc (180:90:.5) 
    -- (2,2);

\draw [-,line width=1mm,red,opacity=.5,rounded corners=0pt]
    (-.5,-.5) arc (180:90:.5) 
    .. controls (.5,0) and (.5,1) .. (1,1) 
    -- (2,1);

\draw [-,line width=1mm,red,opacity=.5,rounded corners=0pt]
    (-.5,-.5) arc (180:270:.5) 
    .. controls (.4,-1) and (.4,-1.5) .. (0.5,-1.5)
    .. controls (0.6,-1.5) and (0.6,-1) .. (1,-1)
    .. controls (1.5,-1) and (1.5,0) .. (2,0);


\filldraw[black] (-.5,1.5) circle (4pt);
\filldraw[black] (-.5,-.5) circle (4pt);
\filldraw[black] (-.5,-2.5) circle (4pt);


\end{tikzpicture}\right) \cdot 
\textrm{wt}\left(
\begin{tikzpicture}[scale=.5,baseline={([yshift=-\the\dimexpr\fontdimen22\textfont2\relax]current bounding box.center)}]
\node [label=left:$x_3$] at (2,2) {};
\node [label=left:$x_2$] at (2,1) {};
\node [label=left:$x_1$] at (2,0) {};

\draw [-] (2,2) -- (4,2);
\draw [-] (2,1) -- (4,1);
\draw [-] (2,0) -- (4,0);
\draw [-] (2,-1) -- (4,-1);
\draw [-] (2,-2) -- (4,-2);
\draw [-] (2,-3) -- (4,-3);

\draw [-] (2.5,2.5) -- (2.5,-3.5);
\draw [-] (3.5,2.5) -- (3.5,-3.5);

\draw [-,line width=1mm,red,opacity=.5,rounded corners=0pt]
    (2,2) -- (2.5,2) -- (2.5,-3.5);

\draw [-,line width=1mm,red,opacity=.5,rounded corners=0pt]
    (2,1) -- (3.5,1) -- (3.5,-3.5);

\draw [-,line width=1mm,red,opacity=.5,rounded corners=0pt]
    (2,0) -- (2.5,0) -- (2.5,-3.5);

\node [rectangle,draw,red,fill=white,scale=.75] at (2.5,-.5) {\tiny{$2$}};
\node [rectangle,draw,red,fill=white,scale=.75] at (2.5,-1.5) {\tiny{$2$}};
\node [rectangle,draw,red,fill=white,scale=.75] at (2.5,-2.5) {\tiny{$2$}};


\end{tikzpicture}\right)$$
\caption{The weight of a state associated to $\lambda = (0,0,1)$ is broken into two components.  The term on the left corresponds to a portion of $\bra{\mathbf{K}}{\mathbf{F}^{-1}}$, and the term on the right corresponds to a portion of $\widetilde{\mathbf{C}}_0\widetilde{\mathbf{C}}_1\ket{\varnothing}$. Using the twisting weights from Figure \ref{fig:R.weights} and the generic bend weights from \ref{fig:gen.bends}, the contribution from the left term is $\bra{\hat{u}} C_3\cdot  D_2\cdot \frac{x_2-\overline{x}_3}{x_2-q\overline{x}_3}\cdot \frac{x_1-\overline{x}_3}{x_1-q\overline{x}_3} \cdot \frac{(1-q)\overline{x}_2}{x_1-q\overline{x}_2}$. Using the column weights from Figure \ref{fig:nonlocal.weights}, the contribution from the term on the right from $\widetilde{\mathbf{C}}_0 \cdot \widetilde{\mathbf{C}}_1 \ket{\varnothing}$ is $\protect \left( x_3 \cdot x_2 \frac{x_2-q\overline{x}_2}{x_2-\overline{x}_2} \cdot \overline{x}_2\right) \cdot \left(x_2\right) \ket{\hat{u}}$.}
\label{fig:sample.computation.of.bra.ket}
\end{figure}

As it will be useful in subsequent sections, we will now compute the term $\widetilde{\mathbf{C}}_0 \cdots \widetilde{\mathbf{C}}_{\lambda_r} \ket{\varnothing}$. 

\comment{
\begin{figure}[ht!]
$$\mathcal{Z} \left(
\begin{tikzpicture}[scale=.5,baseline={([yshift=-\the\dimexpr\fontdimen22\textfont2\relax]current bounding box.center)}]
\node [label=left:$\mathbf{r}$] at (-.5,2.5) {};
\node [label=left:$\mathbf{\vdots}$] at (-.5,1.3) {};
\node [label=left:$\mathbf{2}$] at (-.5,-.5) {};
\node [label=left:$\mathbf{1}$] at (-.5,-2.5) {};
\node [label=right:$x_r$] at (4,3) {};
\node [label=right:$\overline{x_r}$] at (4,2) {};
\node [label=right:$\vdots$] at (4.2,1) {};
\draw [-]
	(0,3) arc (90:270:.5);
\draw [-]
	(0,0) arc (90:270:.5);
\draw [-]
	(0,-2) arc (90:270:.5);
\draw [-] (0,3) -- (2,3);
\draw [-] (0,2) -- (2,2);
\draw [-] (0,0) -- (2,0);
\draw [-] (0,-1) -- (2,-1);
\draw [-] (0,-2) -- (2,-2);
\draw [-] (0,-3) -- (2,-3);

\draw [-,dotted] (2,3) -- (3,3);
\draw [-,dotted] (2,2) -- (3,2);
\draw [-,dotted] (2,0) -- (3,0);
\draw [-,dotted] (2,-1) -- (3,-1);
\draw [-,dotted] (2,-2) -- (3,-2);
\draw [-,dotted] (2,-3) -- (3,-3);

\draw [-] (3,3) -- (4,3);
\draw [-] (3,2) -- (4,2);
\draw [-] (3,0) -- (4,0);
\draw [-] (3,-1) -- (4,-1);
\draw [-] (3,-2) -- (4,-2);
\draw [-] (3,-3) -- (4,-3);

\draw [-] (0.5,3.5) -- (0.5,1.5);
\draw [-,dotted] (0.5,1.5) -- (0.5,0.5);
\draw [-] (0.5,0.5) -- (0.5,-3.5);
\draw [-] (1.5,3.5) -- (1.5,1.5);
\draw [-,dotted] (1.5,1.5) -- (1.5,0.5);
\draw [-] (1.5,0.5) -- (1.5,-3.5);
\draw [-] (3.5,3.5) -- (3.5,1.5);
\draw [-,dotted] (3.5,1.5) -- (3.5,0.5);
\draw [-] (3.5,0.5) -- (3.5,-3.5);

\filldraw[black] (-.5,2.5) circle (4pt);
\filldraw[black] (-.5,-.5) circle (4pt);
\filldraw[black] (-.5,-2.5) circle (4pt);

\end{tikzpicture}\right)=
\sum_\omega
\mathcal{Z}\left(
\begin{tikzpicture}[scale=.5,baseline={([yshift=-\the\dimexpr\fontdimen22\textfont2\relax]current bounding box.center)}]
\node [label=left:$\mathbf{r}$] at (-.5,2.5) {};
\node [label=left:$\mathbf{\vdots}$] at (-.5,1.3) {};
\node [label=left:$\mathbf{2}$] at (-.5,-.5) {};
\node [label=left:$\mathbf{1}$] at (-.5,-2.5) {};
\draw [-]
	(0,3) arc (90:270:.5);
\draw [-]
	(0,0) arc (90:270:.5);
\draw [-]
	(0,-2) arc (90:270:.5);
\draw [-] (2,3) -- (2.5,3);
\draw [-] (2,2) -- (2.5,2);
\draw [-] (2,0) -- (2.5,0);
\draw [-] (2,-1) -- (2.5,-1);
\draw [-] (2,-2) -- (2.5,-2);
\draw [-] (2,-3) -- (2.5,-3);

\filldraw[black] (-.5,2.5) circle (4pt);
\filldraw[black] (-.5,-.5) circle (4pt);
\filldraw[black] (-.5,-2.5) circle (4pt);

\draw [rounded corners, densely dashed] (0,3.5) -- (0,-3.5) -- (2,-3.5) -- (2,3.5) -- cycle;
\draw [-] (2.5,3.5) -- (2.5,-3.5) -- (3.5,-3.5) -- (3.5,3.5) -- cycle;
\node at (3,0) {$\omega$};
\end{tikzpicture} \right) \cdot 
\mathcal{Z}\left(
\begin{tikzpicture}[scale=.5,baseline={([yshift=-\the\dimexpr\fontdimen22\textfont2\relax]current bounding box.center)}]
\draw [-] (2,3) -- (4,3);
\draw [-] (2,2) -- (4,2);
\draw [-] (2,0) -- (4,0);
\draw [-] (2,-1) -- (4,-1);
\draw [-] (2,-2) -- (4,-2);
\draw [-] (2,-3) -- (4,-3);

\draw [-,dotted] (4,3) -- (5,3);
\draw [-,dotted] (4,2) -- (5,2);
\draw [-,dotted] (4,0) -- (5,0);
\draw [-,dotted] (4,-1) -- (5,-1);
\draw [-,dotted] (4,-2) -- (5,-2);
\draw [-,dotted] (4,-3) -- (5,-3);

\draw [-] (5,3) -- (6,3);
\draw [-] (5,2) -- (6,2);
\draw [-] (5,0) -- (6,0);
\draw [-] (5,-1) -- (6,-1);
\draw [-] (5,-2) -- (6,-2);
\draw [-] (5,-3) -- (6,-3);

\draw [-] (2.5,3.5) -- (2.5,1.5);
\draw [-,dotted] (2.5,1.5) -- (2.5,0.5);
\draw [-] (2.5,0.5) -- (2.5,-3.5);
\draw [-] (3.5,3.5) -- (3.5,1.5);
\draw [-,dotted] (3.5,1.5) -- (3.5,0.5);
\draw [-] (3.5,0.5) -- (3.5,-3.5);
\draw [-] (5.5,3.5) -- (5.5,1.5);
\draw [-,dotted] (5.5,1.5) -- (5.5,0.5);
\draw [-] (5.5,0.5) -- (5.5,-3.5);

\draw [-] (1,3.5) -- (1,-3.5) -- (2,-3.5) -- (2,3.5) -- cycle;
\node at (1.5,0) {$\omega$};

\end{tikzpicture}
\right)
$$
\caption{
The left-hand side above depicts the braket $\braket{ \mathbf{K} | \mathbf{F}^{-1} \widetilde{\mathbf{C}}_0 \cdots \widetilde{\mathbf{C}}_{\lambda_r} | \varnothing }$, where the dashed box represents all the ways of braiding rows according to the summands of $\bra{\mathbf{K}} \mathbf{F}^{-1}$ as given in (\ref{eq:fstarform}). The right-hand side separates these according to boundary conditions $\omega \in V^{\otimes 2r}$ as shown above. In each summand, the partition function containing bends vanishes unless the boundary $\omega$ has paths exiting along the top $r$ rows (and therefore empty along the bottom $r$ rows).}
\end{figure}
}

\begin{theorem}\label{thm:twisted.grid} Let $\lambda = (\lambda_1 \geqslant \cdots \geqslant \lambda_r)$ be any partition. Then
\[ \bra{\hat{u}} \widetilde{\mathbf{C}}_0 \cdots \widetilde{\mathbf{C}}_{\lambda_r} \ket{\varnothing} = \sum_{\sigma \in S_r/S_r^{\lambda}} y_{\sigma(1)}^{\lambda_1+1}\cdots y_{\sigma(r)}^{\lambda_r+1} \prod_{i,j: \lambda_j>\lambda_i} \frac{y_{\sigma(j)}-qy_{\sigma(i)}}{y_{\sigma(j)}-y_{\sigma(i)}},\]
where $S_r^{\lambda}$ is the subgroup of $S_r$ that stabilizes $\lambda$ and $y_1,\hdots, y_r$ denote the spectral parameters for the top $r$ rows of the lattice (in descending order).
\end{theorem}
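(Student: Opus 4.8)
The plan is to evaluate the bracket by enumerating directly the nonzero states of the twisted rectangular lattice, reading off their weights from Figure~\ref{fig:nonlocal.weights}. It helps first to recognize the target: the right-hand side is exactly $(y_1\cdots y_r)\,P_\lambda(\boldsymbol y;q)$, the type $A$ Hall--Littlewood partition function $\mathcal Z(\mathcal A_\lambda)$ of \cite[Thm.~2]{Wheeler-Zinn-Justin} in the variables $y_1,\dots,y_r$ of the top $r$ rows. Indeed, shifting $\lambda$ by $(1^r)$ changes neither the stabilizer $S_r^\lambda$ nor the set of strict inequalities $\lambda_j>\lambda_i$, so the summand $y_{\sigma(1)}^{\lambda_1+1}\cdots y_{\sigma(r)}^{\lambda_r+1}\prod_{\lambda_j>\lambda_i}\frac{y_{\sigma(j)}-qy_{\sigma(i)}}{y_{\sigma(j)}-y_{\sigma(i)}}$ is precisely $\sigma\!\big(\boldsymbol y^{\lambda+(1^r)}\prod_{i<j,\ \lambda_i>\lambda_j}\frac{y_i-qy_j}{y_i-y_j}\big)$ evaluated on coset representatives, i.e.\ the standard Hall--Littlewood sum. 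Establishing this identification is the payoff of passing to the twisted operators $\widetilde{\mathbf C}_i$, whose weights, although nonlocal, render each contributing state computable.

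The decisive structural fact is that the third weight in Figure~\ref{fig:nonlocal.weights} vanishes: a westward-moving path can never turn back upward. Together with the vacuum east edge $\ket\varnothing$, the bottom boundary of the model (which injects the $r$ particles of $\lambda$ in columns $\lambda_1,\dots,\lambda_r$), and the projection onto $\hat u$ (exactly one westward exit in each of the top $r$ rows and none in the lower rows), this pins down the shape of every nonzero state: each injected path rises straight up its entry column through weight-$1$ vertices, turns west exactly once via a single fourth-column vertex in one of the top $r$ rows, and then runs straight west to the boundary. Since the $r$ turns occupy the top $r$ rows bijectively, a state is recorded by the row in which the path entering at column $\lambda_k$ turns; equal parts of $\lambda$ inject into a common column and so give indistinguishable states, whence the states are indexed by cosets $\sigma\in S_r/S_r^\lambda$. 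In particular the lower $r$ rows carry only vertical pass-throughs of weight $1$, and the parameters $\overline{x}_i$ drop out of every state.

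It remains to read off the weight of the state indexed by $\sigma$. The path turning in row $\sigma(k)$ meets one fourth-column vertex (base weight $y_{\sigma(k)}$) at column $\lambda_k$ and $\lambda_k$ second-column vertices (each of base weight $y_{\sigma(k)}$) in columns $\lambda_k-1,\dots,0$, giving the monomial $y_{\sigma(k)}^{\lambda_k+1}$. The remaining contributions are the nonlocal products $\prod_{m\in\mathcal A_{\sigma(k),i}}\frac{y_{\sigma(k)}-qy_m}{y_{\sigma(k)}-y_m}$ carried by those second-column vertices, where by definition $\mathcal A_{\sigma(k),i}$ is the set of rows whose path turns in the same column $i$, namely the rows $\sigma(\ell)$ with $\lambda_\ell=i$. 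As $i$ runs over $\lambda_k-1,\dots,0$, each pair $(k,\ell)$ with $\lambda_k>\lambda_\ell$ contributes the single factor $\frac{y_{\sigma(k)}-qy_{\sigma(\ell)}}{y_{\sigma(k)}-y_{\sigma(\ell)}}$, while equal parts contribute nothing. Multiplying these and summing over $\sigma\in S_r/S_r^\lambda$ yields the asserted formula.

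The only delicate point, and thus the main obstacle, is the bookkeeping of the nonlocal sets $\mathcal A_{j,i}$: one must verify that each westward path crosses each strictly smaller turning column exactly once, so that the product runs over precisely the strict pairs $\lambda_j>\lambda_i$ and no ill-defined factors arise from equal parts, and that the vanishing third weight together with the $\hat u$-projection really does exclude all other path shapes (multiple turns, turns in the lower rows, or paths exiting through the top). A clean way to organize this uniformly in $\lambda$ is to peel off the last operator $\widetilde{\mathbf C}_{\lambda_1}$ and induct on the number of columns, tracking how the exponents and the sets $\mathcal A_{j,i}$ update when every path advances one column. Alternatively, since the weights of Figure~\ref{fig:nonlocal.weights} were produced from the factorizing twist $\mathbf F$ exactly as in \cite[Appendix~A]{Wheeler-Zinn-Justin}, one may adapt that computation to identify the bracket with $(y_1\cdots y_r)P_\lambda(\boldsymbol y;q)$ directly.
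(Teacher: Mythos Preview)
Your argument is correct and follows essentially the same approach as the paper's own proof: you use the vanishing of the third twisted weight to force each path into a single turn, biject states with cosets $\sigma\in S_r/S_r^\lambda$, and read off the monomial $y_{\sigma(k)}^{\lambda_k+1}$ together with the nonlocal factors $\frac{y_{\sigma(k)}-qy_{\sigma(\ell)}}{y_{\sigma(k)}-y_{\sigma(\ell)}}$ for $\lambda_k>\lambda_\ell$. The paper's version is terser and omits your contextual identification with the type~$A$ Hall--Littlewood partition function, but the combinatorial core is identical.
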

\begin{proof}

To begin, consider the path of a given particle in a given state of $\bra{\hat{u}} \widetilde{\mathbf{C}}_0 \cdots \widetilde{\mathbf{C}}_{\lambda_r} \ket{\varnothing}$. This particle emanates from the bottom boundary along column $\lambda_i$, and, because the Boltzmann weight in the third column of Figure \ref{fig:nonlocal.weights} is 0, makes a single turn to exit via the left boundary along row $j$, where $j$ is one of the top $r$ rows of the lattice. Since any of the $r$ particles can exit via the left boundary of any of the top $r$ rows, we can see that there is a bijection between states of $\bra{\hat{u}} \widetilde{\mathbf{C}}_0 \cdots \widetilde{\mathbf{C}}_{\lambda_r} \ket{\varnothing}$ and elements of $S_r/S_r^{\lambda}$. As an example, consider the state below: 
\[
\textrm{wt}\left(
\begin{tikzpicture}[scale=.25,baseline={([yshift=-\the\dimexpr\fontdimen22\textfont2\relax]current bounding box.center)}, every node/.style={transform shape}]
\node [label=left:\Huge{$y_r$}] at (-1,2) {};
\node [label=left:\Huge{$y_{r-1}$}] at (-1,1) {};
\node [] at (-1,0) {};
\node [label=left:\Huge{$y_2$}] at (-1,-1) {};
\node [label=left:\Huge{$y_1$}] at (-1,-2) {};
\node [] at (0,-3) {};
\node [] at (0,-4) {};
\node [] at (-1,-5) {};
\node [] at (-1,-6) {};
\node [] at (-1,-7) {};
\node [label=below:\Huge{$\lambda_r$}] at (0.5,-7.5) {};
\node [label=below:\Huge{$\lambda_{r-1}$}] at (1.5,-7.5) {};
\node [label=below:\Huge{$\lambda_2$}] at (4.5,-7.5) {};
\node [label=below:\Huge{$\lambda_1$}] at (7.5,-7.5) {};
\draw [-] (-1,2) -- (3,2);
\draw [densely dotted] (3,2) -- (4,2);
\draw [-] (4,2) -- (8,2);
\draw [-] (-1,1) -- (3,1);
\draw [densely dotted] (3,1) -- (4,1);
\draw [-] (4,1) -- (8,1);
\draw [-] (-1,-1) -- (3,-1);
\draw [densely dotted] (3,-1) -- (4,-1);
\draw [-] (4,-1) -- (8,-1);
\draw [-] (-1,-2) -- (3,-2);
\draw [densely dotted] (3,-2) -- (4,-2);
\draw [-] (4,-2) -- (8,-2);
\draw [-] (-1,-3) -- (3,-3);
\draw [densely dotted] (3,-3) -- (4,-3);
\draw [-] (4,-3) -- (8,-3);
\draw [-] (-1,-4) -- (3,-4);
\draw [densely dotted] (3,-4) -- (4,-4);
\draw [-] (4,-4) -- (8,-4);
\draw [-] (-1,-6) -- (3,-6);
\draw [densely dotted] (3,-6) -- (4,-6);
\draw [-] (4,-6) -- (8,-6);
\draw [-] (-1,-7) -- (3,-7);
\draw [densely dotted] (3,-7) -- (4,-7);
\draw [-] (4,-7) -- (8,-7);

\draw [-] (-.5,2.5) -- (-.5,.5);
\draw [densely dotted] (-.5,.5) -- (-.5,-.5);
\draw [-] (-.5,-.5) -- (-.5,-4.5);
\draw [densely dotted] (-.5,-4.5) -- (-.5,-5.5);
\draw [-] (-.5,-5.5) -- (-.5,-7.5);
\draw [-] (0.5,2.5) -- (0.5,.5);
\draw [densely dotted] (0.5,.5) -- (0.5,-.5);
\draw [-] (0.5,-.5) -- (0.5,-4.5);
\draw [densely dotted] (0.5,-4.5) -- (0.5,-5.5);
\draw [-] (0.5,-5.5) -- (0.5,-7.5);
\draw [-] (1.5,2.5) -- (1.5,.5);
\draw [densely dotted] (1.5,.5) -- (1.5,-.5);
\draw [-] (1.5,-.5) -- (1.5,-4.5);
\draw [densely dotted] (1.5,-4.5) -- (1.5,-5.5);
\draw [-] (1.5,-5.5) -- (1.5,-7.5);
\draw [-] (2.5,2.5) -- (2.5,.5);
\draw [densely dotted] (2.5,.5) -- (2.5,-.5);
\draw [-] (2.5,-.5) -- (2.5,-4.5);
\draw [densely dotted] (2.5,-4.5) -- (2.5,-5.5);
\draw [-] (2.5,-5.5) -- (2.5,-7.5);
\draw [-] (4.5,2.5) -- (4.5,.5);
\draw [densely dotted] (4.5,.5) -- (4.5,-.5);
\draw [-] (4.5,-.5) -- (4.5,-4.5);
\draw [densely dotted] (4.5,-4.5) -- (4.5,-5.5);
\draw [-] (4.5,-5.5) -- (4.5,-7.5);
\draw [-] (5.5,2.5) -- (5.5,.5);
\draw [densely dotted] (5.5,.5) -- (5.5,-.5);
\draw [-] (5.5,-.5) -- (5.5,-4.5);
\draw [densely dotted] (5.5,-4.5) -- (5.5,-5.5);
\draw [-] (5.5,-5.5) -- (5.5,-7.5);
\draw [-] (6.5,2.5) -- (6.5,.5);
\draw [densely dotted] (6.5,.5) -- (6.5,-.5);
\draw [-] (6.5,-.5) -- (6.5,-4.5);
\draw [densely dotted] (6.5,-4.5) -- (6.5,-5.5);
\draw [-] (6.5,-5.5) -- (6.5,-7.5);
\draw [-] (7.5,2.5) -- (7.5,.5);
\draw [densely dotted] (7.5,.5) -- (7.5,-.5);
\draw [-] (7.5,-.5) -- (7.5,-4.5);
\draw [densely dotted] (7.5,-4.5) -- (7.5,-5.5);
\draw [-] (7.5,-5.5) -- (7.5,-7.5);

\draw [-,line width=1mm,red,opacity=0.5]
    (-1,2) -- (.5,2) -- (.5,-7.5);
\draw [-,line width=1mm,red,opacity=0.5]
    (-1,1) -- (1.5,1) -- (1.5,-7.5);
\draw [-,line width=1mm,red,opacity=0.5]
    (-1,-1) -- (4.5,-1) -- (4.5,-7.5);
\draw [-,line width=1mm,red,opacity=0.5]
    (-1,-2) -- (7.5,-2) -- (7.5,-7.5);

\filldraw[white] (-1,2) circle (4pt);
\filldraw[white] (-1,1) circle (4pt);
\filldraw[white] (-1,-1) circle (4pt);
\filldraw[white] (-1,-2) circle (4pt);
\filldraw[red,opacity=.5] (-1,2) circle (4pt);
\filldraw[red,opacity=.5] (-1,1) circle (4pt);
\filldraw[red,opacity=.5] (-1,-1) circle (4pt);
\filldraw[red,opacity=.5] (-1,-2) circle (4pt);
\draw[fill=white] (-1,-3) circle (4pt);
\draw[fill=white] (-1,-4) circle (4pt);
\draw[fill=white] (-1,-6) circle (4pt);
\draw[fill=white] (-1,-7) circle (4pt);
\end{tikzpicture}\right)
= 
y_1^{\lambda_1+1}\cdots y_{r}^{\lambda_r+1} \prod_{i,j: \lambda_j>\lambda_i} \frac{y_j - qy_i}{y_j-y_i}.
\]

If we instead consider the state where the particle emanating from column $\lambda_i$ exits along row $\sigma(i)$ (rather than row $i$), for a given $\sigma \in S_r/S_r^{\lambda}$, then we see that the Boltzmann weight for this state is $y_{\sigma(1)}^{\lambda_1+1}\cdots y_{\sigma(r)}^{\lambda_r+1} \prod_{i,j: \lambda_j>\lambda_i} \frac{y_{\sigma(j)}-qy_{\sigma(i)}}{y_{\sigma(j)}-y_{\sigma(i)}}$. Summing over all possible states, we see that \[\bra{\hat{u}} \widetilde{\mathbf{C}}_0 \cdots \widetilde{\mathbf{C}}_{\lambda_r} \ket{\varnothing}= \sum_{\sigma \in S_r/S_r^{\lambda}} y_{\sigma(1)}^{\lambda_1+1}\cdots y_{\sigma(r)}^{\lambda_r+1} \prod_{i,j: \lambda_j>\lambda_i} \frac{y_{\sigma(j)}-qy_{\sigma(i)}}{y_{\sigma(j)}-y_{\sigma(i)}}. \qedhere \] 
\end{proof}

Following \cite{Wheeler-Zinn-Justin} (see also \cite{Venkateswaran}), we note that $\bra{\hat{u}} \widetilde{\mathbf{C}}_0 \cdots \widetilde{\mathbf{C}}_{\lambda_r} \ket{\varnothing}$ can be expressed as a sum over $S_r$: \[ \bra{\hat{u}} \widetilde{\mathbf{C}}_0 \cdots \widetilde{\mathbf{C}}_{\lambda_r} \ket{\varnothing} = \frac{1}{c_{\lambda}(q)}\sum_{\sigma \in S_r} y_{\sigma(1)}^{\lambda_1+1}\cdots y_{\sigma(r)}^{\lambda_r+1} \prod_{i,j:\lambda_j>\lambda_i} \frac{y_{\sigma(j)}-qy_{\sigma(i)}}{y_{\sigma(j)}-y_{\sigma(i)}},\]  where $c_{\lambda}(q) = \prod_{i\geq 1} \prod_{j=1}^{m_i(\lambda)} \frac{1-q^j}{1-q}$ and $m_i(\lambda)$ is the number of $\lambda_j$ equal to $i$ for each $i\geq 0$.

Before proceeding to evaluate the partition functions for various ranks under the scheme described above, we first include a result from \cite{Wheeler-Zinn-Justin} concerning the partition function that arises from the bend weight values $C_j = 1$, $B_j = -q$, and $A_j=D_j=0$.

\begin{theorem}[Wheeler-Zinn-Justin, Thm. 3 and Rmk. 5 in~\cite{Wheeler-Zinn-Justin}]\label{thm:WZJ}
Let $\lambda = (\lambda_1 \geqslant \lambda_2 \geqslant \cdots \geqslant \lambda_r)$ be a partition.  With rectangular Boltzmann weights as in Figure~\ref{fig:six.vertex.model} and bend weights $C_j = 1$, $B_j = -q$, and $A_j=D_j=0$ we have,
\[ \mathcal{Z}(\mathcal{B}_\lambda) = c_{\lambda}(q)^{-1}\prod_{i=1}^r x_i \cdot \prod_{\substack{\alpha \in \Phi^{+},\\ \alpha \textup{ short}}} (1-qx^{-\alpha}) \cdot \sum_{w \in W} w\left(\prod_{i=1}^r x_i^{\lambda_i}\cdot \prod_{\alpha \in \Phi^{+}} \frac{1}{1-x^{-\alpha}} \cdot \prod_{\substack{\alpha \in \Phi^{+},\\ \alpha \textup{ long}}} (1-qx^{-\alpha})\right),\]
with $c_{\lambda}(q)$ defined as above.
\end{theorem}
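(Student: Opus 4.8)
The plan is to evaluate $\mathcal{Z}(\mathcal{B}_\lambda)$ through the bra-ket factorization developed in this section rather than by a direct state sum. The chosen weights $C_j=1$, $B_j=-q$, $A_j=D_j=0$ are exactly regime $1(b)$ of Lemma~\ref{le:fish} with $C_j$ constant, so the caduceus constant of Lemma~\ref{le:caduceus} is $F=C_1(x_1)C_2(x_2)/(C_1(x_2)C_2(x_1))=1$. Hence Lemma~\ref{le:symmetric} shows $\mathcal{Z}(\mathcal{B}_\lambda)$ is symmetric in $x_1,\dots,x_r$, while Lemma~\ref{le:inversion} (regime $1(b)$) shows $(x_i^{-1}-qx_i)\mathcal{Z}(\mathcal{B}_\lambda)$ is invariant under $x_i\leftrightarrow\overline{x}_i$; one checks directly that the claimed right-hand side obeys these same transformation laws (the $S_r$-symmetry because the short-root prefactor $\prod_{\alpha\textup{ short}}(1-qx^{-\alpha})$ and the monomial $\prod_i x_i$ are $S_r$-invariant, and the $x_i\leftrightarrow\overline{x}_i$ law because the prefactor $\prod_i(x_i-qx_i^{-1})$ carries exactly the factor predicted by Lemma~\ref{le:inversion}). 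These serve as a guiding consistency check. Starting from~(\ref{eq:twoparts}), I write $\mathcal{Z}(\mathcal{B}_\lambda)=\braket{\mathbf{K}|\mathbf{F}^{-1}\widetilde{\mathbf{C}}_0\cdots\widetilde{\mathbf{C}}_{\lambda_r}|\varnothing}$.

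The twisted column operators $\widetilde{\mathbf{C}}_i$ are built only from the tetravalent weights of Figure~\ref{fig:six.vertex.model}, so the ket $\widetilde{\mathbf{C}}_0\cdots\widetilde{\mathbf{C}}_{\lambda_r}\ket{\varnothing}$ is independent of the bend weights. By the discussion preceding Theorem~\ref{thm:twisted.grid}, the vanishing weight in the third column of Figure~\ref{fig:nonlocal.weights} forces this vector to be a scalar multiple of the single state $\ket{\hat u}$, and Theorem~\ref{thm:twisted.grid} identifies that scalar with $\frac{1}{c_\lambda(q)}\sum_{\sigma\in S_r}x_{\sigma(1)}^{\lambda_1+1}\cdots x_{\sigma(r)}^{\lambda_r+1}\prod_{i,j:\lambda_j>\lambda_i}\frac{x_{\sigma(j)}-qx_{\sigma(i)}}{x_{\sigma(j)}-x_{\sigma(i)}}$, a symmetric sum over $S_r$. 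Since the ket is supported on $\ket{\hat u}$, the partition function factors as this $S_r$-sum times the single scalar $\braket{\mathbf{K}|\mathbf{F}^{-1}|\hat u}$, so the entire problem reduces to evaluating the latter for the given bend weights.

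For that evaluation I would substitute the explicit $\mathbf{F}^{-1}=\mathbf{F}^\ast\prod_{\bar 1\preccurlyeq l\prec k\preccurlyeq r}\Delta^{-1}_{kl}(x_k,x_l)$ from~(\ref{eq:fstarform})--(\ref{eq:Delta}) and expand $\bra{\mathbf{K}}$ as the partition function of the $r$ braided caps, using the regime $1(b)$ bend weights of Figure~\ref{fig:gen.bends} and the twist weights of Figure~\ref{fig:R.weights}. Summing over the configurations permitted along each cap $(x_i,\overline{x}_i)$ realizes the sum over the sign-change subgroup $(\mathbb{Z}/2)^r\subset W$: the single-cap data contribute the short-root factors $\prod_{\alpha\textup{ short}}(1-qx^{-\alpha})$ (the $2e_i$ in the natural realization, giving $\prod_i(1-qx_i^{-2})$) together with the monomial $\prod_i x_i$, while the braids of $\mathbf{F}^{-1}$ acting between distinct caps, together with the $\Delta^{-1}$ weights, produce the long-root factors $(1-qx^{-\alpha})/(1-x^{-\alpha})$ for $\alpha=e_i\pm e_j$. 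Combining $\braket{\mathbf{K}|\mathbf{F}^{-1}|\hat u}$ with the ket's $S_r$-sum and invoking the standard identity that fuses a sign-change average with an $S_r$-symmetrization into a single sum over $W=S_r\ltimes(\mathbb{Z}/2)^r$ then yields the stated closed form, with the short-root product and $\prod_i x_i$ pulled out of the Weyl sum.

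I expect the main obstacle to be exactly this final evaluation and recombination: organizing the braids of $\mathbf{F}^{-1}$ against the caps so that the $(\mathbb{Z}/2)^r$-average is computed in closed form, and then proving the symmetric-function identity that turns the product of a type $A$-style Hall-Littlewood sum and the sign-change factor into the type $C$ Weyl-group sum of the theorem. This is the computation of \cite[Appendix A]{Wheeler-Zinn-Justin} transported into the present notation, and the genuine work is the careful bookkeeping of which $(1-qx^{-\alpha})$ factors are short versus long and the verification of the normalization $c_\lambda(q)$; the transformation laws recorded above (Lemmas~\ref{le:symmetric} and~\ref{le:inversion}) then provide an independent check that the assembled answer is correct.
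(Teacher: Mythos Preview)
First, note that the paper does not prove this theorem: it is quoted from Wheeler--Zinn-Justin and identified as the specialization $\gamma=\delta=0$ of their equation~(64). So there is no in-paper argument to compare against; what follows evaluates your sketch on its own merits.

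Your overall strategy---use the factorization~\eqref{eq:twoparts} and evaluate the two pieces---is exactly the method of Section~\ref{sec:genmethods}, and it is how the paper proves the analogous Theorems~\ref{thm:spherical1}, \ref{thm:rank2.generic}, and~\ref{thm:spherical3}. However, there is a genuine error in your second paragraph. You assert that the vanishing of the third twisted weight in Figure~\ref{fig:nonlocal.weights} ``forces this vector to be a scalar multiple of the single state $\ket{\hat u}$''. This is false: that zero weight only forbids a path from turning downward after having moved left; it does \emph{not} force the exit row to lie in the top half. A one-line rank-$1$ check already gives
\[
\widetilde{\mathbf C}_0\cdots\widetilde{\mathbf C}_{\lambda_1}\ket{\varnothing}
= x_1^{\lambda_1+1}\ket{\hat u}+\bar x_1^{\,\lambda_1+1}\ket{\text{particle in row }\bar 1},
\]
so the ket is not a scalar multiple of $\ket{\hat u}$. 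Theorem~\ref{thm:twisted.grid} computes only the $\hat u$-component; the same argument with $(y_1,\dots,y_r)$ replaced by the spectral parameters of any chosen $r$ exit rows gives the other components. Your claimed factorization $\mathcal Z(\mathcal B_\lambda)=(\text{$S_r$-sum})\cdot\braket{\mathbf K|\mathbf F^{-1}|\hat u}$ therefore does not hold.

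The correct organization---visible in the proofs of Lemmas~\ref{le:K_sph1} and~\ref{le:K.rank2.generic} and of Theorem~\ref{thm:rank2.generic}---is to compute \emph{all} components of $\bra{\mathbf K}\mathbf F^{-1}$ and pair each with the matching component of the ket. In regime~$1(b)$ the constraints $A_j=D_j=0$ restrict $\bra{\mathbf K}\mathbf F^{-1}$ to $2^r$ components indexed by sign vectors $(\epsilon_1,\dots,\epsilon_r)\in\{\pm1\}^r$ (one choice of $B_j$ versus $C_j$ per bend; compare~\eqref{eq:boundary2}). Pairing the $(\epsilon_1,\dots,\epsilon_r)$ component with the ket component computed by Theorem~\ref{thm:twisted.grid} with $y_k=x_k^{\epsilon_k}$, and then summing over $\epsilon$, is what produces the $(\mathbb Z/2)^r$ half of the Weyl sum; the $S_r$ half comes from the ket side as you say. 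The remaining obstacle you identify---assembling the long- and short-root factors and checking the normalization $c_\lambda(q)$---is real, and you are right that this is essentially the Wheeler--Zinn-Justin computation. But it does not reduce to a single scalar times a single bra-ket as you claimed.
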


This result is the special case of (64) in \cite{Wheeler-Zinn-Justin} where two parameters that determine weights associated to certain ``bonus columns" are both set to $0$; for further context, see Section \ref{sec:intro}.

\section{Rank One Solvable Models}\label{sec:rank1}

A rank one lattice consists of a single pair of rows, connected by a bend. In these lattices, our family of boundary conditions permits only a single particle to enter along the bottom and exit at the bend. Since the caduceus relation is not relevant to this case, we will say that a model is ``rank one solvable" if it satisfies the Yang-Baxter equation and Fish relation. Lemma \ref{le:fish} gives the necessary conditions, reducing to four possible cases of solvable rank one models. We continue to use the abbreviated notation $B_j(x)$ for $B_j(x,\bar{x}),$ etc., in the following results.

\begin{theorem}\label{thm:spherical1}
Let the rank $r=1$ and let $\lambda = (\lambda_1)$ be a one-part partition. If $\mathcal{B}_{\lambda}$ is a solvable rank one model, then 
\begin{enumerate}
\item[(1a)] if $B_1(x)=C_1(x)$, we have $\displaystyle \mathcal{Z}(\mathcal{B}_\lambda) = C_1(x_1) \sum_{w \in W_0} w\left(x_1^{\lambda_1+1}\cdot \frac{1-qx^{-\alpha_1}}{1-x^{-\alpha_1}}\right)$;
\item[(1b)] if $B_1(x) = -qC_1(x)$,  we have $\displaystyle \mathcal{Z}(\mathcal{B}_\lambda) = C_1 \frac{(1-qx^{-\alpha_1})}{1-x^{-\alpha_1}}\cdot 
\sum_{w \in W_0} (-1)^{\ell(w)}w\left(x_1^{\lambda_1+1}\right)$;
\item[(2a)] if $B_1(x) = -x^2C_1(x)$,  we have $\displaystyle \mathcal{Z}(\mathcal{B}_\lambda) = C_1x_1 \frac{(1-qx^{-\alpha_1})}{1-x^{-\alpha_1}}\cdot 
\sum_{w \in W_0} (-1)^{\ell(w)} w\left(x_1^{\lambda_1}\right)$; and
\item[(2b)] if $B_1(x)=qx^2C_1(x)$, we have  $\displaystyle \mathcal{Z}(\mathcal{B}_\lambda) = x_1 C_1(x_1) \sum_{w \in W_0} w\left(x_1^{\lambda_1}\cdot \frac{1-qx^{-\alpha_1}}{1-x^{-\alpha_1}}\right)$.
\end{enumerate}
Here $B_1(x)$ and $C_1(x)$ refer to bend weights as in Figure \ref{fig:gen.bends}. Moreover, $W_0$ denotes the Weyl group in rank one, so two elements with $\ell(w)$ the length.
\end{theorem}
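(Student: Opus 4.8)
The plan is to compute $\mathcal{Z}(\mathcal{B}_\lambda)$ directly, by enumerating admissible states and summing a geometric series, and only at the end to invoke the four regimes of the Fish relation (Lemma~\ref{le:fish}). First I would enumerate the states. A single particle enters the bottom boundary at column $\lambda_1$, hence into the lower row (with parameter $\overline{x}_1$), and travels up and to the left until it terminates at the bend. Because there are only two rows, such a path is completely determined by the column $k$ at which it turns upward from the $\overline{x}_1$-row into the $x_1$-row: if it never turns it exits along the lower leg of the bend (configuration $B$ in Figure~\ref{fig:gen.bends}), and otherwise it exits along the upper leg (configuration $C$), having crossed at some $k \in \{0,1,\ldots,\lambda_1\}$. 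Since a single particle occupies exactly one leg, the configurations $A$ and $D$ never arise, so the values of $A_1,D_1$ are irrelevant here.

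Next I would read off the Boltzmann weight of each state from Figures~\ref{fig:six.vertex.model} and~\ref{fig:gen.bends}. Every horizontal pass-through vertex and every turn vertex in a given row contributes that row's spectral parameter ($x_1$ or $\overline{x}_1 = x_1^{-1}$); the vertex where the particle turns upward (for $k < \lambda_1$) is the weight-($1-q^m$) vertex with $m=1$, contributing $1-q$; and the bend contributes $B_1(x_1)$ or $C_1(x_1)$. Careful bookkeeping, distinguishing the entry column $\lambda_1$ and the turning column (which have slightly different vertex types than the bulk), gives weight $x_1^{-(\lambda_1+1)}B_1(x_1)$ for the lower-exit state, weight $x_1^{\lambda_1+1}C_1(x_1)$ for the state crossing at $k=\lambda_1$, and weight $(1-q)\,x_1^{2k+1-\lambda_1}\,C_1(x_1)$ for each state crossing at $k < \lambda_1$.

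Summing the resulting geometric series $\sum_{k=0}^{\lambda_1-1} x_1^{2k}$ then yields the closed form
\[
\mathcal{Z}(\mathcal{B}_\lambda) = B_1(x_1)\, x_1^{-(\lambda_1+1)} + C_1(x_1)\,\frac{x_1^{\lambda_1+1}(x_1^2-q) - (1-q)\,x_1^{1-\lambda_1}}{x_1^2-1},
\]
valid in every regime. To finish, I would substitute the defining relation of each regime from Lemma~\ref{le:fish} --- namely $B_1 = C_1$, $B_1 = -qC_1$, $B_1 = -x^2 C_1$, and $B_1 = qx^2 C_1$ --- and simplify. Writing $W_0 = \{1,s\}$ with $s\colon x_1 \mapsto x_1^{-1}$ and taking $x^{-\alpha_1} = x_1^{-2}$ (the single long positive root of type $C_1$), one checks in each case that the numerator factors appropriately: in regimes 1(a) and 2(b) it reassembles into the symmetrizing sum $\sum_{w\in W_0} w(\,\cdot\,)$ yielding the Hall--Littlewood/zonal shape, while in regimes 1(b) and 2(a) the factor $x_1^2-q$ (equivalently $1-qx^{-\alpha_1}$) pulls out in front of the Weyl sum and the remainder becomes the Weyl numerator $\sum_{w} (-1)^{\ell(w)} w(\,\cdot\,)$.

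I expect the only genuine obstacle to lie in the first two steps, getting the state enumeration and the per-vertex weights exactly right --- particularly the boundary vertices at the entry column and at the turn --- since the final algebraic simplification is routine once the master formula above is in hand. One could alternatively supply the symmetry input via Lemma~\ref{le:inversion}, but the direct enumeration is both shorter and entirely self-contained in rank one.
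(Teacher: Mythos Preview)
Your proof is correct. The state enumeration, per-vertex weights (including the entry column, the turn column, and the distinction between the $k=\lambda_1$ and $k<\lambda_1$ cases), and the resulting master formula
\[
\mathcal{Z}(\mathcal{B}_\lambda) = B_1(x_1)\, x_1^{-(\lambda_1+1)} + C_1(x_1)\,\frac{x_1^{\lambda_1+1}(x_1^{2}-q) - (1-q)\,x_1^{1-\lambda_1}}{x_1^{2}-1}
\]
are all right, and the four regime substitutions simplify exactly as you indicate.

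Your route is genuinely different from the paper's. The paper does not enumerate states directly; it instantiates the general $\bra{\mathbf{K}}\mathbf{F}^{-1}$ machinery of Section~\ref{sec:genmethods}, first computing $\bra{\mathbf{K}}\mathbf{F}^{-1}$ in rank one via an auxiliary lemma (Lemma~\ref{le:K_sph1}) using Fish-relation diagrammatics, and then pairing with the rank-one case of Theorem~\ref{thm:twisted.grid}. The paper only writes out cases (1a) and (2b) and farms out (1b) to \cite{Wheeler-Zinn-Justin}, leaving (2a) to the reader. What your approach buys is a completely self-contained argument that treats all four regimes uniformly and avoids the $\mathbf{F}$-matrix / twisted-column apparatus entirely; what the paper's approach buys is a warm-up for the same method that becomes essential in ranks two and three, where direct enumeration is no longer tractable.
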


This may seem like a grandiose way of expressing the sum of two terms, but we mean to suggest expressions that will reappear in higher rank in subsequent sections. In particular, the reader can see that rank one solvable models with u-turn bend split into two families according to the constant of proportionality $F$ appearing in the Fish relation (Lemma~\ref{le:fish}). Members of the family all give, up to a simple explicit factor, one of the two orthogonal polynomials of Macdonald associated to characters of this two-element $W_0$ described in the introduction. Cases (1a) and (2b) have $F=1$ and give rise to the symmetric Hall-Littlewood polynomials and cases (1b) and (2a) have $F=(x^2-q)/(1-qx^2)$ and give the $q$-antisymmetric deformations of Weyl's character formula.\footnote{We say ``$q$-antisymmetric'' as setting $q=1$ gives the usual antisymmetric alternator. This is precisely the family of orthogonal polynomials associated to the sign character of $W_0$ by Macdonald. As written, case (1b) exactly matches the Casselman-Shalika formula for a $p$-adic Whittaker function and recovers the Weyl character formula at $q=0$.} The proof requires the following simple lemma according to the values of $F$, and written in the language of our general approach detailed in the previous section.

\begin{lemma}\label{le:K_sph1} In rank one, if the Fish constant of proportionality is $F=1$ (i.e., in cases (1a) and (2b) of Lemma \ref{le:fish}), then
\begin{equation} \label{eq:boundary1} \bra{\mathbf{K}}\mathbf{F}^{-1} = \left[ C_1(x_1) \frac{x_1-q\overline{x}_1}{x_1-\overline{x}_1} \right] \cdot (\bra{1}_1 \otimes \bra{-1}_{\overline{1}}) + \left[ C_1(x_1) \frac{\overline{x}_1-qx_1}{\overline{x}_1-x_1} \cdot \begin{cases} 1 & \textrm{if $B_1(x)=C_1(x)$} \\ x_1^2 & \textrm{if $B_1(x)=q x^2 C_1(x)$}  \end{cases} \right]\cdot (\bra{-1}_1 \otimes \bra{1}_{\overline{1}}),\end{equation}
If instead $F =(x^2-q)/(1-qx^2)$ (i.e., in cases (1b) and (2a) of Lemma \ref{le:fish}), then
\begin{equation} \label{eq:boundary2} \bra{\mathbf{K}}\mathbf{F}^{-1} = \left[ C_1(x_1) \frac{x_1-q\overline{x}_1}{x_1 - \overline{x}_1} \right] \cdot (\bra{1}_1 \otimes \bra{-1}_{\overline{1}}) + \left[ C_1(x_1) \frac{x_1-q\overline{x}_1}{x_1-\overline{x}_1} \cdot \begin{cases} -1 & \textrm{if $B_1(x)=-qC_1(x)$} \\ -x_1^2 & \textrm{if $B_1(x)=-x^2 C_1(x)$}  \end{cases} \right]\cdot (\bra{-1}_1 \otimes \bra{1}_{\overline{1}}).\end{equation}

Here \[\bra{1}_1 \otimes \bra{-1}_{\overline{1}} = \begin{tikzpicture}[scale=0.5, baseline=-.5ex]
\node [label=above:$ $] at (0,1) {};
\node [label=below:$ $] at (0,-1) {};
\node [label=left:$\mathbf{1}$] at (-1,0) {};
\node [label=center:$x_1$] at (.5,1) {};
\node [label=center:$\overline{x}_1$] at (.5,-1) {};
\draw [-]
	(0,1) arc (90:270:1);
\draw [-,line width=1mm,red,opacity=0.5]
	(0,1) arc (90:180:1);
\filldraw[black] (-1,0) circle (4pt);
\end{tikzpicture} \text{ and }
\bra{-1}_1 \otimes \bra{1}_{\overline{1}} = \begin{tikzpicture}[scale=0.5, baseline=-.5ex]
\node [label=above:$ $] at (0,1) {};
\node [label=below:$ $] at (0,-1) {};
\node [label=left:$\mathbf{1}$] at (-1,0) {};
\node [label=center:$x_1$] at (.5,1) {};
\node [label=center:$\overline{x}_1$] at (.5,-1) {};
\draw [-]
	(0,1) arc (90:270:1);
\draw [-,line width=1mm,red,opacity=0.5]
	(-1,0) arc (180:270:1);
\filldraw[black] (-1,0) circle (4pt);
\end{tikzpicture}. \]


\end{lemma}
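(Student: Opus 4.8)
The plan is to specialize the general machinery of Section~\ref{sec:genmethods} to the rank-one case and to compute the covector $\bra{\mathbf{K}}\mathbf{F}^{-1}$ directly as the product of a bend covector with an explicit $2\times 2$ matrix. In rank one we have $\mathcal{I}=\{1,\bar{1}\}$, the relevant space is the four-dimensional $W_1\otimes W_{\bar{1}}$, and the only nontrivial permutation of $\mathcal{I}$ is the swap. First I would read off the bend covector from Figure~\ref{fig:gen.bends}: identifying the cap with the $x_1$-edge occupied with $\bra{1}_1\otimes\bra{-1}_{\bar{1}}$ and the cap with the $\overline{x}_1$-edge occupied with $\bra{-1}_1\otimes\bra{1}_{\bar{1}}$, the cap contributes $C_1$ and $B_1$ respectively on the one-particle sector (and $A_1,D_1$ on the empty and doubly-occupied sectors). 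Since a rank-one admissible state carries exactly one path, only configurations of type $B_1$ and $C_1$ ever occur at the cap, so it is exactly the one-particle component that is relevant; moreover $\mathbf{F}$ is assembled from the projectors $\mathbf{E}^{(kk)}$ and the six-vertex $R$-matrices, all of which preserve total occupancy, so $\mathbf{F}^{-1}$ preserves the particle grading and this component is all we must track.

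Next I would compute the one-particle block of $\mathbf{F}^{-1}=\mathbf{F}^{\ast}\prod\Delta^{-1}$ on the two-dimensional span of $\bra{1}_1\otimes\bra{-1}_{\bar{1}}$ and $\bra{-1}_1\otimes\bra{1}_{\bar{1}}$. By~\eqref{eq:Delta} the diagonal operator $\Delta_{1\bar{1}}(x_1,\overline{x}_1)$ has entries $(x_1-\overline{x}_1)/(x_1-q\overline{x}_1)$ and $(\overline{x}_1-x_1)/(\overline{x}_1-qx_1)$, while the single $R$-matrix crossing entering $\mathbf{F}^{\ast}$ supplies the off-diagonal term through the $b$- and $c$-weights of Figure~\ref{fig:R.weights} evaluated at $(x_1,\overline{x}_1)$. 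Assembling these (and using $\overline{x}_1=x_1^{-1}$) yields the upper-triangular matrix
\[
M = \begin{pmatrix} \dfrac{x_1 - q\overline{x}_1}{x_1 - \overline{x}_1} & \dfrac{(1-q)x_1^2}{1 - x_1^2} \\ 0 & 1 \end{pmatrix},
\]
whose entries I would pin down either by expanding the explicit formula for $\mathbf{F}^{\ast}$ in~\eqref{eq:fstarform}, or more efficiently by imposing $\mathbf{F}^{-1}\ket{\varnothing}=\ket{\varnothing}$ from~\eqref{finvonvacuum} together with $\mathbf{F}_{1\bar1}\mathbf{F}^{\ast}_{1\bar1}=\Delta_{1\bar{1}}$ to fix the vanishing lower entry and the single off-diagonal entry.

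Finally I would form the row vector $\bra{\mathbf{K}}M=\big(C_1M_{11}+B_1M_{21},\,C_1M_{12}+B_1M_{22}\big)$ and substitute the Fish relation of Lemma~\ref{le:fish} regime by regime. Because $M_{21}=0$, the first entry is $C_1(x_1)\tfrac{x_1-q\overline{x}_1}{x_1-\overline{x}_1}$ in all four cases. For the second entry, in cases (1a) and (2b) (where $F=1$) substituting $B_1=C_1$ and $B_1=qx_1^2C_1$ collapses $C_1M_{12}+B_1$ to $C_1(x_1)\tfrac{\overline{x}_1-qx_1}{\overline{x}_1-x_1}$ times $1$, respectively $x_1^2$, giving~\eqref{eq:boundary1}; in cases (1b) and (2a) (where $F=(x^2-q)/(1-qx^2)$) substituting $B_1=-qC_1$ and $B_1=-x_1^2C_1$ produces the $-1$ and $-x_1^2$ factors of~\eqref{eq:boundary2}. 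The two different-looking prefactors $\tfrac{\overline{x}_1-qx_1}{\overline{x}_1-x_1}$ versus $\tfrac{x_1-q\overline{x}_1}{x_1-\overline{x}_1}$ in the two displays are precisely the algebraic shadow of the two values of the Fish constant $F$.

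I expect the main obstacle to be the correct extraction of the one-particle block of $\mathbf{F}^{-1}$ from the abstract factorizing-twist formula of Section~\ref{sec:genmethods}: in particular, fixing the off-diagonal entry $M_{12}$ with the right $R$-matrix crossing convention and the correct direction of triangularity, and keeping the $\overline{x}_1=x_1^{-1}$ substitutions consistent throughout. Once $M$ is in hand, the remaining verification across the four Fish subcases is routine bookkeeping.
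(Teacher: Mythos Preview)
Your proposal is correct and follows essentially the same route as the paper. Both use the factorization $\mathbf{F}^{-1}=\mathbf{F}^{\ast}\Delta^{-1}$ from Section~\ref{sec:genmethods}: the paper computes each component $(\bra{\mathbf{K}}\mathbf{F}^{-1})^{j_1 j_{\bar 1}}$ diagrammatically by choosing the permutation $\rho$ that sorts the occupied edges upward (identity for $(1,0)$, swap for $(0,1)$), while you package the same data into the explicit $2\times 2$ matrix $M$ on the one-particle block and then multiply by $(C_1,B_1)$. One small caveat: invoking $\mathbf{F}^{-1}\ket{\varnothing}=\ket{\varnothing}$ does not constrain $M$, since $\ket{\varnothing}$ lives in the zero-particle sector; the triangularity $M_{21}=0$ and the off-diagonal $M_{12}$ really come from expanding~\eqref{eq:fstarform} (or equivalently from the diagrammatic $b_1,c_2$ contributions of the single crossing), exactly as the paper does.
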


\begin{proof}
We'll prove this result by verifying that both components of $\bra{\mathbf{K}}\mathbf{F}^{-1}$ have the form given on the right of \eqref{eq:boundary1}. Suppose that $j_{\bar{1}},j_1\in \{0,1\}$, where 1 denotes the presence of a particle at a vertex, and 0 denotes the lack of a particle. In the case $j_1 = 1$ and $j_{\bar{1}} = 0$, by \eqref{eq:fstarform} and \eqref{eq:Delta}, we have 
\begin{align*}
(\bra{\mathbf{K}}\mathbf{F}^{-1})^{j_1j_{\bar{1}}} &= \left(\bra{\mathbf{K}} \mathbf{R}_{1\bar{1}}^{1\bar{1}}\cdot \Delta_{\bar{1}1}^{-1}(x_{\bar{1}},x_1)\right)^{j_1j_{\bar{1}}} = (\bra{\mathbf{K}} \mathbf{R}_{1\bar{1}}^{1\bar{1}})^{j_1j_{\bar{1}}} \cdot b_{j_{\bar{1}},j_1}(\overline{x}_1,x_1)
= (\bra{\mathbf{K}} \mathbf{R}_{1\bar{1}}^{1\bar{1}})^{j_1j_{\bar{1}}} \cdot \frac{x_{1}-q\overline{x}_{1}}{x_{1}-\overline{x}_{1}}.
\end{align*}
Since \[(\bra{\mathbf{K}}\mathbf{R}_{1\bar{1}}^{1\bar{1}})^{j_1j_{\bar{1}}} = 
\textrm{wt}\left(\begin{tikzpicture}[scale=0.5,baseline={([yshift=-\the\dimexpr\fontdimen22\textfont2\relax]current bounding box.center)}]
\node [label=above:$ $] at (0,1) {};
\node [label=below:$ $] at (0,-1) {};
\node [label=left:$\mathbf{1}$] at (-1,0) {};
\node [label=center:$x_1$] at (.5,1) {};
\node [label=center:$\overline{x}_1$] at (.5,-1) {};
\draw [-]
	(0,1) arc (90:270:1);
\draw [-,line width=1mm,red,opacity=0.5]
	(0,1) arc (90:180:1);
\filldraw[black] (-1,0) circle (4pt);
\end{tikzpicture}\right)=C_1(x_1),\] we have $(\bra{\mathbf{K}}\mathbf{F}^{-1})^{j_1j_{\bar{1}}} = C_1(x_1) \displaystyle\frac{x_1-q\overline{x}_1}{x_1-\overline{x}_1}.$

On the other hand, when $j_1 = 0$ and $j_{\bar{1}}=1$, we have 
\begin{align*} 
(\bra{\mathbf{K}}\mathbf{R}_{1\bar{1}}^{\bar{1}1})^{j_1j_{\bar{1}}} & = 
\mathcal{Z}\left(
\begin{tikzpicture}[scale=0.5,baseline={([yshift=-\the\dimexpr\fontdimen22\textfont2\relax]current bounding box.center)}]
\node [label=above:$ $] at (0,1) {};
\node [label=below:$ $] at (0,-1) {};
\node [label=left:$\mathbf{1}$] at (-1,0) {};
\node [label=right:$\overline{x}_1$] at (2,1) {};
\node [label=right:$x_1$] at (2,-1) {};
\draw [-]
	(0,1) arc (90:270:1);
\draw [-]
    (0,1) .. controls (1,1) and (1,-1) .. (2,-1);
\draw [-]
    (0,-1) .. controls (1,-1) and (1,1) .. (2,1);
\filldraw[black] (-1,0) circle (4pt);
\filldraw[white] (2,1) circle (4pt);
\draw[fill=white] (2,-1) circle (4pt);
\filldraw[red,opacity=.5] (2,1) circle (4pt);
\end{tikzpicture}
\right)
= 
\textrm{wt}
\left(
\begin{tikzpicture}[scale=0.5,baseline={([yshift=-\the\dimexpr\fontdimen22\textfont2\relax]current bounding box.center)}]
\node [label=above:$ $] at (0,1) {};
\node [label=below:$ $] at (0,-1) {};
\node [label=left:$\mathbf{1}$] at (-1,0) {};
\node [label=right:$\overline{x}_1$] at (2,1) {};
\node [label=right:$x_1$] at (2,-1) {};
\draw [-]
	(0,1) arc (90:270:1);
\draw [-]
    (0,1) .. controls (1,1) and (1,-1) .. (2,-1);
\draw [-]
    (0,-1) .. controls (1,-1) and (1,1) .. (2,1);
\draw [-,line width=1mm,red,opacity=0.5]
	(-1,0) arc (180:270:1);
\draw [-,line width=1mm,red,opacity=0.5]
    (0,-1) .. controls (1,-1) and (1,1) .. (2,1);
\filldraw[black] (-1,0) circle (4pt);
\end{tikzpicture}
\right)
+
\textrm{wt}
\left(
\begin{tikzpicture}[scale=0.5,baseline={([yshift=-\the\dimexpr\fontdimen22\textfont2\relax]current bounding box.center)}]
\node [label=above:$ $] at (0,1) {};
\node [label=below:$ $] at (0,-1) {};
\node [label=left:$\mathbf{1}$] at (-1,0) {};
\node [label=right:$\overline{x}_1$] at (2,1) {};
\node [label=right:$x_1$] at (2,-1) {};
\draw [-]
	(0,1) arc (90:270:1);
\draw [-]
    (0,1) .. controls (1,1) and (1,-1) .. (2,-1);
\draw [-]
    (0,-1) .. controls (1,-1) and (1,1) .. (2,1);
\draw [-,line width=1mm,red,opacity=0.5]
	(-1,0) arc (180:90:1);
\draw [-,line width=1mm,red,opacity=0.5]
    (0,1) .. controls (0.8,1) and (0.8,0) .. (1,0)
    .. controls (1.2,0) and (1.2,1) .. (2,1);
\filldraw[black] (-1,0) circle (4pt);
\end{tikzpicture}
\right) \\
& = C_1(x_1) \begin{cases} 1 & \textrm{if $B_1(x)=C_1(x)$,} \\ x_1^2 & \textrm{if $B_1(x) = q x^2 C_1(x)$,} \end{cases}
\end{align*}
and hence evaluating $(\bra{\mathbf{K}}\mathbf{F}^{-1})^{j_1j_{\bar{1}}} = (\bra{\mathbf{K}} \mathbf{R}_{1\bar{1}}^{\bar{1}1})^{j_1j_{\bar{1}}} \cdot b_{j_1,j_{\bar{1}}}(\overline{x}_1,x_1)$ gives the desired result. We leave the cases with constant of proportionality $F=(x^2-q)/(1-qx^2)$ by similar computation to the reader.
\end{proof}

\begin{proof}[Proof of Theorem \ref{thm:spherical1}] We prove cases (1a) and (2b), noting that Case (1b) follows from results of Wheeler and Zinn-Justin (c.f.~ \cite{Wheeler-Zinn-Justin} Section 3.3 Remark 5). Case (2a) is similar and left to the reader.

By \eqref{mainthmbraket} and \eqref{eq:twoparts}, we have $Z(\mathcal{B}_{\lambda}) = \bra{\mathbf{K}} \mathbf{F}^{-1} \mathbf{\widetilde{C}_0} \cdots \mathbf{\widetilde{C}_{\lambda_r}} \ket{\varnothing}$. Note that the rank one case of Theorem \ref{thm:twisted.grid} gives $\bra{\stasisone} \widetilde{\mathbf{C}}_0 \cdots \widetilde{\mathbf{C}}_{\lambda_r} \ket{\varnothing} =  y_{1}^{\lambda_1+1}$, where $y_1$ represents the spectral parameter for the particle entering on the top row of the lattice. Combining this with Lemma \ref{le:K_sph1}, we see for example that when $B_1(x)=C_1(x)$, 
\begin{align*}
Z(\mathcal{B}_{\lambda}) &= \bra{\mathbf{K}} \mathbf{F}^{-1} \mathbf{\widetilde{C}_0} \cdots \mathbf{\widetilde{C}_{\lambda_r}} \ket{\varnothing} \\
&= C_1(x_1)\frac{x_1-q\overline{x}_1}{x_1-\overline{x}_1}\cdot x_1^{\lambda_1+1} + C_1(x_1)\frac{\overline{x}_1-qx_1}{\overline{x}_1-x_1}\cdot \overline{x}_1^{\lambda_1+1}. 
\end{align*}
The case of $B_1(x)=qx^2 C_1(x)$ follows by a similarly straightforward substitution.
\end{proof}

\section{Rank Two Solvable Models}\label{sec:rank2}

If we restrict ourselves to the rank two case, then the general caduceus relation has fewer cases. Indeed, the only case of the caduceus we need to treat is the case with two particles along the boundary, namely Lemma~\ref{le:caduceus}, owing to the fact that our family of boundary conditions in rank two permits only two particles to enter along the bottom and exit at the bends. Parroting our definition from the prior section for this special case, we record the following:
\begin{definition} A type $B/C$ lattice model with two pairs of rows is said to be {\bf rank two solvable} if it satisfies the Yang-Baxter equation, is uniform regime in the Fish relation (Lemma~\ref{le:fish}), and satisfies the two particle Caduceus relation of Lemma~\ref{le:caduceus}.
\end{definition}

%

One can systematically treat each subcase from Lemma~\ref{le:fish} and the two particle Caduceus relation, and the resulting partition functions have uniform expressions within each subcase, much as we saw for rank one models in the prior section. For example, subcase (1b) of Lemma~\ref{le:fish} rules out double bend configurations (i.e., $A_j = D_j = 0$) and all such weights lead to variants of the Wheeler-Zinn-Justin model \cite{Wheeler-Zinn-Justin}. For this reason, we will not consider the partition functions in this case here. Instead we focus on Subcases (1a) and (2b) of Lemma~\ref{le:fish} where the Fish and Caduceus constants $F$ are equal to one, leading to partition functions symmetric under the action of the hyperoctahedral group. (In this result, recall the definition of $s_2$ from Equation (\ref{eq:hypteroctahedral.generators}).)

\begin{theorem}\label{thm:rank2.generic}
Let rank $r=2$, and let $\lambda = (\lambda_1 \geqslant \lambda_2)$ be a partition. Suppose our Boltzmann weights are from a solvable, type $B/C$ lattice model for the trigonometric six-vertex model with $R$-weights as in (\ref{fig:R.weights}). Suppose further that the uniform Fish relation constant of proportionality is one; that is, assume either uniform in regime 1(a) (so that $C_i=B_i$ for $i \in \{1,2\}$) or uniform in regime 2(b) (so that $C_i = m_i \bar x$ for $i \in \{1,2\}$). Then one has \begin{align*}
\mathcal{Z}(\mathcal{B}_\lambda) &= \frac{\kappa}{c_{\lambda}(q)}\sum_{w \in W} w\left(x_1^{\lambda_1+\delta_1}x_2^{\lambda_2+\delta_2}\cdot \prod_{\alpha \in \Phi^+} \frac{1-q\mathbf{x}^{-\alpha}}{1-\mathbf{x}^{-\alpha}}\right) 
\\&+\frac{A_1D_2}{c_{\lambda}(q)}\left[\sum_{w \in W} w\left(x_1^{\lambda_1+1}x_2^{\lambda_2+1}\cdot \frac{x_1-qx_2}{x_1-x_2}\cdot \frac{x_2-q\overline{x}_1}{x_2-\overline{x}_1} \cdot \frac{(1-q)\overline{x}_2}{x_1-\overline{x}_2} \cdot \frac{x_1-q\overline{x}_1}{x_1-\overline{x}_1} \cdot \frac{x_2-q\overline{x}_2}{x_2-\overline{x}_2}\right)\right.
\\&+ \left.\sum_{w \in W/\langle s_2 \rangle} w\left(x_1^{\lambda_1-\lambda_2}\cdot \frac{x_1-qx_2}{x_1-x_2}\cdot \frac{\overline{x}_1-qx_2}{\overline{x}_1-x_2} \cdot \frac{x_1-q\overline{x}_2}{x_1-\overline{x}_2} \cdot \frac{\overline{x}_1-q\overline{x}_2}{\overline{x}_1-\overline{x}_2} \cdot \frac{x_1-q\overline{x}_1}{x_1-\overline{x}_1}\right)\right],
\end{align*}
where we have written $c_{\lambda}(q) = \left\{\begin{array}{cc} 1 & \text{if $\lambda_1> \lambda_2$} \\ 1+q & \text{if $\lambda_1 = \lambda_2$,}\end{array}\right\}$, 
$\delta_i = \left\{\begin{array}{cc}1&\text{in regime 1(a)}\\0&\text{in regime 2(b),}\end{array}\right\}$ for $i \in \{1,2\}$, and 
$\kappa = \left\{\begin{array}{cc}C_1C_2&\text{in regime 1(a)}\\m_1m_2&\text{in regime 2(b).}\end{array}\right\}$
\end{theorem}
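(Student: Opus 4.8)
The plan is to evaluate $\mathcal{Z}(\mathcal{B}_\lambda)$ through the braket factorization of Section~\ref{sec:genmethods}, following the rank-one argument of Theorem~\ref{thm:spherical1} but now with two bends. Starting from~(\ref{mainthmbraket}) and~(\ref{eq:twoparts}), I would write
\[ \mathcal{Z}(\mathcal{B}_\lambda) = \bra{\mathbf{K}}\mathbf{F}^{-1}\,\widetilde{\mathbf{C}}_0\cdots\widetilde{\mathbf{C}}_{\lambda_2}\ket{\varnothing}, \]
and treat the two factors separately. The ket factor is controlled by Theorem~\ref{thm:twisted.grid}: its components supply the monomial sums $\sum_{\sigma\in S_2/S_2^\lambda} y_{\sigma(1)}^{\lambda_1+1}y_{\sigma(2)}^{\lambda_2+1}\prod_{i,j:\lambda_j>\lambda_i}\tfrac{y_{\sigma(j)}-qy_{\sigma(i)}}{y_{\sigma(j)}-y_{\sigma(i)}}$, where the $y$'s are read off as the spectral parameters of whichever rows carry the two exiting paths. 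The normalization $c_\lambda(q)$ records the passage from a sum over $S_2/S_2^\lambda$ to one over all of $S_2$, and equals $1$ when $\lambda_1>\lambda_2$ and $1+q$ when $\lambda_1=\lambda_2$, in agreement with the $q$-analogue of $|S_2^\lambda|$ from the remark after Theorem~\ref{thm:twisted.grid}.

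The main work is the rank-two analogue of Lemma~\ref{le:K_sph1}: an explicit description of the covector $\bra{\mathbf{K}}\mathbf{F}^{-1}$. Since $\bra{\mathbf{K}}=\bra{\mathbf{K}_2}\otimes\bra{\mathbf{K}_1}$ and each bend absorbs $0$, $1$, or $2$ of the two paths, the admissible cap data split into two families: (i) one path at each bend, so both caps are of type $B$ or $C$; and (ii) both paths at a single bend, so one cap is type $A$ (empty) and the other type $D$ (doubled). I would compute the action of $\mathbf{F}^{-1}=\mathbf{F}^\ast\prod\Delta^{-1}$ using~(\ref{eq:fstarform}) and~(\ref{eq:Delta}), tracking for each resulting boundary occupancy the braiding coefficient as a product of $R$-matrix entries from Figure~\ref{fig:R.weights} and diagonal factors $b_{i_k,i_l}$, exactly as in the one-bend computation. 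In regimes $1(a)$ and $2(b)$ the Fish relation forces $B_j=C_j$ (resp. $C_j=m_j\overline{x}$ with $B_j=qx^2C_j$), so the four type-(i) cap weights all reduce to $C_1C_2$ (resp. $m_1m_2$); the extra factor of $\overline{x}$ in regime $2(b)$ multiplies the bulk monomial $x_j^{\lambda_j+1}$ down to $x_j^{\lambda_j}$ while collecting the constant $m_j$, which is exactly the source of the shift $\delta_j$ and the prefactor $\kappa$.

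With both factors in hand I would pair them and reorganize. The type-(i) contributions carry cap weight $\kappa$, and the two top/bottom choices at each bend realize the inversions $(\mathbb{Z}/2)^2$ while the $S_2/S_2^\lambda$ bulk supplies the $S_2$-part; together these fill out all of $W=(\mathbb{Z}/2)^2\rtimes S_2$, assembling — via Lemmas~\ref{le:inversion} and~\ref{le:symmetric} — into the main term $\tfrac{\kappa}{c_\lambda(q)}\sum_{w\in W}w\!\left(x_1^{\lambda_1+\delta_1}x_2^{\lambda_2+\delta_2}\prod_{\alpha\in\Phi^+}\tfrac{1-q\mathbf{x}^{-\alpha}}{1-\mathbf{x}^{-\alpha}}\right)$. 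The type-(ii) contributions produce terms weighted by $A_1D_2$ and $A_2D_1$; here I would invoke the two-particle Caduceus relation~(\ref{eq:cad.condition}) (resp.~(\ref{eq:cad.condition2})) to solve for $A_2D_1$ in terms of $A_1D_2$ and $C_1C_2$. The $C_1C_2$ portion merges into the main term, and what survives is precisely the bracketed $A_1D_2[\cdots]$ expression, one summand a full $W$-average and the other a coset sum over $W/\langle s_2\rangle$.

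The step I expect to be the genuine obstacle is this final reorganization: recognizing that the partial monomial sums from Theorem~\ref{thm:twisted.grid}, once multiplied by the braiding coefficients extracted from $\bra{\mathbf{K}}\mathbf{F}^{-1}$, fold together into honest Weyl-group averages. One must verify in particular that the $C_1C_2$ remainder left over from the type-(ii) caps exactly completes the type-(i) sum to the full $W$-orbit, and that the residual stabilizer responsible for the $W/\langle s_2\rangle$ sum in the second $A_1D_2$ term is correctly accounted for rather than overcounted. This is careful but finite bookkeeping, made tractable precisely because solvability — through the Caduceus relation — has already collapsed the independent bend parameters down to the single combination $A_1D_2$.
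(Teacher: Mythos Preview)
Your overall strategy matches the paper's: write $\mathcal{Z}(\mathcal{B}_\lambda)=\bra{\mathbf{K}}\mathbf{F}^{-1}\widetilde{\mathbf{C}}_0\cdots\widetilde{\mathbf{C}}_{\lambda_r}\ket{\varnothing}$, evaluate the ket via Theorem~\ref{thm:twisted.grid}, compute the bra covector explicitly, and pair. The ket side and the role of $c_\lambda(q)$ are exactly as you describe.

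There is, however, a genuine gap in your computation of $\bra{\mathbf{K}}\mathbf{F}^{-1}$. Your claim that the four type-(i) cap weights (one path per bend) ``all reduce to $C_1C_2$'' is incorrect. When you braid the rows to realize $\mathbf{F}^{-1}$ for an occupancy like $(j_2,j_{\bar 2},j_1,j_{\bar 1})=(1,0,1,0)$, there are \emph{two} admissible fillings of the bra diagram: one where each path ends at its own bend (contributing $C_1C_2\cdot b_1$), and one where a path uses a $c_2$-type $R$-vertex to loop through bend~$\mathbf{2}$ and back out, leaving bend~$\mathbf{1}$ empty (contributing $A_1D_2\cdot c_2$). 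This is made explicit in Lemma~\ref{le:K.rank2.generic}, where every type-(i) component of $\bra{\mathbf{K}}\mathbf{F}^{-1}$ carries the factor $C_1(x_1^{\epsilon_1})C_2(x_2^{\epsilon_2})+A_1D_2\cdot\frac{(1-q)\bar x_2^{\epsilon_2}}{x_1^{\epsilon_1}-\bar x_2^{\epsilon_2}}$. Consequently the first $A_1D_2$ sum in the theorem (the full $W$-average) comes from the type-(i) boundary occupancies, not from type-(ii).

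Your handling of type-(ii) is also misdirected. You anticipate contributions weighted by both $A_1D_2$ and $A_2D_1$, and plan to eliminate $A_2D_1$ via the algebraic Caduceus condition~(\ref{eq:cad.condition}). In the paper's argument $A_2D_1$ never appears: for the occupancy $(0,0,1,1)$ the bra component $\bra{\mathbf{K}}\mathbf{R}^\rho$ is precisely the left-hand Caduceus diagram of~(\ref{eq:caduceus}) with $(\alpha,\beta,\gamma,\delta)=(1,1,0,0)$, and one applies the Caduceus \emph{relation} itself (with proportionality constant $F=1$) to replace it by the unbraided picture, which is simply $A_1(x_2)D_2(x_1)=A_1D_2$. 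Both type-(ii) occupancies therefore contribute $A_1D_2$ directly, and together they give the $W/\langle s_2\rangle$ coset sum. So your attribution is reversed: the full-$W$ bracketed term is type-(i), the coset term is type-(ii), and no $C_1C_2$ ``remainder'' from type-(ii) needs to be merged anywhere.
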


In order to prove Theorem \ref{thm:rank2.generic}, we will follow the approach outlined in the previous section: first we will compute $\bra{\mathbf{K}}\mathbf{F}^{-1}$ (in Lemma \ref{le:K.rank2.generic}), and then we will combine this with Theorem \ref{thm:twisted.grid} to complete the evaluation.

\begin{lemma}\label{le:K.rank2.generic} Let rank $r=2$, and suppose our Boltzmann weights are from a solvable, type $B/C$ lattice model for the trigonometric six-vertex model with $R$-weights as in (\ref{fig:R.weights}). Suppose further that the uniform Fish relation constant of proportionality is one. Then, with notation as in Section~\ref{sec:genmethods}, $\bra{\mathbf{K}}\mathbf{F}^{-1}$ is equal to

\begin{align} \label{eq:boundary}   \sum_{\{\epsilon_1,\epsilon_2\} \in \{\pm 1\}^2} & \frac{x_1^{\epsilon_1}-q\overline{x}_1^{\epsilon_1}}{x_1^{\epsilon_1}-\overline{x}_1^{\epsilon_1}} \cdot \frac{x_2^{\epsilon_2}-q\overline{x}_2^{\epsilon_2}}{x_2^{\epsilon_2}-\overline{x}_2^{\epsilon_2}} \cdot \frac{x_2^{\epsilon_2}-q\overline{x}_1^{\epsilon_1}}{x_2^{\epsilon_2}-\overline{x}_1^{\epsilon_1}} \cdot \nonumber \\& \left(C_1(x_1^{\epsilon_1})C_2(x_2^{\epsilon_2}) + A_1(x_1^{\epsilon_1})D_2(x_2^{\epsilon_2}) \cdot \frac{(1-q)\bar{x}_2^{\epsilon_2}}{x_1^{\epsilon_1}-\bar{x}_2^{\epsilon_2}} \right) \cdot (\bra{\epsilon_1}_1 \otimes \bra{-\epsilon_1}_{\overline{1}})\otimes (\bra{\epsilon_2}_2 \otimes \bra{-\epsilon_2}_{\overline{2}}),
\\&+ A_1(x_2)D_2(x_1)\prod_{\{\epsilon_1,\epsilon_2\}\in \{\pm 1\}^2} \frac{x_1^{\epsilon_1}-q\bar{x}_2^{\epsilon_2}}{x_1^{\epsilon_1}-\bar{x}_2^{\epsilon_2}}\cdot (\bra{\epsilon_1}_1 \otimes \bra{\epsilon_1}_{\overline{1}})\otimes (\bra{-\epsilon_2}_2 \otimes \bra{-\epsilon_2}_{\overline{2}})\nonumber
\\&+ A_1(x_1)D_2(x_2)\prod_{\{\epsilon_1,\epsilon_2\}\in \{\pm 1\}^2} \frac{x_2^{\epsilon_2}-q\bar{x}_1^{\epsilon_1}}{x_2^{\epsilon_2}-\bar{x}_1^{\epsilon_1}}\cdot (\bra{-\epsilon_1}_1 \otimes \bra{-\epsilon_1}_{\overline{1}})\otimes (\bra{\epsilon_2}_2 \otimes \bra{\epsilon_2}_{\overline{2}}),\nonumber
\end{align}
where \[
\bra{1}_i \otimes \bra{-1}_{\overline{i}} = \begin{tikzpicture}[scale=0.5, baseline=-.5ex]
\node [label=above:$ $] at (0,1) {};
\node [label=below:$ $] at (0,-1) {};
\node [label=left:$\mathbf{i}$] at (-1,0) {};
\node [label=center:$x_i$] at (.5,1) {};
\node [label=center:$\overline{x}_i$] at (.5,-1) {};
\draw [-]
	(0,1) arc (90:270:1);
\draw [-,line width=1mm,red,opacity=0.5]
	(0,1) arc (90:180:1);
\filldraw[black] (-1,0) circle (4pt);
\end{tikzpicture},\quad
\bra{-1}_i \otimes \bra{1}_{\overline{i}} = \begin{tikzpicture}[scale=0.5, baseline=-.5ex]
\node [label=above:$ $] at (0,1) {};
\node [label=below:$ $] at (0,-1) {};
\node [label=left:$\mathbf{i}$] at (-1,0) {};
\node [label=center:$x_i$] at (.5,1) {};
\node [label=center:$\overline{x}_i$] at (.5,-1) {};
\draw [-]
	(0,1) arc (90:270:1);
\draw [-,line width=1mm,red,opacity=0.5]
	(-1,0) arc (180:270:1);
\filldraw[black] (-1,0) circle (4pt);
\end{tikzpicture},\quad
\bra{1}_i \otimes \bra{1}_{\overline{i}} = \begin{tikzpicture}[scale=0.5, baseline=-.5ex]
\node [label=above:$ $] at (0,1) {};
\node [label=below:$ $] at (0,-1) {};
\node [label=left:$\mathbf{i}$] at (-1,0) {};
\node [label=center:$x_i$] at (.5,1) {};
\node [label=center:$\overline{x}_i$] at (.5,-1) {};
\draw [-]
	(0,1) arc (90:270:1);
\draw [-,line width=1mm,red,opacity=0.5]
	(0,1) arc (90:270:1);
\filldraw[black] (-1,0) circle (4pt);
\end{tikzpicture} \text{ and }
\bra{-1}_i \otimes \bra{-1}_{\overline{i}} = \begin{tikzpicture}[scale=0.5, baseline=-.5ex]
\node [label=above:$ $] at (0,1) {};
\node [label=below:$ $] at (0,-1) {};
\node [label=left:$\mathbf{i}$] at (-1,0) {};
\node [label=center:$x_i$] at (.5,1) {};
\node [label=center:$\overline{x}_i$] at (.5,-1) {};
\draw [-]
	(0,1) arc (90:270:1);
\filldraw[black] (-1,0) circle (4pt);
\end{tikzpicture}. \]
\end{lemma}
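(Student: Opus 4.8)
The plan is to mirror the rank one computation of Lemma~\ref{le:K_sph1}, evaluating $\bra{\mathbf{K}}\mathbf{F}^{-1}$ one covector component at a time by means of the explicit formula~\eqref{eq:fstarform} for $\mathbf{F}^\ast$ together with the factorization $\mathbf{F}^{-1}_{2\bar2 1\bar1} = \mathbf{F}^\ast_{2\bar2 1\bar1}\prod_{\bar1\preccurlyeq l\prec k\preccurlyeq 2}\Delta^{-1}_{kl}(x_k,x_l)$ and the diagonal entries $b_{i_k,i_l}$ from~\eqref{eq:Delta}. First I would record that $\bra{\mathbf{K}}$ factors as a tensor product $\bra{K_2}\otimes\bra{K_1}$ over the two bends, whose four components on each pair $W_i\otimes W_{\bar i}$ are read off directly from Figure~\ref{fig:gen.bends}: the component at $\bra{-1}_i\otimes\bra{-1}_{\bar i}$ is $A_i$, at $\bra{-1}_i\otimes\bra{1}_{\bar i}$ is $B_i$, at $\bra{1}_i\otimes\bra{-1}_{\bar i}$ is $C_i$, and at $\bra{1}_i\otimes\bra{1}_{\bar i}$ is $D_i$. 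Since by Lemma~\ref{le:fish} both regimes 1(a) and 2(b) force $\textrm{deg}(A_i)=\textrm{deg}(D_i)=0$ and relate $B_i$ to $C_i$ with Fish constant $F=1$, the $B$-bends will be re-expressed through $C$-bends, exactly as the $\{1,x_1^2\}$ factor did in rank one.

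Next I would expand $\mathbf{F}^\ast$ over $\mathcal{I}=\{2,\bar2,1,\bar1\}$ and group the resulting basis covectors by occupancy type. Two qualitatively different families arise. The \emph{split} family has occupancy $(\epsilon_1,-\epsilon_1)$ on the pair $(1,\bar1)$ and $(\epsilon_2,-\epsilon_2)$ on $(2,\bar2)$, a single particle per bend, and is the source of the first sum in~\eqref{eq:boundary}. The \emph{aligned} family has occupancy $(\epsilon,\epsilon)$ on one pair and $(-\epsilon,-\epsilon)$ on the other, two particles at one bend and none at the other, and yields the second and third sums. For each fixed output occupancy I would collect the contributing terms of $\mathbf{F}^\ast$: the diagonal ``identity $\rho$'' terms contribute cap weights directly with $\Delta^{-1}$ prefactors, while the single-inversion braid terms $\mathbf{R}^\rho$ reroute a particle between the two bends before it is absorbed into a cap, contracting $\bra{\mathbf{K}}$ with the crossing exactly as in the rank one diagrams.

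For the split family I would show that the direct $C_1C_2$ contribution (with the $B$-contributions folded in via the regime relation) produces the leading term, and that the single braid sending a particle from the full $D_2$-bend across to fill an empty $A_1$-bend produces the correction $A_1(x_1^{\epsilon_1})D_2(x_2^{\epsilon_2})\cdot(1-q)\bar x_2^{\epsilon_2}/(x_1^{\epsilon_1}-\bar x_2^{\epsilon_2})$; the three prefactors $\tfrac{x_1^{\epsilon_1}-q\bar x_1^{\epsilon_1}}{x_1^{\epsilon_1}-\bar x_1^{\epsilon_1}}\cdot\tfrac{x_2^{\epsilon_2}-q\bar x_2^{\epsilon_2}}{x_2^{\epsilon_2}-\bar x_2^{\epsilon_2}}\cdot\tfrac{x_2^{\epsilon_2}-q\bar x_1^{\epsilon_1}}{x_2^{\epsilon_2}-\bar x_1^{\epsilon_1}}$ are precisely the $b_{i_k,i_l}$ entries of $\Delta^{-1}$ attached to the relevant strands. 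For the aligned family the two sums both trace back to the empty-bend/full-bend pairing: the triangular structure of $\mathbf{F}^\ast$ relative to the ordering $2\succ\bar2\succ1\succ\bar1$ selects the $A_1 D_2$ combination, with the particles routed straight (giving the unswapped arguments $A_1(x_1)D_2(x_2)$ of the third sum) or fully crossed by the longest braid (giving the swapped arguments $A_1(x_2)D_2(x_1)$ of the second sum), and the accumulated product of $b$-factors assembles into the displayed $\prod_{\epsilon_1,\epsilon_2}\tfrac{x_2^{\epsilon_2}-q\bar x_1^{\epsilon_1}}{x_2^{\epsilon_2}-\bar x_1^{\epsilon_1}}$ and its mirror.

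I expect the main obstacle to be the split-family cross term: verifying that the $A_1 D_2$ cap configuration genuinely reroutes, under a single-inversion $\mathbf{R}^\rho$, into a split occupancy and contributes the twisting factor $(1-q)\bar x_2^{\epsilon_2}/(x_1^{\epsilon_1}-\bar x_2^{\epsilon_2})$ with the correct signs, together with confirming that the triangular selection indeed discards the would-be $A_2 D_1$ terms rather than producing spurious contributions. The remaining verification, that the $B$-bend and $C$-bend contributions combine correctly under the regime hypothesis and that the $\Delta^{-1}$ prefactors match the displayed ratios for all four sign patterns $(\epsilon_1,\epsilon_2)$, is routine but tedious, and I would organize it by the symmetry under $x_i\leftrightarrow\bar x_i$ to cut the number of explicit cases in half.
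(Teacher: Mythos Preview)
Your overall framework matches the paper's: compute $(\bra{\mathbf{K}}\mathbf{F}^{-1})^{j_2j_{\bar2}j_1j_{\bar1}}$ component by component via the factorization $\mathbf{F}^{-1}=\mathbf{F}^\ast\prod\Delta^{-1}$, and observe that the projectors in~\eqref{eq:fstarform} force, for each fixed occupancy pattern, a single sorting permutation $\rho$. The paper then writes $(\bra{\mathbf{K}}\mathbf{F}^{-1})^{j_2j_{\bar2}j_1j_{\bar1}} = (\bra{\mathbf{K}}\mathbf{R}^{\rho}_{2\bar21\bar1})^{j_2j_{\bar2}j_1j_{\bar1}}\cdot\prod_{k<l}b_{j_k,j_l}^{-1}$ and reduces to six permutations $\rho$. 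For the four split patterns it uses the Fish relation (with $F=1$) to convert each to the canonical $\rho=(2,1,\bar2,\bar1)$ diagram, which has exactly two admissible fillings producing the $C_1C_2$ and the $A_1D_2\cdot c_2(1,\bar2)$ terms you describe. So far your sketch is fine.

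The genuine gap is in your treatment of the aligned family, specifically the case with both particles on bend~$1$ and none on bend~$2$. There the sorting permutation is $\rho=(1,\bar1,2,\bar2)$, and $\bra{\mathbf{K}}\mathbf{R}^\rho$ is the full four-crossing caduceus diagram. You assert that ``fully crossed by the longest braid'' yields $A_1(x_2)D_2(x_1)$ directly from the triangular structure of $\mathbf{F}^\ast$, but it does not: that diagram has many admissible fillings (eight, as enumerated in the proof of Lemma~\ref{le:caduceus}), and their sum only collapses to $A_1(x_2)D_2(x_1)$ because the model is assumed solvable, i.e.\ the two-particle Caduceus relation of Lemma~\ref{le:caduceus} holds with proportionality constant $1$. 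The paper invokes exactly this lemma at that step. Without it you would have to redo the eight-state computation and then appeal to the solvability hypothesis anyway. Your remark about ``discarding would-be $A_2D_1$ terms'' is also misplaced: the bend labels $j$ in $A_j,D_j$ are fixed by position in the lattice and never change under $\rho$; only the spectral arguments get permuted, and that swap of arguments is precisely what the Caduceus relation delivers.
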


Note that the assumption on solvability with Fish constant one ensures that $A_1$ and $D_2$ have degree $0$, so there's no need to keep track of the associated spectral parameters in the above expression. We have chosen to retain them to track the contributions from each permutation in the proof below. However, we {\bf do} make use of the independence of spectral parameter in the subsequent proof of Theorem~\ref{thm:rank2.generic}.

\begin{proof}
Following the approach taken in \cite{Wheeler-Zinn-Justin}, we'll verify that each component of $\bra{\mathbf{K}}\mathbf{F}^{-1}$ has the form given on the right of \eqref{eq:boundary}. To begin, then, suppose $j_{\bar{1}}, j_1, j_{\bar{2}}, j_2 \in \{0,1\}$, where 1 denotes the presence of a particle at a vertex, and 0 denotes the lack of a particle. By \eqref{eq:fstarform} and \eqref{eq:Delta}, we have
\begin{align*}
(\bra{\mathbf{K}}\mathbf{F}^{-1})^{j_2j_{\bar{2}}j_1j_{\bar{1}}} &= (\bra{\mathbf{K}} \mathbf{R}_{2\bar{2}1\bar{1}}^{\rho} \prod_{\substack{k<l \\ k,l \in \{\bar{1},1,\bar{2},2\}}}  \Delta_{kl}^{-1}(x_k,x_l) )^{j_2j_{\bar{2}}j_1j_{\bar{1}}}
\\&= (\bra{\mathbf{K}} \mathbf{R}_{2\bar{2}1\bar{1}}^{\rho} )^{j_2j_{\bar{2}}j_1j_{\bar{1}}} \cdot \prod_{\substack{k<l \\ k,l \in \{\bar{1},1,\bar{2},2\}}} b_{j_k,j_l}^{-1}(x_k,x_l),
\end{align*}
where $\rho$ is a permutation of $\{\bar{1},1,\bar{2},2\}$ such that $j_{\rho(1)} = j_{\rho(\overline{1})} = 0$ and $j_{\rho(2)} = j_{\rho(\bar{2})} = 1$.

Since the $R$-matrix weights $a_1(k,j)$ and $a_2(k,j)$ are both equal to 1, it suffices to consider permutations $\rho$ such that $\rho(2)\geq \rho(\overline{2})$ and $\rho(1) \geq \rho(\overline{1})$. This leaves us with six cases: $\rho = (2,\overline{2},1,\overline{1})$, $(2,1,\overline{2},\overline{1})$, $(2,\overline{1},\overline{2},1)$, $(\overline{2},1,2,\overline{1})$, $(\overline{2},\overline{1},2,1)$, $(1,\overline{1},2,\overline{2})$. 
Recall that we are writing the permutation $\rho$ in ``one-line'' notation where the components indicate the respective images of $(2,\overline{2},1,\overline{1})$.

First, suppose that $\rho = (2,\overline{2},1,\overline{1})$ (and hence $j_2 = j_{\bar{2}} = 1$, and $j_1 = j_{\bar{1}} = 0$). Then $(\bra{\mathbf{K}}\mathbf{R}_{2\bar{2}1\bar{1}}^{\rho})^{j_2j_{\bar{2}}j_1j_{\bar{1}}} = A_1(x_1) D_2(x_2)$; diagrammatically, we have

$$\mathcal{Z}\left(
\begin{tikzpicture}[scale=.5,baseline={([yshift=-\the\dimexpr\fontdimen22\textfont2\relax]current bounding box.center)}]
\node [label=right:$x_2$] at (3,2) {};
\node [label=right:$\overline{x}_2$] at (3,1) {};
\node [label=right:$x_1$] at (3,0) {};
\node [label=right:$\overline{x}_1$] at (3,-1) {};
\node [label=left:$\mathbf{2}$] at (1.5,1.5) {};
\node [label=left:$\mathbf{1}$] at (1.5,-.5) {};

\draw [-] 
    (3,-1) -- (2,-1) 
    arc (270:90:.5)
    -- (3,0);

\draw [-] 
    (3,2) -- (2,2)
    arc (90:270:.5)
    -- (3,1);

\filldraw[black] (1.5,1.5) circle (4pt);
\filldraw[black] (1.5,-.5) circle (4pt);
\filldraw[white] (3,2) circle (4pt);
\filldraw[white] (3,1) circle (4pt);
\filldraw[red,opacity=.5] (3,2) circle (4pt);
\filldraw[red,opacity=.5] (3,1) circle (4pt);
\draw[fill=white] (3,0) circle (4pt);
\draw[fill=white] (3,-1) circle (4pt);

\end{tikzpicture}\right) =
\textrm{wt}\left(
\begin{tikzpicture}[scale=.5,baseline={([yshift=-\the\dimexpr\fontdimen22\textfont2\relax]current bounding box.center)}]
\node [label=right:$x_2$] at (3,2) {};
\node [label=right:$\overline{x}_2$] at (3,1) {};
\node [label=right:$x_1$] at (3,0) {};
\node [label=right:$\overline{x}_1$] at (3,-1) {};
\node [label=left:$\mathbf{2}$] at (1.5,1.5) {};
\node [label=left:$\mathbf{1}$] at (1.5,-.5) {};

\draw [-] 
    (3,-1) -- (2,-1) 
    arc (270:90:.5)
    -- (3,0);

\draw [-] 
    (3,2) -- (2,2)
    arc (90:270:.5)
    -- (3,1);

\draw [-,line width=1mm,red,opacity=.5,rounded corners=0pt]
    (3,2) -- (2,2)
    arc (90:270:.5)
    -- (3,1);

\filldraw[black] (1.5,1.5) circle (4pt);
\filldraw[black] (1.5,-.5) circle (4pt);

\end{tikzpicture}\right)
=
A_1(x_1) D_2(x_2).
$$ 
Hence, \[(\bra{\mathbf{K}}\mathbf{F}^{-1})^{j_2j_{\bar{2}}j_1j_{\bar{1}}} = A_1(x_1) D_2(x_2) \cdot \prod_{\substack{k<l \\ k,l \in \{\bar{1},1,\bar{2},2\}}} b_{j_k,j_l}^{-1}(x_k,x_l) = A_1(x_1) D_2(x_2) \cdot \prod_{\{\epsilon_1, \epsilon_2\} \in \{\pm 1\}^2} \frac{x_2^{\epsilon_2}-q\bar{x}_1^{\epsilon_1}}{x_2^{\epsilon_2}-\bar{x}_1^{\epsilon_1}}.\] 

Next, suppose that $\rho = (2,1,\overline{2},\overline{1})$ (so $j_2 = j_1 = 1$, and $j_{\bar{2}} = j_{\bar{1}} = 0$). Note that \begin{equation}\label{eq:bcoeffs2} \prod_{\substack{k<l \\ k,l \in \{\bar{1},1,\bar{2},2\}}} b_{j_k,j_l}^{-1}(x_k,x_l) = \prod_{k,l \in \{\rho(2),\rho(\overline{2})\}} \frac{x_k-q\overline{x}_l}{x_k-\overline{x}_l} \qquad \text{ when $\overline{\rho(2)} \neq \rho(\bar{2})$},\end{equation} so that, in these cases, \begin{equation}\label{eq:twisted.bdry} (\bra{\mathbf{K}}\mathbf{F}^{-1})^{j_2j_{\bar{2}}j_1j_{\bar{1}}} = \left(\bra{\mathbf{K}} \mathbf{R}_{2\bar{2}1\bar{1}}^{\rho}\right)^{j_2j_{\bar{2}}j_1j_{\bar{1}}} \cdot \prod_{k,l \in \{\rho(2),\rho(\overline{2})\}} \frac{x_k-q\overline{x}_l}{x_k-\overline{x}_l}. \end{equation}
We have 

\begin{align*}
(\bra{\mathbf{K}}\mathbf{R}_{2\bar{2}1\bar{1}}^{\rho})^{j_2j_{\bar{2}}j_1j_{\bar{1}}} &= \mathcal{Z}\left(
\begin{tikzpicture}[scale=.5,baseline={([yshift=-\the\dimexpr\fontdimen22\textfont2\relax]current bounding box.center)}]
\node [label=right:$x_2$] at (3,2) {};
\node [label=right:$x_1$] at (3,1) {};
\node [label=right:$\overline{x}_2$] at (3,0) {};
\node [label=right:$\overline{x}_1$] at (3,-1) {};
\node [label=left:$\mathbf{2}$] at (0.5,1.5) {};
\node [label=left:$\mathbf{1}$] at (0.5,-.5) {};
\draw [-]
    (3,-1) -- (1,-1)
    arc(270:90:.5)
    .. controls (1.5,0) and (1.5,1) .. (2,1)
    -- (3,1);
\draw [-]    
    (3,2) -- (1,2)
    arc (90:270:.5)
    .. controls (1.5,1) and (1.5,0) .. (2,0)
    -- (3,0);
\filldraw[black] (.5,1.5) circle (4pt);
\filldraw[black] (.5,-.5) circle (4pt);
\filldraw[white] (3,2) circle (4pt);
\filldraw[white] (3,1) circle (4pt);
\filldraw[red,opacity=.5] (3,2) circle (4pt);
\filldraw[red,opacity=.5] (3,1) circle (4pt);
\draw[fill=white] (3,0) circle (4pt);
\draw[fill=white] (3,-1) circle (4pt);
\end{tikzpicture}\right)
=
\textrm{wt}\left(
\begin{tikzpicture}[scale=.5,baseline={([yshift=-\the\dimexpr\fontdimen22\textfont2\relax]current bounding box.center)}]
\node [label=right:$x_2$] at (3,2) {};
\node [label=right:$x_1$] at (3,1) {};
\node [label=right:$\overline{x}_2$] at (3,0) {};
\node [label=right:$\overline{x}_1$] at (3,-1) {};
\node [label=left:$\mathbf{2}$] at (0.5,1.5) {};
\node [label=left:$\mathbf{1}$] at (0.5,-.5) {};
\draw [-] 
    (3,-1) -- (1,-1) 
    arc(270:90:.5)
    .. controls (1.5,0) and (1.5,1) .. (2,1) 
    -- (3,1);  
\draw [-]    
    (3,2) -- (1,2)
    arc (90:270:.5)
    .. controls (1.5,1) and (1.5,0) .. (2,0)
    -- (3,0);
\draw [-,line width=1mm,red,opacity=.5,rounded corners=0pt]
    (3,2) -- (1,2)
    arc (90:180:.5);
\draw [-,line width=1mm,red,opacity=.5,rounded corners=0pt]
    (3,1) -- (2,1)
    .. controls (1.5,1) and (1.5,0) .. (1,0)
    arc (90:180:.5);
\filldraw[black] (.5,1.5) circle (4pt);
\filldraw[black] (.5,-.5) circle (4pt);
\end{tikzpicture}\right)
+
\textrm{wt}\left(
\begin{tikzpicture}[scale=.5,baseline={([yshift=-\the\dimexpr\fontdimen22\textfont2\relax]current bounding box.center)}]
\node [label=right:$x_2$] at (3,2) {};
\node [label=right:$x_1$] at (3,1) {};
\node [label=right:$\overline{x}_2$] at (3,0) {};
\node [label=right:$\overline{x}_1$] at (3,-1) {};
\node [label=left:$\mathbf{2}$] at (0.5,1.5) {};
\node [label=left:$\mathbf{1}$] at (0.5,-.5) {};
\draw [-] 
    (3,-1) -- (1,-1) 
    arc(270:90:.5)
    .. controls (1.5,0) and (1.5,1) .. (2,1) 
    -- (3,1);
\draw [-]    
    (3,2) -- (1,2)
    arc (90:270:.5)
    .. controls (1.5,1) and (1.5,0) .. (2,0)
    -- (3,0);
\draw [-,line width=1mm,red,opacity=.5,rounded corners=0pt]
    (3,2) -- (1,2)
    arc (90:270:.5)
    .. controls (1.4,1) and (1.4,.5) .. (1.5,.5)
    .. controls (1.6,.5) and (1.6,1) .. (2,1)
    -- (3,1);
\filldraw[black] (.5,1.5) circle (4pt);
\filldraw[black] (.5,-.5) circle (4pt);
\end{tikzpicture}\right). \\
&= C_1(x_1) C_2(x_2) \cdot \frac{x_1-\overline{x}_2}{x_1-q\overline{x}_2} + A_1(x_1) D_2(x_2) \cdot \frac{(1-q)\bar{x}_2}{x_1-q\bar{x}_2}. 
\end{align*}
Hence,
\begin{align*}
(\bra{\mathbf{K}}\mathbf{F}^{-1})^{j_2j_{\bar{2}}j_1j_{\bar{1}}} &= (\bra{\mathbf{K}}\mathbf{R}_{2\bar{2}1\bar{1}}^{\rho})^{j_2j_{\bar{2}}j_1j_{\bar{1}}} \cdot \prod_{k,l \in \{2,1\}} \frac{x_k-q\overline{x}_l}{x_k-\overline{x}_l}
\\&=\left(C_1(x_1)C_2(x_2) \cdot \frac{x_1-\overline{x}_2}{x_1-q\overline{x}_2} + A_1(x_1)D_2(x_2) \cdot \frac{(1-q)\bar{x}_2}{x_1-q\bar{x}_2}\right) \cdot \prod_{k,l \in \{2,1\}} \frac{x_k-q\overline{x}_l}{x_k-\overline{x}_l} 
\\&= \left(C_1(x_1)C_2(x_2) + A_1(x_1)D_2(x_2) \cdot \frac{(1-q)\bar{x}_2}{x_1-\bar{x}_2}\right) \cdot \frac{x_1-q\overline{x}_1}{x_1-\overline{x}_1} \cdot \frac{x_2-q\overline{x}_2}{x_2-\overline{x}_2} \cdot \frac{x_2-q\overline{x}_1}{x_2-\overline{x}_1}.
\end{align*}

For $\rho = (2,\overline{1},\overline{2},1)$, consider \[(\bra{\mathbf{K}}\mathbf{R}_{2\bar{2}1\bar{1}}^{\rho})^{j_2j_{\bar{2}}j_1j_{\bar{1}}} = 
\mathcal{Z}\left(
\begin{tikzpicture}[scale=.5,baseline={([yshift=-\the\dimexpr\fontdimen22\textfont2\relax]current bounding box.center)}]
\node [label=right:$x_2$] at (3,2) {};
\node [label=right:$\overline{x}_1$] at (3,1) {};
\node [label=right:$\overline{x}_2$] at (3,0) {};
\node [label=right:$x_1$] at (3,-1) {};
\node [label=left:$\mathbf{2}$] at (-0.5,1.5) {};
\node [label=left:$\mathbf{1}$] at (-0.5,-.5) {};

\draw [-] 
    (3,1) -- (2,1) 
    .. controls (1.5,1) and (1.5,0) .. (1,0) 
    .. controls (0.5,0) and (0.5,-1) .. (0,-1)
    arc (270:90:.5)
    .. controls (0.5,0) and (0.5,-1) .. (1,-1)
    -- (3,-1);
    
\draw [-]    
    (3,2) -- (0,2)
    arc (90:270:.5)
    -- (1,1)
    .. controls (1.5,1) and (1.5,0) .. (2,0)
    -- (3,0);


\filldraw[black] (-.5,1.5) circle (4pt);
\filldraw[black] (-.5,-.5) circle (4pt);
\filldraw[white] (3,2) circle (4pt);
\filldraw[white] (3,1) circle (4pt);
\filldraw[red,opacity=.5] (3,2) circle (4pt);
\filldraw[red,opacity=.5] (3,1) circle (4pt);
\draw[fill=white] (3,0) circle (4pt);
\draw[fill=white] (3,-1) circle (4pt);
\end{tikzpicture}\right)
= \mathcal{Z}\left(
\begin{tikzpicture}[scale=.5,baseline={([yshift=-\the\dimexpr\fontdimen22\textfont2\relax]current bounding box.center)}]
\node [label=right:$x_2$] at (3,2) {};
\node [label=right:$\overline{x}_1$] at (3,1) {};
\node [label=right:$\overline{x}_2$] at (3,0) {};
\node [label=right:$x_1$] at (3,-1) {};
\node [label=left:$\mathbf{2}$] at (0.5,1.5) {};
\node [label=left:$\mathbf{1}$] at (0.5,-.5) {};

\draw [-] 
    (3,-1) -- (1,-1) 
    arc(270:90:.5)
    .. controls (1.5,0) and (1.5,1) .. (2,1) 
    -- (3,1);
    
\draw [-]    
    (3,2) -- (1,2)
    arc (90:270:.5)
    .. controls (1.5,1) and (1.5,0) .. (2,0)
    -- (3,0);


\filldraw[black] (.5,1.5) circle (4pt);
\filldraw[black] (.5,-.5) circle (4pt);
\filldraw[white] (3,2) circle (4pt);
\filldraw[white] (3,1) circle (4pt);
\filldraw[red,opacity=.5] (3,2) circle (4pt);
\filldraw[red,opacity=.5] (3,1) circle (4pt);
\draw[fill=white] (3,0) circle (4pt);
\draw[fill=white] (3,-1) circle (4pt);
\end{tikzpicture}\right)
,\] where the second equality comes from the fish equation, noting that the associated constant of proportionality is assumed to be one. Applying the same analysis as in the $\rho = (2,1,\overline{2},\overline{1})$ case, we see that

\begin{align*}
(\bra{\mathbf{K}}\mathbf{F}^{-1})^{j_2j_{\bar{2}}j_1j_{\bar{1}}} &= (\bra{\mathbf{K}}\mathbf{R}_{2\bar{2}1\bar{1}}^{\rho})^{j_2j_{\bar{2}}j_1j_{\bar{1}}} \cdot \prod_{k,l \in \{2,\overline{1}\}} \frac{x_k-q\overline{x}_l}{x_k-\overline{x}_l}
\\&= \left(C_1(\overline{x}_1)C_2(x_2) \cdot \frac{\overline{x}_1-\overline{x}_2}{\overline{x}_1-q\overline{x}_2} + A_1(\overline{x}_1)D_2(x_2) \cdot \frac{(1-q)\bar{x}_2}{\overline{x}_1-q\bar{x}_2}\right) \cdot \prod_{k,l \in \{2,\bar{1}\}} \frac{x_k-q\overline{x}_l}{x_k-\overline{x}_l} 
\\&= \left(C_1(\overline{x}_1)C_2(x_2) + A_1(\overline{x}_1)D_2(x_2) \cdot \frac{(1-q)\bar{x}_2}{\overline{x}_1-\bar{x}_2}\right) \cdot \frac{\overline{x}_1-qx_1}{\overline{x}_1-x_1} \cdot \frac{x_2-q\overline{x}_2}{x_2-\overline{x}_2} \cdot \frac{x_2-qx_1}{x_2-x_1}.
\end{align*}

The cases where $\rho = (\overline{2},1,2,\overline{1})$ and $\rho = (\overline{2},\overline{1},2,1)$ can be checked in a very similar manner to the method used in the previous case, using one and two instances of the fish relation, respectively. In the case of $\rho = (\overline{2},1,2,\overline{1})$, we find
\begin{align*}(\bra{\mathbf{K}}\mathbf{R}_{2\bar{2}1\bar{1}}^{\rho})^{j_2j_{\bar{2}}j_1j_{\bar{1}}} = & 
\left(C_1(x_1)C_2(\overline{x}_2) \cdot \frac{x_1-x_2}{x_1-q x_2} + A_1(x_1)D_2(\overline{x}_2) \cdot \frac{(1-q) x_2}{x_1-q x_2}\right), \end{align*}
while for $\rho = (\overline{2},\overline{1},2,1)$, we obtain
\begin{align*}(\bra{\mathbf{K}}\mathbf{R}_{2\bar{2}1\bar{1}}^{\rho})^{j_2j_{\bar{2}}j_1j_{\bar{1}}} = & 
\left(C_1(\overline{x}_1)C_2(\overline{x}_2) \cdot \frac{\overline{x}_1-x_2}{\overline{x}_1-q x_2} + A_1(\overline{x}_1)D_2(\overline{x}_2) \cdot \frac{(1-q) x_2}{\overline{x}_1-q x_2}\right), \end{align*}
and their overall contribution can again be computed as in~(\ref{eq:twisted.bdry}). 
Finally, suppose that $\rho = (1,\overline{1},2,\overline{2})$, then, by the Caduceus relation for two particles (Lemma \ref{le:caduceus}), we find
$$\mathcal{Z}\left(
\begin{tikzpicture}[scale=.5,baseline={([yshift=-\the\dimexpr\fontdimen22\textfont2\relax]current bounding box.center)}]
\node [label=right:$x_1$] at (3,2) {};
\node [label=right:$\overline{x}_1$] at (3,1) {};
\node [label=right:$x_2$] at (3,0) {};
\node [label=right:$\overline{x}_2$] at (3,-1) {};
\node [label=left:$\mathbf{2}$] at (-0.5,1.5) {};
\node [label=left:$\mathbf{1}$] at (-0.5,-.5) {};

\draw [-] 
    (3,-1) -- (2,-1) 
    .. controls (1.5,-1) and (1.5,0) .. (1,0) 
    .. controls (.5,0) and (.5,1) .. (0,1)
    arc (270:90:.5)
    -- (1,2)
    .. controls (1.5,2) and (1.5,1) .. (2,1)
    .. controls (2.5,1) and (2.5,0) .. (3,0);

\draw [-] 
    (3,2) -- (2,2)
    .. controls (1.5,2) and (1.5,1) .. (1,1)
    .. controls (.5,1) and (.5,0) .. (0,0)
    arc (90:270:.5)
    -- (1,-1)
    .. controls (1.5,-1) and (1.5,0) .. (2,0)
    .. controls (2.5,0) and (2.5,1) .. (3,1);

\filldraw[black] (-.5,1.5) circle (4pt);
\filldraw[black] (-.5,-.5) circle (4pt);
\filldraw[white] (3,2) circle (4pt);
\filldraw[white] (3,1) circle (4pt);
\filldraw[red,opacity=.5] (3,2) circle (4pt);
\filldraw[red,opacity=.5] (3,1) circle (4pt);
\draw[fill=white] (3,0) circle (4pt);
\draw[fill=white] (3,-1) circle (4pt);

\end{tikzpicture}\right) = 
\mathcal{Z}\left(
\begin{tikzpicture}[scale=.5,baseline={([yshift=-\the\dimexpr\fontdimen22\textfont2\relax]current bounding box.center)}]
\node [label=right:$x_1$] at (3,2) {};
\node [label=right:$\overline{x}_1$] at (3,1) {};
\node [label=right:$x_2$] at (3,0) {};
\node [label=right:$\overline{x}_2$] at (3,-1) {};
\node [label=left:$\mathbf{2}$] at (1.5,1.5) {};
\node [label=left:$\mathbf{1}$] at (1.5,-.5) {};

\draw [-] 
    (3,-1) -- (2,-1) 
    arc (270:90:.5)
    -- (3,0);

\draw [-] 
    (3,2) -- (2,2)
    arc (90:270:.5)
    -- (3,1);

\filldraw[black] (1.5,1.5) circle (4pt);
\filldraw[black] (1.5,-.5) circle (4pt);
\filldraw[white] (3,2) circle (4pt);
\filldraw[white] (3,1) circle (4pt);
\filldraw[red,opacity=.5] (3,2) circle (4pt);
\filldraw[red,opacity=.5] (3,1) circle (4pt);
\draw[fill=white] (3,0) circle (4pt);
\draw[fill=white] (3,-1) circle (4pt);

\end{tikzpicture}\right) =
\textrm{wt}\left(
\begin{tikzpicture}[scale=.5,baseline={([yshift=-\the\dimexpr\fontdimen22\textfont2\relax]current bounding box.center)}]
\node [label=right:$x_1$] at (3,2) {};
\node [label=right:$\overline{x}_1$] at (3,1) {};
\node [label=right:$x_2$] at (3,0) {};
\node [label=right:$\overline{x}_2$] at (3,-1) {};
\node [label=left:$\mathbf{2}$] at (1.5,1.5) {};
\node [label=left:$\mathbf{1}$] at (1.5,-.5) {};

\draw [-] 
    (3,-1) -- (2,-1) 
    arc (270:90:.5)
    -- (3,0);

\draw [-] 
    (3,2) -- (2,2)
    arc (90:270:.5)
    -- (3,1);

\draw [-,line width=1mm,red,opacity=.5,rounded corners=0pt]
    (3,2) -- (2,2)
    arc (90:270:.5)
    -- (3,1);

\filldraw[black] (1.5,1.5) circle (4pt);
\filldraw[black] (1.5,-.5) circle (4pt);

\end{tikzpicture}\right)
=
A_1(x_2)D_2(x_1).
$$
Note in particular that the caduceus constant of proportionality is also forced to be one, by our assumption on the Fish constants of proportionality.
With the spectral parameters swapped as a consequence of the Caduceus relation, we have \[(\bra{\mathbf{K}}\mathbf{F}^{-1})^{j_2j_{\bar{2}}j_1j_{\bar{1}}} = A_1(x_2)D_2(x_1)\cdot \prod_{\substack{k>l \\ k,l \in \{\bar{1},1,\bar{2},2\}}} b_{j_k,j_l}^{-1}(x_k,x_l) = A_1(x_2)D_2(x_1) \cdot \prod_{\{\epsilon_1, \epsilon_2\} \in \{\pm 1\}^2} \frac{x_1^{\epsilon_1}-q\bar{x}_2^{\epsilon_2}}{x_1^{\epsilon_1}-\bar{x}_2^{\epsilon_2}}. \qedhere \] 
\end{proof}

\begin{proof}[Proof of Theorem \ref{thm:rank2.generic}] 
First, by \eqref{mainthmbraket} and \eqref{eq:twoparts}, we have $Z(\mathcal{B}_{\lambda}) = \bra{\mathbf{K}} \mathbf{F}^{-1} \mathbf{\widetilde{C}_0} \cdots \mathbf{\widetilde{C}_{\lambda_r}} \ket{\varnothing}$. In rank 2, Theorem \ref{thm:twisted.grid} tells us that \[\bra{\hat{u}} \widetilde{\mathbf{C}}_0 \cdots \widetilde{\mathbf{C}}_{\lambda_r} \ket{\varnothing} = \left\{\begin{array}{ll} \sum_{\sigma \in S_2} y_{\sigma(1)}^{\lambda_1+1}y_{\sigma(2)}^{\lambda_2+1}\cdot \frac{y_{\sigma(1)} - qy_{\sigma(2)}}{y_{\sigma(1)}-y_{\sigma(2)}} & \text{ if $\lambda_1 > \lambda_2$}\\ y_1^{\lambda_1+1}y_2^{\lambda_2+1} & \text{ if $\lambda_1 = \lambda_2,$} \end{array}\right.\] 
where $y_1$ represents the spectral parameter for the particle entering on the top row of the lattice, and $y_2$ represents the spectral parameter particle entering on the second row from the top. Combining this with Lemma \ref{le:K.rank2.generic} in the case where $\lambda_1 \neq \lambda_2$, we see that $\mathcal{Z}(\mathcal{B}_{\lambda}) = \bra{\mathbf{K}} \mathbf{F}^{-1} \mathbf{\widetilde{C}_0} \cdots \mathbf{\widetilde{C}_{\lambda_r}} \ket{\varnothing}$ equals
\begin{align*} 
&\left[\sum_{\{\epsilon_1,\epsilon_2\} \in \{\pm 1\}^2} \frac{x_1^{\epsilon_1}-q\overline{x}_1^{\epsilon_1}}{x_1^{\epsilon_1}-\overline{x}_1^{\epsilon_1}} \cdot \frac{x_2^{\epsilon_2}-q\overline{x}_2^{\epsilon_2}}{x_2^{\epsilon_2}-\overline{x}_2^{\epsilon_2}} \cdot \frac{x_2^{\epsilon_2}-q\overline{x}_1^{\epsilon_1}}{x_2^{\epsilon_2}-\overline{x}_1^{\epsilon_1}}\cdot \left(C_1(x_1^{\epsilon_1}) C_2(x_2^{\epsilon_2}) + A_1D_2 \cdot \frac{(1-q)\bar{x}_2^{\epsilon_2}}{x_1^{\epsilon_1}-\bar{x}_2^{\epsilon_2}} \right)\right.
\\&\cdot \left.\left((x_{1}^{\epsilon_{1}})^{\lambda_1+1}(x_{2}^{\epsilon_{2}})^{\lambda_2+1}\cdot \frac{x_{1}^{\epsilon_{1}} - qx_{2}^{\epsilon_{2}}}{x_{1}^{\epsilon_{1}}-x_{2}^{\epsilon_{2}}} + (x_{2}^{\epsilon_{2}})^{\lambda_1+1}(x_{1}^{\epsilon_{1}})^{\lambda_2+1}\cdot \frac{x_{2}^{\epsilon_{2}} - qx_{1}^{\epsilon_{1}}}{x_{2}^{\epsilon_{2}}-x_{1}^{\epsilon_{1}}}\right)\right]
\\&+A_1D_2\prod_{\{\epsilon_1,\epsilon_2\}\in \{\pm 1\}^2} \frac{x_1^{\epsilon_1}-q\bar{x}_2^{\epsilon_2}}{x_1^{\epsilon_1}-\bar{x}_2^{\epsilon_2}}\cdot \left( x_1^{\lambda_1-\lambda_2}\cdot \frac{x_1 - q\overline{x}_1}{x_1-\overline{x}_1} + x_1^{\lambda_2-\lambda_1}\cdot \frac{\overline{x}_1 - qx_1}{\overline{x}_1-x_1}\right)
\\&+A_1D_2\prod_{\{\epsilon_1,\epsilon_2\}\in \{\pm 1\}^2} \frac{x_2^{\epsilon_2}-q\bar{x}_1^{\epsilon_1}}{x_2^{\epsilon_2}-\bar{x}_1^{\epsilon_1}}\cdot \left( x_2^{\lambda_1-\lambda_2}\cdot \frac{x_2 - q\overline{x}_2}{x_2-\overline{x}_2} + x_2^{\lambda_2-\lambda_1}\cdot \frac{\overline{x}_2 - qx_2}{\overline{x}_2-x_2}\right),
\end{align*}
where we have used the fact that when the Fish constant of proportionality is one, so we are uniform of case 1(a) or 2(b) in Lemma~\ref{le:fish}, then $deg(A_1)=deg(D_2)=0$ according to Lemma~\ref{le:caduceus}, and so we may drop the spectral dependence (i.e., the dependence on $x_i$) in the statement of Lemma~\ref{le:K.rank2.generic}.

To analyze the case where $\lambda_1 \ne \lambda_2$ further, we now separate the calculation into cases -- that where the fish relation is uniform of case 1(a) and of 2(b). In case 1(a), Lemma~\ref{le:caduceus} ensures $C_i$ is independent of spectral parameter, so we may immediately write the partition function above as
\begin{align*}
Z(\mathcal{B}_{\lambda}) = \; \; & C_1C_2\sum_{w\in W} w\left( x_1^{\lambda_1+1}x_2^{\lambda_2+1} \cdot \prod_{\alpha \in \Phi^+} \frac{1-qx^{-\alpha}}{1-x^{-\alpha}}\right)
\\&+A_1D_2\left[\sum_{w \in W} w\left(x_1^{\lambda_1+1}x_2^{\lambda_2+1}\cdot \frac{x_1-qx_2}{x_1-x_2}\cdot \frac{x_2-q\overline{x}_1}{x_2-\overline{x}_1} \cdot \frac{(1-q)\overline{x}_2}{x_1-\overline{x}_2} \cdot \frac{x_1-q\overline{x}_1}{x_1-\overline{x}_1} \cdot \frac{x_2-q\overline{x}_2}{x_2-\overline{x}_2}\right)\right.
\\&+ \left.\sum_{w \in W/\langle s_2 \rangle} w\left(x_1^{\lambda_1-\lambda_2}\cdot \frac{x_1-qx_2}{x_1-x_2}\cdot \frac{\overline{x}_1-qx_2}{\overline{x}_1-x_2} \cdot \frac{x_1-q\overline{x}_2}{x_1-\overline{x}_2} \cdot \frac{\overline{x}_1-q\overline{x}_2}{\overline{x}_1-\overline{x}_2} \cdot \frac{x_1-q\overline{x}_1}{x_1-\overline{x}_1}\right)\right].
\end{align*}
If instead we are uniform of case 2(b), recall that Lemma~\ref{le:caduceus} ensures that $C_i(x)=m_i \overline{x}$ for some $m_i \in \mathbb{C}[q].$ One may then check that:
\begin{align*}
Z(\mathcal{B}_{\lambda}) = \; \; & m_1 m_2 \sum_{w\in W} w\left( x_1^{\lambda_1}x_2^{\lambda_2} \cdot \prod_{\alpha \in \Phi^+} \frac{1-qx^{-\alpha}}{1-x^{-\alpha}}\right)
\\&+A_1D_2\left[\sum_{w \in W} w\left(x_1^{\lambda_1+1}x_2^{\lambda_2+1}\cdot \frac{x_1-qx_2}{x_1-x_2}\cdot \frac{x_2-q\overline{x}_1}{x_2-\overline{x}_1} \cdot \frac{(1-q)\overline{x}_2}{x_1-\overline{x}_2} \cdot \frac{x_1-q\overline{x}_1}{x_1-\overline{x}_1} \cdot \frac{x_2-q\overline{x}_2}{x_2-\overline{x}_2}\right)\right.
\\&+ \left.\sum_{w \in W/\langle s_2 \rangle} w\left(x_1^{\lambda_1-\lambda_2}\cdot \frac{x_1-qx_2}{x_1-x_2}\cdot \frac{\overline{x}_1-qx_2}{\overline{x}_1-x_2} \cdot \frac{x_1-q\overline{x}_2}{x_1-\overline{x}_2} \cdot \frac{\overline{x}_1-q\overline{x}_2}{\overline{x}_1-\overline{x}_2} \cdot \frac{x_1-q\overline{x}_1}{x_1-\overline{x}_1}\right)\right].
\end{align*}
Case 2(b) is handled similarly where again $C_i(x)=m_j \overline{x}$ for some $m_j \in \mathbb{C}[q]$, giving the result in this case.

On the other hand, if $\lambda_1= \lambda_2$, then $\mathcal{Z}(\mathcal{B}_{\lambda})$ is equal to
\begin{align*}
&\sum_{\{\epsilon_1,\epsilon_2\} \in \{\pm 1\}^2} \frac{x_1^{\epsilon_1}-q\overline{x}_1^{\epsilon_1}}{x_1^{\epsilon_1}-\overline{x}_1^{\epsilon_1}} \cdot \frac{x_2^{\epsilon_2}-q\overline{x}_2^{\epsilon_2}}{x_2^{\epsilon_2}-\overline{x}_2^{\epsilon_2}} \cdot \frac{x_2^{\epsilon_2}-q\overline{x}_1^{\epsilon_1}}{x_2^{\epsilon_2}-\overline{x}_1^{\epsilon_1}} \cdot \left(C_1(x_1^{\epsilon_1}) C_2(x_2^{\epsilon_2}) + A_1D_2\frac{(1-q)\overline{x}_2^{\epsilon_2}}{x_1^{\epsilon_1}-\overline{x}_2^{\epsilon_2}}\right)\cdot (x_{1}^{\epsilon_{1}})^{\lambda_1+1}(x_{2}^{\epsilon_{2}})^{\lambda_2+1}
\\&+A_1D_2\prod_{\{\epsilon_1,\epsilon_2\}\in \{\pm 1\}^2} \frac{x_1^{\epsilon_1}-q\bar{x}_2^{\epsilon_2}}{x_1^{\epsilon_1}-\bar{x}_2^{\epsilon_2}} +A_1D_2\prod_{\{\epsilon_1,\epsilon_2\}\in \{\pm 1\}^2} \frac{x_2^{\epsilon_2}-q\bar{x}_1^{\epsilon_1}}{x_2^{\epsilon_2}-\bar{x}_1^{\epsilon_1}}.
\end{align*}
Again, assuming we are in case 1(a) of the Fish relation, then the weights $C_i$ are independent of the $x_i$ and we may write:
\begin{align*}
\mathcal{Z}(\mathcal{B}_{\lambda})= \; \; & \frac{C_1C_2}{1+q}\cdot \sum_{w\in W} w\left( x_1^{\lambda_1+1}x_2^{\lambda_2+1} \cdot \prod_{\alpha \in \Phi^+} \frac{1-qx^{-\alpha}}{1-x^{-\alpha}}\right). 
\\&+\frac{A_1D_2}{1+q}\left[\sum_{w \in W} w\left(x_1^{\lambda_1+1}x_2^{\lambda_2+1}\cdot \frac{x_1-qx_2}{x_1-x_2}\cdot \frac{x_2-q\overline{x}_1}{x_2-\overline{x}_1} \cdot \frac{(1-q)\overline{x}_2}{x_1-\overline{x}_2} \cdot \frac{x_1-q\overline{x}_1}{x_1-\overline{x}_1} \cdot \frac{x_2-q\overline{x}_2}{x_2-\overline{x}_2}\right)\right.
\\&+ \left.\sum_{w \in W/\langle s_2 \rangle} w\left(x_1^{\lambda_1-\lambda_2}\cdot \frac{x_1-qx_2}{x_1-x_2}\cdot \frac{\overline{x}_1-qx_2}{\overline{x}_1-x_2} \cdot \frac{x_1-q\overline{x}_2}{x_1-\overline{x}_2} \cdot \frac{\overline{x}_1-q\overline{x}_2}{\overline{x}_1-\overline{x}_2} \cdot \frac{x_1-q\overline{x}_1}{x_1-\overline{x}_1}\right)\right]. \qedhere
\end{align*}
\end{proof}

In particular, when the bend weights are uniform in Case 2(b), there's a particularly nice choice of weights realizing the $B/C$ Hall-Littlewood polynomials for all partitions in rank two.

\begin{corollary}
    With Boltzmann weights from the trigonometric six-vertex model and bend weights (labeled as in Figure~\ref{fig:gen.bends}) satisfying
    $$ C_i(x) = \overline{x}, \quad A_1 D_2 = 0, \quad A_2 D_1 = q^3 - q,$$
    then for every dominant weight $(\lambda_1, \lambda_2)$ (i.e., pairs with $\lambda_1 \geqslant \lambda_2$), the partition function $\mathcal{Z}(\mathcal{B}_{\lambda})$ is the type $B/C$ Hall-Littlewood polynomial:
    $$ \mathcal{Z}(\mathcal{B}_{\lambda}) = \frac{1}{c_{\lambda}(q)}\sum_{w \in W} w\left(x_1^{\lambda_1}x_2^{\lambda_2}\cdot \prod_{\alpha \in \Phi^+} \frac{1-q\mathbf{x}^{-\alpha}}{1-\mathbf{x}^{-\alpha}}\right) \quad \text{where} \quad c_{\lambda}(q) = \left\{\begin{array}{cc} 1 & \text{if $\lambda_1> \lambda_2$} \\ 1+q & \text{if $\lambda_1 = \lambda_2$}\end{array}\right\}.$$
\end{corollary}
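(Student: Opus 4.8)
The plan is to recognize this corollary as a direct specialization of Theorem~\ref{thm:rank2.generic}, so the bulk of the work is simply verifying that the prescribed bend weights satisfy its hypotheses and then reading off the resulting formula. First I would confirm that the weights are uniform of regime 2(b) in the Fish relation (Lemma~\ref{le:fish}). Since $C_i(x) = \overline{x}$, the regime 2(b) relation $B_i(x) = qx^2 C_i(x)$ forces $B_i(x) = qx$, which is precisely the normalization $B_i(x) = qm_i x$, $C_i(x) = m_i\overline{x}$ with $m_i = 1$. The Fish relation in this case has constant of proportionality $F = 1$ and requires $\mathrm{deg}(A_i) = \mathrm{deg}(D_i) = 0$; this is consistent with $A_i, D_i$ being elements of $\mathbb{C}[q]$, as the conditions $A_1 D_2 = 0$ and $A_2 D_1 = q^3 - q$ only constrain constant weights.

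Next I would check the remaining solvability condition, namely the two-particle Caduceus relation of Lemma~\ref{le:caduceus} (which, as noted at the start of Section~\ref{sec:rank2}, is the only Caduceus case relevant in rank two, since at most two particles appear on the boundary). In regime 2(b), case~(2) of Lemma~\ref{le:caduceus} demands equation~\eqref{eq:cad.condition2}, i.e.\ $q^3 m_2 m_1 + q^2 D_2 A_1 - q m_2 m_1 - A_2 D_1 = 0$. Substituting $m_1 = m_2 = 1$ and $A_1 D_2 = 0$ collapses this to $q^3 - q - A_2 D_1 = 0$, which is exactly the stated hypothesis $A_2 D_1 = q^3 - q$. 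Together with the Yang--Baxter equation, which holds automatically for the tetravalent six-vertex weights by Proposition~\ref{le:YBE}, this shows the model is rank-two solvable and sits in the regime to which Theorem~\ref{thm:rank2.generic} applies.

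With the hypotheses in place, I would simply invoke Theorem~\ref{thm:rank2.generic} with the regime 2(b) specializations $\kappa = m_1 m_2 = 1$ and $\delta_1 = \delta_2 = 0$. The leading term of the theorem is then
\[
\frac{1}{c_\lambda(q)} \sum_{w \in W} w\!\left(x_1^{\lambda_1} x_2^{\lambda_2} \prod_{\alpha \in \Phi^+} \frac{1 - q\mathbf{x}^{-\alpha}}{1 - \mathbf{x}^{-\alpha}}\right),
\]
which is precisely the claimed $B/C$ Hall--Littlewood polynomial, and the entire second bracket is multiplied by the prefactor $A_1 D_2 = 0$ and hence vanishes identically. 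This yields the corollary.

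I do not expect a genuine obstacle here, since no new computation is required beyond specializing already-established formulas; the only point demanding care is the bookkeeping of which subcase of Lemmas~\ref{le:fish} and~\ref{le:caduceus} the chosen weights inhabit, and verifying that the stated product conditions are simultaneously satisfiable (for instance by taking $A_1 = 0$ and $A_2 D_1 = q^3 - q$, or $D_2 = 0$ with the same). The conceptual content is entirely contained in the vanishing of the $A_1 D_2$ correction term, which is exactly what isolates the symmetric Hall--Littlewood polynomial from the more general partition function of Theorem~\ref{thm:rank2.generic}.
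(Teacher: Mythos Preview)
Your proposal is correct and matches the paper's intended approach: the corollary is stated immediately after Theorem~\ref{thm:rank2.generic} with no separate proof, precisely because it is the specialization you describe (regime 2(b) with $m_1=m_2=1$, so $\kappa=1$, $\delta_i=0$, and the $A_1D_2$-bracket vanishes). Your verification of the Fish and two-particle Caduceus hypotheses is exactly the bookkeeping the paper leaves implicit.
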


\section{Rank Three Solvable Models}

In contrast to the rank two case, solvability in rank three requires the full generality of all the Lemmas on the caduceus relation featured in Section~\ref{sec:type.B}. Indeed, since Definition \ref{def:solvable} is more restrictive than the definition of a ``rank two solvable" model, there will be fewer solvable models in rank three. As the next result shows, these additional restrictions lead to many more refined conditions on the possible bend weights. Remarkably, we show as a special case of Theorem~\ref{thm:spherical3} that one such choice still results in rank three $B/C$ Hall-Littlewood polynomials for all partitions.

\begin{proposition}\label{prop:rank3wts}
The only possible choices of bend weights to complete the trigonometric six-vertex model to a solvable $B/C$ model in rank three are as follows:
\begin{enumerate}
\item $B_j(x) = -qC_j(x)$ and $A_j(x) = D_j(x) = 0$ for all $j$.
\item $B_j(x) = -x^2C_j(x)$ and $A_j(x) = D_j(x) = 0$ for all $j$.
\item $A_j,B_j,C_j,D_j \in \mathbb{C}[q]$, with $B_j = C_j$ for $j=1,2,3$, and $A_iC_jD_k = C_1C_2C_3$ for all arrangements $(i,j,k)$ of $\{1,2,3\}$, or
\item $A_j,B_j,C_j,D_j \in \mathbb{C}[q]$, with $B_j = C_j$ for $j=1,2,3$, and \newline $\left\{\begin{array}{l} A_1, D_3 = 0 \\ A_3C_2D_1 = A_3C_1D_2 = A_2C_3D_1 = (1-q^2)C_1C_2C_3, \text{ or } \end{array}\right.$
\item $A_j,B_j,C_j,D_j \in \mathbb{C}[q]$, with $B_j = C_j$ for $j=1,2,3$, and \newline $\left\{\begin{array}{l} A_3, D_1 = 0 \\ A_1C_2D_3 = A_2C_1D_3 = A_1C_3D_2 = (1-q^2)C_1C_2C_3, \text{ or } \end{array}\right.$
\item $B_j(x) = qm_jx$, $C_j(x) = m_j\bar{x}$, where $m_j \in \mathbb{C}[q]$ and $A_j, D_j \in \mathbb{C}[q]$ for $j=1,2,3$ and \newline $q^{2(k-j)}m_iA_jD_k = qm_1m_2m_3$ for all arrangements $(i,j,k)$ of $\{1,2,3\}$, or
\item $B_j(x) = qm_jx$, $C_j(x) = m_j\bar{x}$, where $m_j \in \mathbb{C}[q]$ and $A_j, D_j \in \mathbb{C}[q]$ for $j=1,2,3$ and \newline $\left\{\begin{array}{l} A_1,D_3 = 0 \\ m_1A_3D_2 =  m_3A_2D_1 = (q^3-q)m_1m_2m_3 \\ m_2A_3D_1 = (q^5-q^3)m_1m_2m_3, \text{ or } \end{array} \right.$
\item $B_j(x) = qm_jx$, $C_j(x) = m_j\bar{x}$, where $m_j \in \mathbb{C}[q]$ and $A_j, D_j \in \mathbb{C}[q]$ for $j=1,2,3$ and \newline $\left\{\begin{array}{l} A_3,D_1 = 0 \\ m_1A_2D_3 = m_3A_1D_2 = (q^{-1}-q)m_1m_2m_3 \\ m_2A_1D_3 = (q^{-3}-q^{-1})m_1m_2m_3, \text{ or } \end{array} \right.$
\end{enumerate}
\end{proposition}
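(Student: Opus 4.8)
The plan is to read off the constraints imposed by Definition~\ref{def:solvable} and solve the resulting system regime by regime. Solvability asks for three things: the Yang--Baxter equation (which holds for the tetravalent weights by Proposition~\ref{le:YBE}, independent of the bends), a uniform Fish regime, and all three Caduceus relations. So the first move is to fix a uniform regime among $\{1(a),1(b),2(a),2(b)\}$ as classified in Lemma~\ref{le:fish}, and then to impose the one-, two-, and three-particle Caduceus conditions of Lemmas~\ref{le:caduceus1}, \ref{le:caduceus}, and~\ref{le:caduceus3} on each adjacent pair of bends $(1,2)$ and $(2,3)$, these being the pairs exchanged by the simple reflections in the train argument.

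First I would dispose of the two ``antisymmetric'' regimes. In regimes $1(b)$ and $2(a)$ the Fish relation already forces $A_j=D_j=0$ and $B_j=-qC_j$ (respectively $B_j=-x^2C_j$) for every $j$, and Lemmas~\ref{le:caduceus}, \ref{le:caduceus3}, \ref{le:caduceus1} then show the Caduceus relations hold automatically: the one- and three-particle quantities vanish identically, and the two-particle relation is satisfied with $F=(C_1(x_1)C_2(x_2))/(C_1(x_2)C_2(x_1))$. Hence these regimes impose no further constraint and yield exactly cases (1) and (2).

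The substance of the argument is in regimes $1(a)$ and $2(b)$. In regime $1(a)$ we have $B_j=C_j$, and the two-particle relation~(\ref{eq:cad.condition}) forces every weight into $\mathbb{C}[q]$ (degree $0$); in regime $2(b)$ we have $C_j(x)=m_j\bar x$, $B_j(x)=qm_jx$ with $m_j,A_j,D_j\in\mathbb{C}[q]$. In each case I would write out the full system coming from the three Caduceus lemmas on the pairs $(1,2)$ and $(2,3)$ and solve it by casework on which of the bend weights $A_j,D_j$ vanish. When none of them vanishes, the one- and three-particle relations give the proportionalities $A_j\propto C_j$ and $D_j\propto C_j$; feeding these into the two-particle relation pins down the proportionality constants and collapses the system to the single symmetric identity $A_iC_jD_k=C_1C_2C_3$ (regime $1(a)$, case (3)) or its $q$-weighted analogue $q^{2(k-j)}m_iA_jD_k=qm_1m_2m_3$ (regime $2(b)$, case (6)). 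The remaining, ``triangular,'' branches arise when a matched pair of weights is allowed to vanish --- $A_1=D_3=0$ or $A_3=D_1=0$ --- which removes exactly the configurations that would otherwise force full proportionality, leaving the surviving two-particle relations to determine the remaining products; this produces cases (4),(5) in regime $1(a)$ and cases (7),(8) in regime $2(b)$.

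The hard part will be twofold. First, one must show the casework is \emph{exhaustive}: that every solution of the combined system lies in one of the listed families, with no additional degenerate branch (e.g.\ sporadic vanishing patterns, or solutions with some $C_j=0$, which must be excluded or shown to reduce to the given cases). Second, and most delicate, is the bookkeeping of the $q$-powers in regime $2(b)$: because the weights $B_j,C_j$ genuinely depend on the spectral parameters, the parameter swaps built into the Caduceus diagrams introduce factors of $q$ and $x^{\pm 2}$ that are invisible in regime $1(a)$, and getting the exponents $q^{2(k-j)}$, $q^3-q$, and $q^{5}-q^{3}$ (together with their inverse-power counterparts) correct requires carefully tracking which spectral parameter each weight is evaluated at in each summand of the Caduceus identity.
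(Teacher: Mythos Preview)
Your high-level plan—split by Fish regime, dispose of $1(b)$ and $2(a)$, then solve the Caduceus system pair by pair—is the paper's approach. But there is a genuine gap in how you propose to apply the one- and three-particle Caduceus relations, and it is precisely this point that controls whether the ``triangular'' cases (4), (5), (7), (8) can exist.

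You say you will impose Lemmas~\ref{le:caduceus1} and~\ref{le:caduceus3} on each adjacent pair as written. Read literally, the one-particle relation for the pair $(1,2)$ in regime $1(a)$ is $C_2A_1=C_1A_2$. Setting $A_1=0$ then forces $A_2=0$ (since $C_1\ne 0$), hence $A_3=0$ from the pair $(2,3)$, and the two-particle relation collapses to $(q^2-1)C_1C_2=0$, a contradiction. So under your stated constraints the branch $A_1=D_3=0$ is inconsistent, and case (4) never appears. Your claim that this vanishing ``removes exactly the configurations that would otherwise force full proportionality'' does not supply a mechanism; inside the Caduceus lemma itself, nothing is removed.

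What the paper uses—and what you are missing—is a \emph{weakening by particle conservation}. In a rank-three model exactly three particles reach the bends. When bends $j,j{+}1$ carry one particle (resp.\ three), the remaining bend $k$ carries two (resp.\ zero), so every such term of the full partition function is multiplied by $D_k$ (resp.\ $A_k$). Hence for the train argument one only needs the one-particle Caduceus for $(j,j{+}1)$ \emph{times} $D_k$, and the three-particle Caduceus \emph{times} $A_k$; the two-particle relation picks up $C_k$. The paper writes these as constraints (i)--(vi) with explicit ``extra factors''. With $A_1=D_3=0$, the one-particle constraint for $(1,2)$ and the three-particle constraint for $(2,3)$ become $0=0$; the surviving four relations then force exactly the identities of case (4). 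The analogous weakening in regime $2(b)$ is what allows (7) and (8).

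Once you insert these extra factors, the rest of your plan (casework, $q$-power bookkeeping, exclusion of $C_j=0$) proceeds as in the paper.
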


\begin{proof}

To begin, recall that a type $B/C$ model is of uniform regime and satisfies the two particle caduceus relation if and only if the model satisfies one of the four conditions listed in Lemma \ref{le:caduceus}. Note that cases (1) and (2) above correspond to cases (3) and (4) of Lemma \ref{le:caduceus}, respectively. By Lemma \ref{le:fish}, $A_j(x) = D_j(x) = 0$ in these cases, so that the caduceus condition with 3 particles and 1 particle is trivially satisfied. Hence, the choice of bend weights described cases (1) and (2) complete the trigonometric six-vertex model to a solvable $B/C$ model in rank three.

Suppose instead that a given model has bivalent weights as described in case (1) of Lemma \ref{le:caduceus}. In this case, we have the following set of constraints on the bivalent weights from Lemmas \ref{le:caduceus}, \ref{le:caduceus3}, and \ref{le:caduceus1}:

\begin{equation}\label{eq:cad.condition3}
\left\{\begin{array}{ll} (i) & C_3(q^2(A_1D_2-C_1C_2) - (A_2D_1-C_1C_2)) = 0 \\ (ii) & C_1(q^2(A_2D_3-C_2C_3) - (A_3D_2-C_2C_3)) = 0 \\
(iii) & A_3C_1D_2 = A_3C_2D_1 \\ (iv) & A_1C_2D_3 = A_1C_3D_2 \\ (v) & D_3C_1A_2 = D_3C_2A_1 \\ (vi) & D_1C_2A_3 = D_1C_3A_2. \end{array}\right.\end{equation}

Recall that in this case, $A_j,B_j,C_j,D_j \in \mathbb{C}[q]$ and $B_j = C_j$ for $j=1,2,3$. Note that (iii) and (iv) (resp.~(v) and (vi)) include an extra factor of $A_j$ (resp.~$D_j$) when compared with the result from Lemma \ref{le:caduceus3} (resp. Lemma \ref{le:caduceus1}) due to the presence of the third bivalent vertex in rank three. For similar reasons, there is a factor of $C_3$ in (i) and a factor of $C_1$ in (ii).

Before we begin our case analysis, we note that if $C_j = 0$ for any $j$, then every state in the associated model has weight 0, since the only states in the model that don't have a factor of $C_j$ would have a weight of 0 by (iii)-(vi), and hence we may assume that $C_j \neq 0$ for all $j$. 

First, suppose that $A_1C_2D_3 \neq 0$ and $A_3C_2D_1 \neq 0$. This case will lead us to scenario (3) above. By (iv) and (v), we have $A_1C_2D_3 = A_1C_3D_2 = A_2C_1D_3$, so $A_1 = \frac{C_1}{C_2}A_2$ and $D_3 = \frac{C_3}{C_2}D_2$. Similarly, by (iii) and (vi), we have $A_3C_2D_1 = A_3C_1D_2 = A_2C_3D_1$, and hence $A_3 = \frac{C_3}{C_2}A_2$ and $D_1 = \frac{C_1}{C_2}D_2$. Thus, $A_1C_2D_3 = A_3C_2D_1 = \frac{C_1C_3}{C_2^2}A_2C_2D_2$, so by (iii)-(vi), we have $A_iC_jD_k = \frac{C_1C_3}{C_2^2}A_2C_2D_2$ where $(i,j,k)$ is an arrangement of $\{1,2,3\}$. Additionally, by (i), we have $(q^2-1)(\frac{C_1C_3}{C_2^2}A_2C_2D_2-C_1C_2C_3) = 0$, so $C_1C_2C_3 = \frac{C_1C_3}{C_2^2}A_2C_2D_2$ as well, which brings us to scenario (3) above.

Now, suppose that $A_1C_2D_3 = 0$. Then, by (iv) and (v), respectively, we have $A_1C_3D_2 = 0$ and $A_2C_1D_3 = 0$. Hence, by (i) and (ii) we have $A_2C_3D_1 = (1-q^2)C_1C_2C_3$ and $A_3C_1D_2 = (1-q^2)C_1C_2C_3$. From this, we see that $A_2, D_2 \neq 0$. Since $A_1C_3D_2 = 0$, we must have $A_1 = 0$, and, since $A_2C_1D_3 = 0$, we must have $D_3 = 0$. Summarizing, if $A_1C_2D_3 = 0$, then 
\begin{equation*}
\left\{\begin{array}{l} A_1, D_3 = 0 \\ A_3C_2D_1 = A_3C_1D_2 = A_2C_3D_1 = (1-q^2)C_1C_2C_3, \end{array}\right.
\end{equation*}
which is scenario (4) above.

If, instead, $A_3C_2D_1 = 0$, then due to the symmetry within conditions (i)-(vi), we have 
\begin{equation*}
\left\{\begin{array}{l} A_3, D_1 = 0 \\ A_1C_2D_3 = A_2C_1D_3 = A_1C_3D_2 = (1-q^2)C_1C_2C_3, \end{array}\right.
\end{equation*}
which is scenario (5) above. Finally, note that if $A_1C_2D_3 = 0$ and $A_3C_2D_1 = 0$, then $A_1,D_1 = 0$, so by (i) $(1-q^2)C_1C_2C_3 = 0$, which leads to a trivial model, as discussed at the beginning of the proof.

Cases (6), (7), and (8) emerge from an analysis of rank three models that satisfy case (2) of Lemmas \ref{le:caduceus}, \ref{le:caduceus3}, and \ref{le:caduceus1} in much the same way that cases (3), (4), and (5) were established above. 
\end{proof}

Note that if $C_j(x) = 1$ in scenario (1), then we recover the model from \cite{Wheeler-Zinn-Justin}. Meanwhile, scenario (3) is equivalent to having $A_j = B_j = C_j = D_j = 1$, since in this case every state receives the same weight contribution from its bivalent vertices. In Theorem \ref{thm:spherical3}, we show that the zonal spherical function in type C can be realized as the partition function of a model from scenario (4) or as the partition function of a model from scenario (7).



\begin{theorem}\label{thm:spherical3}
Let rank $r=3$ and $\lambda = (\lambda_1 \geqslant \lambda_2 \geqslant \lambda_3)$. If $\mathcal{B}_{\lambda}$ is solvable then 
\begin{enumerate}
\item $\displaystyle \mathcal{Z}(\mathcal{B}_\lambda) = \frac{C_1C_2C_3}{c_{\lambda}(q)}\sum_{w \in W} w\left(x_1^{\lambda_1+1}x_2^{\lambda_2+1}x_3^{\lambda_3+1}\cdot \prod_{\alpha \in \Phi^+} \frac{1-qx^{-\alpha}}{1-x^{-\alpha}}\right)$ if $B_{\lambda}$ has bivalent weights as in case (4) of Proposition \ref{prop:rank3wts}, or
\item $\displaystyle \mathcal{Z}(\mathcal{B}_\lambda) = x_1x_2x_3 \frac{C_1(x_1)C_2(x_2)C_3(x_3)}{c_{\lambda}(q)}\sum_{w \in W} w\left(x_1^{\lambda_1}x_2^{\lambda_2}x_3^{\lambda_3}\cdot \prod_{\alpha \in \Phi^+} \frac{1-qx^{-\alpha}}{1-x^{-\alpha}}\right)$ if $B_{\lambda}$ has bivalent weights as in case (7) of Proposition \ref{prop:rank3wts},
\end{enumerate}
where $c_{\lambda}(q) = \left\{\begin{array}{ll} 1 & \text{if $\lambda_1 > \lambda_2 > \lambda_3$} \\ 1+q & \text{if $\lambda_1 = \lambda_2 > \lambda_3$ or $\lambda_1 > \lambda_2 = \lambda_3$} \\ (1+q)(1+q+q^2) & \text{if $\lambda_1 = \lambda_2 = \lambda_3$}. \end{array}\right.$
\end{theorem}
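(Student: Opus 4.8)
The plan is to follow the same braket strategy used for rank two in Theorem~\ref{thm:rank2.generic}, factoring the partition function as in \eqref{mainthmbraket} and \eqref{eq:twoparts},
\[ \mathcal{Z}(\mathcal{B}_\lambda) = \bra{\mathbf{K}}\mathbf{F}^{-1}\,\widetilde{\mathbf{C}}_0\cdots\widetilde{\mathbf{C}}_{\lambda_3}\ket{\varnothing}, \]
and evaluating the two factors separately. The ket is handled uniformly by Theorem~\ref{thm:twisted.grid}: since the middle column weight in Figure~\ref{fig:nonlocal.weights} vanishes, every particle is funneled to one of the top three rows, so $\widetilde{\mathbf{C}}_0\cdots\widetilde{\mathbf{C}}_{\lambda_3}\ket{\varnothing}$ is supported on $\ket{\hat u}$ with coefficient the $S_3/S_3^\lambda$ sum in the active top-row parameters. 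The whole computation therefore reduces to a rank-three analogue of Lemma~\ref{le:K.rank2.generic}, namely an explicit formula for $\bra{\mathbf{K}}\mathbf{F}^{-1}$.

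First I would compute $\bra{\mathbf{K}}\mathbf{F}^{-1}$ using the explicit operators \eqref{eq:fstarform} and \eqref{eq:Delta}. As in rank two, the diagonal factors $\Delta^{-1}_{kl}$ contribute the crossing ratios, while each permutation $\rho$ of $\{3,\bar 3,2,\bar 2,1,\bar 1\}$ with $\rho(i)\succcurlyeq\rho(\bar i)$ (the only ones surviving because $a_1=a_2=1$) contributes a cap diagram that I would collapse to a bend-weight monomial using the Fish relation (Lemma~\ref{le:fish}) together with the one-, two-, and three-particle Caduceus relations (Lemmas~\ref{le:caduceus1}, \ref{le:caduceus}, \ref{le:caduceus3}). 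The hypotheses $A_1=D_3=0$ from cases~(4) and~(7) of Proposition~\ref{prop:rank3wts} kill a large fraction of these contributions at once. Sorting the remainder, $\bra{\mathbf{K}}\mathbf{F}^{-1}$ splits into a \emph{diagonal} part, proportional to $C_1C_2C_3$ (resp.\ $m_1m_2m_3$) and attached to the single-bend cap covectors for each sign vector $\epsilon\in\{\pm1\}^3$, plus a \emph{correction} part carrying the surviving double-bend products $A_2D_1,\ A_3D_1,\ A_3D_2$ (resp.\ their regime~2(b) analogues).

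Combining the diagonal part with Theorem~\ref{thm:twisted.grid} assembles the answer. Summing the single-bend contributions over $\epsilon\in\{\pm1\}^3$ supplies the sign-change part of the hyperoctahedral group, the $S_3$-sum from the twisted grid supplies the permutations, and the accumulated crossing ratios rebuild $\prod_{\alpha\in\Phi^+}\frac{1-q\mathbf{x}^{-\alpha}}{1-\mathbf{x}^{-\alpha}}$. In case~(4) (regime~1(a)) the shift $\delta_i=1$ and coefficient $C_1C_2C_3$ appear, giving the first displayed formula; in case~(7) (regime~2(b)) one has $\delta_i=0$ and $C_j=m_j\bar x$, so the crossing data combine to the prefactor $x_1x_2x_3\,C_1(x_1)C_2(x_2)C_3(x_3)$, giving the second. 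The normalization $c_\lambda(q)$ is tracked exactly as in rank two: repeated parts of $\lambda$ shrink the stabilizer $S_3^\lambda$ and introduce the factors $1$, $1+q$, and $(1+q)(1+q+q^2)$ in the three shape cases.

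The main obstacle is showing that the correction part contributes nothing. Unlike the rank-two Corollary, where $A_1D_2=0$ kills the correction outright, here $A_2D_1$, $A_3D_1$, and $A_3D_2$ are all nonzero, so the vanishing is a genuine cancellation among several terms rather than a term-by-term one---this is precisely the ``remarkable'' point. I would group the correction terms by which pair of bends is doubled and by the reordering permutation $\rho$, then verify that the defining relations $A_3C_2D_1 = A_3C_1D_2 = A_2C_3D_1 = (1-q^2)C_1C_2C_3$ of case~(4) (and the sharper chain $m_1A_3D_2 = m_3A_2D_1 = (q^3-q)m_1m_2m_3$, $m_2A_3D_1 = (q^5-q^3)m_1m_2m_3$ of case~(7)) are exactly the identities forcing the grouped sum to telescope to zero. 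This cancellation is the delicate heart of the argument, and it is the feature with no analogue in higher rank, matching the failure for $r\geq 4$ asserted in the Main Theorem.
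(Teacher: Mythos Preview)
Your approach is the paper's approach: factor as $\bra{\mathbf{K}}\mathbf{F}^{-1}\,\widetilde{\mathbf{C}}_0\cdots\widetilde{\mathbf{C}}_{\lambda_3}\ket{\varnothing}$, evaluate the ket by Theorem~\ref{thm:twisted.grid}, and reduce the bra to an explicit sum over sign vectors. The paper isolates this last step as Lemma~\ref{le:K_sph3}, whose content is precisely that $\bra{\mathbf{K}}\mathbf{F}^{-1}$ is the eight-term ``diagonal'' sum and nothing else. So your diagonal part is correct and assembles into the stated Weyl-group sum exactly as you describe.

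Where your sketch diverges from the paper is in the mechanism by which the twelve non-diagonal components of $\bra{\mathbf{K}}\mathbf{F}^{-1}$ vanish. You anticipate that the surviving double-bend products $A_2D_1,A_3D_1,A_3D_2$ cancel against one another by telescoping via the relations $A_3C_2D_1=A_3C_1D_2=A_2C_3D_1=(1-q^2)C_1C_2C_3$ (resp.\ the case-(7) chain). The paper's argument is more elementary for most of these: each non-diagonal boundary condition is handled \emph{separately}, and six of the twelve vanish term-by-term because every admissible filling of the corresponding cap diagram carries a factor of $A_1$ or $D_3$. Four more (those carrying $A_2D_1$ or $A_3D_2$) are reduced by a single Caduceus application---and possibly one Fish---to one of those six, so they also vanish without invoking the specific relations of Proposition~\ref{prop:rank3wts}(4)/(7) beyond $A_1=D_3=0$. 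Only the last pair, corresponding to $\rho=(2,1,\bar 1,3,\bar 3,\bar 2)$ and its Fish partner, resist this reduction; the paper handles them by writing out the nineteen nonzero states and summing directly. So the full case-(4)/(7) relations are not the primary engine; $A_1=D_3=0$ together with the Caduceus and Fish relations does almost all the work, with one residual brute-force verification where the finer relations may enter.

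In short, your plan would succeed, but upon execution you would find that the ``telescoping via the defining relations'' picture is needed at most for a single stubborn component, not across the board.
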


\begin{lemma}\label{le:K_sph3}
Let rank $r=3$. If the bivalent weights satisfy condition (4) or (7) of Proposition \ref{prop:rank3wts}, so that the Fish constant of proportionality $F=1$, then 
\begin{equation} \label{eq:boundary3} \bra{\mathbf{K}} \mathbf{F}^{-1} =\sum_{\{\epsilon_1, \epsilon_2, \epsilon_3\} \in \{\pm 1\}^3} C_1(x_1^{\epsilon_1})C_2(x_2^{\epsilon_2})C_3(x_3^{\epsilon_3}) \prod_{3\geq k\geq l \geq 1} \left(\frac{x_k^{\epsilon_k}-q\overline{x}_l^{\epsilon_l}}{x_k^{\epsilon_k}-\overline{x}_l^{\epsilon_l}}\right) \bigotimes_{i=1}^3 (\bra{\epsilon_i}_i \otimes \bra{-\epsilon_i}_{\overline{i}}),\end{equation}
\end{lemma}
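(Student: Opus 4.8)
The plan is to mimic the component-by-component computation of Lemma~\ref{le:K.rank2.generic}, now carried out in the six-fold tensor product $W_3 \otimes W_{\bar 3} \otimes W_2 \otimes W_{\bar 2} \otimes W_1 \otimes W_{\bar 1}$. Fix an occupancy $\vec\jmath = (j_3, j_{\bar 3}, j_2, j_{\bar 2}, j_1, j_{\bar 1})$. Using the explicit form of $\mathbf{F}^\ast$ in \eqref{eq:fstarform} together with the diagonal matrix $\Delta$ of \eqref{eq:Delta} and the fact that $a_1 = a_2 = 1$, the corresponding component factors as
\[
(\bra{\mathbf{K}} \mathbf{F}^{-1})^{\vec\jmath} = (\bra{\mathbf{K}} \mathbf{R}^\rho_{3\bar 3 2 \bar 2 1 \bar 1})^{\vec\jmath} \cdot \prod_{k \prec l} b^{-1}_{j_k, j_l}(x_k, x_l),
\]
where $\rho$ is the permutation that sorts the occupied rows to the top. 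Thus the whole problem reduces to evaluating the ``braided caps'' partition function $\bra{\mathbf{K}}\mathbf{R}^\rho$ for each $\rho$ appearing, exactly as the six cases were handled in rank two.

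First I would treat the balanced occupancies, those with $j_i + j_{\bar i} = 1$ for each $i$. Here I expect the direct resolution of the braid (each cap supplying its own particle through a $C$-weight) to contribute $C_1(x_1^{\epsilon_1}) C_2(x_2^{\epsilon_2}) C_3(x_3^{\epsilon_3})$ times the product of $b$-factors, which after collecting the $\Delta^{-1}$ data is precisely the claimed summand with the product $\prod_{3 \geq k \geq l \geq 1}$. The new phenomenon, relative to rank one, is that the braid admits further resolutions in which two caps exchange a path, producing correction terms weighted by products of the form $A_i C_j D_k$; in rank two this was the source of the $A_1 D_2$ contribution. The crux is to show these corrections cancel. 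Using $A_1 = D_3 = 0$ from cases (4) and (7) of Proposition~\ref{prop:rank3wts} eliminates every correction involving the extreme caps, and the remaining corrections---those built from $A_3 C_2 D_1$, $A_3 C_1 D_2$, and $A_2 C_3 D_1$ (resp. their case (7) analogues)---are forced to equal $(1-q^2)C_1 C_2 C_3$ (resp. the prescribed case (7) values) by the defining relations of those cases. I would check that, once multiplied by the rational $R$-matrix coefficients produced by the braid, these equal contributions cancel in pairs, leaving only the pure $C_1 C_2 C_3$ term.

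Next I would dispatch the imbalanced occupancies, i.e.\ those for which some pair $(i, \bar i)$ is doubly occupied or doubly empty; the Lemma asserts that every such component vanishes. A doubly-empty pair forces a cap of weight $A_i$ and a doubly-occupied pair forces a cap of weight $D_i$, so with $A_1 = D_3 = 0$ a large family of these components is immediately zero. For the surviving imbalanced patterns I would invoke the one- and three-particle Caduceus relations (Lemmas~\ref{le:caduceus1} and~\ref{le:caduceus3}): the very relations among $A_j, C_j, D_j$ extracted in the proof of Proposition~\ref{prop:rank3wts} are exactly the conditions under which the braided evaluation of such a cap configuration telescopes to zero, since they were imposed to make the corresponding partition functions proportional with constant $F = 1$ while ruling out any residual unbalanced flow.

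The main obstacle I anticipate is the bookkeeping of the $A$--$D$ cancellations in the balanced sector, where for a non-monotone sorting permutation $\rho$ several braid resolutions contribute simultaneously and the cancellation depends delicately on matching the $R$-matrix coefficients $b_1, c_1, c_2$ against the prescribed values of $A_3 C_2 D_1$, $A_3 C_1 D_2$, and $A_2 C_3 D_1$. Unlike rank two, a single application of a Caduceus relation no longer suffices: one must use the three-cap relations in the form \eqref{eq:cad.condition3}, which genuinely involve all three bends at once. Organizing the casework so that the relations are applied in a consistent order---and verifying that the leftover factors assemble into the single product $\prod_{3 \geq k \geq l \geq 1} (x_k^{\epsilon_k} - q \bar x_l^{\epsilon_l})/(x_k^{\epsilon_k} - \bar x_l^{\epsilon_l})$---is where the real work lies, though each individual step is a finite rational-function identity that can be verified directly.
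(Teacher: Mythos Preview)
Your overall setup---computing $(\bra{\mathbf{K}}\mathbf{F}^{-1})^{\vec\jmath}$ component by component via $(\bra{\mathbf{K}}\mathbf{R}^\rho)^{\vec\jmath}\cdot\prod b^{-1}_{j_k,j_l}$, separating balanced from imbalanced occupancies, and leaning on $A_1=D_3=0$ together with Fish and Caduceus---is exactly the paper's approach. But you have inverted where the difficulty lies, and this matters for execution.

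In the balanced sector your anticipated ``$A$--$D$ cancellations'' do not occur. For $\rho_0=(3,2,1,\bar3,\bar2,\bar1)$ the top half of bend~$3$ is joined to the $x_3$ exit by a strand with \emph{no crossings}, and likewise the bottom half of bend~$1$ is joined directly to the empty $\bar x_1$ exit. Hence bend~$3\in\{C_3,D_3\}$ and bend~$1\in\{A_1,C_1\}$; with $D_3=A_1=0$ both are forced to be $C$, which then forces bend~$2$ to be $C_2$ as well. There is a \emph{unique} nonzero filling, so no terms of the form $A_3C_2D_1$, $A_3C_1D_2$, $A_2C_3D_1$ ever appear here, and the specific values of those products from Proposition~\ref{prop:rank3wts} are not used in this sector at all. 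The other seven balanced $\rho$'s follow from this one by the Fish relation.

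The real work is in the imbalanced sector, and your treatment of it is too vague. Six of the twelve cases die immediately because the braid forces an $A_1$ or $D_3$ bend; four more reduce via a single Caduceus (and possibly a Fish) to one of those six. But the case $\rho=(2,1,\bar1,3,\bar3,\bar2)$ (boundary $j_3=j_{\bar3}=j_{\bar2}=0$, $j_2=j_1=j_{\bar1}=1$) does \emph{not} reduce this way: one must enumerate the nineteen nonzero fillings and check that their sum vanishes. The surviving bend configurations are of types $A_3C_2D_1$, $A_3D_2C_1$, $C_3A_2D_1$, and $C_3C_2C_1$ (with the appropriate $B$/$C$ variants), and it is precisely the case~(4)/(7) relations $A_3C_2D_1=A_3C_1D_2=A_2C_3D_1=(1-q^2)C_1C_2C_3$ (resp.\ their case~(7) analogues) that make this sum collapse to zero. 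So the relations you planned to deploy in the balanced sector are actually needed here, in a direct nineteen-term computation you did not anticipate; the final imbalanced case $\rho=(\bar2,1,\bar1,3,\bar3,2)$ then follows from this one by Fish.
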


\begin{proof}
We will prove \eqref{eq:boundary3} by verifying that each component of $\bra{\mathbf{K}}\mathbf{F}^{-1}$ has the form given on the right of \eqref{eq:boundary3} in a manner similar to our approach in the proof of Lemma \ref{le:K.rank2.generic}. Suppose that $j_{\bar{1}},j_1, j_{\bar{2}},j_2, j_{\bar{3}},j_3\in \{0,1\}$, where 1 denotes the presence of a particle at a vertex, and 0 denotes the lack of a particle. By Equations \eqref{eq:fstarform} and \eqref{eq:Delta}, we have 
\begin{align*}
(\bra{\mathbf{K}}\mathbf{F}^{-1})^{j_3j_{\bar{3}}j_2j_{\bar{2}}j_1j_{\bar{1}}} &= \left(\bra{\mathbf{\mathbf{K}}} \mathbf{R}_{3\bar{3}2\bar{2}1\bar{1}}^{\rho} \prod_{\substack{k<l \\ k,l \in \{\bar{1},1,\bar{2},2,\bar{3},3\}}}  \Delta_{kl}^{-1}(x_k,x_l)\right)^{j_3j_{\bar{3}}j_2j_{\bar{2}}j_1j_{\bar{1}}}
\\&= \left(\bra{\mathbf{\mathbf{K}}} \mathbf{\mathbf{R}}_{3\bar{3}2\bar{2}1\bar{1}}^{\rho}\right)^{j_3j_{\bar{3}}j_2j_{\bar{2}}j_1j_{\bar{1}}} \cdot \prod_{\substack{k<l \\ k,l \in \{\bar{1},1,\bar{2},2,\bar{3},3\}}} b_{j_k,j_l}^{-1}(x_k,x_l)
\end{align*}
where $\rho$ is a permutation of $\{\bar{1},1,\bar{2},2,\bar{3},3\}$ such that $j_{\rho(\bar{2})} = j_{\rho(1)} = j_{\rho(\bar{1})} = 0$ and $j_{\rho(3)} = j_{\rho(\bar{3})} = j_{\rho(2)} = 1$. 

Since the $R$-matrix weights $a_1(k,j)$ and $a_2(k,j)$ are both equal to 1, it suffices to consider permutations $\rho$ such that $\rho(j) \geq \rho(\bar{j})$ for $j=1,2,3$. Thus, we have twenty cases to consider: one for each selection of three elements from the set above. For twelve of these cases, $(\bra{\mathbf{K}}\mathbf{F}^{-1})^{j_3j_{\bar{3}}j_2j_{\bar{2}}j_1j_{\bar{1}}} = 0$, which is why the right hand side of \eqref{eq:boundary3} only has eight terms in it.

First, let's consider the non-zero cases. Let $\rho_0 = (3,2,1,\overline{3},\overline{2},\overline{1})$. Since we are assuming that the bivalent weights satisfy either condition (4) or condition (7), we have that $D_3=A_1=0$, and hence there is only one way to fill the corresponding diagram to yield a non-zero weight. Explicitly, $(\bra{\mathbf{K}}\mathbf{R}_{3\bar{3}2\bar{2}1\bar{1}}^{\rho_0})^{j_3j_{\bar{3}}j_2j_{\bar{2}}j_1j_{\bar{1}}}$ is equal to \begin{align*} 
\mathcal{Z}\left(
\begin{tikzpicture}[scale=.5,baseline={([yshift=-\the\dimexpr\fontdimen22\textfont2\relax]current bounding box.center)}]
\node [label=left:$x_3$] at (-.5,2) {};
\node [label=left:$\overline{x}_3$] at (-.5,1) {};
\node [label=left:$x_2$] at (-.5,0) {};
\node [label=left:$\overline{x}_2$] at (-.5,-1) {};
\node [label=left:$x_1$] at (-.5,-2) {};
\node [label=left:$\overline{x}_1$] at (-.5,-3) {};
\draw [-]
	(0,2) arc (90:270:.5);
\draw [-]
	(0,0) arc (90:270:.5);
\draw [-]
	(0,-2) arc (90:270:.5);
\draw [-] 
    (0,2) -- (2.5,2);
\draw [-] 
    (0,1) .. controls (0.5,1) and (0.5,0) .. (1,0)
    .. controls (1.5,0) and (1.5,-1) .. (2,-1)
    -- (2.5,-1); 
\draw [-]
    (0,0) .. controls (0.5,0) and (0.5,1) .. (1,1)
    -- (2.5,1);
\draw [-]
    (0,-1) .. controls (0.5,-1) and (0.5,-2) .. (1,-2)
    -- (2.5,-2);
\draw [-]
    (0,-2) .. controls (0.5,-2) and (0.5,-1) .. (1,-1)
    .. controls (1.5,-1) and (1.5,0) .. (2,0)
    -- (2.5,0);
\draw [-]
    (0,-3) -- (2.5,-3);
\filldraw[black] (-.5,1.5) circle (4pt);
\filldraw[black] (-.5,-.5) circle (4pt);
\filldraw[black] (-.5,-2.5) circle (4pt);
\filldraw[white] (2.5,2) circle (4pt);
\filldraw[white] (2.5,1) circle (4pt);
\filldraw[white] (2.5,0) circle (4pt);
\filldraw[red,opacity=.5] (2.5,2) circle (4pt);
\filldraw[red,opacity=.5] (2.5,1) circle (4pt);
\filldraw[red,opacity=.5] (2.5,0) circle (4pt);
\draw[fill=white] (2.5,-1) circle (4pt);
\draw[fill=white] (2.5,-2) circle (4pt);
\draw[fill=white] (2.5,-3) circle (4pt);
\end{tikzpicture}\right) = 
\textrm{wt}
\left(
\begin{tikzpicture}[scale=.5,baseline={([yshift=-\the\dimexpr\fontdimen22\textfont2\relax]current bounding box.center)}]
\node [label=left:$x_3$] at (-.5,2) {};
\node [label=left:$\overline{x}_3$] at (-.5,1) {};
\node [label=left:$x_2$] at (-.5,0) {};
\node [label=left:$\overline{x}_2$] at (-.5,-1) {};
\node [label=left:$x_1$] at (-.5,-2) {};
\node [label=left:$\overline{x}_1$] at (-.5,-3) {};
\draw [-]
	(0,2) arc (90:270:.5);
\draw [-]
	(0,0) arc (90:270:.5);
\draw [-]
	(0,-2) arc (90:270:.5);
\draw [-] 
    (0,2) -- (2.5,2);
\draw [-] 
    (0,1) .. controls (0.5,1) and (0.5,0) .. (1,0)
    .. controls (1.5,0) and (1.5,-1) .. (2,-1)
    -- (2.5,-1); 
\draw [-]
    (0,0) .. controls (0.5,0) and (0.5,1) .. (1,1)
    -- (2.5,1);
\draw [-]
    (0,-1) .. controls (0.5,-1) and (0.5,-2) .. (1,-2)
    -- (2.5,-2);
\draw [-]
    (0,-2) .. controls (0.5,-2) and (0.5,-1) .. (1,-1)
    .. controls (1.5,-1) and (1.5,0) .. (2,0)
    -- (2.5,0);
\draw [-]
    (0,-3) -- (2.5,-3);
\draw [-,line width=1mm,red,opacity=.5,rounded corners=0pt]
    (-.5,1.5) arc (180:90:.5)
    -- (2.5,2);
\draw [-,line width=1mm,red,opacity=.5,rounded corners=0pt]
    (-.5,-.5) arc (180:90:.5)
    .. controls (0.5,0) and (0.5,1) .. (1,1)
    -- (2.5,1);
\draw [-,line width=1mm,red,opacity=.5,rounded corners=0pt]
    (-.5,-2.5) arc (180:90:.5)
    .. controls (0.5,-2) and (0.5,-1) .. (1,-1)
    .. controls (1.5,-1) and (1.5,0) .. (2,0)
    -- (2.5,0);
\filldraw[black] (-.5,1.5) circle (4pt);
\filldraw[black] (-.5,-.5) circle (4pt);
\filldraw[black] (-.5,-2.5) circle (4pt);
\end{tikzpicture}\right)
=B_1B_2B_3\frac{x_1-\overline{x}_2}{x_1-q\overline{x}_2} \cdot \frac{x_1-\overline{x}_3}{x_1-q\overline{x}_3} \cdot \frac{x_2-\overline{x}_3}{x_2-q\overline{x}_3}.
\end{align*}
Additionally, we have \begin{equation}\label{eq:bcoeffs3} \prod_{\substack{k<l \\ k,l \in \{\bar{1},1,\bar{2},2,\bar{3},3\}}} b_{j_k,j_l}^{-1}(x_k,x_l) = \prod_{k,l \in \{\rho(3), \rho(\bar{3}), \rho(2)\}} \frac{x_k-q\overline{x}_l}{x_k-\overline{x}_l} \quad \quad \text{ when $\rho(3) =\pm 3,\, \rho(\bar{3}) = \pm 2,\, \rho(2)=\pm 1$}.\end{equation} Thus, for $\rho=\rho_0$,
\begin{align*}
(\bra{\mathbf{K}}\mathbf{F}^{-1})^{j_3j_{\bar{3}}j_2j_{\bar{2}}j_1j_{\bar{1}}} &= C_1(x_1)C_2(x_2)C_3(x_3)\frac{x_1-\overline{x}_2}{x_1-q\overline{x}_2} \cdot \frac{x_1-\overline{x}_3}{x_1-q\overline{x}_3} \cdot \frac{x_2-\overline{x}_3}{x_2-q\overline{x}_3} \cdot \prod_{k,l \in \{3,2,1\}} \frac{x_k-q\overline{x}_l}{x_k-\overline{x}_l}, 
\\&= C_1(x_1)C_2(x_2)C_3(x_3)\prod_{3 \geq k\geq l \geq 1} \frac{x_k-q\overline{x}_l}{x_k-\overline{x}_l}.
\end{align*}
Note that if $\rho= (3^{\epsilon_3}, 2^{\epsilon_2}, 1^{\epsilon_1}, \bar{3}^{\epsilon_3}, \bar{2}^{\epsilon_2}, \bar{1}^{\epsilon_1})$, for some $\epsilon_1,\epsilon_2,\epsilon_3 \in \{\pm 1\}$, then, after applying the Fish equation (Lemma \ref{le:fish}) and carrying out the same analysis as in the $\rho_0$ case above, we have that \[(\bra{\mathbf{K}}\mathbf{F}^{-1})^{j_3j_{\bar{3}}j_2j_{\bar{2}}j_1j_{\bar{1}}} = C_1(x_1^{\epsilon_1})C_2(x_2^{\epsilon_2})C_3(x_3^{\epsilon_3})\prod_{3 \geq k\geq l \geq 1} \frac{x_k^{\epsilon_k}-q\overline{x}_l^{\epsilon_l}}{x_k^{\epsilon_k}-\overline{x}_l^{\epsilon_l}}.\] 

In the other twelve cases, $(\bra{\mathbf{K}}\mathbf{R}_{3\bar{3}2\bar{2}1\bar{1}}^{\rho})^{j_3j_{\bar{3}}j_2j_{\bar{2}}j_1j_{\bar{1}}} = 0$ so that $(\bra{\mathbf{K}}\mathbf{F}^{-1})^{j_3j_{\bar{3}}j_2j_{\bar{2}}j_1j_{\bar{1}}} = 0$. Let's explain why this is the case. First, recall that $D_3 = A_1 = 0$. Hence, $(\bra{\mathbf{K}}\mathbf{R}_{3\bar{3}2\bar{2}1\bar{1}}^{\rho})^{j_3j_{\bar{3}}j_2j_{\bar{2}}j_1j_{\bar{1}}} = 0$ in the following six cases: 
$$ \rho = (3, \overline{3}, 2, \overline{2}, 1, \overline{1}), (3, \overline{3}, 1, 2, \overline{2}, \overline{1}), (3, 2, \overline{2}, \overline{3}, 1, \overline{1}),(3, \overline{3}, \overline{2}, 2, 1, \overline{1}), (3, \overline{3}, \overline{1}, 2, \overline{2}, 1),(\overline{3}, 2, \overline{2}, 3, 1, \overline{1}).$$  
One application of the Caduceus relation (Lemmas \ref{le:caduceus}, \ref{le:caduceus3}, \ref{le:caduceus1}) shows us that $(\bra{\mathbf{K}}\mathbf{R}_{3\bar{3}2\bar{2}1\bar{1}}^{\rho})^{j_3j_{\bar{3}}j_2j_{\bar{2}}j_1j_{\bar{1}}} = 0$ for $\rho = (3, 1, \overline{1}, \overline{3}, 2, \overline{2})$ and $(2, \overline{2}, 1, 3, \overline{3}, \overline{1})$, and a subsequent application of the Fish relation takes care of $\rho = (\overline{3}, 1, \overline{1}, 3, 2, \overline{2})$ and $(2, \overline{2}, \overline{1}, 3, \overline{3}, 1)$. Finally, if $\rho = (2, 1, \overline{1}, 3, \overline{3}, \overline{2})$, it can be verified directly that $(\bra{\mathbf{K}}\mathbf{R}_{3\bar{3}2\bar{2}1\bar{1}}^{\rho})^{j_3j_{\bar{3}}j_2j_{\bar{2}}j_1j_{\bar{1}}} = 0$ using a method similar to that in the proof of Lemma \ref{le:caduceus}; $(\bra{\mathbf{K}}\mathbf{R}_{3\bar{3}2\bar{2}1\bar{1}}^{\rho})^{j_3j_{\bar{3}}j_2j_{\bar{2}}j_1j_{\bar{1}}} = 0$ for $\rho = (\overline{2}, 1, \overline{1}, 3, \overline{3}, 2)$ by one further application of the fish relation. See Figure \ref{fig:rank.3.examples} for a depiction of this argument. 

\begin{figure}[ht!]
$$\begin{tikzpicture}[scale=.5,baseline={([yshift=-\the\dimexpr\fontdimen22\textfont2\relax]current bounding box.center)}]
\node [label=left:$x_3$] at (-.5,2) {};
\node [label=left:$\overline{x}_3$] at (-.5,1) {};
\node [label=left:$x_2$] at (-.5,0) {};
\node [label=left:$\overline{x}_2$] at (-.5,-1) {};
\node [label=left:$x_1$] at (-.5,-2) {};
\node [label=left:$\overline{x}_1$] at (-.5,-3) {};
\draw [-]
	(0,2) arc (90:270:.5);
\draw [-]
	(0,0) arc (90:270:.5);
\draw [-]
	(0,-2) arc (90:270:.5);

\draw [-] 
    (0,2) -- (1.5,2);
    
\draw [-] 
    (0,1) -- (1.5,1);
    
\draw [-]
    (0,0) .. controls (0.5,0) and (0.5,-1) .. (1,-1)
    -- (1.5,-1);
    
\draw [-]
    (0,-1) .. controls (0.5,-1) and (0.5,0) .. (1,0)
    -- (1.5,0);
    
\draw [-]
    (0,-2) -- (1.5,-2);
    
\draw [-]
    (0,-3) -- (1.5,-3);
    
\filldraw[white] (1.5,2) circle (4pt);
\filldraw[white] (1.5,1) circle (4pt);
\filldraw[white] (1.5,0) circle (4pt);
\filldraw[red,opacity=.5] (1.5,2) circle (4pt);
\filldraw[red,opacity=.5] (1.5,1) circle (4pt);
\filldraw[red,opacity=.5] (1.5,0) circle (4pt);
\draw[fill=white] (1.5,-1) circle (4pt);
\draw[fill=white] (1.5,-2) circle (4pt);
\draw[fill=white] (1.5,-3) circle (4pt);
    
\filldraw[black] (-.5,1.5) circle (4pt);
\filldraw[black] (-.5,-.5) circle (4pt);
\filldraw[black] (-.5,-2.5) circle (4pt);

\end{tikzpicture} 
\qquad \qquad 
\begin{tikzpicture}[scale=.5,baseline={([yshift=-\the\dimexpr\fontdimen22\textfont2\relax]current bounding box.center)}]
\node [label=left:$x_3$] at (-.5,2) {};
\node [label=left:$\overline{x}_3$] at (-.5,1) {};
\node [label=left:$x_2$] at (-.5,0) {};
\node [label=left:$\overline{x}_2$] at (-.5,-1) {};
\node [label=left:$x_1$] at (-.5,-2) {};
\node [label=left:$\overline{x}_1$] at (-.5,-3) {};
\draw [-]
	(0,2) arc (90:270:.5);
\draw [-]
	(0,0) arc (90:270:.5);
\draw [-]
	(0,-2) arc (90:270:.5);

\draw [-] 
    (0,2) -- (5.5,2);

\draw [-] 
    (0,1) -- (3,1)
    .. controls (3.5,1) and (3.5,0) .. (4,0)
    .. controls (4.5,0) and (4.5,-1) .. (5,-1)
    -- (5.5,-1);
    
\draw [-]
    (0,0) -- (1,0)
    .. controls (1.5,0) and (1.5,-1) .. (2,-1)
    .. controls (2.5,-1) and (2.5,-2) .. (3,-2)
    -- (5.5,-2);
    
\draw [-]
    (0,-1) .. controls (0.5,-1) and (0.5,-2) .. (1,-2)
    .. controls (1.5,-2) and (1.5,-3) .. (2,-3)
    -- (5.5,-3);
    
\draw [-]
    (0,-2) .. controls (0.5,-2) and (0.5,-1) .. (1,-1)
    .. controls (1.5,-1) and (1.5,0) .. (2,0)
    -- (3,0)
    .. controls (3.5,0) and (3.5,1) .. (4,1)
    -- (5.5,1);
    
\draw [-]
    (0,-3) -- (1,-3)
    .. controls (1.5,-3) and (1.5,-2) .. (2,-2)
    .. controls (2.5,-2) and (2.5,-1) .. (3,-1)
    -- (4,-1)
    .. controls (4.5,-1) and (4.5,0) .. (5,0)
    -- (5.5,0);
    
\filldraw[white] (5.5,2) circle (4pt);
\filldraw[white] (5.5,1) circle (4pt);
\filldraw[white] (5.5,0) circle (4pt);
\filldraw[red,opacity=.5] (5.5,2) circle (4pt);
\filldraw[red,opacity=.5] (5.5,1) circle (4pt);
\filldraw[red,opacity=.5] (5.5,0) circle (4pt);
\draw[fill=white] (5.5,-1) circle (4pt);
\draw[fill=white] (5.5,-2) circle (4pt);
\draw[fill=white] (5.5,-3) circle (4pt);
    
\filldraw[black] (-.5,1.5) circle (4pt);
\filldraw[black] (-.5,-.5) circle (4pt);
\filldraw[black] (-.5,-2.5) circle (4pt);

\end{tikzpicture}\qquad \qquad 
\begin{tikzpicture}[scale=.5,baseline={([yshift=-\the\dimexpr\fontdimen22\textfont2\relax]current bounding box.center)}]
\node [label=left:$x_3$] at (-.5,2) {};
\node [label=left:$\overline{x}_3$] at (-.5,1) {};
\node [label=left:$x_2$] at (-.5,0) {};
\node [label=left:$\overline{x}_2$] at (-.5,-1) {};
\node [label=left:$x_1$] at (-.5,-2) {};
\node [label=left:$\overline{x}_1$] at (-.5,-3) {};
\draw [-]
	(0,2) arc (90:270:.5);
\draw [-]
	(0,0) arc (90:270:.5);
\draw [-]
	(0,-2) arc (90:270:.5);

\draw [-] 
    (0,2) -- (1,2)
    .. controls (1.5,2) and (1.5,1) .. (2,1)
    .. controls (2.5,1) and (2.5,0) .. (3,0)
    .. controls (3.5,0) and (3.5,-1) .. (4,-1)
    -- (4.5,-1);
    
\draw [-] 
    (0,1) .. controls (0.5,1) and (0.5,0) .. (1,0)
    .. controls (1.5,0) and (1.5,-1) .. (2,-1)
    .. controls (2.5,-1) and (2.5,-2) .. (3,-2)
    -- (4.5,-2);
    
\draw [-]
    (0,0) .. controls (0.5,0) and (0.5,1) .. (1,1)
    .. controls (1.5,1) and (1.5,2) .. (2,2)
    -- (4.5,2);
    
\draw [-]
    (0,-1) .. controls (0.5,-1) and (0.5,-2) .. (1,-2)
    .. controls (1.5,-2) and (1.5,-3) .. (2,-3)
    -- (4.5,-3);
    
\draw [-]
    (0,-2) .. controls (0.5,-2) and (0.5,-1) .. (1,-1)
    .. controls (1.5,-1) and (1.5,0) .. (2,0)
    .. controls (2.5,0) and (2.5,1) .. (3,1)
    -- (4.5,1);
    
\draw [-]
    (0,-3) -- (1,-3)
    .. controls (1.5,-3) and (1.5,-2) .. (2,-2)
    .. controls (2.5,-2) and (2.5,-1) .. (3,-1)
    .. controls (3.5,-1) and (3.5,0) .. (4,0)
    -- (4.5,0);
    
\filldraw[white] (4.5,2) circle (4pt);
\filldraw[white] (4.5,1) circle (4pt);
\filldraw[white] (4.5,0) circle (4pt);
\filldraw[red,opacity=.5] (4.5,2) circle (4pt);
\filldraw[red,opacity=.5] (4.5,1) circle (4pt);
\filldraw[red,opacity=.5] (4.5,0) circle (4pt);
\draw[fill=white] (4.5,-1) circle (4pt);
\draw[fill=white] (4.5,-2) circle (4pt);
\draw[fill=white] (4.5,-3) circle (4pt);
    
\filldraw[black] (-.5,1.5) circle (4pt);
\filldraw[black] (-.5,-.5) circle (4pt);
\filldraw[black] (-.5,-2.5) circle (4pt);

\end{tikzpicture} 
$$
\caption{Three diagrams relevant to the computation of $(\bra{\mathbf{K}}\mathbf{R}_{3\bar{3}2\bar{2}1\bar{1}}^{\rho})^{j_3j_{\bar{3}}j_2j_{\bar{2}}j_1j_{\bar{1}}}$, for $\rho = (3,\bar{3},\bar{2},2,1,\bar{1})$ (on the left), $\rho=(3,1,\bar{1},\bar{3},2,\bar{2})$ (in the middle), and $\rho=(2,1,\overline{1},3,\overline{3},\overline{2})$ (on the right).  Notice that for the first, the sum over all possible states is zero since any admissible filling would involve filled entries along the top two rows, and then one uses $D_3=0$.  For the middle diagram, an application of the cadaceus allows us to replace this configuration with the one corresponding to $\rho = (3,2,\bar{2},\bar{3},1,\bar{1})$ which we know vanishes since $A_1=0$.  Finally, for the last diagram, we compute the sum over all possible states manually to arrive at the value $0$.}\label{fig:rank.3.examples}
\end{figure}

The proof of the vanishing of $(\bra{\mathbf{K}}\mathbf{R}_{3\bar{3}2\bar{2}1\bar{1}}^{\rho})^{j_3j_{\bar{3}}j_2j_{\bar{2}}j_1j_{\bar{1}}}$ in the case where $\rho=(2,1,\overline{1},3,\overline{3},\overline{2})$ is done by writing out the (nineteen) non-zero states with this boundary condition and then evaluating their sum directly.  It would be interesting to determine if there were a more systematic approach to handling the vanishing of this case, as it might translate to more elaborate configurations in higher rank cases as well.
\end{proof}

With the value of $\bra{\mathbf{K}} \mathbf{F}^{-1}$ established, Theorem \ref{thm:spherical3} follows from Lemma \ref{le:K_sph3} much as Theorem \ref{thm:rank2.generic} follows from Lemma \ref{le:K.rank2.generic}. 

Before we move on, we will make quick notes about scenarios (2), (5), (6), and (8) in Proposition \ref{prop:rank3wts}, which have been otherwise ignored in this section. For scenario (2), we again recover the model from \cite{Wheeler-Zinn-Justin}, in analogy with case (3) and case (4) of Theorem \ref{thm:spherical1}. Scenarios (5) and (8) yield symmetric functions with convoluted alternator expressions. Finally, scenario (6) leads to a model similar to the $A_j=B_j=C_j=D_j=1$ model that results from scenario (3).

\section{Higher Rank Models}

In this section we consider solvable type $B/C$ models, as defined in Definition \ref{def:solvable}, in rank $r>3$. Perhaps surprisingly, there are fewer solvable models when $r>3$ than there are solvable rank three models. In particular, for all $r>3$, there is no solvable type $B/C$ model whose partition function gives values of the rank $r$ zonal spherical function. As shown in the proof of Proposition \ref{prop:higher.rank} below, this is a consequence of the caduceus relations. In brief, our results will show that when there are a sufficient number of bivalent vertices in a non-trivial model (i.e.~when the rank is sufficiently large), satisfying the caduceus relations forces all bivalent weights $A_j$ and $D_j$ to be non-zero. Contrast this with Theorem~\ref{thm:rank2.generic} ($r=2$) and Theorem \ref{thm:spherical3} ($r=3$); in these cases, the solvable models whose partition functions match the zonal spherical function satisfy $A_iD_j=0$ for some $i$ and $j$. The next result shows that the tipping point for when this can no longer be arranged occurs when the rank $r>3$.

\begin{proposition}\label{prop:higher.rank}
The only possible choices of bend weights to complete the trigonometric six-vertex model to a solvable $B/C$ model in rank $r>3$ are as follows:
\begin{enumerate}
\item $B_j(x) = -qC_j(x)$ and $A_j(x) = D_j(x) = 0$ for all $j$.
\item $B_j(x) = -x^2C_j(x)$ and $A_j(x) = D_j(x) = 0$ for all $j$.
\item $A_j, B_j, C_j, D_j \in \mathbb{C}[q]$, with $B_j = C_j$ and \[\frac{A_i}{C_i}\cdot \frac{D_j}{C_j} \cdot \prod_{k=1}^r C_k = \prod_{k=1}^r C_k \text{ for any $i \neq j$.}\]
\item $B_j(x) = qm_jx$, $C_j(x) = m_j\bar{x}$, where $m_j \in \mathbb{C}[q]$ and $A_j, D_j \in \mathbb{C}[q]$ for all $j$ and \[ \frac{A_i}{C_i}\cdot \frac{D_j}{C_j} \cdot \prod_{k=1}^r C_k = q^{1-2(j-i)}\prod_{k=1}^r C_k \text{ for any $i \neq j$.} \]
\item $B_j = C_j = 0$, $A_j, D_j \in \mathbb{C}[q]$ with $q^2A_jD_{j+1} = A_{j+1}D_j$ for all $j$, and $r$ is even.
\end{enumerate}
\end{proposition}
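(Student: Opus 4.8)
The plan is to follow the template of the rank three classification in Proposition~\ref{prop:rank3wts}, exploiting that Definition~\ref{def:solvable} requires the caduceus relations for \emph{all} pairs of bends, not merely adjacent ones. First I would invoke the Fish relation (Lemma~\ref{le:fish}): uniform regime forces one of the four subcases $1(a),1(b),2(a),2(b)$. In regimes $1(b)$ and $2(a)$ one has $A_j=D_j=0$ for every $j$, so Lemmas~\ref{le:caduceus3} and~\ref{le:caduceus1} (three and one particles) are vacuous and only Lemma~\ref{le:caduceus} survives, immediately yielding scenarios~(1) and~(2) exactly as in rank three. The remaining work is to analyze regime $1(a)$ (and its $q$-shifted cousin $2(b)$), splitting off the degenerate locus where some $C_j=0$.

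In regime $1(a)$ with all $C_j\neq 0$, I would record the full system obtained by applying Lemmas~\ref{le:caduceus},~\ref{le:caduceus3},~\ref{le:caduceus1} to every pair of bends $(i,j)$ with $i<j$, each relation dressed by the spectator bends exactly as the factors $C_3,A_3,D_1,\dots$ appeared in~\eqref{eq:cad.condition3}. Cancelling the nonvanishing dressing factors $\prod_{k\neq i,j}C_k$ from the two-particle relation gives
\[ q^2 A_i D_j - A_j D_i = (q^2-1)\,C_iC_j \qquad (i<j), \]
while the three- and one-particle relations give $\bigl(\prod_{k\neq i,j}A_k\bigr)(C_iD_j-C_jD_i)=0$ and $\bigl(\prod_{k\neq i,j}D_k\bigr)(C_jA_i-C_iA_j)=0$ respectively.

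The heart of the proof is to show that for $r>3$ none of the $A_j,D_j$ can vanish. The two-particle relation already forces $A_jD_i\neq0$ whenever $A_iD_j=0$, so at most one $A_j$ and at most one $D_j$ may vanish. Suppose $A_1=0$; then for each $j$ the two-particle relation gives $A_jD_1\neq0$, so $D_1\neq0$ and every other $A_j\neq0$. Now the three-particle relations attached to the pairs $(1,j)$ have nonvanishing dressing $\prod_{k\neq 1,j}A_k$ (since $A_1$ is the only zero and it is not a spectator there), forcing $C_1D_j=C_jD_1$ for all $j$; hence every $D_j=(D_1/C_1)C_j\neq0$. But then every one-particle dressing $\prod_{k\neq i,j}D_k$ is nonzero, so $C_jA_i=C_iA_j$ for all $i,j$; taking $i=1$ this forces all $A_j=0$, contradicting $A_2D_1\neq0$. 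Thus all $A_j,D_j\neq0$, and the three-, one-, and two-particle relations collapse to $D_j=\delta C_j$, $A_j=\alpha C_j$ with $\alpha\delta=1$, which is precisely scenario~(3); running the identical argument in regime $2(b)$, the powers of $q$ coming from the shifted weights $B_j=qx^2C_j$ (as in~\eqref{eq:cad.condition2}) replace the constraint by $A_iD_j=q^{1-2(j-i)}C_iC_j$, giving scenario~(4). Sufficiency of scenarios~(3),(4) is then a direct back-substitution into the three caduceus lemmas.

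Finally I would treat the degenerate locus where some $C_j=0$. Here the Fish relation forces $B_j=0$ as well, and Lemmas~\ref{le:caduceus3},~\ref{le:caduceus1} are vacuous by their case~(3), so only the two-particle relation constrains the weights, yielding the recursion $q^2A_jD_{j+1}=A_{j+1}D_j$. The subtlety---and the step I expect to be the main obstacle---is that with $C_j=0$ the dressing factors $\prod_{k\neq i,j} C_k$ that accompanied the two-particle relation also vanish, so one must re-examine directly which configurations contribute nontrivially and track where the $A_j,D_j$ are permitted to be zero. Carrying out this propagation-of-zeros bookkeeping along the chain of recursions shows that a nontrivial solvable model survives only when the vanishing pattern of the $A_j,D_j$ closes up consistently, which happens exactly when $r$ is even; this is scenario~(5). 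Pinning down this parity condition cleanly, together with making the ``at most one zero, then forced cascade'' argument of the previous paragraph fully rigorous across all pairs, is where the real care is required.
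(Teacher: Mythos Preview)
Your overall architecture is right—split by Fish regime, dispose of $1(b)$ and $2(a)$ immediately, then analyze $1(a)$ (and $2(b)$ analogously), with the degenerate $C_j=0$ branch separated out.  But the constraints you write down are not the ones the paper actually uses, and this matters.

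The caduceus relations (Lemmas~\ref{le:caduceus}--\ref{le:caduceus1}) are stated and proved for a pair of \emph{adjacent} bends; the paper's system~\eqref{eq:cad.conditionN} accordingly only records the two-particle relation for consecutive indices $(j,j+1)$, and dresses the three-/one-particle relations for $(k,k+1)$ by a \emph{single} spectator factor $A_j$ (respectively $D_j$), one equation for each admissible $j$.  Your version instead imposes $q^2A_iD_j-A_jD_i=(q^2-1)C_iC_j$ for \emph{all} $i<j$ and dresses the three-/one-particle relations by the full product $\prod_{k\neq i,j}A_k$.  Neither of these is what the hypotheses give you.

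This is not a harmless reformulation.  Your ``no $A_j$ vanishes'' cascade nowhere uses $r>3$: run it verbatim at $r=3$ and it forces all $A_j,D_j\neq 0$, contradicting Proposition~\ref{prop:rank3wts}, whose scenarios~(4) and~(5) have $A_1=0$ or $A_3=0$.  So your argument is over-constraining the problem.  The paper's proof really does need a fourth index: from $A_1D_2=0$ it first derives the chain relations~\eqref{eq:rank.r.doubles1} (using only adjacent constraints~(ii),(iii)), whence $A_2D_3=0$ and $A_3D_4=0$; the adjacent two-particle relation at $j=1$ gives $A_2D_1=(1-q^2)C_1C_2\neq 0$, so $A_2\neq 0$, and at $j=3$ gives $A_4D_3=(1-q^2)C_3C_4\neq 0$, so $D_3\neq 0$.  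Now $A_2D_3\neq 0$ contradicts $A_2D_3=0$.  The step at $j=3$ is exactly where $r\ge 4$ enters, and it is missing from your outline.

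For the degenerate branch: with some $C_j=0$ the paper shows (via (ii),(iii) of~\eqref{eq:cad.conditionN}) that any state carrying a $B_k$ or $C_k$ factor for \emph{any} $k$ has weight zero, so only pure $A/D$ states survive; those exist only for even $r$.  Your sketch gestures at this but does not supply the propagation argument.
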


\begin{proof}

As in the proof of Proposition \ref{prop:rank3wts}, we begin by noting that a type $B/C$ model of uniform regime satisfies the two particle caduceus relation if and only if it satisfies one of the four conditions listed in Lemma \ref{le:caduceus}. Note that scenarios (1) and (2) above correspond to cases (3) and (4) of Lemma \ref{le:caduceus}, respectively. Since the bivalent weights in scenarios (1) and (2) above trivially satisfy the three particle caduceus relation and the one particle caduceus relation, we see that these choices of bend weights complete the trigonometric six-vertex model to a solvable $B/C$ model in rank $r>3$.

Suppose instead that a given model has bivalent weights as described in case (1) of Lemma \ref{le:caduceus}, such that $C_j \neq 0$ for each $j$. (Later on, we will show that if $C_j = 0$ for some $j$ then the model in question must fall into scenario (5) above.) Then, in order to have a solvable B/C model in rank $r>3$, the following constraints on the bivalent weights must be satisfied:
\begin{equation}\label{eq:cad.conditionN}
\begin{array}{lll} (i) & q^2(A_jD_{j+1}-C_jC_{j+1})=(A_{j+1}D_j-C_jC_{j+1}) & \text{for $j \in [1,r-1]$.} \\ (ii) & A_jC_kD_{k+1} = A_jC_{k+1}D_k & \text{for $j \in [1,r]$, $k \in [1,r-1]$, $j\neq k,k+1$} \\ (iii) & C_kA_{k+1}D_j = A_kC_{k+1}D_j & \text{for $j \in [1,r]$, $k \in [1,r-1]$, $j\neq k,k+1$} \end{array} \end{equation}

Constraint (i) is a restatement of the result of Lemma \ref{le:caduceus}. Constraint (ii) is a consequence of Lemma \ref{le:caduceus3}; note that this constraint contains an extra factor of $A_j$ when compared with the result from Lemma \ref{le:caduceus3} due to particle conservation. For example, if $r=4$, we have $D_1C_2A_3C_4 = C_1D_2A_3C_4$ and $D_1C_2C_3A_4 = C_1D_2C_3A_4$ both by Lemma \ref{le:caduceus3}. Similarly, constraint (iii) is a consequence of Lemma \ref{le:caduceus1}. 


Since $C_j \in \mathbb{C}[q]$ and $C_j \neq 0$ for each $j$, (ii) and (iii) imply that, \begin{equation}\label{eq:rank.r.doubles1} A_iD_j = v_{ij}(q) A_kD_{\ell} \text{ if $1 \leq i < j \leq n$ and $1 \leq k < \ell \leq n$}\end{equation} where $v_{ij}(q)$ is some rational function in $q$. As an example when $r=4$, we can repeatedly apply (ii) and (iii) to establish the following equalities: \[A_1D_2C_3C_4 = A_1C_2D_3C_4 = A_1C_2C_3D_4 = C_1A_2C_3D_4 = C_1C_2A_3D_4.\] Similarly, \begin{equation}\label{eq:rank.r.doubles2} A_iD_j = w_{ij}(q) A_kD_{\ell} \text{ if $i>j$ and $k>\ell$} \end{equation} for some rational function $w_{ij}(q)$. 

If $A_1C_2D_3 \neq 0$ and $A_3C_2D_1 \neq 0$, then, using \eqref{eq:rank.r.doubles1} and \eqref{eq:rank.r.doubles2} and an argument analogous to the one given to address scenario (3) in Proposition \ref{prop:rank3wts} can be used to prove that \[\frac{A_i}{C_i}\cdot \frac{D_j}{C_j} \cdot \prod_{k=1}^r C_k = \prod_{k=1}^r C_k\] for any $i\neq j$, which is scenario (3) above. On the other hand, it is clear that a type $B/C$ model satisfying the condition above along with the assumption $B_j = C_j$ for all $j$ is solvable.

Now suppose towards a contradiction that $A_1D_2 = 0$. Then, by \eqref{eq:rank.r.doubles1}, $A_3D_4 = 0$ as well. From (i), we have that $A_2D_1 = (1-q^2)C_1C_2$ and $A_4D_3 = (1-q^2)C_3C_4$, and hence $A_2,A_4,D_1,D_3 \neq 0$. On the other hand, since $A_1D_2 = 0$, $A_2D_3$ must be 0 as well, which is a contradiction. Similarly, if we assume that $A_2D_1 = 0$ and conditions (i), (ii), and (iii), then we reach a contradiction. Thus, in order to have a solvable rank $r>3$ model, we must have $A_j \neq 0$ and $D_j \neq 0$ for all $j$.

If instead we consider a model with bivalent weights as described in case (2) of Lemma \ref{le:caduceus}, such that $C_j \neq 0$ for any $j$, then an analysis similar to the one given above where the bivalent weights satisfy case (1) of Lemma \ref{le:caduceus} leads us to scenario (4) in the statement of this proposition.

Finally, observe that, if $C_j = 0$ for some $j$, then one can use (ii) and (iii) to show that every state that contains a factor of $B_k$ or $C_k$ for any $k$ must evaluate to zero. For example, if $r=4$ and $C_1 = 0$, then \[A_1D_2C_3C_4 = A_1C_2D_3C_4 = C_1A_2D_3C_4 = 0.\] Hence, the only non-zero states must consist only of vertices with weight $A_j$ or $D_j$; furthermore, such states occur only if the rank $r$ is even. Thus, if $C_j=0$ for some $j$, then the model in question falls into scenario~(5). 
\end{proof}

\bibliographystyle{habbrv} 
\bibliography{blattice}

\end{document}